\title{\texorpdfstring{$q$}{q}-Hodge complexes and refined \texorpdfstring{$\TC^-$}{TC-}}
\author{Samuel Meyer and Ferdinand Wagner}
\newcommand{\AdicGm}{\IT}
\newcommand{\Akup}{\A_{\ku,  p}^*}
\newcommand{\AKUp}{\A_{\KU,  p}}
\begin{document}
	\maketitle
	
	\begin{abstract}
		\textbf{Abstract. --- }
		As a consequence of Efimov's proof of rigidity of the $\infty$-category of localising motives \cite{EfimovRigidity}, Efimov and Scholze have constructed refinements of localising invariants such as $\THH$ and $\TC^-$. These refinements often contain vastly more information than the original invariant.
		
		In this article we explain a general recipe how to compute the refinements in certain situations. We then apply this recipe to compute the homotopy groups of $\TCref(\ku\otimes\IQ/\ku)$ and $\TCref(\KU\otimes\IQ/\KU)$. The result has a rather surprising geometric description and contains non-trivial information modulo any prime, in contrast to the unrefined $\TC^-$.
	\end{abstract}

	\tableofcontents
	\renewcommand{\SectionPrefix}{\textrm{\S}}
	\renewcommand{\SubsectionPrefix}{\textrm{\S}}
	
	\newpage

	\section{Introduction}\label{sec:Intro}
	
	Topological Hochschild homology ($\THH$) and its variants $\TC^-$ and $\TP$ can be used to construct powerful cohomology theories for $p$-adic formal schemes, such as prismatic cohomology \cite{BMS2,Prismatic}. However, in the setting of rigid-analytic varieties over $\IQ_p$, or varieties over $\IQ$, they are less useful: When evaluated on rational inputs, these invariants will be rational themselves, and so any cohomology theory one might construct from them will never admit interesting comparisons to, say, étale cohomology with torsion coefficients.
	
	In this article we'll study a refinement of $\THH$/$\TC^-$ due to Efimov and Scholze that allows us to get around this shortcoming, while still being somewhat computable.	We hope that this will give rise to some interesting arithmetic cohomology theories.

	\subsection{Refined localising invariants}\label{subsec:RefinedInvariantsIntro}
	
	The construction of refinements of $\THH$ and $\TC^-$ is based on Efimov's rigidity theorem (\cref{thm:EfimovRigidity} below). The notion of \emph{rigidity} for symmetric monoidal $\infty$-categories was introduced by Gaitsgory and Rozenblyum (see \cite[Definition~{\chref{1.9.1.2}[I.9.1.2]}]{GaitsgoryRozenblyumDerivedI}). We'll work with the following variant of their definition, which is equivalent to the original one by \cite[Corollary~\chref{4.57}]{RamziLocallyRigid}:
	\begin{defi}\label{def:Rigid}
		A presentable stable symmetric monoidal $\infty$-category%
		\footnote{We always assume that the tensor product commutes with colimits in both variables (see \cref{par:Notation}).}
		$\Ee$ is \emph{rigid} if the following two conditions are satisfied:
		\begin{alphanumerate}
			\item The tensor unit $\IUnit\in \Ee$ is compact.\label{enum:TensorUnitCompact}
			\item $\Ee$ is generated under colimits by objects of the form $X\simeq \colimit(X_1\rightarrow X_2\rightarrow \dotsb)$, where each $X_{n}\rightarrow X_{n+1}$ is \emph{trace-class}, that is, induced by a morphism $\IUnit \rightarrow X_{n+1}^\vee\otimes X_n$ (see \cref{def:Nuclear}).\label{enum:GeneratedByBasicNuclear}
		\end{alphanumerate}
	\end{defi}
	
	Generalising the construction of $\Mot^\loc$ by Blumberg--Gepner--Tabuada \cite{BlumbergGepnerTabuada}, Efimov introduces a presentable stable symmetric monoidal $\infty$-category $\Mot_\Ee^\loc$ of \emph{localising motives over $\Ee$} \cite[Definition~\chref{1.20}]{EfimovLimits} and shows the following deep result:
	
	\begin{thm}[\cite{EfimovRigidity}; see {\cite[Theorem~\chref{4.7.1}]{SheavesOnManifolds}} for the case of $\Mot^\loc$]\label{thm:EfimovRigidity}
		If $\Ee$ is a rigid presentable stable symmetric monoidal $\infty$-category, then the same is true for $\Mot_\Ee^\loc$.
	\end{thm}
	
	Efimov and Scholze observed that this theorem has the following curious consequence:
	
	\begin{numpar}[Refined localising invariants \textmd{(Efimov--Scholze)}.]\label{par:RefinedInvariants}
		Let $T$ be a localising invariant over $\Ee$, that is, a colimit-preserving functor
		\begin{equation*}
			T\colon \Mot_\Ee^\loc\longrightarrow \Dd
		\end{equation*}
		into a presentable stable $\infty$-category $\Dd$. If $T$ is equipped with a symmetric monoidal structure, then \cref{thm:EfimovRigidity} implies that there's a unique symmetric monoidal factorisation
		\begin{equation*}
			\begin{tikzcd}
				\Mot_\Ee^\loc\rar["T"]\drar[dashed,"T^\mathrm{ref}"'] & \Dd\\
				& \Dd^\mathrm{rig}\uar
			\end{tikzcd}
		\end{equation*}
		This factorisation $T^\mathrm{ref}\colon \Mot_\Ee^\loc\rightarrow \Dd^\mathrm{rig}$ is the \emph{refinement of $T$} defined by Efimov--Scholze. Here $\Dd^\mathrm{rig}$ denotes the \emph{rigidification of $\Dd$} in the sense of \cite[Construction~\chref{4.75}]{RamziLocallyRigid}; see also \cite[Proposition~\chref{1.23}]{EfimovLimits}. We recall from these references that $\Dd^\mathrm{rig}$ can be described as the full sub-$\infty$-category of $\Ind(\Dd)$%
		\footnote{Applying $\Ind(-)$ to large $\infty$-categories causes set-theoretical issues, but $\Nuc\Ind(-)$ is fine; see~\cref{rem:NucInd}.}
		generated under colimits by ind-objects of the form $\indcolim_{i\in\IQ}x_i$, where all transition maps $x_i\rightarrow x_j$ for rational numbers $i<j$ are trace-class. If $\Dd$ is \emph{locally rigid} and its tensor unit is $\omega_1$-compact, then it suffices to consider $\IZ_{\geqslant 0}$-indexed ind-objects instead of $\IQ$-indexed ones. In other words, in this case
		\begin{equation*}
			\Dd^\mathrm{rig}\overset{\simeq}{\longrightarrow}\Nuc\Ind(\Dd)
		\end{equation*}
		is given by the \emph{nuclear} objects in $\Ind(\Cc)$ in the sense of \cref{def:Nuclear}. See \cite[Theorem~\chref{4.2}]{EfimovLimits}.
	\end{numpar}
	\begin{rem}\label{rem:TC-refOC}
		The refinement procedure from \cref{par:RefinedInvariants} is very sensitive to the choice of $\Ee$. This is a feature, not a bug, as it offers a lot of flexibility, even if we stick to the case where $T$ is topological Hochschild homology. For example, we could consider the $p$-completed $\THH$ functor
		\begin{equation*}
			\THH(-;\IZ_p)\colon \Mot^\loc\longrightarrow \bigl(\Sp^{\B S^1}\bigr)_p^\complete
		\end{equation*}
		to obtain a refinement $\THHref(-;\IZ_p)$. But for a complete non-archimedean algebraically closed field $C$, we could also define $\THHref_{/\Oo_C}(-;\IZ_p)$ to be the refinement of the functor
		\begin{equation*}
			\THH(-;\IZ_p)\colon \Mot_{\Oo_C}^\loc\longrightarrow \Mod_{\THH(\Oo_\Cc;\IZ_p)}\bigl(\Sp^{\B S^1}\bigr)_p^\complete\,,
		\end{equation*}
		where we only accept motives over $\Oo_C$ as input.%
		\footnote{Historically, $\THHref_{/\Oo_C}(-;\IZ_p)$ is the first refined invariant. Efimov and Scholze have sketched a computation of $\THHref_{/\Oo_C}(C;\IZ_p)$ \cite{ScholzeTC-OC}, by reducing the problem to the known computation of $\THH(\Oo_C/p^\alpha;\IZ_p)$ for all $\alpha\geqslant 1$ (compare \cref{thm:RefinedInvariantsIntro} below).}
		These two refinements are completely different, as the forgetful functor $\Mot_{\Oo_C}^\loc\rightarrow \Mot^\loc$ doesn't preserve trace-class morphisms.
	\end{rem}
	
	The refinement $T^\mathrm{ref}$ typically contains vastly more information than $T$ itself, as we'll discuss in the case of $\THHref(\IQ)$ below. Our first goal in this article is to give a recipe for computing $T^\mathrm{ref}$ in certain cases.
	
	\begin{thm}[see \cref{thm:RefinedInvariants}]\label{thm:RefinedInvariantsIntro}
		Assume we're in the situation of \cref{par:RefinedInvariants}, with $\Dd$ locally rigid. Let $\Ee\rightarrow\Xx$ be a strongly continuous symmetric monoidal functor into another rigid symmetric monoidal presentable stable $\infty$-category, such that $\Xx$ is smooth and proper as an $\Ee$-module. Suppose we're given the following data:
		\begin{alphanumerate}
			\item[V] A tower of $\IE_1$-algebras in $\Xx$ of the form $V_0\leftarrow V_1\leftarrow V_2\leftarrow \dotsb$, such that each $V_r$ is dualisable in $\Xx$ and contained in the thick tensor ideal generated by $V_0$. Moreover, for all $r\geqslant 0$, the induced map $V_{r+1}\otimes V_r\rightarrow V_r\otimes V_r$ factors through the multiplication\label{enum:VIntro}
			\begin{equation*}
				V_{r+1}\otimes V_r\overset{\mu}{\longrightarrow} V_r
			\end{equation*}
			as a map of $V_{r+1}$-$V_r$-bimodules.
		\end{alphanumerate}
		Let $\Uu\subseteq \Xx$ be the full sub-$\infty$-category spanned by those $U\in\Xx$ for which $\Hhom_\Xx(V_0,U)\simeq 0$. Then there exists a cofibre sequence of the following form in $\Nuc\Ind(\Dd)$:
		\begin{equation*}
			\indcolim_{r\geqslant 0}T\bigl(\RMod_{V_r}(\Xx)\bigr)^\vee\longrightarrow T(\Xx)\longrightarrow T^\mathrm{ref}(\Uu)\,.
		\end{equation*}
	\end{thm}
	With the language developed in \cref{subsec:KillingAlgebras}, we could say that \emph{$\Uu$ is obtained from $\Xx$ by killing $V_0$} and \emph{$T^\mathrm{ref}(\Uu)$ is obtained from $T(\Xx)$ by killing the idempotent pro-algebra $\prolim_{r\geqslant 0}T(\RMod_{V_r}(\Xx))$}.
	
	\begin{numpar}[How to apply \cref{thm:RefinedInvariantsIntro}.]
		Even though \cref{thm:RefinedInvariantsIntro} looks quite technical, we'll verify in \cref{subsec:Burklund} that it covers many cases of interest. This is due to the following observation (see \cref{cor:E1FactorisationSpalpha}): Let $v\colon \Ii\rightarrow \IUnit_\Xx$ be a morphism from a dualisable object such that the cofibre $\IUnit_\Xx/v$ admits a right-unital multiplication. Then Burklund's tower of $\IE_1$-algebras \cite[Theorem~\chref{1.5}]{BurklundMooreSpectra}
		\begin{equation*}
			\IUnit_\Xx/v^2\longleftarrow \IUnit_\Xx/v^3\longleftarrow \IUnit_\Xx/v^4\longleftarrow \dotsb
		\end{equation*}
		satisfies the conditions from \cref{thm:RefinedInvariantsIntro}\cref{enum:VIntro}. Thus, to compute, for example, $\THHref(\IS[1/p])$ for a prime~$p$, one can choose a Burklund-style tower of $\IE_1$-structures on $\IS/p^\alpha$ for sufficiently large~$\alpha$ to obtain a cofibre sequence
		\begin{equation*}
			\indcolim_{\alpha}\THH(\IS/p^\alpha)^\vee\longrightarrow \THH(\IS)\longrightarrow \THHref\bigl(\IS\bigl[\localise{p}\bigr]\bigr)\,.
		\end{equation*}
		In a similar way, we'll explain how one could attempt computations such as $\THHref(\IQ)$, $\THHref(\IS[x])$, or $\THHref(\L_n^f\IS_{(p)})$---which brings us to our main question:
	\end{numpar}
	
	\subsection{What's \texorpdfstring{$\THHref(\IQ)$}{THHref(Q)}?}\label{subsec:THHrefQIntro}
	
	We'll explain in \cref{subsec:NewCohomologyTheoryIntro} why the answer to this question should be interesting, but let us already remark that it has to be non-trivial: As we'll see below, $\THHref(\IQ)_p^\complete\not\simeq 0$ for all primes~$p$. So in contrast to $\THH(\IQ)\simeq \IQ$, the refined version $\THHref(\IQ)$ contains non-trivial $p$-complete information for any prime~$p$.
	
	However, computing $\THHref(\IQ)$, or just its $p$-completions, is a highly non-trivial task: As we've seen above, this would involve computing $\THH(\IS/p^\alpha)$, or at least a pro-system of the form $\prolim_\alpha \THH(\IS/p^\alpha)$, which seems currently out of reach.
	
	Scholze and Efimov have suggested that a more approachable goal would be to compute $\THHref((\MU\otimes\IQ)/\MU)$ and then to attack the original question---to the extent in which that's possible---via Adams--Novikov descent. Here we let $\THHref(-/k)$ denote the refinement of
	\begin{equation*}
		\THH(-/k)\colon \Mot_k^\loc\rightarrow \Mod_k(\Sp)^{\B S^1}
	\end{equation*}
	for any $\IE_\infty$-ring spectrum~$k$.
	
	While we still don't know what happens for $k=\MU$, the purpose of this article is to give an answer for $k=\ku$ and $k=\KU$. To this end, we'll introduce the following variant of $\THHref$, which will make it easier to formulate the result in geometric terms.
	
	\begin{numpar}[Refined $\TC^-$.]
		If $k$ is complex orientable and $t\in\pi_{-2}(k^{\h S^1})$ is a chosen complex orientation, then taking $S^1$-fixed points induces a symmetric monoidal equivalence
		\begin{equation*}
			(-)^{\h S^1}\colon \Mod_k(\Sp)^{\B S^1}\overset{\simeq}{\longrightarrow}\Mod_{k^{\h S^1}}(\Sp)_t^\complete
		\end{equation*}
		between $k$-modules with $S^1$-action and $t$-complete $k^{\h S^1}$-modules (see \cref{lem:S1ActiontComplete}). In particular, we can view $\TC^-(-/k)$ as a symmetric monoidal functor $\Mot_k^\loc\rightarrow \Mod_{k^{\h S^1}}(\Sp)_t^\complete$, which contains the same information as $\THH(-/k)$. Applying refinement, we obtain the functor
		\begin{equation*}
			\TCref(-/k)\colon \Mot_k^\loc\longrightarrow \Nuc(k^{\h S^1})
		\end{equation*}
		of by Efimov and Scholze. Here $\Nuc(k^{\h S^1})\coloneqq \Nuc\Ind(\Mod_{k^{\h S^1}}(\Sp)_t^\complete)$ denotes Efimov's $\infty$-category  of \emph{nuclear $k^{\h S^1}$-modules}.
	\end{numpar}
	
	So it will be enough to compute $\TCref(\ku\otimes\IQ/\ku)$ and $\TCref(\KU\otimes\IQ/\KU)$.	
	
	\begin{numpar}[$q$-Hodge filtrations and $q$-Hodge complexes.]
		Suppose we've chosen an $\IE_1$-structure on $\IS/m$ for some integer~$m$. Then \cite[Theorems~\chref{4.27} and~\chref{5.63}]{qdeRhamku} show that
		\begin{align*}
			\pi_{2*}\TC^-\bigl((\ku\otimes\IS/m)/\ku\bigr)&\cong \fil_{\qHodge}^\star\qdeRham_{(\IZ/m)/\IZ}\,,\\
			\pi_{2*}\TC^-\bigl((\KU\otimes\IS/m)/\KU\bigr)&\cong \qHodge_{(\IZ/m)/\IZ}[\beta^{\pm1}]\,,
		\end{align*}
		where $\qdeRham_{-/\IZ}$ denotes the derived $q$-de Rham complex,  $\fil_{\qHodge}^\star$ is a \emph{$q$-Hodge filtration} in the sense of \cite[Definition~\chref{3.2}]{qWittHabiro} (which in the case of $\IZ/m$ just amounts to a $q$-deformation of the Hodge filtration, as the additional compatibilities are trivial), and $\qHodge_{(\IZ/m)/\IZ}$ is the associated \emph{$q$-Hodge complex}
		\begin{equation*}
			\qHodge_{(\IZ/m)/\IZ}\coloneqq \colimit\Bigl( \fil_{\qHodge}^0\qdeRham_{(\IZ/m)/\IZ}\xrightarrow{(q-1)} \fil_{\qHodge}^1\qdeRham_{(\IZ/m)/\IZ}\xrightarrow{(q-1)}\dotsb\Bigr)_{(q-1)}^\complete\,.
		\end{equation*}
		Thanks to Burklund's result \cite[Theorem~\chref{1.5}]{BurklundMooreSpectra}, we can choose a coinitial sub-poset $\IN^{\lightning}\!\subseteq \IN$ of positive integers, partially ordered by divisibility, together with compatible $\IE_1$-structures on $\IS/m$ for $m\in\IN^{\lightning}\!$. This leads to the following result:
	\end{numpar}

	\begin{thm}[see \cref{thm:RefinedTC-qHodge}]\label{thm:RefinedTC-Intro}
		$\TCref((\ku\otimes\IQ)/\ku)$ and $\TCref((\KU\otimes\IQ)/\KU)$ are concentrated in even degrees, and their even homotopy groups are described as follows:
		\begin{alphanumerate}
			\item $\pi_{2*}\TCref((\ku\otimes\IQ)/\ku)\cong \A_{\ku}^*$, where $\A_{\ku}^*$ is the idempotent nuclear graded $\IZ[\beta]\llbracket t\rrbracket$-algebra obtained by killing the pro-idempotent $\prolim_{m\in\IN^{\lightning}\!}\Fil_{\qHodge}^*\qhatdeRham_{(\IZ/m)/\IZ}$.
			\item $\pi_{2*}\TCref((\KU\otimes\IQ)/\KU)\cong \A_{\KU}[\beta^{\pm 1}]$, where $\A_{\KU}$ is the idempotent nuclear $\IZ\qpower$-algebra obtained by killing the pro-idempotent $\prolim_{m\in\IN^{\lightning}\!}\qHodge_{(\IZ/m)/\IZ}$.
		\end{alphanumerate}
	\end{thm}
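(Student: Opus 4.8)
The plan is to deduce \cref{thm:RefinedTC-qHodge} from the reduction of refined invariants to nilpotent thickenings (carried out in \cref{subsec:THHrefQ} via the formalism of \cref{subsec:KillingProIdempotents}) together with the computation of ordinary $\TC^-$ relative to $\ku$ and $\KU$ in \cref{thm:qDeRhamkuIntro}. The first and decisive step is to present, working one prime at a time and then reassembling, the motive of $\ku\otimes\IQ$ relative to $\ku$ as the complement of its nilpotent thickenings --- exactly as in the Efimov--Scholze treatment of $\Oo_C$ and its generic fibre $\Oo_C[1/p]$ recalled in \cref{rem:TC-refOC}. Concretely, I would show that in $\Mot_\ku^\loc$ the ($p$-completed) motive of $\ku\otimes\IQ$ is obtained by killing the pro-idempotent $\IE_\infty$-algebra $\prolim_\alpha(\ku\otimes\IS/p^\alpha)_p^\complete$, and then that applying $\THHref(-/\ku)$ turns this into an equivalence
\begin{equation*}
	\TCref\bigl((\ku\otimes\IQ)/\ku\bigr)\simeq\TC^-(\ku/\ku)\,/\!\!/\,\Bigl(\prolim_{m\in\IN}\TC^-(\ku_{\IZ/m}/\ku)\Bigr)\,,
\end{equation*}
where $\IN$ is ordered by divisibility, $\ku_{\IZ/m}\coloneqq\prod_{p\mid m}(\ku\otimes\IS/p^{\alpha_p})_p^\complete$ (with $m=\prod_pp^{\alpha_p}$), the pro-object on the right is a pro-idempotent $\IE_\infty$-algebra over $\TC^-(\ku/\ku)=\ku^{\h S^1}$, and $/\!\!/$ denotes killing it; the analogous statement holds verbatim for $\KU$.

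Granting this, the second step only requires feeding in \cref{thm:qDeRhamkuIntro}. By Burklund's theorem (\cref{rem:BurklundIntro}) the ring $\IZ/p^\alpha$ admits the $\IE_1$-lift $\IS_{\IZ/p^\alpha}=\IS/p^\alpha$ whenever $\alpha\geqslant2$ (and $\alpha\geqslant4$ even for $p=2$); since the integers $m$ all of whose prime exponents lie in this admissible range are cofinal in $(\IN,\mid)$, restricting to such $m$ does not change the pro-object $\prolim_m\TC^-(\ku_{\IZ/m}/\ku)$. For those $m$, \cref{thm:qDeRhamkuIntro} --- applied over each prime and assembled over primes as in the definition of $\qdeRham_{(\IZ/m)/\IZ}$ --- shows that each $\TC^-(\ku_{\IZ/m}/\ku)$ and $\TC^-(\KU_{\IZ/m}/\KU)$ is concentrated in even degrees, with
\begin{equation*}
	\pi_{2*}\TC^-(\ku_{\IZ/m}/\ku)\cong\Fil_{\qHodge}^*\qhatdeRham_{(\IZ/m)/\IZ}\,,\qquad\pi_{2*}\TC^-(\KU_{\IZ/m}/\KU)\cong\qHodge_{(\IZ/m)/\IZ}[\beta^{\pm1}]\,,
\end{equation*}
compatibly with the transition maps; hence the pro-idempotent being killed has homotopy $\prolim_m\Fil_{\qHodge}^*\qhatdeRham_{(\IZ/m)/\IZ}$ (resp.\ $\prolim_m\qHodge_{(\IZ/m)/\IZ}[\beta^{\pm1}]$).

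The third step transports even-concentration through the quotient and reads off the answer. Here one uses that $\TC^-(\ku/\ku)=\ku^{\h S^1}$ is even with $\pi_{2*}=\IZ[\beta]\llbracket t\rrbracket$ and $q-1=\beta t$, that each $\TC^-(\ku_{\IZ/m}/\ku)$ is even, and that the structure maps $\IZ[\beta]\llbracket t\rrbracket\to\Fil_{\qHodge}^*\qhatdeRham_{(\IZ/m)/\IZ}$ are flat (after the relevant completions the target is a filtered prismatic envelope; cf.\ the explicit description after \cref{rem:BurklundIntro}), so that the cofibres $\cofib(\TC^-(\ku_{\IZ/m}/\ku)\to\ku^{\h S^1})$ and their filtered colimit carry no odd homotopy, while a $\lim^1$-vanishing argument over the cofinal subsystem permits passage to the pro-object. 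One concludes that $\TCref((\ku\otimes\IQ)/\ku)$ is even and that its $\pi_{2*}$ is computed by applying the same ``kill the pro-idempotent'' recipe at the level of graded $\IZ[\beta]\llbracket t\rrbracket$-modules, which is the definition of $\A_{\ku}^*$. The $\KU$-statement follows the same way with $\qHodge_{(\IZ/m)/\IZ}[\beta^{\pm1}]$ in place of $\Fil_{\qHodge}^*\qhatdeRham_{(\IZ/m)/\IZ}$ --- or, alternatively, by inverting $\beta$ and using $\KU=\ku[\beta^{-1}]$ together with compatibility of all constructions with this localisation.

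The main obstacle is the first step: proving that refining $\THH((\ku\otimes\IQ)/\ku)$ really amounts to killing the pro-idempotent $\prolim_m\TC^-(\ku_{\IZ/m}/\ku)$. This needs an arithmetic-fracture/localisation analysis in $\Mot_\ku^\loc$ identifying $\ku\otimes\IQ$ with the complement of its nilpotent thickenings, a verification that the relevant transition maps are trace-class so that $\TCref$ of the colimit is the \emph{nuclear} colimit --- the discrepancy with the naive colimit being precisely why $\TCref((\ku\otimes\IQ)/\ku)\not\simeq\TC^-((\ku\otimes\IQ)/\ku)$ --- and an upgrade of the resulting pro-spectrum to a pro-idempotent $\IE_\infty$-algebra so that the quotient is multiplicatively meaningful. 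A secondary difficulty is the even-concentration after the quotient, where $\lim^1$-type odd classes must be excluded using flatness of $\Fil_{\qHodge}^*\qhatdeRham_{(\IZ/m)/\IZ}$ over $\IZ[\beta]\llbracket t\rrbracket$ and the detailed structure of the pro-system.
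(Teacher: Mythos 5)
Your overall architecture matches the paper: reduce $\TCref((k\otimes\IQ)/k)$ to killing the pro-idempotent $\prolim_m\THH((k\otimes\IS/m)/k)$ via the localisation sequence $\Sp^\tors\to\Sp\to\Sp_\IQ$ in $\Pr_{\IS,\omega}^\L$, verify the trace-class condition on transition maps, feed in \cref{thm:qDeRhamkuIntro} (reducing to the Burklund range), and read off the homotopy. That is indeed the strategy of \cref{thm:THHrefQ}, \cref{lem:TorsionSpectra}, \cref{lem:TraceClassTransitionMaps}, \cref{cor:TC-kupalpha} and \cref{thm:RefinedTC-qHodge}.

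However, your third step contains a genuine error. You assert that the maps $\IZ[\beta]\llbracket t\rrbracket\to\Fil_{\qHodge}^*\qhatdeRham_{(\IZ/m)/\IZ}$ are flat and deduce evenness of the cofibre from this. Flatness is false: modulo $(\beta,t)$ the target is $\gr_{\Hodge}^*\hatdeRham_{(\IZ/m)/\IZ}$, which is $\IZ/m$ in every graded degree, hence $m$-torsion and certainly not flat over $\IZ$. Moreover, even granting flatness, the cofibre of a map of \emph{even} spectra generically has odd homotopy, so the inference ``flat $\Rightarrow$ cofibre is even'' does not follow. The correct mechanism is the opposite one: because these graded modules are torsion, the $\ku^{\h S^1}$-linear duals $\TC^-((\ku/m)/\ku)^\vee$ are concentrated in \emph{odd} degrees (the relevant $\Eext_{\IZ[\beta]\llbracket t\rrbracket}$-groups live in degree~$1$, not degree~$0$; this is \cref{lem:ExtSpectralSequence} and \cref{cor:Duals}), and it is the cofibre sequence $\indcolim_m\TC^-((\ku/m)/\ku)^\vee\to\ku^{\h S^1}\to\TCref((\ku\otimes\IQ)/\ku)$ — odd source, even target — that forces the refined invariant to be even. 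You also misidentify the cofibre: the defining cofibre sequence involves $\TC^-((\ku/m)/\ku)^\vee$, not $\TC^-((\ku/m)/\ku)$ itself. Finally, you treat as routine the facts that $\prolim_m\Fil_{\qHodge}^*\qhatdeRham_{(\IZ/m)/\IZ}$ is pro-idempotent and has trace-class transition maps \emph{on homotopy groups}; this does not follow formally from the corresponding properties of $\prolim_m\TC^-((\ku/m)/\ku)$, since passage to homotopy groups is not symmetric monoidal, and the paper needs a separate argument with the double-speed Postnikov filtration (\cref{lem:TorSpectralSequence}, \cref{lem:ProIdempotent}, \cref{lem:ProTraceClass}) to establish them.
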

	\cref{thm:RefinedTC-Intro} provides a description of the desired homotopy rings in terms of the $q$-Hodge filtrations on $\qdeRham_{(\IZ/m)/\IZ}$. In \cref{subsec:ElementaryProof}, we'll describe $\fil_{\qHodge}^\star\qdeRham_{(\IZ/m)/\IZ}$ in terms of explicit generators. This leads to a much more explicit description of $\A_{\ku}^*$ and $\A_{\KU}$ in terms of rings of overconvergent functions on certain adic spaces. For simplicity, we'll work with $\TCref((\ku_p^\complete\otimes\IQ)/\ku_p^\complete)$ and $\TCref((\KU_p^\complete\otimes\IQ)/\KU_p^\complete)$ instead. 
	Let us first formulate the result for $\KU_p^\complete$, as it is easier to state. We put
	\begin{equation*}
		\AKUp\coloneqq \pi_0\TCref\bigl((\KU_p^\complete\otimes\IQ)/\KU_p^\complete\bigr)\,,
	\end{equation*}
	so $\pi_{2*}\TCref((\KU_p^\complete\otimes\IQ)/\KU_p^\complete)\cong \AKUp[\beta^{\pm 1}]$. Let also $X\coloneqq \Spa \IZ_p\qpower\smallsetminus\{p=0,q=1\}$ be the \enquote{analytic locus} where $p$ or $q-1$ is invertible. Then $\AKUp$ has the following description, confirming a conjecture of Scholze and Efimov.
	\begin{thm}\label{thm:OverconvergentNeighbourhood}
		Let $Z\subseteq X$ denote the union of the closed subsets $\Spa(\IF_p\qLaurent,\IF_p\qpower)$ and $\Spa(\IQ_p(\zeta_{p^n}),\IZ_p[\zeta_{p^n}])$ for all $n\geqslant 0$. Let $Z^\dagger$ denote the overconvergent neighbourhood of $Z$ in $X$ and $\Oo(Z^\dagger)$ the nuclear $\IZ_p\qpower$-algebra of overconvergent functions on $Z$. Then
		\begin{equation*}
			\AKUp\cong \Oo(Z^\dagger)\,.
		\end{equation*}
	\end{thm}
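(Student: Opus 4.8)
The plan is to combine the identification of $\TCref((\KU\otimes\IQ)/\KU)$ in \cref{thm:RefinedTC-Intro} with the explicit computation of the derived $q$-Hodge complexes $\qHodge_{(\IZ/p^\alpha)/\IZ_p}$ carried out in \cref{subsec:ElementaryProof}, and the formalism of killing pro-idempotents recalled in \cref{subsec:KillingProIdempotents}. The first step is bookkeeping: the $p$-completed analogue of \cref{thm:RefinedTC-Intro} — obtained by $p$-completion, which replaces the product $\qHodge_{(\IZ/m)/\IZ}=\prod_\ell\qHodge_{(\IZ/\ell^{\alpha_\ell})/\IZ_\ell}$ over primes by its factor at $p$ — says that $\AKUp$ is the idempotent nuclear $\IZ_p\qpower$-algebra obtained by killing the pro-idempotent $P\coloneqq\prolim_\alpha\qHodge_{(\IZ/p^\alpha)/\IZ_p}$, where $\alpha$ runs over a coinitial set of integers $\geqslant 2$ (even and $\geqslant 4$ if $p=2$, cf.\ \cref{rem:BurklundIntro}) and where $q-1=\beta t$. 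Everything now takes place in nuclear $\IZ_p\qpower$-modules, and it remains to (i) compute $P$ as a pro-object of nuclear $\IZ_p\qpower$-algebras and (ii) identify the result of killing it with $\Oo(Z^\dagger)$.

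For step (i) I would invoke \cref{subsec:ElementaryProof}: there $\qdeRham_{(\IZ/p^\alpha)/\IZ_p}$ is identified, via the base change $\IZ_p\{x\}_\infty/x^\alpha\rightarrow\IZ/p^\alpha$ along $x\mapsto p$ (which sends the Frobenius-twisted generator $\phi(x^\alpha)$ to $p^\alpha$, the prism Frobenius being the identity on $\IZ_p$), with the $q$-de Rham prismatic envelope $\bigl(\IZ_p\qpower\{p^\alpha/[p]_q\}\bigr)^\complete_{(p,q-1)}$, and its $q$-Hodge filtration with the preimage of the $(p^\alpha,q-1)$-adic filtration along the completed rationalisation. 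From this I would read off $\qHodge_{(\IZ/p^\alpha)/\IZ_p}$ as the nuclear $\IZ_p\qpower$-algebra $\Oo(U_\alpha)$ of functions on an explicit rational locus $U_\alpha$ inside the Tate locus $X=\Spa\IZ_p\qpower\smallsetminus\{p=0,q=1\}$, and then show that the natural transition maps make $\{U_\alpha\}$ an increasing exhaustion of $X\smallsetminus Z^\dagger$; equivalently, the $W_\alpha\coloneqq X\smallsetminus U_\alpha$ form a cofinal family of open neighbourhoods of $Z$. This is the crux of the argument and, I expect, the main obstacle: one has to trace how the exponent $\alpha$ together with the $p$-adic divided powers $\gamma^{(n)}(p^\alpha/[p]_q)$ present in the prismatic envelope control, for every $n\geqslant 0$, the order of contact permitted along the vanishing locus of the cyclotomic factor $\Phi_{p^n}(q)$ of $[p^{n}]_q$ — this is precisely what pulls the cyclotomic points $\Spa(\IQ_p(\zeta_{p^n}),\IZ_p[\zeta_{p^n}])$ into $Z$ — and to observe that, as $\alpha$ and $n$ grow, these points accumulate onto the special fibre, accounting for the point $\Spa(\IF_p\qLaurent,\IF_p\qpower)$. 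Concretely this is a Newton-polygon estimate involving the valuations of $[p^n]_q$ and of $(p^n)!$, followed by a cofinality check among all open neighbourhoods of $Z$.

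Given (i), step (ii) is formal: by \cref{subsec:KillingProIdempotents}, killing the pro-idempotent $P\simeq\prolim_\alpha\Oo(U_\alpha)$ attached to the exhaustion $U_\alpha\uparrow X\smallsetminus Z^\dagger$ produces the idempotent nuclear algebra attached to the complementary closed germ, namely the filtered colimit $\colimit_\alpha\Oo(W_\alpha)$, and by the cofinality from (i) this is exactly the ring of overconvergent functions $\Oo(Z^\dagger)$. Keeping track of the identification $q-1=\beta t$ throughout upgrades this to an isomorphism of nuclear $\IZ_p\qpower$-algebras $\AKUp\cong\Oo(Z^\dagger)$, which is the claim; running the same argument integrally and over all primes at once recovers the description of $\A_\KU$ in \cref{thm:RefinedTC-Intro} in these terms.
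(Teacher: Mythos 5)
Your high‐level plan — start from the $p$-complete version of \cref{thm:RefinedTC-Intro}, reduce to the Tate locus $X$, match the pro-idempotent $\prolim_\alpha\qHodge_{(\IZ/p^\alpha)/\IZ_p}$ against a pro-system coming from complements of open neighbourhoods of $Z$, and conclude via the machinery of \cref{subsec:KillingProIdempotents} — is the same as the paper's. But there are two concrete gaps in the way you propose to execute it.

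In step (i) you want to identify each $\qHodge_{(\IZ/p^\alpha)/\IZ_p}$ (after extending scalars to $\Oo_X$) with $\Oo(U_\alpha)$ for some rational open $U_\alpha\subseteq X$, and then show $\{U_\alpha\}$ exhausts $X\smallsetminus Z^\dagger$. The paper does not prove any such level-wise identification, and it is almost certainly false: $\qHodge_{(\IZ/p^\alpha)/\IZ_p}$ is a $q$-PD-type envelope whose reduction mod $(q-1)$ is computed in the paper to be $\bigoplus_\IN\IZ/p^\alpha$ as an abelian group, which is not the ring of functions on any rational subset. What the paper actually proves (\cref{lem:qHodgeSystemOfOpensI}, \cref{lem:qHodgeSystemOfOpensII}) is only a \emph{pro-equivalence}: for fixed $\alpha$ and $n,r,s\gg 0$ there is a map $\Oo_{W_{n,r,s}}\rightarrow\qHodge_{(\IZ/p^\alpha)/\IZ_p}\lotimes\Oo_X$, and for fixed $n,r,s$ and $\alpha\gg 0$ there is a map the other way — the point being that the divided-power lifts $\widetilde\gamma_q^{(d)}(x^\alpha)$ of \cref{lem:StructuralResultqHodge} provide \emph{solid} (topologically nilpotent) witnesses for the required inequalities, which is exactly your Newton-polygon intuition but deployed at the level of individual elements rather than an actual identification of rings. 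This weaker statement is both all that is needed and all that is true.

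In step (ii) you call the conclusion formal. It is not: the fact that $\Oo(Z^\dagger)$ is the idempotent nuclear algebra obtained by killing $\prolim_{Z\cap\ov W=\emptyset}\Oo_W$ is \cref{thm:OverconvergentIdempotentNuclear}, which rests on the analytic-stacks formalism developed in \cref{subsec:OverconvergentGeneral} — in particular the trace-class property of $\Oo_U\rightarrow\Oo_{U'}$ when $\ov{U'}\subseteq U$ (\cref{lem:TraceClassAnalyticStacks}, \cref{lem:TateOpenImmersions}), a spectral-space separation lemma (\cref{lem:SpectralT4}), and the hypothesis that $Z$ is closed under generalisations, verified in \cref{cor:OZdaggerIdempotentNuclear}. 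You also need the preliminary \cref{lem:AkuOverAnalyticLocus} (vanishing of $\TCref((\IF_p\otimes\IQ)/\IF_p)$ by base change along $\ku\rightarrow\IF_p$) to know that $\AKUp$ actually lives over $X$; this isn't automatic from $\AKUp$ being a $\IZ_p\qpower$-module.
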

	In \cref{fig:Rainbow} we show a picture of $Z^\dagger$. It should be reminiscent of Scholze's famous prismatic picture (a nice depiction of which can be found in \cite[p.\ \chpageref{4}]{HesselholtNikolausHandbook}), but the rays are \enquote{overconvergently blurred} and the \enquote{origin} $\{p=0,q=1\}$ has been removed. 
	
	\begin{figure}[ht]
		\centering\includegraphics{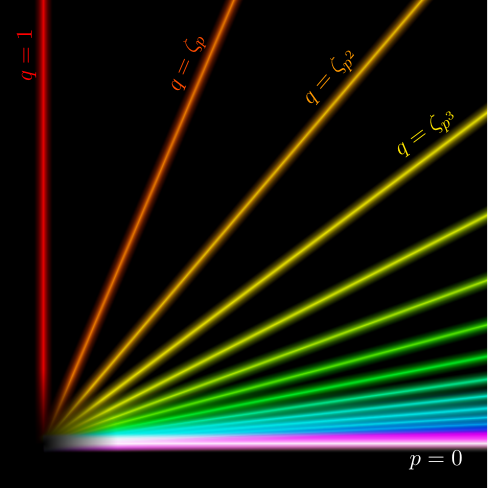}
		\caption{The analytic spectrum of $\AKUp\cong \Oo(Z^\dagger)$.}\label{fig:Rainbow}
	\end{figure}
	
	Since $Z^\dagger$ visibly contains the entire infinitesimal neighbourhood of $\{p=0\}$ except for the \enquote{origin}, we see that $\TCref((\KU_p^\complete\otimes \IQ)/\KU_p^\complete)_p^\complete\neq 0$. In particular, it follows that $\THHref(\IQ)_p^\complete\not\simeq 0$, as we've claimed above.%
	To formulate a similar geometric result for $\ku_p^\complete$, consider the ungraded ring $\IZ[\beta,t]_{(p,t)}^\complete$ with its $(p,t)$-adic topology. We wish to encode the graded $(p,t)$-complete ring $\IZ_p[\beta]\llbracket t\rrbracket$ in terms of an action of $\IG_m$ on $\Spa \IZ[\beta,t]_{(p,t)}^\complete$, as usual---but we have to be careful: Since we wish that $t$ is a topologically nilpotent elements in non-zero graded degree, we can only act by units $u$ \enquote{of norm $\abs{u}=1$}. More precisely, we have to replace $\IG_m$ by the \enquote{adic unit circle} $\AdicGm\coloneqq \Spa(\IZ[u^{\pm 1}],\IZ[u^{\pm 1}])$.
	
	With this modification, everything works (as we'll elaborate in \cref{subsec:GradedAdic}): Declaring $\beta$ and $t$ to have degree $2$ and $-2$, respectively, determines an action of $\AdicGm$ on $\Spa \IZ[\beta,t]_{(p,t)}^\complete$, and we can identify $\IZ_p[\beta]\llbracket t\rrbracket$ with the structure sheaf on $(\Spa \IZ[\beta,t]_{(p,t)}^\complete)/\AdicGm$, where the quotient is always taken in the derived (or \enquote{stacky}) sense. We also let $X^*\coloneqq \Spa \IZ[\beta,t]_{(p,t)}^\complete\smallsetminus\{p=0,\beta t=0\}$. Since $p$ and $\beta t$ are homogeneous, $X^*$ inherits an action of $\AdicGm$. Putting
	\begin{equation*}
		\Akup\coloneqq \pi_{2*}\TCref\bigl((\ku_p^\complete\otimes\IQ)/\ku_p^\complete\bigr)\,,
	\end{equation*}
	we see that $\Akup$ is a graded $\IZ_p[\beta]\llbracket t\rrbracket$-module, hence we can regard it as a quasi-coherent sheaf on $(\Spa \IZ[\beta,t]_{(p,t)}^\complete)/\AdicGm$. As we'll see, it is already the pushforward of a sheaf on the open substack $X^*/\AdicGm$. This sheaf, which we'll also denote $\Akup$, can be described as follows:
	\begin{thm}\label{thm:OverconvergentNeighbourhoodEquivariant}
		Let $Z^*\subseteq X^*$ be union of the $\AdicGm$-equivariant closed subsets $\{p=0\}$ and $\{[p^n]_{\ku}(t)=0\}$ for all $n\geqslant 0$, where $[p^n]_{\ku}(t)\coloneqq ((1+\beta t)^{p^n}-1)/\beta$ denotes the $p^n$-series of the formal group law of $\ku$. Let $Z^{*,\dagger}$ denote the overconvergent neighbourhood of $Z^*$. Then $Z^{*,\dagger}$ inherits a $\AdicGm$-action and
		\begin{equation*}
			\Akup\cong \Oo_{Z^{*,\dagger}/\AdicGm}\,.
		\end{equation*}
	\end{thm}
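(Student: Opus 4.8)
The plan is to run the proof of \cref{thm:OverconvergentNeighbourhood} again, carrying the $\AdicGm$-action (equivalently, the grading) through every step; over $\{\beta\neq 0\}$ the computation specialises verbatim to \cref{thm:OverconvergentNeighbourhood}, so the genuinely new content is the behaviour along $\{\beta=0\}$. First, since we work $p$-completely, \cref{thm:RefinedTC-Intro}, in its $p$-complete form --- where only the factors indexed by $p$-powers survive --- identifies $\Akup$ with the graded $(p,t)$-complete $\IZ_p[\beta]\llbracket t\rrbracket$-algebra obtained by killing the pro-idempotent $\prolim_\alpha\Fil_{\qHodge}^*\qhatdeRham_{(\IZ/p^\alpha)/\IZ_p}$, with $\alpha$ ranging over a coinitial set ($\alpha\geqslant 2$, or $\alpha$ even and $\geqslant 4$ when $p=2$); here one uses \cref{thm:qDeRhamkuIntro}, Burklund's $\IE_1$-structures on the $\IS/p^\alpha$, and the identification $q-1=\beta t$. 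It remains to recognise this pro-idempotent as cutting out (the overconvergent neighbourhood of) $Z^*$.

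Next I would feed in the explicit description of the $q$-Hodge filtration on $\qdeRham_{(\IZ/p^\alpha)/\IZ_p}$ from \cref{subsec:ElementaryProof} and \cref{rem:BurklundIntro}: writing $\IZ/p^\alpha\simeq\IZ_p\{x\}_\infty/x^\alpha\clotimes_{\IZ_p\{x\}_\infty}\IZ_p$, the complex $\qdeRham_{(\IZ/p^\alpha)/\IZ_p}$ is the $(p,q-1)$-completed prismatic envelope $\IZ_p\{x\}_\infty\qpower\{\phi(x^\alpha)/[p]_q\}$ base-changed along $x\mapsto p$, and $\Fil_{\qHodge}^*$ is the preimage of the $(x^\alpha,q-1)$-adic filtration on the completed rationalisation. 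Via the dictionary of \cref{subsec:GradedAdic} between graded $(p,t)$-complete $\IZ_p[\beta]\llbracket t\rrbracket$-modules and $\AdicGm$-equivariant quasi-coherent sheaves on $\Spa\IZ[\beta,t]_{(p,t)}^\complete$, together with $q-1=\beta t$, this exhibits $\Fil_{\qHodge}^*\qhatdeRham_{(\IZ/p^\alpha)/\IZ_p}$ as the functions on an $\AdicGm$-equivariant locally closed substack of $X^*/\AdicGm$: off $\{p=0\}$ it is an overconvergent-type thickening --- whose order is governed by $\alpha$ --- of the locus where $\phi(x^\alpha)/[p]_q$ fails to be a unit, pulled back along $x=p$, while off $\{q=1\}$ it is supported on $\{p=0\}$ exactly as in the proof of \cref{thm:OverconvergentNeighbourhood}. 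Using the identity $[p^n]_q\,t=[p^n]_{\ku}(t)$ one checks that on $\{\beta\neq 0\}$ the former loci are precisely the cyclotomic rays $\{[p^n]_{\ku}(t)=0\}$, recovering $Z$ from \cref{thm:OverconvergentNeighbourhood}; on $\{\beta=0\}$ one has $[p^n]_{\ku}(t)=p^n t+\beta\cdot(\cdots)$, so all rays degenerate to $\{t=0\}$, matching the degeneration of $\qdeRham_{(\IZ/p^\alpha)/\IZ}$ and $\Fil_{\qHodge}^*$ to $\deRham_{(\IZ/p^\alpha)/\IZ}$ and its Hodge filtration.

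Finally, letting $\alpha\to\infty$ thickens these loci to all finite orders, and --- exactly as in the proof of \cref{thm:OverconvergentNeighbourhood} --- combining $\prolim_\alpha$ with the operation of killing a pro-idempotent (reviewed in \cref{subsec:KillingProIdempotents}) converts \enquote{thickening to all finite orders} into passage to the \emph{overconvergent} neighbourhood $Z^{*,\dagger}$. This identifies $\Akup$ with $\Oo_{Z^{*,\dagger}/\AdicGm}$ over $\{\beta\neq 0\}$ (by \cref{thm:OverconvergentNeighbourhood} after inverting $\beta$) and over $\{p\neq 0\}$ (from the $\{\beta=0\}$ analysis above together with the $\{\beta\neq 0\}$ overlap); since $\{\beta\neq 0\}$ and $\{p\neq 0\}$ cover $X^*/\AdicGm$ and both sides are quasi-coherent sheaves on it --- for $\Akup$ this is the sheafiness assertion made just before the theorem, which follows from $t$-completeness of $\TC^-$ together with the description above --- the two glue to the asserted $\AdicGm$-equivariant equivalence.

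Granting \cref{thm:OverconvergentNeighbourhood}, I expect the main obstacle to be the analysis near $\{\beta=0\}$ and near the deleted \enquote{origin} $\{p=0,\beta t=0\}$: one must check that the dictionary of \cref{subsec:GradedAdic} and the overconvergence estimates of \cref{subsec:ElementaryProof} behave correctly there (this part of $X^*/\AdicGm$ has no counterpart in the periodic case), and --- more delicately --- that after killing the pro-idempotent the system $\prolim_\alpha\Fil_{\qHodge}^*\qhatdeRham_{(\IZ/p^\alpha)/\IZ_p}$ yields the \emph{overconvergent}, rather than merely the formal, neighbourhood of $Z^*$, uniformly in the $\AdicGm$-action.
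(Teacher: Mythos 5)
Your high-level picture is right --- identify $\Akup$ as the result of killing the pro-idempotent $\prolim_\alpha\Fil_{\qHodge}^*\qhatdeRham_{(\IZ/p^\alpha)/\IZ_p}$, transport it to $X^*_\solid/\AdicGm$ via the graded dictionary of \cref{subsec:GradedAdic}, and recognise the cyclotomic loci from the elements $p^{(n+1)\alpha}/[p^n]_{\ku}(t)$ --- but there are two genuine gaps.

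First, the proposed reduction to \cref{thm:OverconvergentNeighbourhood} by covering $X^*/\AdicGm$ with $\{\beta\neq 0\}$ and $\{p\neq 0\}$ does not close. Inverting $\beta$ only identifies $\Akup$ over the open locus $\{\beta\neq0\}$, and your identification over $\{p\neq 0\}$ is claimed to follow \enquote{from the $\{\beta=0\}$ analysis together with the $\{\beta\neq0\}$ overlap} --- but $\{\beta=0\}$ is closed, not open, so this is not a gluing of an open cover; at best it would be a Beauville--Laszlo argument along $\beta$, which you would then have to carry out on the entire formal completion, not just the fibre. Worse, overconvergent neighbourhoods do not commute with restriction to the closed locus $\{\beta=0\}$ (the opens $U\cap\{\beta=0\}$ for $U\supseteq Z^*$ are not cofinal among neighbourhoods of $Z^*\cap\{\beta=0\}$), so the asserted \enquote{degeneration to the Hodge filtration} along $\beta=0$ does not by itself identify the completion of $\Oo_{Z^{*,\dagger}/\AdicGm}$. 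The paper does not deduce \cref{thm:OverconvergentNeighbourhoodEquivariant} from \cref{thm:OverconvergentNeighbourhood}; it proves both in parallel by the same argument (\cref{lem:qHodgeSystemOfOpensI,lem:qHodgeSystemOfOpensII}), treating the $\ku$ case as primary.

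Second, and more importantly, the step you flag as \enquote{the main obstacle} is in fact the entire content of the proof, and you have not supplied it. One must show that the pro-object $\prolim_\alpha\Fil_{\qHodge}^*\qhatdeRham_{(\IZ/p^\alpha)/\IZ_p}\lotimes\Oo_{X^*/\AdicGm}$ is \emph{equivalent as a pro-object} to $\prolim_{n,r,s}\Oo_{W^*_{n,r,s}/\AdicGm}$ for the explicit rational opens $W^*_{n,r,s}$ given by $\abs{p^r}\leqslant\abs{[p^n]_{\ku}(t)}\neq0$, $\abs{(\beta t)^s}\leqslant\abs p\neq0$, i.e.\ construct maps in both directions after suitably increasing the indices. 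The direction producing a map out of $\Fil_{\qHodge}^*\qhatdeRham_{(\IZ/p^\alpha)/\IZ_p}$ requires knowing that its generators --- the lifts of iterated divided powers $\gamma^{(d)}(p^\alpha)$ --- land in \emph{solid} (power-bounded) elements of $\Oo_{W^*_{n,r,s}}$, which is exactly what the structural result \cref{lem:StructuralResultqHodge} provides via the explicit elements $\Gamma_d\in(x^p,(q-1)^{p-1})^{p^{d-1}}$ with $(\Gamma_d)^\alpha$ divisible by $\prod_{i=1}^d\Phi_{p^i}(q)^{p^{d-i}}$. Without these quantitative estimates, \enquote{thickening to all finite orders} does not automatically convert into the overconvergent (as opposed to formal) neighbourhood, and your argument remains a plausibility sketch rather than a proof.
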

	
	\subsection{New cohomology theories for \texorpdfstring{$\IQ$}{Q}-varieties}\label{subsec:NewCohomologyTheoryIntro}
	
	Let us end with a bit of speculation. It should be possible to adapt the formalism of even filtrations from \cite{EvenFiltration} to $\TCref(-/\ku)$ and $\TCref(-/\KU)$. For a smooth variety $X$ over $\IQ$, this would allow us to construct cohomology theories $\R\Gamma_{\ku}(X)$ and $\R\Gamma_{\KU}(X)$; the former comes naturally equipped with a filtration:
	\begin{align*}
		\fil^\star \R\Gamma_{\ku}(X)&\coloneqq \gr_{\ev, \h S^1}^*\TCref\bigl((\ku\otimes X)/\ku\bigr)\,,\\
		\R\Gamma_{\KU}(X)&\coloneqq \gr_{\ev,\h S^1}^0\TCref\bigl((\KU\otimes X)/\KU\bigr)\,.
	\end{align*}
	This article can be viewed as a computation of the coefficients of $\fil^*\R\Gamma_{\ku}(-)$ and $\R\Gamma_{\KU}(-)$.
	
	\begin{numpar}[Relation to $q$-de Rham/$q$-Hodge cohomology.]
		Morally, $\fil^*\R\Gamma_{\ku}(X)$ should be the \enquote{$q$-Hodge-filtered $q$-de Rham cohomology of $X$} and $\R\Gamma_{\KU}(X)$ should be the \enquote{$q$-Hodge cohomology of $X$}.
		
		We remark that there's a naive definition of $q$-de Rham cohomology of $\IQ$-varieties (obtained, for example, by applying \cite[Theorem~\chref{A.1}]{qWittHabiro} for $A=\IQ$), but it would just be a $(q-1)$-completed base change of ordinary de Rham cohomology. By contrast, $\R\Gamma_{\ku}(X)$ and $\R\Gamma_\KU(X)$ will be non-trivial modulo any prime~$p$, and so they ought to be much more interesting. In particular, we hope to find not only comparison isomorphisms with de Rham cohomology, but also with étale cohomology of $X_{\ov \IQ}$ with torsion coefficients.
	\end{numpar}
	\begin{numpar}[Relation to Habiro cohomology.]
		We expect that $\R\Gamma_{\KU}(-)$ naturally descends from $\IZ\qpower$ to the \emph{Habiro ring} $\Hh\coloneqq \limit_{m\in\IN}\IZ[q]_{(q^m-1)}^\complete$. In particular, its ring of coefficients $\A_{\KU}$ should admit a Habiro descent $\Aa_{\KU}$ satisfying $\A_{\KU}\cong \Aa_{\KU}\soltimes_{\Hh}\IZ\qpower$.
		
		This descent should arise as follows: We explain in \cite[\S{\chref[section]{5}}]{qdeRhamku} how descent to the Habiro ring corresponds to making the $S^1$-action on $\THH(-/\ku)$ \emph{genuine} with respect to all finite subgroups $C_m\subseteq S^1$; or more precisely, it corresponds to turning $\THH(-/\ku)$ into a \emph{cyclonic spectrum}. One could then apply the refinement procedure to the functor
		\begin{equation*}
			\THH(-/\KU)\colon \Mot_\KU^\loc\longrightarrow \cat{CycnSp}
		\end{equation*}
		valued in cyclonic spectra. Via an appropriate cyclonic even filtration, it should then be possible to to construct the desired Habiro descent $\R\Gamma_\Hh(X)$ of $\R\Gamma_{\KU}(X)$. Moreover, we hope that $\R\Gamma_\Hh(X)$ admits a \emph{stacky} approach, given by an appropriate cyclonic version of the \emph{even stack} of \cite{EvenStacks}, and we expect that the resulting \emph{Habiro stack} $X^\Hh$ is closely related to Scholze's construction \cite{HabiroCohomologyLecture}.
	\end{numpar}
	
	\begin{numpar}[Higher chromatic bases.]\label{par:HigherChromaticSpeculation}
		We would be very interested in the calculation for $\MU$ or any higher chromatic base like $\mathrm{BP}\langle n\rangle$ or $\E_n$, and we're curious to see whether the deformed de Rham complexes from \cite{DevalapurkarMisterka} make an appearance. The final goal should be to work directly with the refinement of
		\begin{equation*}
			\THH(-)\colon \Mot^\loc\longrightarrow \cat{CyctSp}\,,
		\end{equation*}
		valued in \emph{cyclotomic spectra}, and to describe the cyclotomic even stack of $\THHref(X)$ when $X$ is a $\IQ$-variety. The result might be close to the finest possible information that one can squeeze out of $\THH(-)$.
		
		Here we should point out that $\THHref(\IQ)$ is an $\IE_\infty$-algebra over the $K$-theory spectrum $K(\IQ)$, which vanishes upon $K(n)$-localisation for $n\geqslant 2$. Due to the delicate nature of the refinement, this doesn't mean that the answer over a higher chromatic base would be trivial, and $\TCref(-/\MU)$ should still contain strictly more information than $\TCref(-/\ku)$, but that information will necessarily be rather subtle.%
		\footnote{Here's one way to think about this: $\TCref(\ku\otimes\IQ/\ku)$ should see the algebraic locus where \enquote{$v_1\neq 0$}. In the world of adic spaces this corresponds to the condition \enquote{$\abs{v_1}\geqslant 1$}. We expect that $\TCref(\MU\otimes\IQ/\MU)$ is able to see a certain part the locus where \enquote{$0<\abs{v_1}<1$}.}
	\end{numpar}
	
	\subsection{Overview of this article}
	
	In \cref{sec:RefinedInvariants}, we study refined localising invariants in general. After a few generalities in \crefrange{subsec:TraceClassNuclear}{subsec:SmoothProper}, we'll explain a method to compute refinements in \cref{subsec:Recipe}. We'll then show in \cref{subsec:Burklund} that Burklund's $\IE_1$-structures satisfy the necessary assumptions for the method to be applicable.
	
	In \cref{sec:RefinedTC-}, we'll then apply the method to compute the homotopy groups of $\TCref((\ku\otimes\IQ)/\ku)$ and $\TCref((\KU\otimes\IQ)/\KU)$. We'll first derive a preliminary description in terms of certain $q$-Hodge filtrations $\fil_{\qHodge}^\star\qdeRham_{(\IZ/m)/\IZ}$ in \cref{subsec:RefinedTC-}. Afterwards, we'll construct explicit generators of these filtrations in \cref{subsec:ElementaryProof}. This will finally allow us to prove the explicit descriptions of \cref{thm:OverconvergentNeighbourhood,thm:OverconvergentNeighbourhoodEquivariant} in \cref{sec:Overconvergent}.
	
	\begin{numpar}[Notation and conventions.]\label{par:Notation}
		Throughout the article, we freely use the language of $\infty$-categories and we'll adopt the following conventions:
		\begin{alphanumerate}
			\item \textbf{Stable $\boldsymbol\infty$-categories.} We let $\Sp$ denote the $\infty$-category of spectra. For an ordinary ring $R$, we let $\Dd(R)$ denote the derived $\infty$-category of $R$. We often implicitly regard objects of $\Dd(R)$ as spectra via the Eilenberg--MacLane functor $\H$, but we'll always suppress this functor in our notation. For a stable $\infty$-category $\Cc$, we let $\Hom_\Cc(-,-)$ denote the mapping spectra in $\Cc$. The shift functor and its inverse will always be denoted by $\Sigma$ and $\Sigma^{-1}$ (even for $\Dd(R)$), to avoid confusion with shifts in graded or filtered objects.
			
			\item \textbf{Symmetric monoidal $\boldsymbol\infty$-categories.} If no confusion can occur, we denote the tensor unit by $\IUnit$ and the tensor product by $\otimes$. If $\Cc$ is symmetric monoidal, we let $\Alg(\Cc)$ and $\CAlg(\Cc)$ denote the $\infty$-categories of $\IE_1$-algebras and $\IE_\infty$-algebras in $\Cc$, respectively.
			
			Whenever we consider a symmetric monoidal $\infty$-category $\Cc$ which is stable or presentable, we always implicitly assume that the tensor product commutes with finite colimits or arbitrary colimits, respectively. In the presentable case, we let $\Hhom_\Cc(-,-)$ denote the \emph{internal Hom} in $\Cc$ and $X^\vee\coloneqq \Hhom_\Cc(X,\IUnit)$ the \emph{dual} of an object $X \in \Cc$.
			
			\item \textbf{Graded and filtered objects.} For a stable $\infty$-category $\Cc$, we let $\Gr(\Cc)$ and $\Fil(\Sp)$ denote the $\infty$-categories of \emph{graded} and \emph{\embrace{descendingly} filtered objects in $\Cc$}. The shift in graded or filtered objects will be denoted $(-)(1)$. An object with a descending filtration is typically denoted
			\begin{equation*}
				\fil^\star X=\Bigl(\dotsb\leftarrow \fil^nX\leftarrow \fil^{n+1}X\leftarrow\dotsb\Bigl)
			\end{equation*}
			and we let $\gr^*X$ denote the \emph{associated graded}, given by $\gr^nX\coloneqq \cofib(\fil^{n+1}X\rightarrow \fil^nX)$. We mostly work with filtrations that are constant in degrees $\leqslant 0$ (such as the Hodge filtration). In this case we'll abusingly write $\fil^\star X=(\fil^0X\leftarrow \fil^1 X\leftarrow\dotsb)$; this should be interpreted as the constant $\fil^0X$-valued filtration in degrees $\leqslant 0$.
			
			If $\Cc$ is symmetric monoidal and the tensor product $-\otimes-$ commutes with colimits in both variables, we equip $\Gr(\Cc)$ and $\Fil(\Cc)$ with their canonical symmetric monoidal structures given by Day convolution. We'll use the fact that $\Fil(\Cc)\simeq \Mod_{\IUnit_{\Gr}[t]}\Gr(\Cc)$, where $\IUnit_{\Gr}$ denotes the tensor unit in $\Gr(\Cc)$ and $t$ sits in graded degree~$-1$; see e.g.\ \cite[Proposition~\chref{3.2.9}]{RaksitFilteredCircle}. Under this equivalence, passing to the associated graded corresponds to \enquote{modding out~$t$}, i.e.\ the base change $\IUnit_{\Gr}\otimes_{\IUnit_{\Gr}[t]}-$.
			
			Sometimes we also consider \emph{ascending} filtrations. Ascendingly filtered objects will be denoted $\fil_\star X=(\dotsb\rightarrow \fil_nX\rightarrow \fil_{n+1}X\rightarrow \dotsb)$ and the associated graded by $\gr_*X$, where $\gr_nX\coloneqq \cofib(\fil^{n-1}X\rightarrow \fil^nX)$.\label{conv:Filtrations}
			
			\item \textbf{Condensed mathematics.} Whenever we use condensed mathematics, we work in the light condensed setting. We'll distinguish between the words \emph{static} (\enquote{un-animated}) for a spectrum concentrated in degree~$0$, and \emph{discrete} (\enquote{un-condensed}) for a condensed spectrum with the discrete topology.
			
			\item \textbf{Derived quotients.} For an $\IE_1$-ring spectrum $R$, a homotopy class $f\in\pi_n(R)$, and a left- or right-$R$-module $M$, we denote 
			\begin{equation*}
				M/f\coloneqq \cofib\left(f\colon \Sigma^n M\rightarrow M\right)\,.
			\end{equation*}
			For several homotopy classes $f_1,\dotsc,f_r$, we let $M/(f_1,\dotsc,f_r)\coloneqq (\dotsb(M/f_1)/f_2\dotsb)/f_r$. Similarly, if $R^*$ is a graded $\IE_1$-ring spectrum, $f\in \pi_n(R^i)$, and $M^*$ is a left or right-$R$-module, we put
			\begin{equation*}
				M^*/f\coloneqq \cofib\bigl(f\colon \Sigma^n M(i)\rightarrow M\bigr)
			\end{equation*}
			and define $M^*/(f_1,\dotsc,f_r)$ analogously. The same notation will also be used in the filtered setting, by regarding filtered objects as graded $\IUnit_{\Gr}[t]$-modules, as explained above.
			
			\item \textbf{Completions.} For an $\IE_\infty$-ring spectrum $R$, finitely many  homogeneous homotopy classes $f_1,\dotsc,f_r\in\pi_*(R)$, and and an $R$-module spectrum $M$, we let
			\begin{equation*}
				\widehat{M}_{(f_1,\dotsc,f_r)}\coloneqq \lim_{n\geqslant 1}M/(f_1^n,\dotsc,f_r^n)
			\end{equation*}
			denote the \emph{$(f_1,\dotsc,f_r)$-adic completion} of $M$. Since the completion only depends on the ideal $I=(f_1,\dotsc,f_r)\subseteq \pi_*(R)$, we often just write $\widehat{M}_I$ (or $(-)_I^\complete$ for longer arguments). If $R$ is an ordinary ring, this recovers the notion of \emph{derived $I$-completion}; in particular, all completions in this article will be derived. For the $p$-completions of $\IZ$ and the sphere spectrum $\IS$ we omit the hat and just write $\IZ_p$ and $\IS_p$.
			
			We let $\Mod_R(\Sp)_{I}^\complete\subseteq \Mod_R(\Sp)$, or $\widehat{\Dd}_{I}(R)\subseteq \Dd(R)$ for ordinary rings $R$, denote the full sub-$\infty$-category spanned by the \emph{$I$-complete objects}, that is, those $M$ for which $M\simeq \widehat M_{I}$. The following fact will be used countless times: If $M$ is $(f_1,\dotsc,f_r)$-complete, and the homotopy groups of $M/(f_1,\dotsc,f_r)$ vanish in some degree~$d$, then also the homotopy groups of $M$ must vanish in degree~$d$. Completion can analogously be defined in the graded or filtered setting, and then an analogue of this fact will still be true.
			
			\item \textbf{Derived ($\boldsymbol q$-)de Rham complexes.} We let $\deRham_{R/A}$ and $\qdeRham_{R/A}$ denote the derived de Rham complex and the derived $q$-de Rham complex of $R$ over $A$, respectively (the latter is only defined if $A$ is a $\Lambda$-ring).
		\end{alphanumerate}
		
		
	\end{numpar}
	\begin{numpar}[Acknowledgments.]
		We are grateful to Peter Scholze and Sasha Efimov for proposing this question and explaining many technical points of the theory. Moreover, it was Scholze who pointed out that the filtration on $\qdeRham_{(\IZ/p^\alpha)/\IZ_p}$, that we found in the homotopy groups of $\TC^-((\ku/p^\alpha)/\ku)$, should indeed be canonical, despite the second author's initial conviction that this couldn't possibly be true---this observation is what led the second author to revisit the theory of $q$-Hodge filtrations/complexes in \cite{qWittHabiro,qdeRhamku}. Special thanks are also due to Sanath Devalapurkar and Arpon Raksit for generously sharing and explaining their (by then) unpublished results on the connection between $q$-de Rham cohomology and $\ku$. Furthermore, would like to thank Gabriel Angelini-Knoll, Johannes Anschütz, Ben Antieau, Ko Aoki, Guido Bosco, Robert Burklund, Jeremy Hahn, Lars Hesselholt, Deven Manam, Florian Riedel, and Juan Esteban Rodr\'{i}guez Camargo for helpful discussions.
		
		This work was carried out while F.W.\ was a Ph.D.\ student at the University/Max Planck Institute for Mathematics in Bonn and he would like to thank these institutions for their hospitality. F.W.\ was supported by DFG through Peter Scholze's Leibniz-Preis.
	\end{numpar}
	
	\newpage
	
	\section{Refined localising invariants and how to compute them}\label{sec:RefinedInvariants}
	
	In this section we'll present Efimov--Scholze's construction of refined localising invariants and we'll explain a method for computing them in the case of certain \enquote{open submotives} of \enquote{smooth and proper} rigid symmetric monoidal $\infty$-categories over some base (these notions will be made precise below). As a consequence, we'll get a recipe for computing $\THHref(\IQ)$, which we'll carry out (after base change to $\ku$) in \crefrange{sec:RefinedTC-}{sec:Overconvergent}, but the method would apply just as well to other cases like $\THHref(\L_n^f\IS_{(p)}/\IS_{(p)})$ or $\THHref(\IS[x])$.
	
	\subsection{Trace-class morphisms and nuclear objects}\label{subsec:TraceClassNuclear}
	
	In this subsection we briefly review the two notions in the title. These will be used countless times in the rest of the article. Throughout, we let $\Cc$ be a presentable symmetric monoidal $\infty$-category; by convention (see \cref{par:Notation}), this includes the assumption that the tensor product commutes with colimits in both variables.
	\begin{defi}\label{def:TraceClass}
		A morphism $\varphi\colon X\rightarrow Y$ in $\Cc$ is called \emph{trace-class} if there exists morphism $\eta\colon \IUnit \rightarrow X^\vee\otimes Y$ in $\Cc$ such that $\varphi$ is the composition
		\begin{equation*}
			X\simeq X\otimes\IUnit\xrightarrow{X\otimes \eta} X\otimes X^\vee\otimes Y\xrightarrow{\ev_X\otimes Y}\IUnit \otimes Y\simeq Y\,.
		\end{equation*}
		We often call $\eta$ the \emph{classifier of $\varphi$} and say that \emph{$\eta$ witnesses $\varphi$ being trace-class}.
	\end{defi}
	
	Trace-class morphism have a number of nice properties. We'll often use the properties from  \cite[Lemma~\chref{8.2}]{Complex} as well as the following lemma.
	\begin{lem}\label{lem:TraceClassAbstractNonsense}
		Let $F\colon \Cc\rightarrow \Dd$ be a symmetric monoidal functor between presentable symmetric monoidal $\infty$-categories. By abuse of notation, we use $(-)^\vee$ to denote both the predual in $\Cc$ and in $\Dd$.
		\begin{alphanumerate}
			\item There exists a natural transformation $F((-)^\vee)\Rightarrow F(-)^\vee$.\label{enum:DualTransformation}
			\item If $X\rightarrow Y$ is trace-class in $\Cc$, then $Y^\vee\rightarrow X^\vee$ is trace-class in $\Cc$ and $F(X)\rightarrow F(Y)$ is trace-class in $\Dd$.\label{enum:DualTraceClass}
			\item The commutative square in $\Dd$ formed by the morphisms from \cref{enum:DualTransformation} and \cref{enum:DualTraceClass}\label{enum:DualTraceClassLift}
			\begin{equation*}
				\begin{tikzcd}
					F(Y^\vee)\rar\dar & F(X^\vee)\dar\\
					F(Y)^\vee\rar\urar[dashed] & F(X)^\vee
				\end{tikzcd}
			\end{equation*}
			admits a canonical diagonal map $F(Y)^\vee\rightarrow F(X^\vee)$ that makes both triangles commute.
		\end{alphanumerate}
	\end{lem}
	\begin{proof}
		The natural transformation from \cref{enum:DualTransformation} is adjoint to $F((-)^\vee)\otimes_{\Dd} F(-)\Rightarrow \IUnit_\Dd$, which is in turn given by applying $F$ to the evaluation $(-)^\vee\otimes (-)\Rightarrow \IUnit_\Cc$.
		
		Now let $X\rightarrow Y$ be trace-class in $\Cc$ with classifier $\IUnit_\Cc\rightarrow X^\vee\otimes Y$. If we apply $F$ to the classifier and compose with the morphism $F(X^\vee)\rightarrow F(X)^\vee$ from \cref{enum:DualTransformation}, we obtain a morphism $\IUnit_\Dd\rightarrow F(X^\vee)\otimes_{\Dd} F(Y)\rightarrow F(X)^\vee\otimes_{\Dd} F(Y)$, which witnesses $F(X)\rightarrow F(Y)$ being trace-class. If we compose instead with $Y\rightarrow Y^{\vee\vee}$, we obtain $\IUnit_\Cc\rightarrow X^\vee\otimes_\Cc Y\rightarrow X^\vee\otimes_\Cc Y^{\vee\vee}$, which witnesses $Y^\vee\rightarrow X^\vee$ being trace-class. This shows \cref{enum:DualTraceClass}. To show \cref{enum:DualTraceClassLift}, we construct the diagonal map $F(Y)^\vee\rightarrow F(X^\vee)$ as follows:
		\begin{equation*}
			F(Y)^\vee\longrightarrow F(X^\vee\otimes_\Cc Y)\otimes_\Dd F(Y)^\vee\simeq F(X^\vee)\otimes_{\Dd}F(Y)
			\otimes_\Dd F(Y)^\vee\longrightarrow F(X^\vee)\,.
		\end{equation*}
		Here we use the classifier $\IUnit_\Cc\rightarrow X^\vee\otimes_\Cc Y$ and the evaluation map for $F(Y)$.
	\end{proof}
	
	\begin{defi}\label{def:Nuclear}
		In addition to the assumptions above, let us now assume that $\Cc$ is stable, compactly generated, and the tensor unit $\IUnit$ is compact.
		\begin{alphanumerate}
			\item An object $X\in\Cc$ is called \emph{nuclear} if every morphism $P\rightarrow X$ from a compact object $P$ is trace-class.
			\item We call $X$ \emph{basic nuclear} if it can be written as $X\simeq \colimit (X_0\rightarrow X_1\rightarrow \dotsb)$ such that each transition map $X_n\rightarrow X_{n+1}$ is trace-class.
		\end{alphanumerate}
		We let $\Nuc(\Cc)\subseteq \Cc$ denote the full sub-$\infty$-category spanned by the nuclear objects.
	\end{defi}
	\begin{thm}\label{thm:Nuclear}
		Let $\Cc$ be a presentable stable symmetric monoidal $\infty$-category such that $\Cc$ is compactly generated and the tensor unit $\IUnit\in\Cc$ is compact.
		\begin{alphanumerate}
			\item $\Nuc(\Cc)$ is stable and closed under colimits and tensor products in $\Cc$.\label{enum:Nuc}
			\item $\Nuc(\Cc)$ is $\omega_1$-compactly generated and the $\omega_1$-compact objects are precisely the basic nuclears.\label{enum:Nucw1Generated}
			\item If $F\colon \Cc\rightarrow \Dd$ is a symmetric monoidal colimit-preserving functor into another presentable symmetric monoidal $\infty$-category, then $F$ restricts to a functor $F\colon \Nuc(\Cc)\rightarrow \Nuc(\Dd)$.\label{enum:NucFunctorial}
		\end{alphanumerate}
	\end{thm}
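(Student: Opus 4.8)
The plan is to deduce all three parts from the calculus of trace-class morphisms (\cite[Lemma~\chref{8.2}]{Complex} and \cref{lem:TraceClassAbstractNonsense}) together with a single factorisation principle, which I would establish first: \emph{every trace-class morphism $\varphi\colon X\to Y$ in $\Cc$ factors as $X\to P\to Y$ with $P$ compact and $X\to P$ trace-class.} Indeed, a classifier $\eta\colon\IUnit\to X^\vee\otimes Y$ of $\varphi$ factors through $X^\vee\otimes P$ for some compact $P\to Y$ --- because $\IUnit$ is compact, $X^\vee\otimes(-)$ preserves colimits, and $Y$ is a filtered colimit of compacts --- and the resulting map $\IUnit\to X^\vee\otimes P$ classifies a trace-class morphism $X\to P$ whose composite with $P\to Y$ is $\varphi$. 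Two consequences fall out at once. First, factoring each transition map of a sequential diagram through a compact object shows that \emph{every basic nuclear object is a sequential colimit of compact objects with trace-class transition maps}; being a countable colimit of compact objects, it is in particular $\omega_1$-compact in $\Cc$. Second, if $X$ is nuclear then, starting from any compact $P\to X$ (automatically trace-class) and iterating the factorisation, one builds a basic nuclear $\widetilde{P}\simeq\colimit_n P^{(n)}$ together with a factorisation $P\to\widetilde{P}\to X$; organising all such $\widetilde{P}$ into an $\omega_1$-filtered diagram and using $X\simeq\colimit_{P\to X}P$, it follows that \emph{every nuclear object is an $\omega_1$-filtered colimit of basic nuclear objects}.

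I would then establish the elementary closure properties. The subcategory $\Nuc(\Cc)$ visibly contains $0$ and is closed under retracts. It is closed under (de)suspension: a morphism $P\to\Sigma^n X$ from a compact object is adjoint to $\Sigma^{-n}P\to X$, which is trace-class when $X$ is nuclear, and trace-classness is preserved by tensoring with the invertible object $\Sigma^n\IUnit$. It is closed under filtered (hence $\omega_1$-filtered) colimits, since a compact object mapping to $\colimit_i X_i$ factors through some $X_i$, exhibiting the morphism as a trace-class morphism post-composed with $X_i\to\colimit_i X_i$. And it is closed under tensor products: given nuclear $X,Y$ and a compact $P\to X\otimes Y$, choose filtered presentations $X\simeq\colimit_i C_i$ and $Y\simeq\colimit_j D_j$ by compacts, so that $P$ factors through some $C_i\otimes D_j$; as $C_i\to X$ and $D_j\to Y$ are trace-class and trace-class morphisms form a two-sided tensor ideal closed under composition, the composite $C_i\otimes D_j\to X\otimes D_j\to X\otimes Y$ is trace-class, and hence so is $P\to X\otimes Y$.

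The genuinely substantive point --- which I expect to be the main obstacle --- is the rest of \cref{enum:Nuc}: that $\Nuc(\Cc)$ is closed under \emph{all} colimits in $\Cc$, equivalently, given the above, that $\cofib(g)$ is nuclear for every morphism $g\colon X\to Y$ of nuclear objects. Since every nuclear object is an $\omega_1$-filtered colimit of basic nuclears, $g$ is an $\omega_1$-filtered colimit of morphisms between basic nuclears, so it is enough to treat the cofibre of such a morphism $h\colon U\to V$; presenting $U,V$ as sequential colimits of compacts and $h$ as a compatible colimit of morphisms $h_n\colon U_n\to V_{m(n)}$ of compacts, one reduces to arranging the presentations \emph{adapted to $h$} so that the induced maps on the (compact) cofibres $\cofib(h_n)$ are trace-class; then $\cofib(h)\simeq\colimit_n\cofib(h_n)$ is basic nuclear. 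Setting up such adapted presentations is the delicate heart of the theory, and I would carry it out following \cite{Complex} and the surrounding literature on nuclear objects. An alternative I would consider is to construct the nuclearisation functor --- sending $X$ to the colimit of $P$ over all compact objects $P$ equipped with a trace-class morphism $P\to X$ --- and to prove it is right adjoint to the inclusion $\Nuc(\Cc)\hookrightarrow\Cc$, which would force the inclusion to preserve colimits.

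The remaining two parts are then formal. The basic nuclear objects form an essentially small full subcategory $\Aa\subseteq\Nuc(\Cc)$ consisting of $\omega_1$-compact objects, and $\Aa$ is idempotent complete --- the full subcategory of compact objects of $\Cc$ is idempotent complete, and a retract of a sequential colimit of trace-class morphisms is again one (a standard interleaving argument, using that any composite involving a transition map is trace-class). By the generation statement $\Aa$ generates $\Nuc(\Cc)$ under $\omega_1$-filtered colimits, and $\Nuc(\Cc)$ is closed under all colimits in $\Cc$ by \cref{enum:Nuc}; hence $\Nuc(\Cc)\simeq\Ind_{\omega_1}(\Aa)$, whose $\omega_1$-compact objects are precisely the objects of $\Aa$, which is \cref{enum:Nucw1Generated}. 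For \cref{enum:NucFunctorial}: by \cref{lem:TraceClassAbstractNonsense}\cref{enum:DualTransformation} a symmetric monoidal functor $F$ sends a trace-class morphism with classifier $\eta$ to the morphism classified by $F(\eta)$ composed with the comparison $F(X^\vee)\to F(X)^\vee$, so $F$ preserves trace-class morphisms, hence basic nuclear objects, and --- being colimit-preserving --- $\omega_1$-filtered colimits of them; since by the generation statement $\Nuc(\Cc)$ is the closure of the basic nuclears under $\omega_1$-filtered colimits, $F$ carries $\Nuc(\Cc)$ into $\Nuc(\Dd)$.
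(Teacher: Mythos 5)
The paper disposes of parts (a) and (b) with a bare citation to \cite[Theorem~\chref{8.6}]{Complex}, and derives (c) exactly as you do, from \cref{lem:TraceClassAbstractNonsense} together with the generation statement in (b); so at the meta-level your proposal matches the paper while unpacking the citation. Your factorisation principle (every trace-class morphism factors through a compact via a trace-class morphism, because $\IUnit$ is compact and $X^\vee\otimes(-)$ preserves colimits), its two consequences, the elementary closure properties, and the formal deduction of (b) and (c) once (a) is in hand are all correct and consistent with what is in \cite{Complex}.

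The one step I would push back on is the sketch of closure under cofibres via \enquote{adapted presentations.} Given a map $h\colon U\to V$ of sequential colimits of compacts with trace-class transition maps, the induced maps $\cofib(h_n)\to\cofib(h_{n+1})$ need not be trace-class: trace-class morphisms form a two-sided $\otimes$-ideal, but there is no two-out-of-three principle for them along cofibre sequences, and $\cofib(h_n)$ is not a split extension of $\Sigma U_n$ by $V_{m(n)}$, so no interleaving trick will manufacture the required classifier from the classifiers on $U$ and $V$. The mechanism in \cite{Complex} is different and much cleaner: one first establishes an alternative characterisation of nuclearity --- $X$ is nuclear if and only if the natural map $\Hom_\Cc(\IUnit,P^\vee\otimes X)\to\Hom_\Cc(P,X)$ is an equivalence for every compact $P$ --- after which closure under arbitrary colimits (in particular cofibres) is immediate, since both sides are exact in $X$ and preserve filtered colimits. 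Your alternative route via a nuclearisation right adjoint is also viable; I would commit to one of these two and drop the adapted-presentation sketch, which I do not believe can be made to work as stated.
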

	\begin{proof}
		Parts~\cref{enum:Nuc} and~\cref{enum:Nucw1Generated} are \cite[Theorem~\chref{8.6}]{Complex}. By \cref{lem:TraceClassAbstractNonsense}\cref{enum:DualTraceClass}, $F$ preserves trace-class maps, hence basic nuclear objects and thus all nuclear objects by \cref{enum:Nucw1Generated}. This proves~\cref{enum:NucFunctorial}.
	\end{proof}
	
	\begin{rem}\label{rem:NucInd}
		If $\Cc$ is a small stable symmetric monoidal $\infty$-category, then \cref{thm:Nuclear} can be applied to $\Ind(\Cc)$. Since every trace-class map in $\Ind(\Cc)$ factors through a compact object by \cite[Lemma~\chref{8.4}]{Complex}, we see that the basic nuclear objects in $\Ind(\Cc)$ are of the form $\indcolim(X_1\rightarrow X_2\rightarrow \dotsb)$, where each $X_n\rightarrow X_{n+1}$ is trace-class in $\Cc$.
		
		If $\Cc$ is a presentable stable symmetric monoidal $\infty$-category, one can still make sense of $\Nuc\Ind(\Cc)$ without running into set-theoretic problems. Indeed, if $\kappa$ is a sufficiently large regular cardinal such that $\Cc$ is $\kappa$-compactly generated and $\IUnit$ is $\kappa$-compact, then every trace-class morphism in $\Cc$ factors through a $\kappa$-compact object. Thus every basic nuclear ind-object is equivalent to one in which each $X_n$ is $\kappa$-compact and so the basic nuclear objects in form an essentially small $\infty$-category. We may then define $\Nuc\Ind(\Cc)$ as $\Ind_{\omega_1}(-)$ of the $\infty$-category of basic nuclear objects.
	\end{rem}

	\subsection{Killing (pro-)algebra objects}\label{subsec:KillingAlgebras} 
	
	In this subsection we review the general formalism for passing to the \enquote{open complement} of an algebra object. We'll follow \cite[Lecture~\href{https://youtu.be/38PzTzCiMow?list=PLx5f8IelFRgGmu6gmL-Kf_Rl_6Mm7juZO&t=5523}{13}]{AnalyticStacks}. Throughout, let's fix a presentable stable symmetric monoidal $\infty$-category $\Cc$.
	
	\begin{numpar}[Killing algebras.]\label{par:KillingAlgebras}
		Let $A\in\Cc$ be an object equipped maps $\mu\colon A\otimes A\rightarrow A$ and $\IUnit\rightarrow A$ such that $\mu$ is left-unital (or right-unital; this doesn't matter). We let $\Cc^A\subseteq \Cc$ be the full sub-$\infty$-category spanned by those $U\in\Cc$ for which
		\begin{equation*}
			\Hhom_\Cc(A,U)\simeq 0\,,
		\end{equation*}
		where $\Hhom_\Cc$ denotes the internal $\Hom$ of $\Cc$ (see \cref{par:Notation}), Clearly $\Cc^A$ is closed under limits in $\Cc$. If $\kappa$ is a sufficiently large cardinal such that $S\otimes A$ are $\kappa$-compact for all $S$ in a set of generators for $\Cc$, then $\Cc^A$ is also closed under $\kappa$-filtered colimits. By the $\infty$-categorical reflection theorem \cite{ReflectionTheorem}, it follows that the inclusion $\Cc^A\rightarrow \Cc$ admits a left adjoint $j^*\colon \Cc\rightarrow \Cc^A$. Since $\Cc^A$ is also clearly closed under $\Hhom_\Cc(Y,-)$ for any $Y\in\Cc$, we see that
		\begin{equation*}
			j^*(X\otimes Y)\overset{\simeq}{\longrightarrow}j^*\bigl(j^*(X)\otimes Y\bigr)
		\end{equation*}
		is an equivalence for all $X,Y\in\Cc$. By abstract nonsense about symmetric monoidal localisations (see \cite[Proposition~\chref{2.2.1.9}]{HA}), it follows that $\Cc^A$ and $j^*\colon \Cc\rightarrow \Cc^A$ can be equipped with canonical symmetric monoidal structures and the inclusion $\Cc^A\rightarrow \Cc$ with a lax symmetric monoidal structure. In particular, $j^*(\IUnit)$ is an $\IE_\infty$-algebra in $\Cc$. We'll often say that \emph{$\Cc^A$ is obtained from $\Cc$ by killing~$A$} and \emph{$j^*(\IUnit)$ is obtained from $\IUnit$ by killing $A$}.
	\end{numpar}
	Our first goal is now to give a formula for $j^*$ in certain cases.
	\begin{lem}\label{lem:KillingAlgebras}
		Let $\Ii\coloneqq\fib(\IUnit\rightarrow A)$. Then for every $X\in \Cc$ the canonical map
		\begin{equation*}
			\eta_X\colon X\simeq \Hhom_\Cc(\IUnit,X)\longrightarrow \Hhom_\Cc(\Ii,X)
		\end{equation*}
		becomes an equivalence upon applying $\Hom_\Cc(-,U)$ for any $U\in \Cc^A$.
	\end{lem}
	\begin{proof}
		It's enough to show $\Hom_\Cc(\fib(\eta_X),U)\simeq 0$. Note that the fibre $\fib(\eta_X)\simeq \Hhom_\Cc(A,X)$ is a weak $A$-module in the sense that there exists a unital multiplication map
		\begin{equation*}
			A\otimes \Hhom_\Cc(A,X)\rightarrow \Hhom_\Cc(A,X)\,.
		\end{equation*}
		In particular, $\Hhom_\Cc(A,X)$ is a retract of $A\otimes\Hhom_\Cc(A,X)$ and so it suffices to show that $\Hom_\Cc(-,U)$ vanishes on the latter. Now $\Hom_\Cc(A\otimes Y,U)\simeq \Hom_\Cc(Y,\Hhom_\Cc(A,U))\simeq 0$ holds for all $Y\in\Cc$, so we conclude.
	\end{proof}
	\begin{prop}\label{prop:KillingAlgebras}
		With notation as above, suppose that one of the following two conditions is satisfied:
		\begin{alphanumerate}
			\item For all $X\in\Cc$, we recursively put $X_0\coloneqq X$ and $X_{n+1}\coloneqq \Hhom_\Cc(\Ii,X_n)$. Then the diagram\label{enum:KillingAlgebrasIdempotent}
			\begin{equation*}
				X\overset{\eta_X}{\longrightarrow}X_1\xrightarrow{\eta_{X_1}}X_2\xrightarrow{\eta_{X_2}}\dotsb
			\end{equation*}
			stabilises at some finite stage \embrace{for example, this is satisfied if $A$ is idempotent---then the colimit always stabilises after the first step}.
			\item The functor $\Hhom_\Cc(A,-)$ commutes with sequential colimits \embrace{for example, this is satisfied if $A$ is dualisable in $\Cc$}.\label{enum:KillingAlgebrasCompact}
		\end{alphanumerate}
		Then $j^*(X)$ is the colimit of the diagram from \cref{enum:KillingAlgebrasIdempotent} for all $X\in\Cc$. 
	\end{prop}
	\begin{proof}
		Let us denote the colimit of the diagram from \cref{enum:KillingAlgebrasIdempotent} by $X_\infty$. Then \cref{lem:KillingAlgebras} ensures that $\Hom_\Cc(X_\infty,U)\rightarrow \Hom_\Cc(X,U)$ is an equivalence for all $U\in\Cc^A$, so we only need to check $X_\infty\in\Cc^A$; that is, $\Hhom_\Cc(A,X_\infty)\simeq 0$. Equivalently, $\eta_{X_\infty}\colon X_\infty \rightarrow \Hhom_\Cc(\Ii,X_\infty)$ needs to be an equivalence. But either of the two assumptions above makes sure that $\Hhom_\Cc(\Ii,-)$ commutes with the colimit defining $X_\infty$ and so $\eta_{X_\infty}$ is an equivalence by construction.
	\end{proof}
	We'll now explain a variant of the construction above in a pro-/ind-setting.
	\begin{numpar}[Killing pro-algebras]
		We keep $\Cc$ a presentable symmetric monoidal stable $\infty$-category. The tensor product on $\Cc$ extends to symmetric monoidal structures on $\Pro(\Cc)$ and $\Ind(\Cc)$.%
		\footnote{We'll ignore the set-theoretic difficulties that arise with applying $\Pro(-)$ and $\Ind(-)$ to large $\infty$-categories. In all cases of interest, we can safely replace $\Cc$ by its $\kappa$-compact objects $\Cc^\kappa\subseteq \Cc$ for some large enough regular cardinal $\kappa$ (usually $\kappa=\omega_1$ is enough).}
		Observe that $\Hhom_\Cc$ can also be extended to a functor
		\begin{equation*}
			\Pro(\Cc)^\op\otimes\Ind(\Cc)\simeq \Ind(\Cc^\op)\otimes\Ind(\Cc)\xrightarrow{\Ind(\Hhom_\Cc)} \Ind(\Cc)\,,
		\end{equation*}
		which, by abuse of notation, we still denote $\Hhom_\Cc$.
		Explicitly,
		\begin{equation*}
			\Hhom_\Cc\Bigl(\prolim_{j\in J} Y_j,\indcolim_{k\in K} Z_k\Bigr)\simeq \indcolim_{(j,k)\in J^\op\times K} \Hhom_\Cc(Y_j,Z_k)\,.
		\end{equation*}
		Let now $A\coloneqq\prolim_{i\in I} A_i\in \Pro(\Cc)$ be a pro-object equipped with maps $\mu\colon A\otimes A\rightarrow A$ and $\IUnit\rightarrow A$ such that $\mu$ is left-unital. We let $\Ind(\Cc)^A\subseteq \Ind(\Cc)$ denote the full sub-$\infty$-category spanned by those ind-objects for which
		\begin{equation*}
			\Hhom_\Cc(A,M)\simeq 0\,.
		\end{equation*}
		Our goal is again to describe a left adjoint $j^*\colon \Ind(\Cc)^A\rightarrow \Ind(\Cc)$ of the inclusion. To this end, let $\Ii\coloneqq \fib(\IUnit\rightarrow A)$ and consider the canonical maps $\eta_X\colon X\simeq \Hhom_\Cc(\IUnit,X)\rightarrow \Hhom_\Cc(\Ii,X)$ for all $X\in\Ind(\Cc)$, as in \cref{lem:KillingAlgebras}.
	\end{numpar}
	\begin{lem}\label{lem:KillingProAlgebras}
		The inclusion of $\Ind(\Cc)^A$ admits a left adjoint $j^*\colon \Ind(\Cc)\rightarrow\Ind(\Cc)^A$, which can be explicitly described as follows: For $X\in \Cc$ we recursively put $X_0\coloneqq X$ and $X_{n+1}\coloneqq \Hhom_\Cc(\Ii,X_n)$. Then
		\begin{equation*}
			j^*(X)\simeq \colimit\Bigl(X\overset{\eta_X}{\longrightarrow}X_1\xrightarrow{\eta_{X_1}}X_2\xrightarrow{\eta_{X_2}}\dotsb\Bigr)\,.
		\end{equation*}
	\end{lem}
	\begin{proof}
		Since $\Hhom_\Cc(A,-)\colon \Ind(\Cc)\rightarrow \Ind(\Cc)$ preserves filtered colimits, we can argue as in the proof of \cref{prop:KillingAlgebras} to see that $j^*(X)\in\Ind(\Cc)^A$. It remains to show that the canonical morphism $X\rightarrow j^*(X)$ induces equivalences
		\begin{equation*}
			\Hom_{\Ind(\Cc)}\bigl(j^*(X),U\bigr)\overset{\simeq}{\longrightarrow}\Hom_{\Ind(\Cc)}(X,U)
		\end{equation*}
		for all $U\in \Ind(\Cc)^A$. It will be enough to show the same for $\eta_X$, or equivalently, that $\Hom_{\Ind(\Cc)}(\Hhom_\Cc(A,X),U)\simeq 0$. To this end, let $M\in\Ind(\Cc)$ be any object for which the natural transformation $\Hhom_\Cc(A,-)\Rightarrow \Hhom_\Cc(\IUnit,-)\simeq (-)$ admits a section.%
		\footnote{Intuitively, the condition should be that $M$ admits a unital multiplication $A\otimes M\rightarrow M$, but this doesn't make sense in our setting. So we replace this by the condition that $\Hhom_\Cc(A,M)\rightarrow M$  admits a section.}
		Via such a section $M\rightarrow \Hhom_\Cc(A,M)$, the identity on $\Hom_{\Ind(\Cc)}(M,U)$ factors through
		\begin{equation*}
			\Hom_{\Ind(\Cc)}\bigl(\Hhom_\Cc(A,M),\Hhom_\Cc(A,U)\bigr)\simeq 0\,,
		\end{equation*}
		and so $\Hom_{\Ind(\Cc)}(M,U)\simeq 0$. Since such a section exists for $M=\Hhom_\Cc(A,X)$, we conclude.
	\end{proof}
	\begin{numpar}[Killing idempotent pro-algebras.]
		Suppose that $A$ is \emph{idempotent} in $\Pro(\Cc)$, that is, $\IUnit\rightarrow A$ induces an equivalence
		\begin{equation*}
			A\simeq \IUnit\otimes A\overset{\simeq}{\longrightarrow} A\otimes A\,.
		\end{equation*}
		Let us spell out how $j^*(\IUnit)$ looks like in this case: We write $A= \prolim A_i$ and denote by $(-)^\vee\coloneqq \Hhom_\Cc(-,\IUnit)$ the predual in $\Cc$. Then \cref{lem:KillingProAlgebras} implies that there is a cofibre sequence
		\begin{equation*}
			\indcolim_{i\in I^\op}A_i^\vee\longrightarrow \IUnit\longrightarrow j^*(\IUnit)\,.
		\end{equation*}
		For idempotent $A$, we check in \cref{lem:ProIdempotentAbstract} below that $j^*\colon \Ind(\Cc)\rightarrow \Ind(\Cc)^A$ can be equipped with a symmetric monoidal structure (we don't know if this works in general---the argument from \cref{par:KillingAlgebras} doesn't seem to work anymore). As a consequence, $j^*(\IUnit)$ will be an $\IE_\infty$-algebra in $\Ind(\Cc)$. We'll say that \emph{$j^*(\IUnit)$ is obtained from $\IUnit$ by killing the idempotent pro-algebra $A$}.
	\end{numpar}
	\begin{lem}\label{lem:ProIdempotentAbstract}
		Suppose that $A$ is an idempotent pro-object. Then for all $X,Y\in\Ind(\Cc)$, the canonical morphism
		\begin{equation*}
			j^*(X\otimes Y)\overset{\simeq}{\longrightarrow}j^*\bigl(j^*(X)\otimes Y\bigr)
		\end{equation*}
		is an equivalence. In particular, there's a canonical way to equip $j^*\colon \Ind(\Cc)\rightarrow \Ind(\Cc)^A$ with a symmetric monoidal structure.
	\end{lem}
	\begin{proof}
		By \cref{lem:KillingProAlgebras} and idempotence of $A$, $j^*(X)\simeq \cofib(\Hhom_\Cc(A,X)\rightarrow X)$. Thus, to show the first assertion, we may equivalently show that the canonical morphism
		\begin{equation*}
			\Hhom_\Cc\bigl(A,\Hhom_\Cc(A,X)\otimes Y\bigr)\overset{\simeq}{\longrightarrow}\Hhom_\Cc(A,X)\otimes Y
		\end{equation*}
		induced by $\IUnit\rightarrow A$ is an equivalence. To see this, first observe that this morphism has a left inverse given by
		\begin{equation*}
			\Hhom_\Cc(A,X)\otimes Y\simeq \Hhom_\Cc\bigl(A,\Hhom_\Cc(A,X)\bigr)\otimes Y\longrightarrow \Hhom_\Cc\bigl(A,\Hhom_\Cc(A,X)\otimes Y\bigr)
		\end{equation*}
		using idempotence of $A$ and $Y\simeq \Hhom_\Cc(\IUnit,Y)$. Now, in general, let $M\in \Ind(\Cc)$ be an ind-object for which $\Hhom_\Cc(A,M)\rightarrow M$ has a left inverse. We can then exhibit $\Hhom_\Cc(A,M)\rightarrow M$ as a retract of $\Hhom_\Cc(A,\Hhom_\Cc(A,M))\rightarrow \Hhom_\Cc(A,M)$. But the latter is an equivalence by pro-idempotence of $A$, so already $\Hhom_\Cc(A,M)\rightarrow M$ must be an equivalence.
		
		This finishes the proof that $j^*(X\otimes Y)\rightarrow j^*(j^*(X)\otimes Y)$ is an equivalence. By abstract nonsense about symmetric monoidal structures on localisations (see \cite[Proposition~\chref{2.2.1.9}]{HA}), it follows that $j^*$ can be canonically equipped with a symmetric monoidal structure.
	\end{proof}
	
	\begin{rem}
		In general, $j^*(\IUnit)$ is not an idempotent $\IE_\infty$-algebra in $\Ind(\Cc)$; it is idempotent if and only if $A^\vee\coloneqq\indcolim_{i\in I^\op}A_i^\vee$ is an \emph{ind-idempotent coalgebra} in the sense that $A^\vee\rightarrow \IUnit$ induces an equivalence $A^\vee\otimes A^\vee\simeq A^\vee$ in $\Ind(\Cc)$.
	\end{rem}
	
	In the following lemma we'll study a special situation in which this is the case.
	
	\begin{lem}\label{lem:NuclearIdempotentAbstract}
		Let $A = \prolim_{i\in I}A_i$ be an idempotent pro-object whose transition maps are eventually trace-class in the sense that for all $i\in I$ there exists an object $j\rightarrow i$ such that $A_j\rightarrow A_i$ is trace-class. Let $A^\vee\coloneqq \indcolim_{i\in I}A_i^\vee$. Then the canonical map
		\begin{equation*}
			X\otimes A^\vee\overset{\simeq}{\longrightarrow} \Hhom_\Cc(A,X)
		\end{equation*}
		is an equivalence for all $X\in \Ind(\Cc)$. In particular, this implies:
		\begin{alphanumerate}
			\item $A^\vee$ is an idempotent coalgebra in $\Ind(\Cc)$ with eventually trace-class transition maps.\label{enum:IndIdempotentCoalgebra}
			\item $j^*(\IUnit)$ is an idempotent nuclear $\IE_\infty$-algebra in $\Ind(\Cc)$, $\Ind(\Cc)^A\subseteq \Ind(\Cc)$ is precisely the full sub-$\infty$-category of $j^*(\IUnit)$-modules, and $-\otimes j^*(\IUnit)\simeq j^*(-)$.\label{enum:NuclearIdempotentAbstract}
			\item If $F\colon \Cc\rightarrow \Dd$ is any symmetric monoidal functor of presentable symmetric monoidal $\infty$-categories, then $F(j^*(\IUnit))$ is obtained by killing the idempotent pro-algebra $F(A)$.\label{enum:IdempotentBasechange}
		\end{alphanumerate}
	\end{lem}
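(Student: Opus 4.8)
The plan is to first establish the main claim of the lemma --- that the canonical map $X\otimes A^\vee\rightarrow\Hhom_\Cc(A,X)$ is an equivalence for every $X\in\Ind(\Cc)$ --- and then to deduce \cref{enum:IndIdempotentCoalgebra}, \cref{enum:NuclearIdempotentAbstract} and \cref{enum:IdempotentBasechange} as formal consequences. Writing $A\simeq\prolim_{i\in I}A_i$ and $X\simeq\indcolim_{k\in K}Z_k$, both sides unwind, via the formula for $\Hhom_\Cc$ on pro- and ind-objects, to $I^\op\times K$-indexed ind-objects $\indcolim_{(i,k)}(A_i^\vee\otimes Z_k)$ and $\indcolim_{(i,k)}\Hhom_\Cc(A_i,Z_k)$, with the map given levelwise by the canonical comparison morphisms $c_{i,k}\colon A_i^\vee\otimes Z_k\rightarrow\Hhom_\Cc(A_i,Z_k)$. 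So the task is to show that this map of ind-objects becomes an equivalence in $\Ind(\Cc)$; equivalently, that $\indcolim_{(i,k)}\cofib(c_{i,k})\simeq 0$.

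The engine is a dual factorisation for trace-class maps. If $\varphi\colon A_j\rightarrow A_i$ is trace-class with classifier $\eta\colon\IUnit\rightarrow A_j^\vee\otimes A_i$, then pairing $\eta$ with the evaluation $\Hhom_\Cc(A_i,Z_k)\otimes A_i\rightarrow Z_k$ produces a morphism $\rho\colon\Hhom_\Cc(A_i,Z_k)\rightarrow A_j^\vee\otimes Z_k$ with the property that $c_{j,k}\circ\rho$ equals the transition map $\Hhom_\Cc(A_i,Z_k)\rightarrow\Hhom_\Cc(A_j,Z_k)$ while $\rho\circ c_{i,k}$ equals the transition map $A_i^\vee\otimes Z_k\rightarrow A_j^\vee\otimes Z_k$; this is a routine string-diagram computation from the definition of trace-class morphisms (compare \cite[Lemma~\chref{8.2}]{Complex}). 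In other words, the transition square of the map $(c_{i,k})$ associated to $(i,k)\rightarrow(j,k)$ factors, as a morphism of arrows, through the identity arrow of $A_j^\vee\otimes Z_k$, so that the induced transition $\cofib(c_{i,k})\rightarrow\cofib(c_{j,k})$ is null. Since the hypothesis ensures that for every $i$ there is a $j\rightarrow i$ in $I$ with $A_j\rightarrow A_i$ trace-class --- i.e. such indices are cofinal --- every object of the filtered system $\bigl(\cofib(c_{i,k})\bigr)$ maps by a null map to a later object, whence $\indcolim_{(i,k)}\cofib(c_{i,k})\simeq 0$ and the main claim follows. I expect this cofinal zig-zag --- setting up the dual factorisation coherently in the $\infty$-categorical setting and deducing the vanishing --- to be the only substantive point; the rest is bookkeeping.

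To conclude, for \cref{enum:IndIdempotentCoalgebra} I would observe that the transition maps of $A^\vee\simeq\indcolim_{i\in I^\op}A_i^\vee$ are the duals $\varphi^\vee$ of the structure maps $\varphi\colon A_j\rightarrow A_i$, which are trace-class whenever $\varphi$ is by \cref{lem:TraceClassAbstractNonsense}\cref{enum:DualTraceClass}; cofinality then shows $A^\vee$ is basic nuclear. Feeding $X=A^\vee$ into the main claim gives $A^\vee\otimes A^\vee\simeq\Hhom_\Cc(A,A^\vee)\simeq\Hhom_\Cc(A\otimes A,\IUnit)$, and since $A\rightarrow A\otimes A$ is a pro-equivalence this is $\simeq\Hhom_\Cc(A,\IUnit)=A^\vee$; tracking the identifications shows the equivalence is induced by the counit, so $A^\vee$ is an ind-idempotent coalgebra. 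For \cref{enum:NuclearIdempotentAbstract}, the main claim rewrites the formula of \cref{lem:ProIdempotentAbstract} as $j^*(X)\simeq\cofib(A^\vee\otimes X\rightarrow X)\simeq j^*(\IUnit)\otimes X$, so $j^*$ is a smashing localisation; as $j^*$ is symmetric monoidal (\cref{lem:ProIdempotentAbstract}), $j^*(\IUnit)$ is an idempotent $\IE_\infty$-algebra and $\Ind(\Cc)^A\simeq\Mod_{j^*(\IUnit)}(\Ind(\Cc))$. It is nuclear because it sits in the cofibre sequence $A^\vee\rightarrow\IUnit\rightarrow j^*(\IUnit)$ in which $A^\vee$ is basic nuclear by \cref{enum:IndIdempotentCoalgebra} and $\IUnit$ is nuclear (being dualisable, its identity is trace-class), using that $\Nuc(\Ind(\Cc))$ is closed under colimits by \cref{thm:Nuclear}\cref{enum:Nuc}. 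Finally, for \cref{enum:IdempotentBasechange}, a symmetric monoidal $F$ preserves trace-class morphisms and pro-equivalences, so $F(A)=\prolim_iF(A_i)$ is again a pro-idempotent algebra satisfying the hypothesis; moreover the canonical maps $F(A_i^\vee)\rightarrow F(A_i)^\vee$ assemble, by the same cofinal zig-zag applied to the squares of \cref{lem:TraceClassAbstractNonsense}\cref{enum:DualTraceClass}, into an equivalence $F(A^\vee)\simeq F(A)^\vee$. Applying $F$ to the cofibre sequence then identifies $F(j^*(\IUnit))\simeq\cofib\bigl(F(A)^\vee\rightarrow\IUnit_\Dd\bigr)$ with the result of killing the pro-idempotent $F(A)$, as claimed.
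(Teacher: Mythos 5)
Your proposal is correct and matches the paper's argument: the same trace-class zig-zag $\Hhom_\Cc(A_i,X)\rightarrow A_j^\vee\otimes X$ built from the classifier is the engine in both, and parts \cref{enum:IndIdempotentCoalgebra}--\cref{enum:IdempotentBasechange} are deduced in the same way (plugging in $X\simeq A^\vee$, identifying $j^*(-)\simeq -\otimes j^*(\IUnit)$, and running the reverse-map zig-zag for $F$). The only cosmetic difference is that you conclude the main equivalence by showing the cofibre ind-object has null transition maps, whereas the paper constructs the candidate inverse and verifies it by mapping in compact objects; both routes leave the same homotopy-coherence bookkeeping implicit, exactly as the paper's own proof sketch does.
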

	\begin{proof}[Proof sketch]
		We can construct an inverse of $X\otimes A^\vee\rightarrow \Hhom_\Cc(A,X)$ as follows: Fix some $i\in I$, choose $j\rightarrow i$ such that $A_j\rightarrow A_i$ is trace-class and let $\IUnit\rightarrow A_i\otimes A_j^\vee$ be the corresponding classifier. Then consider the composition
		\begin{equation*}
			\Hhom_\Cc(A_i,X)\longrightarrow \Hhom_\Cc(A_i,X)\otimes A_i\otimes A_j^\vee\longrightarrow X\otimes A_j^\vee\,.
		\end{equation*}
		In the first map, we tensor $\Hhom_\Cc(A_i,X)$ with the classifier above. In the second map we use the evaluation $\Hhom_\Cc(A_i,X)\otimes A_i\rightarrow X$. It's straightforward but a little tedious to check that
		\begin{gather*}
			X\otimes A_i^\vee\longrightarrow \Hhom_\Cc(A_i,X)\longrightarrow X\otimes A_j^\vee\,\\
			\Hhom_\Cc(A_i,X)\longrightarrow X\otimes A_j^\vee\longrightarrow \Hhom_\Cc(A_j,X)
		\end{gather*}
		agree with the transition maps in the ind-objects $X\otimes A^\vee$ and $\Hhom_\Cc(A,X)$, respectively; we'll omit the argument.
		
		Proving that these maps assemble into an inverse map $X\otimes A^\vee\rightarrow\Hhom_\Cc(A,X)$ requires a non-trivial argument, since we're working in an $\infty$-category, but there's an easier way to show that $X\otimes A^\vee\rightarrow \Hhom_\Cc(A,X)$ is an equivalence: Equivalences are detected by $\pi_0\Hom_{\Ind(\Cc)}(Z,-)$, where $Z$ ranges through all compact objects of $\Ind(\Cc)$; now any morphism from a compact object factors through $X\otimes A_i^\vee$ or $\Hhom_\Cc(A_i,X)$ for some $i\in I$, and so the observations above will be enough.
		
		To show \cref{enum:IndIdempotentCoalgebra}, plug in $X\simeq A^\vee$: We obtain $A^\vee\otimes A^\vee\simeq \Hhom_\Cc(A,A^\vee)\simeq (A\otimes A)^\vee$. This proves idempotence as a coalgebra, because $(A\otimes A)^\vee\simeq A^\vee$ follows by dualising $A\simeq A\otimes A$. If $j\rightarrow i$ is large enough so that $A_j\rightarrow A_i$ is trace-class, then the dual transition map $A_i^\vee\rightarrow A_j^\vee$ is again trace-class by \cref{lem:TraceClassAbstractNonsense}\cref{enum:DualTraceClass}. This shows \cref{enum:IndIdempotentCoalgebra}.
		
		For \cref{enum:NuclearIdempotentAbstract}, since we've shown that $A^\vee$ is an idempotent coalgebra in $\Ind(\Cc)$, it follows that $j^*(\IUnit)$ is an idempotent algebra. Also $A^\vee$ is a nuclear object in $\Ind(\Cc)$, since every map $Z\rightarrow A^\vee$ from a compact object factors through a trace-class morphism and is therefore trace-class itself. Since $\IUnit$ is nuclear too, it follows that $j^*(\IUnit)$ is nuclear. $X\otimes j^*(\IUnit)\simeq j^*(X)$ follows immediately from the above equivalence $X\otimes A^\vee\simeq \Hhom_\Cc(A,X)$. Since the inclusion $\Ind(\Cc)^A\rightarrow \Ind(\Cc)$ is lax monoidal by \cref{lem:ProIdempotentAbstract}, it factors through a functor
		\begin{equation*}
			\Ind(\Cc)^A\rightarrow \Mod_{j^*(\IUnit)}\bigl(\Ind(\Cc)\bigr)\,.
		\end{equation*}
		Since $j^*(\IUnit)$ is idempotent, $\Mod_{j^*(\IUnit)}(\Ind(\Cc))\subseteq \Ind(\Cc)$ is the full sub-$\infty$-category spanned by the objects of the form $X\otimes j^*(\IUnit)$. Hence we also get an inclusion $\Ind(\Cc)^A\subseteq \Mod_{j^*(\IUnit)}(\Ind(\Cc))$. On the other hand, every object of the form $X\otimes j^*(\IUnit)\simeq j^*(X)$ is contained in $\Ind(\Cc)^A$. This finishes the proof of \cref{enum:NuclearIdempotentAbstract}.
		
		To show~\cref{enum:IdempotentBasechange}, we only need $\indcolim_{i\in I}F(A_i^\vee)\simeq \indcolim_{i\in I}F(A_i)^\vee$. If $A_j\rightarrow A_i$ is trace-class, \cref{lem:TraceClassAbstractNonsense}\cref{enum:DualTraceClassLift} provides a map $F(A_i)^\vee\rightarrow F(A_j^\vee)$ in the reverse direction. By a formal argument as above, this is enough to show the desired equivalence.
	\end{proof}

	\subsection{Generalities on refined localising invariants}\label{subsec:SmoothProper}
	
	Throughout this subsection and the next, we fix the following notation: Let $\Pr_{\mathrm{st}}^\L$ denote the $\infty$-category of presentable stable $\infty$-categories and colimit-preserving functors. For a regular cardinal~$\kappa$, we denote by $\Pr_{\mathrm{st},\kappa}^\L\subseteq \Pr_\mathrm{st}^\L$ the non-full sub-$\infty$-category spanned by the $\kappa$-compactly generated presentable stable $\infty$-categories and those colimit-preserving functors that also preserve $\kappa$-compact objects (equivalently, the right adjoint preserves $\kappa$-filtered colimits). We equip these $\infty$-categories with the Lurie tensor product and we let $\Pr_\mathrm{st}^\mathrm{dual}\subseteq \Pr_\mathrm{st}^\L$ denote the non-full sub-$\infty$-category spanned by the dualisable objects and the \emph{strongly continuous} functors, that is, those functors whose right adjoint still preserves all colimits.
	
	We also let $\Ee\in \CAlg(\Pr_{\mathrm{st}}^\L)$ be a rigid presentable stable symmetric monoidal $\infty$-category in the sense of \cref{def:Rigid}. We denote
	\begin{equation*}
		\Pr_\Ee^\L\coloneqq \Mod_\Ee(\Pr_{\mathrm{st}}^\L)\quad\text{and}\quad \Pr_{\Ee,\kappa}^\L\coloneqq \Mod_\Ee(\Pr_{\mathrm{st},\kappa}^\L)\,,
	\end{equation*}
	the latter assuming that $\Ee$ is $\kappa$-compactly generated. If $\Ee\simeq \Mod_k(\Sp)$ is the $\infty$-category of modules over some $\IE_\infty$-ring spectrum~$k$, we'll usually abbreviate these as $\Pr_k^\L$ and $\Pr_{k,\kappa}^\L$, respectively.
	
	\begin{numpar}[Localising motives over~$\Ee$.]
		We define the \emph{$\infty$-category of dualisable $\Ee$-modules} as the module $\infty$-category $\Cat_\Ee^\mathrm{dual}\coloneqq \Mod_\Ee(\Cat_\mathrm{st}^\mathrm{dual})$.%
		\footnote{$\Cat_\Ee^\mathrm{dual}$ can be defined without assuming that $\Ee$ is rigid, but usually it won't agree with $\Mod_\Ee(\Cat_\mathrm{st}^\mathrm{dual})$. See \cite[\S{\chref[subsection]{1.3}}]{EfimovLimits}.}
		Following Efimov \cite[Definition~\chref{1.20}]{EfimovLimits}, we let the \emph{$\infty$-category $\Mot_\Ee^\loc$ of localising motives over~$\Ee$} be the recipient of the universal localising invariant on dualisable $\Ee$-modules.
		
		In the case where $\Ee\simeq \Mod_k(\Sp)$ is the $\infty$-category of modules over some $\IE_\infty$-ring spectrum~$k$, we'll write $\Mot_k^\loc$ instead; this agrees with the $\infty$-category of localising motives over~$k$ defined by Blumberg--Gepner--Tabuada \cite{BlumbergGepnerTabuada}.
	\end{numpar}
	
	\begin{lem}\label{lem:TrefBabyCase}
		Let $M_{(-)}\colon \IQ\rightarrow \Mot_\Ee^\loc$ be a diagram such that $M_i\rightarrow M_j$ is trace-class for all rational numbers $i<j$. Then
		\begin{equation*}
			T^\mathrm{ref}\Bigl(\colimit_{i\in\IQ}M_i\Bigr)\simeq \indcolim_{i\in\IQ}T(M_i)\,.
		\end{equation*}
		If $\Dd$ is locally rigid and its tensor unit is $\omega_1$-compact, then the same is true for $\IZ_{\geqslant 0}$-indexed diagrams with trace-class transition maps.
	\end{lem}
	\begin{proof}
		This is almost tautological: Since $\Mot_\Ee^\loc$ is rigid, $(\Mot_\Ee^\loc)^\mathrm{rig}\rightarrow \Mot_\Ee^\loc$ is an equivalence. Since the ind-object $\indcolim_{i\in\IQ}M_i$ is a preimage of $M$ under this equivalence, the first claim follows. The second claim is completely analogous, since the additional assumptions imply $\Dd^\mathrm{rig}\simeq \Nuc\Ind(\Dd)$, as we've seen in \cref{par:RefinedInvariants}.
	\end{proof}
	\begin{numpar}[Why computing $T^\mathrm{ref}$ is hard.]\label{par:ComputingTrefIsHard}
		In general, we're faced with at least two difficult problems:
		\begin{alphanumerate}\itshape
			\item[\,!\,] For an arbitrary motive $M\in\Mot_\Ee^\loc$, it can be very hard to decompose $M$ into pieces for which resolutions as in \cref{lem:TrefBabyCase} exist.\label{enum:TrefHardA}
			\item[\,!!\,] Even if such resolutions can be found, computing $T(M_i)$ \embrace{and the transition maps between them} can still be a very hard problem.\label{enum:TrefHardB}
		\end{alphanumerate}
		In \cref{subsec:Recipe}, we'll explain how to solve problem~\cref{enum:TrefHardA} in many cases of interest, which will include $\THHref(\IQ)$, $\THHref(\L_n^f\IS_{(p)}/\IS_{(p)})$ and $\THHref(\IS[x])$. The entirety of \crefrange{sec:RefinedTC-}{sec:Overconvergent} below will then be spent on problem~\cref{enum:TrefHardB} for $\THHref(\IQ)$, and we will only be able to obtain an answer after base change to $\ku$.
	\end{numpar}
	
	But before we dive into the difficult calculations, let us discuss another easy case. To this end, recall from \cite[Definition~\chref{1.48}]{EfimovLimits} that a dualisable $\Ee$-module category $\Xx$ is called \emph{smooth} if the coevaluation $\Sp\rightarrow \Xx^\vee\otimes_\Ee\Xx$ preserves compact objects, and \emph{proper} if the evaluation $\Xx\otimes\Xx^\vee\rightarrow \Ee$ preserves compact objects. Here $\Xx^\vee$ denotes the dual of $\Xx$ as an $\Ee$-module.
	
	\begin{lem}\label{lem:SmoothProper}
		Let $\Xx$ be a dualisable $\Ee$-module.
		\begin{alphanumerate}
			\item $\Xx$ is smooth and proper in the sense above if and only if $\Xx$ is dualisable in $\Cat_\Ee^\mathrm{dual}$.\label{enum:SmoothProperDualisable}%
			\footnote{Note that being dualisable in $\Cat_\Ee^\mathrm{dual}$ is much stronger than being a dualisable $\Ee$-module.}
			\item If this is the case, then $T^\mathrm{ref}(\Xx)\simeq T(\Xx)$.\label{enum:SmoothProperRefined}
		\end{alphanumerate}
	\end{lem}
	\begin{proof}[Proof sketch]
		Assume first that $\Xx$ is smooth and proper. We'll only explain why the coevaluation and the evaluation over $\Ee$, i.e.\ $\Ee\rightarrow \Xx^\vee\otimes_\Ee\Xx$ and $\Xx\otimes_\Ee\Xx^\vee\rightarrow \Ee$, are functors in $\Cat_\Ee^\mathrm{dual}$; the triangle identities are then straightforward to verify. Since $\Sp\rightarrow \Xx^\vee\otimes_\Ee \Xx$ is strongly continuous by smoothness, the same will be true for the composition
		\begin{equation*}
			\Ee\longrightarrow \Ee\otimes(\Xx^\vee\otimes_\Ee\Xx)\longrightarrow \Xx^\vee\otimes_\Ee\Xx
		\end{equation*}	
		by \cite[Proposition~\chref{1.12}(ii)]{EfimovLimits}. So the coevaluation is a functor in $\Cat_\Ee^\mathrm{dual}$. Moreover, we have $\Xx^\vee\simeq \Hhom_\Ee^\mathrm{dual}(\Xx,\Ee)$ by \cite[Proposition~\chref{3.4}(iii)]{EfimovLimits}. Since $\Ee$ was assumed symmetric monoidal, $\Cat_\Ee^\mathrm{dual}$ admits an internal $\Hom$, which necessarily lifts $\Hhom_\Ee^\mathrm{dual}$. Hence we get an evaluation $\Xx\otimes_\Ee\Xx^\vee\rightarrow \Ee$ in $\Ee$ as well.
		
		Now assume that $\Xx$ is dualisable in $\Cat_\Ee^\mathrm{dual}$. Then $\Ee\rightarrow \Xx^\vee\otimes_\Ee\Xx$ is strongly continuous, hence it sends the tensor unit (which is compact as $\Ee$ is rigid) to a compact object. Then the same must be true for $\Sp\rightarrow \Xx^\vee\otimes_\Ee\Xx$, proving smoothness. For properness, we already know that $\Xx\otimes_\Ee \Xx^\vee\rightarrow \Ee$ is strongly continuous, so it remains to show the same for $\Xx\otimes\Xx^\vee\rightarrow \Xx\otimes_\Ee\Xx^\vee$. To this end, write
		\begin{equation*}
			\Xx\otimes_\Ee\Xx^\vee \simeq (\Xx\otimes \Xx^\vee)\otimes_{\Ee\otimes\Ee}\Ee
		\end{equation*}
		and use that $\Ee\otimes\Ee\rightarrow \Ee$ is strongly continuous by rigidity. This finishes the proof of \cref{enum:SmoothProperDualisable}. Part~\cref{enum:SmoothProperRefined} is an immediate consequence of this and \cref{lem:TrefBabyCase}, applied to the constant $\Xx$-valued diagram, which has trace-class transition maps since the identity on any dualisable object is trace-class.
	\end{proof}
	\begin{cor}\label{cor:SelfDualPreservesTraceClass}
		Let $\Ee\rightarrow \Xx$ be a strongly continuous symmetric monoidal functor into another rigid symmetric monoidal presentable stable $\infty$-category. If $\Xx$ is smooth and proper as an $\Ee$-module, then the forgetful functor $\Cat_\Xx^\mathrm{dual}\rightarrow \Cat_\Ee^\mathrm{dual}$ preserves trace-class morphisms.
	\end{cor}
	\begin{proof}
		By \cref{lem:SmoothProper}\cref{enum:SmoothProperDualisable} and the general fact that $\Xx^\vee\simeq \Xx$ (see \cite[\chref{1.9.2.1}]{GaitsgoryRozenblyumDerivedI} or \cite[Proposition~\chref{1.3}]{EfimovLimits}), we see that $\Xx$ is a self-dual $\IE_\infty$-algebra in $\Cat_\Ee^\mathrm{dual}$. The assertion then becomes purely abstract nonsense: For $\Xx$-modules $\Mm$ and $\Nn$, the diagram
		\begin{equation*}
			\begin{tikzcd}
				\Hhom_\Xx^\mathrm{dual}(\Mm,\Xx)\otimes_\Xx\Nn\dar\rar & \Hhom_\Xx^\mathrm{dual}(\Mm,\Xx)\otimes_\Xx(\Xx\otimes_\Ee\Nn)\dar\rar["\simeq"]& \Hhom_\Ee^\mathrm{dual}(\Mm,\Ee)\otimes_\Ee\Nn\dar\\
				\Hhom_\Xx^\mathrm{dual}(\Mm,\Nn)\rar & \Hhom_\Xx^\mathrm{dual}(\Mm,\Xx\otimes_\Ee\Nn)\rar["\simeq"] & \Hhom_\Ee^\mathrm{dual}(\Mm,\Nn)
			\end{tikzcd}
		\end{equation*}
		commutes, where the horizontal arrows in the left square are given by the unit $\Nn\rightarrow \Xx\otimes_\Ee\Nn$ of the \enquote{wrong way} adjunction  between the forgetful functor and $\Xx\otimes_\Ee-\colon \Cat_\Ee^\mathrm{dual}\rightarrow \Cat_\Xx^\mathrm{dual}$.
	\end{proof}
	
	\subsection{A recipe for computation}\label{subsec:Recipe}
	We continue to fix the notation from \cref{subsec:SmoothProper} as well as a symmetric monoidal localising invariant
	\begin{equation*}
		T\colon \Mot_\Ee^\loc\longrightarrow \Dd\,.
	\end{equation*}
	From now on, we'll additionally assume that $\Dd$ is locally rigid and its tensor unit is $\omega_1$-compact, so that $\Dd^\mathrm{rig}\simeq \Nuc\Ind(\Dd)$ by \cite[Theorem~\chref{4.2}]{EfimovLimits}.
	
	Our goal in this subsection is to explain a method to compute certain values of the refinement $T^\mathrm{ref}$. This method is a more or less straightforward abstract reformulation of the method that Efimov uses in his computations  (see e.g.\ \cite[Talk~{\chref[section]{6}}]{EfimovCopenhagen}).
	
	\begin{numpar}[Motives of interest.]\label{par:MotivesOfInterest}
		Let $\Ee\rightarrow \Xx$ be a strongly continuous symmetric monoidal functor into another rigid symmetric monoidal presentable stable $\infty$-category. Assume that $\Xx$ is smooth and proper as an $\Ee$-module. We wish to compute $T^\mathrm{ref}(\Uu)$ for localisations $\Uu\subseteq\Xx$ that arise as in \cref{par:KillingAlgebras}. That is, there is some object $V_0\in\Xx$ with a left-unital multiplication such that $\Uu$ is the full sub-$\infty$-category spanned by those $U\in\Xx$ for which $\Hhom_\Xx(V_0,U)\simeq 0$. Let us additionally assume that the following is satisfied:
		\begin{alphanumerate}\itshape
			\item[V] There exists a tower of $\IE_1$-algebras in $\Xx$,\label{enum:V}
			\begin{equation*}
				V_0\longleftarrow V_1\longleftarrow V_2\longleftarrow \dotsb\,,
			\end{equation*}
			such that each $V_r$ is dualisable in $\Xx$ and contained in the thick tensor ideal \embrace{that is, the smallest full sub-$\infty$-category closed under finite limits and colimits, retracts, and $-\otimes X$ for all $X\in\Xx$} generated by $V_0$. Moreover, we assume that for all $r\geqslant 0$, the induced map $V_{r+1}\otimes V_r\rightarrow V_r\otimes V_r$ factors through the multiplication
			\begin{equation*}
				V_{r+1}\otimes V_r\overset{\mu}{\longrightarrow} V_r
			\end{equation*}
			as a map of $V_{r+1}$-$V_r$-bimodules.
		\end{alphanumerate}
		The main example to keep in mind is the following: Suppose we're given maps $v_i\colon \Ii_i\rightarrow \IUnit_\Xx$ for $i=0,1,\dotsc,n$, where each $\Ii_i$ is dualisable in $\Xx$. Then we can define $V_r$ as the iterated cofibre
		\begin{equation*}
			V_r\coloneqq \IUnit_\Xx/\bigl(v_0^{\alpha_{r,0}},\dotsc,v_n^{\alpha_{r,n}}\bigr)
		\end{equation*}
		for some entry-wise increasing sequence of $(n+1)$-tuples $\alpha_r=(\alpha_{r,1},\dotsc,\alpha_{r,n})$ and equip the tower $\{V_r\}_{r\geqslant 0}$ with Burklund-style $\IE_1$-structures. We'll discuss in \cref{subsec:Burklund} why this satisfies~\cref{enum:V} and how this allows us to recover many examples of interest, such as $\THHref(\IQ)$, $\THHref(\IS[x])$, and $\THHref(\L_n^f\IS_{(p)}/\IS_{(p)})$ (note that the last example doesn't quite fit this situation, which will cause us some pain).
	\end{numpar}
	
	\begin{thm}\label{thm:RefinedInvariants}
		Let $\Ee$ be rigid and let $T\colon \Mot_\Ee^\loc\rightarrow \Dd$ be a localising invariant such that $\Dd$ is locally rigid and its tensor unit is $\omega_1$-compact. Let $\Xx$ and $\Uu$ be as in \cref{par:MotivesOfInterest}.
		\begin{alphanumerate}
			\item The pro-object $\prolim_{r\geqslant 0}T(\RMod_{V_r}(\Xx))$ is idempotent over $T(\Xx)$ and its transition maps are trace-class.\label{enum:RefinedInvariantsProIdempotent}
			\item $T^\mathrm{ref}(\Uu)$ is obtained from $T(\Xx)$ by killing this idempotent pro-algebra. In particular, $T^\mathrm{ref}(\Uu)$ sits inside the following cofibre sequence in $\Dd^\mathrm{rig}\simeq\Nuc\Ind(\Dd)$:\label{enum:RefinedInvariantsCofibreSequence}
			\begin{equation*}
				\indcolim_{r\geqslant 0}T\bigl(\RMod_{V_r}(\Xx)\bigr)^\vee\longrightarrow T(\Xx)\longrightarrow T^\mathrm{ref}(\Uu)\,.
			\end{equation*}
		\end{alphanumerate}
	\end{thm}
	
	We start the proof of \cref{thm:RefinedInvariants} with a few easy observations about the \enquote{closed complement} of $\Uu$ in $\Xx$.
	
	\begin{lem}\label{lem:UVandX}
		Let $\Xx$ and $\Uu$ be as in \cref{par:MotivesOfInterest}.
		\begin{alphanumerate}
			\item The inclusion $\Uu\rightarrow \Xx$ admits a left adjoint $j^*\colon \Xx\rightarrow \Uu$, which can be canonically equipped with a symmetric monoidal structure.\label{enum:USymmetricMonoidal}
			\item If $\Vv\subseteq\Xx$ denotes the kernel of $j^*$, then $\Vv$ is a tensor ideal and closed under colimits, finite limits, and retracts in $\Xx$. If $S$ runs through a set of generators of $\Xx$, then $V_0\otimes S$ forms a set of generators of $\Vv$.\label{enum:VTensorIdeal}
			\item  For all $r\geqslant 0$, the $\IE_1$-algebra $V_r$ is a compact object of $\Xx$, and every left- or right-module over $V_r$ is contained in $\Vv$.\label{enum:VCompact}
		\end{alphanumerate}
	\end{lem}
	\begin{proof}
		Part~\cref{enum:USymmetricMonoidal} follows immediately from \cref{par:KillingAlgebras}. Since $j^*$ is symmetric monoidal and preserves all colimits, its kernel~$\Vv$ must be a tensor ideal and closed under colimits, finite limits, and retracts. Now let $V\in\Vv$ be an object such that
		\begin{equation*}
			0\simeq \Hom_\Xx(V_0\otimes S,V)\simeq \Hom_\Xx\bigl(S,\Hhom_\Xx(V_0,V)\bigr)
		\end{equation*}
		for all~$S$. Since $S$ runs through a set of generators of $\Xx$, this implies $\Hhom_\Xx(V_0,V)\simeq 0$. Hence also $V\in\Uu$ and so $V\simeq j^*(V)\simeq 0$. This finishes the proof of \cref{enum:VTensorIdeal}.
		
		To show \cref{enum:VCompact}, observe that any $X\in\Xx$ is dualisable if and only if it is compact (because in a rigid presentable symmetric monoidal $\infty$-category $\id_X\colon X\rightarrow X$ is trace-class if and only if it is compact; see \cite[Corollary~\chref{4.52}]{RamziLocallyRigid} or \cite[Proposition~\chref{1.7}]{EfimovLimits}). Hence $V_r$ is compact for all $r\geqslant 0$. To show that any left- or right-$V_r$-module is contained in $\Vv$, it suffices to show the same for induced modules (i.e. those of the form $V_r\otimes X$), since every module is a colimit of induced ones. By the thick tensor ideal condition in \cref{par:MotivesOfInterest}\cref{enum:V}, we can furthermore reduce to objects of the form $V_0\otimes X$. Now if $U\in\Uu$, then
		\begin{equation*}
			\Hom_\Xx(V_0\otimes X,U)\simeq \Hom_\Xx\bigl(X,\Hhom_\Xx(V_0,U)\bigr)\simeq 0\,,
		\end{equation*}
		proving $j^*(V_0\otimes X)\simeq 0$, as desired. 
	\end{proof}

	\begin{lem}\label{lem:TraceClassTransitionMaps}
		For every $r\geqslant 0$, the base change functor
		\begin{equation*}
			-\otimes_{V_{r+1}}V_r\colon \RMod_{V_{r+1}}(\Xx)\longrightarrow \RMod_{V_r}(\Xx)
		\end{equation*}
		is a trace-class morphism in $\Cat_{\Xx}^\mathrm{dual}$, hence also in $\Cat_\Ee^\mathrm{dual}$.
	\end{lem}
	\begin{proof}
		The additional assertion will follow immediately from \cref{cor:SelfDualPreservesTraceClass} once we've shown the rest. Writing $\RMod_{V_r}(\Xx)\simeq \RMod_{V_r}(\Ind(\Xx^\omega))\otimes_{\Ind(\Xx^\omega)}\Xx$, we may reduce to the case where $\Xx$ is compactly generated, as $-\otimes_{\Ind(\Xx^\omega)}\Xx$ preserves trace-class morphisms by \cref{lem:TraceClassAbstractNonsense}\cref{enum:DualTraceClass}. In the compactly generated case, we'll even show that $-\otimes_{V_{r+1}}V_r$ is trace-class in $\Pr_{\Xx,\omega}^\L$.
		
		Recall from \cite[Remark~\chref{4.8.4.8}]{HA} that $\RMod_{V_{r+1}}(\Xx)$ is dualisable in $\Pr_\Xx^\L$ with dual $\LMod_{V_{r+1}}(\Xx)$. Therefore, the base change functor is always trace-class in $\Pr_\Xx^\L$. The witnessing functor $\Xx\rightarrow \LMod_{V_{r+1}}(\Xx)\otimes_\Xx \RMod_{V_r}(\Xx)\simeq \LMod_{V_{r+1}\otimes V_r^\op}(\Xx)$ is the classifier of $V_r$ as a left module over $V_{r+1}\otimes V_r^\op$, or equivalently, a $V_{r+1}$-$V-r$-bimodule. If we work in $\Pr_{\Xx,\omega}^\L$ instead, then $\RMod_{V_{r+1}}(\Xx)$ will no longer be dualisable, but we can still form the predual
		\begin{equation*}
			\Hhom_{\Pr_{\Xx,\omega}^\L}\bigl(\RMod_{V_{r+1}}(\Xx),\Xx\bigr)\simeq \Ind\left(\Fun_{\Xx^\omega}(\RMod_{V_{r+1}}(\Xx)^\omega,\Xx^\omega)\right)\simeq \Ind\bigl(\LMod_{V_{r+1}}(\Xx^\omega)\bigl)\,,
		\end{equation*}
		where we've used \cite[Theorem~\chref{4.8.4.1}]{HA} and the fact that $V_{r+1}\in\Xx^\omega$ by \cref{lem:UVandX}\cref{enum:VCompact}. Using \cite[Theorem~\chref{4.8.4.6}]{HA}, we still have a functor
		\begin{equation*}
			\Xx\longrightarrow \Ind\bigl(\LMod_{V_{r+1}}(\Xx^\omega)\bigr)\otimes_\Xx\RMod_{V_r}(\Xx)\simeq \RMod_{V_r}\left(\Ind\bigl(\LMod_{V_{r+1}}(\Xx^\omega)\bigr)\right)
		\end{equation*}
		in $\Pr_\Xx^\L$ that classifies $V_r$ has a right $V_r$-module in $\Ind(\LMod_{V_{r+1}}(\Xx))$. For the desired trace-class property to hold, this functor needs to be contained in $\Pr_{\Xx,\omega}^\L$. That is, we need $V_r$ to be a compact object in $\RMod_{V_r}(\Ind(\LMod_{V_{r+1}}(\Xx^\omega)))$.
		
		To this end, recall our assumption~\cref{par:MotivesOfInterest}\cref{enum:V} that $V_{r+1}\otimes V_r\rightarrow V_r\otimes V_r$ factors through the multiplication $V_{r+1}\otimes V_r\rightarrow V_r$ as a map of $V_{r+1}$-$V_r$-bimodules. Consequently, $V_r$ is a retract of $V_r\otimes V_r$ in $\RMod_{V_r}(\Ind(\LMod_{V_{r+1}}(\Xx^\omega)))$. This is enough to show compactness. Indeed, the object $V_r\in\Ind(\LMod_{V_{r+1}}(\Xx^\omega))$ is compact%
		\footnote{By contrast, $V_r$ is usually \emph{not} compact in $\LMod_{V_{r+1}}(\Xx)$.}
		and so the induced right-$V_r$-module $V_r\otimes V_r$ must be compact.
	\end{proof}
	
	\begin{rem}\label{rem:Dbcoh}
		As a consequence of the proof of \cref{lem:TraceClassTransitionMaps} and \cref{lem:TraceClassAbstractNonsense}\cref{enum:DualTraceClass}, we see that the functors
		\begin{equation*}
			\Ind \LMod_{V_r}(\Xx^\omega)\otimes_{\Ind(\Xx^\omega)}\Xx\longrightarrow \Ind \LMod_{V_{r+1}}(\Xx^\omega)\otimes_{\Ind(\Xx^\omega)}\Xx\,.
		\end{equation*}
		induced by the forgetful functors $\LMod_{V_r}(\Xx^\omega)\rightarrow \LMod_{V_{r+1}}(\Xx^\omega)$ are also trace-class in $\Cat_\Xx^\mathrm{dual}$, hence in $\Cat_\Ee^\mathrm{dual}$ by \cref{cor:SelfDualPreservesTraceClass}.
		
		The reader familiar with some of Efimov's computations of refined invariants will have already seen $\Ind \LMod_{V_r}(\Xx^\omega)\otimes_{\Ind(\Xx^\omega)}\Xx$, albeit in disguise: For example, it is the abstract analogue of $\Dd_\mathrm{coh}^b(\IQ[x]/x^n)$ in Efimov's computation of $\HC^{-, \mathrm{ref}}(\IQ[x^{\pm1}]/\IQ[x])$ (see e.g.\ \cite[Talk~{\chref[section]{6}}]{EfimovCopenhagen}). Also note that the forgetful functors $\LMod_{V_r}(\Xx^\omega)\rightarrow\Xx^\omega$ will land in $\Vv$ by \cref{lem:UVandX}\cref{enum:VCompact} and so we get functors
		\begin{equation*}
			\Ind \LMod_{V_r}(\Xx^\omega)\otimes_{\Ind(\Xx^\omega)}\Xx\longrightarrow \Vv\,.
		\end{equation*}
		for all $r\geqslant 0$. These are compatible with the functors above.
	\end{rem}

	\begin{lem}\label{lem:TorsionSpectra}
		With notation as above, the functors from \cref{rem:Dbcoh} induce an equivalence of $\Xx$-linear presentable $\infty$-categories
		\begin{equation*}
			\colimit_{r\geqslant 0}\left(\Ind\LMod_{V_r}(\Xx^\omega)\otimes_{\Ind(\Xx^\omega)}\Xx\right)\overset{\simeq}{\longrightarrow} \Vv\,.
		\end{equation*}
		Here the colimit on the left-hand side is taken in $\Cat_\Xx^\mathrm{dual}$, or equivalently, in $\Cat_\Ee^\mathrm{dual}$ or $\Pr_\mathrm{st}^\L$.
	\end{lem}
	
	\begin{proof}
		We'll prove this under the assumption that $\Xx$ is compactly generated; to reduce to this special case, apply \cref{lem:ReductionToCompactlyGenerated} below for $\Ind(\Xx^\omega)\rightarrow \Xx$. Since $\Xx$ is rigid, compact objects are closed under tensor products, since they coincide with the dualisable objects. By \cref{lem:UVandX}\cref{enum:VTensorIdeal}, this implies that $\Vv$ is again compactly generated. By construction, $\Ind\LMod_{V_r}(\Xx^\omega)\rightarrow \Xx$ preserves compact objects, hence the same is true if we restrict the codomain to $\Vv$. Using that $\Pr_{\mathrm{st},\omega}^\L\rightarrow \Pr_\mathrm{st}^\L$ preserves all colimits, we deduce that
		\begin{equation*}
			L\colon \colimit_{r\geqslant 0}\Ind\LMod_{V_r}(\Xx^\omega)\longrightarrow \Vv
		\end{equation*}
		is a functor in $\Pr_{\mathrm{st},\omega}^\L$. In particular, whether $L$ is fully faithful can be checked on compact objects. So let $M$ and $N$ be compact.
		
		Writing $\colimit_{r\geqslant 0}\Ind(\LMod_{V_r}(\Xx^\omega))\simeq \Ind(\colimit_{r\geqslant 0}\LMod_{V_r}(\Xx^\omega))$, we may assume that $M$ and $N$ are $V_r$-modules for some $r$. We must then show that
		\begin{equation*}
			\colimit_{s\geqslant r}\Hom_{V_s}(M,N)\overset{\simeq}{\longrightarrow} \Hom_\Xx(M,N)\,.
		\end{equation*}
		is an equivalence. To this end, let us rewrite this map as
		\begin{equation*}
			\colimit_{s\geqslant r}\Hom_{V_r}\bigl((V_r\otimes_{V_s}V_r)\otimes_{V_r}M,N\bigr)\longrightarrow \Hom_{V_r}\bigl((V_r\otimes V_r)\otimes_{V_r}M,N\bigr)\,.
		\end{equation*}
		For all $s\geqslant r$, consider $V_r\otimes V_r$ as a right-$V_{s+1}$-module via the right action on the first tensor factor and as a left-$V_{s+1}$-module via the left action on the second tensor factor. In total, we've produced a right-$(V_{s+1}\otimes V_{s+1}^\op)$-module structure on $V_r\otimes V_r$. Since $V_r\otimes V_r$ is already a right-$(V_s\otimes V_s^\op)$-module via the same construction, the identity on $V_r\otimes V_r$ factors through $(V_r\otimes V_r)\otimes_{V_{s+1}\otimes V_{s+1}^\op}V_s\otimes V_s^\op$. By Assumption~\cref{par:MotivesOfInterest}\cref{enum:V}, $V_{s+1}\otimes V_{s+1}\rightarrow V_s\otimes V_s$ factors through $V_{s+1}$ as a map of $V_{s+1}$-$V_{s+1}$-bimodules, or equivalently, as a map of left-$V_{s+1}\otimes V_{s+1}^\op$-modules. This shows that the identity on $V_r\otimes V_r$ factors through
		\begin{equation*}
			(V_{r}\otimes V_r)\otimes_{V_{s+1}\otimes V_{s+1}^\op}V_{s+1}\simeq V_r\otimes_{V_{s+1}}V_r\,.
		\end{equation*}
		This factorisation works as $V_r$-$V_r$-bimodules, since we haven't touched the \enquote{outer} $V_r$-$V_r$-bimodule structure anywhere and have only worked with the \enquote{inner} bimodule structures. Thus, the colimit diagram above can be intertwined with the constant $\Hom_{V_r}((V_r\otimes V_r)\otimes_{V_r}M,N)$-valued diagram, which proves that we get the desired equivalence.
		
		Hence $L$ is fully faithful. Once we know this, essential surjectivity follows immediately from \cref{lem:UVandX}\cref{enum:VTensorIdeal}, so we win.
	\end{proof}
	
	\begin{lem}\label{lem:ReductionToCompactlyGenerated}
		Let $\Xx\rightarrow \Xx'$ be a symmetric monoidal colimit-preserving functor into another rigid presentable stable symmetric monoidal $\infty$-category $\Xx'$. Let $V_0'$ denote the image of $V_0$, let $\Uu'\coloneqq (\Xx')^{V_0'}\subseteq \Xx'$ and let $\Vv'$ be the kernel of the left adjoint $\Xx'\rightarrow \Uu'$ of the inclusion. Then the induced functor
		\begin{equation*}
			\Vv\otimes_{\Xx}\Xx'\overset{\simeq}{\longrightarrow} \Vv'
		\end{equation*}
		is an equivalence of $\infty$-categories.
	\end{lem}
	\begin{proof}
		It's enough to show this in the case where $\Xx$ is compactly generated, since the general case will follow by considering $\Ind(\Xx^\omega)\rightarrow \Xx\rightarrow \Xx'$. By \cref{lem:UVandX}\cref{enum:VTensorIdeal}, $\Vv$ is a tensor ideal and so the inclusion $\Vv\rightarrow \Xx$ is $\Xx$-linear. Note that its right adjoint is again $\Xx$-linear. Indeed, the right adjoint is given by $\fib(X\rightarrow j^*(X))$ for all $X\in\Xx$, so we must show that $j^*(X)\otimes Y\rightarrow j^*(X\otimes Y)$ is an equivalence for all $Y\in\Xx$. Since we assume $\Xx$ to be compactly generated, it suffices to show this in the case $Y\in \Xx^\omega$, as both sides commute with filtered colimits. But then $Y$ is dualisable as $\Xx$ is rigid. Since $\Uu'$ is stable under tensoring with dualisable objects, we obtain $j^*(X)\otimes Y\simeq j^*(j^*(X)\otimes Y)\simeq j^*(X\otimes Y)$ from \cref{par:KillingAlgebras}, as desired.
		
		It follows that $\Vv\otimes_{\Xx}\Xx\rightarrow \Xx'$ is fully faithful, since we can now just base change the fact that the unit is an equivalence. Its essential image is clearly contained in $\Vv'$, and it's clear from \cref{lem:UVandX}\cref{enum:VTensorIdeal} that $\Vv_\omega\otimes_{\Xx}\Xx'\rightarrow \Vv'$ is essentially surjective.
	\end{proof}
	
	
	\begin{proof}[Proof of \cref{thm:RefinedInvariants}]
		By \cref{lem:TraceClassTransitionMaps} and \cref{lem:TraceClassAbstractNonsense}\cref{enum:DualTraceClass} applied to the symmetric monoidal functor $T\colon \Cat_\Xx^\mathrm{dual}\simeq \Mod_\Xx(\Cat_\Ee^\mathrm{dual})\rightarrow \Mod_{T(\Xx)}(\Dd)$, the transition maps of the pro-object $\prolim_{r\geqslant 0} T(\RMod_{V_r}(\Xx))$ are trace-class morphisms in $\Mod_{T(\Xx)}(\Dd)$. To prove~\cref{enum:RefinedInvariantsProIdempotent}, it will thus be enough to check that the dual ind-object is an idempotent coalgebra.
		
		To see this, write $\RMod_{V_r}(\Xx)\simeq \RMod_{V_r}(\Ind(\Xx^\omega))\otimes_{\Ind(\Xx^\omega)}\Xx$. We've seen in the proof of \cref{lem:TraceClassTransitionMaps} that the predual of $\RMod_{V_r}(\Ind(\Xx^\omega))$ in $\Pr_{\Ind(\Xx^\omega),\omega}^\L$ is $\Ind\LMod_{V_r}(\Xx^\omega)$. Now consider the diagram of symmetric monoidal functors
		\begin{equation*}
			\begin{tikzcd}
				\Pr_{\Ind(\Xx^\omega),\omega}^\L\rar["-\otimes_{\Ind(\Xx^\omega)}\Xx"]\dar &[2.5em] \Cat_\Xx^\mathrm{dual}\rar & \Mod_\Xx(\Mot_\Ee^\loc)\rar["T"] & \Mod_{T(\Xx)}(\Dd)\\
				\Cat_{\Ind(\Xx^\omega)}^\mathrm{dual}\ar[rr] & & \Mot_{\Ind(\Xx^\omega)}^\loc\uar &
			\end{tikzcd}
		\end{equation*}
		In general, none of them preserves preduals, but once we pass to $\indcolim_{r\geqslant 0}$ this isn't a problem anymore by \cref{lem:TraceClassAbstractNonsense}\cref{enum:DualTraceClassLift}. Thus, it will be enough to check that the image of $\indcolim_{r\geqslant 0}\Ind\LMod_{V_r}(\Xx^\omega)$ is idempotent in $\Ind(\Mot_{\Ind(\Xx^\omega)}^\loc)$.
		
		For ease of notation, let us now replace $\Xx$ by $\Ind(\Xx^\omega)$, thereby assuming that $\Xx$ is compactly generated. Since $\indcolim_{r\geqslant 0}\Ind\LMod_{V_r}(\Xx^\omega)$ has trace-class transition maps and $\Nuc\Ind(\Mot_{\Xx}^\loc)\simeq \Mot_{\Xx}^\loc$ by Efimov's rigidity theorem, it will be enough to show that $\colimit_{r\geqslant 0}\Ind\LMod_{V_r}(\Xx^\omega)\simeq \Vv$ is idempotent in $\Mot_{\Xx}^\loc$. We claim that $\Vv$ is already idempotent in $\Cat_{\Xx}^\mathrm{dual}$. To see this, just observe that the same argument as in \cref{lem:TorsionSpectra} also proves that
		\begin{equation*}
			\colimit_{r\geqslant 0}\Ind\LMod_{V_r\otimes V_r}(\Xx^\omega)\overset{\simeq}{\longrightarrow} \Vv
		\end{equation*} 
		is an equivalence of $\infty$-categories. This finishes the proof of~\cref{enum:RefinedInvariantsProIdempotent}.
		
		Let us now show \cref{enum:RefinedInvariantsCofibreSequence}. In the following, we'll use several times (and in a somewhat confusing way) that $\Nuc\Ind(\Mod_\Xx(\Mot_\Ee^\loc))\simeq \Mod_\Xx(\Mot_\Ee^\loc)$ by Efimov's rigidity theorem.
		
		The proof of~\cref{enum:RefinedInvariantsProIdempotent} shows that $\prolim_{r\geqslant 0} \RMod_{V_r}(\Xx)$ is idempotent in $\Pro(\Mod_\Xx(\Mot_\Ee^\loc))$, its dual ind-object has nuclear transition map, and the dual ind-object is sent to $\Vv$ under $\Nuc\Ind(\Mod_\Xx(\Mot_\Ee^\loc))\simeq \Mod_\Xx(\Mot_\Ee^\loc)$. Since $\Vv\rightarrow \Xx\rightarrow \Uu$ becomes a cofibre sequence in $\Mot_\Xx(\Mot_\Ee^\loc)$, it follows that the preimage of $\Uu$ under $\Nuc\Ind(\Mod_\Xx(\Mot_\Ee^\loc))\simeq \Mod_\Xx(\Mot_\Ee^\loc)$ is obtained from $\Xx$ by killing the pro-idempotent $\prolim_{r\geqslant 0} \RMod_{V_r}(\Xx)$. This is necessarily also true as $\IE_\infty$-$\Xx$-algebras, since the $\IE_\infty$-structure will be idempotent over $\Xx$ by \cref{lem:NuclearIdempotentAbstract}\cref{enum:NuclearIdempotentAbstract} and thus unique. Since any symmetric monoidal functor preserves killing idempotent pro-algebras with trace-class transition maps by \cref{lem:NuclearIdempotentAbstract}\cref{enum:IdempotentBasechange}, the statement of~\cref{enum:RefinedInvariantsCofibreSequence} follows.
	\end{proof}
	
	\subsection{Burklund's \texorpdfstring{$\IE_1$}{E1}-structures and square-zero extensions}\label{subsec:Burklund}
	
	In this subsection we show that tensor products of two Burklund-style $\IE_1$-structures on quotients are often trivial square zero algebras. We then use this technical result to make \cref{thm:RefinedInvariants} applicable in many cases of interest.
	
	For the abstract setup, let $\Cc$ be a presentable stable $\IE_2$-monoidal $\infty$-category and $v\colon \Ii\rightarrow\IUnit$ be a morphism in $\Cc$ such that $\IUnit/v$ admits a right-unital multiplication. Fix $\alpha_0\geqslant 3$, so that $\IUnit/v^{\alpha_0}$ admits a preferred $\IE_2$-algebra structure by \cite[Theorem~\chref{1.5}]{BurklundMooreSpectra}. The same theorem shows that $\IUnit/v^{\alpha}$ admits a preferred $\IE_1$-algebra structure for all $\alpha\geqslant 2$. Via base change, we get an $\IE_1$-structure on $\IUnit/v^{\alpha_0}\otimes\IUnit/v^{\alpha}$  in the $\IE_1$-monoidal stable $\infty$-category $\LMod_{\IUnit/v^{\alpha_0}}(\Cc)$. 
	\begin{prop}\label{prop:SquareZeroExtensions}
		With notation and assumptions as above, suppose additionally that $\Cc$ is rigid, $\Ii$ is dualisable in $\Cc$, and $\alpha\geqslant \alpha_0+3$.
		\begin{alphanumerate}
			\item
			If we equip $\IUnit/v^{\alpha_0}\oplus \Sigma(\Ii^{\otimes \alpha}/v^{\alpha_0})$ with the trivial square-zero $\IE_1$-structure over $\IUnit/v^{\alpha_0}$, then the equivalence of left $\IUnit/v^{\alpha_0}$-modules
			\begin{equation*}
				\IUnit/v^{\alpha_0}\otimes\IUnit/v^{\alpha}\simeq \IUnit/v^{\alpha_0}\oplus \Sigma(\Ii^{\otimes \alpha}/v^{\alpha_0})
			\end{equation*}
			lifts canonically to an equivalence of $\IE_1$-algebras in $\LMod_{\IUnit/v^{\alpha_0}}(\Cc)$. Under this identification, the multiplication $\IUnit/v^{\alpha_0}\otimes\IUnit/v^{\alpha}\rightarrow \IUnit/v^{\alpha_0}$ becomes the augmentation map $\IUnit/v^{\alpha_0}\oplus \Sigma(\Ii^{\otimes \alpha}/v^{\alpha_0})\rightarrow \IUnit/v^{\alpha_0}$.\label{enum:SquareZero}
			\item For all $\alpha'\geqslant \alpha\geqslant \alpha_0+3$, the map $\IUnit/v^{\alpha_0}\otimes\IUnit/v^{\alpha'}\rightarrow \IUnit/v^{\alpha_0}\otimes\IUnit/v^{\alpha}$ agrees with the map of trivial square-zero extensions induced by $v^{\alpha'-\alpha}\colon \Ii^{\otimes \alpha'}/v^{\alpha_0}\rightarrow \Ii^{\otimes \alpha}/v^{\alpha_0}$, as maps of $\IE_1$-algebras in $\LMod_{\IUnit/v^{\alpha_0}}(\Cc)$.\label{enum:SquareZeroMap}
		\end{alphanumerate}
	\end{prop}
	\begin{rem}
		The bound $\alpha\geqslant \alpha_0+3$ doesn't seem optimal and the author suspects that \cref{prop:SquareZeroExtensions} might already be true for $\alpha\geqslant \alpha_0$. It also seems reasonable that the result should be true for any compatible $\IE_1$-structures on $\IUnit/v^{\alpha_0}$ and $\IUnit/v^{\alpha}$, but we don't know how to show this.
	\end{rem}
	\begin{rem}
		Since the bounds $\alpha_0\geqslant 3$ and $\alpha\geqslant \alpha_0+3$ ensure that the $\IE_1$-algebra structures on $\IUnit/v^{\alpha_0}$ and $\IUnit/v^{\alpha}$ refine to $\IE_2$-algebra structures, the multplication map in \cref{prop:SquareZeroExtensions}\cref{enum:SquareZero} is canonically a map of $\IE_1$-algebras. The identification with the augmentation $\IUnit/v^{\alpha_0}\oplus \Sigma(\Ii^{\otimes \alpha}/v^{\alpha_0})\rightarrow \IUnit/v^{\alpha_0}$ also holds as $\IE_1$-algebra maps (as we'll see in the proof).
	\end{rem}
	\begin{proof}[Proof of \cref{prop:SquareZeroExtensions}]
		Recall \cite[Constructions~\chref{4.7} and~\chref{4.8}]{BurklundMooreSpectra}: Let $\widetilde{\Cc}\coloneqq \Def(\Cc,\Qq)$ be the deformation of $\Cc$ that Burklund uses. The specific construction is irrelevant for the purpose of this proof; the reader only needs to know that $\widetilde{\Cc}$ is a presentable stable $\IE_2$-monoidal $\infty$-category and comes with $\IE_2$-monoidal functors $\nu\colon \Cc\rightarrow \widetilde{\Cc}$ (which is non-exact) and $(-)^{\tau=1}\colon \widetilde{\Cc}\rightarrow \Cc$ (which preserves colimits and is therefore exact) such that $\nu(-)^{\tau=1}\simeq \id_\Cc$. Let furthermore $\widetilde{\IUnit}\coloneqq \nu(\IUnit)$ denote the tensor unit of $\widetilde{\Cc}$ and let $\widetilde{\Ii}\coloneqq \Sigma^{-1}\nu(\Sigma \Ii)$. Even though $\nu$ is non-exact, $\nu(\IUnit)\rightarrow \nu(\IUnit/v)\rightarrow \nu(\Sigma\Ii)$ is still a cofibre sequence in $\widetilde{\Cc}$ and so $\nu(v)\colon \nu(\Ii)\rightarrow \widetilde{\IUnit}$ factors through a map
		\begin{equation*}
			\widetilde{v}\colon \widetilde{\Ii}\longrightarrow \widetilde{\IUnit}\,.
		\end{equation*}
		Then $\widetilde{v}$ is a deformation of $v$ in the sense that $\widetilde{v}^{\tau=1}\simeq v$.%
		\footnote{Note that $\widetilde{v}$ is usually \emph{not} the trivial deformation $\nu(v)$, as the canonical map $\nu(\Ii)\rightarrow \widetilde{\Ii}$ is usually not an equivalence. This is crucial to make Burklund's construction work.}
		It will thus be enough to show the assertions with $v$ replaced by $\widetilde{v}\colon \widetilde{\Ii}\rightarrow \widetilde{\IUnit}$.
		
		Burklund constructs $\IE_1$-structures on $\widetilde{\IUnit}/\widetilde{v}^{\alpha}$ for $\alpha\geqslant 2$ using the obstruction theory from \cite[Proposition~\chref{2.4}]{BurklundMooreSpectra} in $\widetilde{\Cc}$. The reason to replace $\Cc$ and $v$ by their deformations $\widetilde{\Cc}$ and $\widetilde{v}$ is that for the deformed versions all obstructions vanish (because the obstruction group vanishes), and the witnessing nullhomotopies are unique (because the next homotopy group also vanishes).
		
		The base-changed $\IE_1$-structure on $\widetilde{\IUnit}/\widetilde{v}^{\alpha_0}\otimes \widetilde{\IUnit}/\widetilde{v}^\alpha$ is then obtained via Burklund's obstruction theory in the $\IE_1$-monoidal%
		\footnote{Burklund's paper assumes an $\IE_2$-monoidal structure, but for the purpose of \cite[\S{\chref[section]{2}}]{BurklundMooreSpectra} only an $\IE_1$-monoidal structure is necessary.}
		presentable stable $\infty$-category $\LMod_{\smash{\widetilde{\IUnit}}/\smash{\widetilde{v}^{\alpha_0}}}(\widetilde{\Cc})$. The main step to prove both~\cref{enum:SquareZero} and~\cref{enum:SquareZeroMap} is to show that in this case too all obstructions vanish and the witnessing nullhomotopies are unique. More precisely, we'll show that for all $k\geqslant 2$ and all $\alpha'\geqslant \alpha\geqslant \alpha_0+3$,
		\begin{equation*}
			\pi_i\Hom_{\LMod_{\widetilde{\IUnit}/\widetilde{v}^{\alpha_0}}(\widetilde{\Cc})}\left(\Sigma^{-3}\bigl(\Sigma^2(\widetilde{\Ii}/\widetilde{v}^{\alpha_0})^{\otimes \alpha'}\bigr)^{\otimes k},\widetilde{\IUnit}/\widetilde{v}^{\alpha_0}\otimes \widetilde{\IUnit}/\widetilde{v}^{\alpha}\right)\cong 0\quad \text{ for }i\in\{0,1\}\,.
		\end{equation*}
		To show this, we use that $\widetilde{\IUnit}/\widetilde{v}^{\alpha_0}\otimes -\colon \widetilde{\Cc}\rightarrow \LMod_{\smash{\widetilde{\IUnit}}/\smash{\widetilde{v}^{\alpha_0}}}(\widetilde{\Cc})$ is left adjoint to the forgetful functor, that $\widetilde{\IUnit}/\widetilde{v}^{\alpha_0}\otimes \widetilde{\IUnit}/\widetilde{v}^{\alpha}\simeq \widetilde{\IUnit}/\widetilde{v}^{\alpha_0}\oplus \Sigma(\widetilde{\Ii}^{\otimes \alpha}/\widetilde{v}^{\alpha_0})$ as left-$\widetilde{\IUnit}/\widetilde{v}^{\alpha_0}$-modules, and that $\widetilde{\Ii}$ is still dualisable, with dual $\widetilde{\Ii}^\vee\simeq\Sigma \nu(\Sigma^{-1}\Ii^\vee)$. The left-hand side above can then be rewritten as follows:
		\begin{multline*}
			\pi_i\Hom_{\widetilde{\Cc}}\left(\Sigma^{2k-3}\widetilde{\Ii}^{\otimes \alpha' k},\widetilde{\IUnit}/\widetilde{v}^{\alpha_0}\oplus \Sigma(\widetilde{\Ii}^{\otimes \alpha}/\widetilde{v}^{\alpha_0})\right)\\
			\begin{aligned}
				&\cong \pi_i\Hom_{\widetilde{\Cc}}\left(\Sigma^{2k-3}\widetilde{\Ii}^{\otimes \alpha' k},\widetilde{\IUnit}/\widetilde{v}^{\alpha_0}\right)\oplus \pi_i\Hom_{\widetilde{\Cc}}\left(\Sigma^{2k-2}\widetilde{\Ii}^{\otimes \alpha' k}\otimes (\widetilde{\Ii}^\vee)^{\otimes \alpha'},\widetilde{\IUnit}/\widetilde{v}^{\alpha_0}\right)\\
				&\cong \pi_i\Hom_{\widetilde{\Cc}}\left(\Sigma^{-\alpha' k +2k-3}\nu(X),\widetilde{\IUnit}/\widetilde{v}^{\alpha_0}\right)\oplus \pi_i\Hom_{\widetilde{\Cc}}\left(\Sigma^{-\alpha' k+\alpha+2k-2}\nu(Y),\widetilde{\IUnit}/\widetilde{v}^{\alpha_0}\right)\,,
			\end{aligned}
		\end{multline*}
		where $X\simeq (\Sigma\Ii)^{\otimes \alpha' k}$ and $Y\simeq (\Sigma \Ii)^{\otimes \alpha'k}\otimes (\Sigma^{-1}\Ii^\vee)^{\otimes \alpha}$. According to \cite[Lemma~\chref{4.8}]{BurklundMooreSpectra} (which is applicable thanks to our rigidity assumption on $\Cc$), both summands on the right-hand side vanish for $i\in\{0,1\}$ as soon as $\alpha' k-\alpha-2k+1\geqslant \alpha_0$. Under our assumptions $\alpha'\geqslant \alpha\geqslant \alpha_0+3$ and $k\geqslant 2$, we can estimate
		\begin{equation*}
			\alpha'k-\alpha-2k+1\geqslant (\alpha_0+3)(k-1)-2k+1=(k-1) \alpha_0+k-2\geqslant\alpha_0\,,
		\end{equation*}
		as desired. This shows that indeed all obstructions vanish (because the obstruction group $\pi_0$ vanishes) and the witnessing nullhomotopies are unique (because $\pi_1$ also vanishes).
		
		Now~\cref{enum:SquareZeroMap} as well as the first part of~\cref{enum:SquareZero} immediately follow. Indeed, in the case $\alpha'=\alpha$,  the vanishing result above combined with \cite[Remark~\chref{2.5}]{BurklundMooreSpectra} shows that the $\IE_1$-structure on $\widetilde{\IUnit}/\widetilde{v}^{\alpha_0}\otimes \widetilde{\IUnit}/\widetilde{v}^\alpha$ is unique, so it has to be the trivial square zero structure. For general $\alpha'\geqslant \alpha$, the same argument shows that the $\IE_1$-map $\widetilde{\IUnit}/\widetilde{v}^{\alpha_0}\otimes \widetilde{\IUnit}/\widetilde{v}^{\alpha'}\rightarrow \widetilde{\IUnit}/\widetilde{v}^{\alpha_0}\otimes \widetilde{\IUnit}/\widetilde{v}^{\alpha}$ is unique, proving~\cref{enum:SquareZeroMap}. To show the second part of~\cref{enum:SquareZero}, observe that, with notation as above, we must also have
		\begin{equation*}
			\pi_i\Hom_{\widetilde{\Cc}}\left(\Sigma^{-\alpha' k +2k-3}\nu(X),\widetilde{\IUnit}/\widetilde{v}^{\alpha_0}\right)\cong 0\quad\text{for }i\in\{0,1\}\,.
		\end{equation*}
		This precisely ensures that $\widetilde{\IUnit}/\widetilde{v}^{\alpha_0}\otimes \widetilde{\IUnit}/\widetilde{v}^\alpha\rightarrow \widetilde{\IUnit}/\widetilde{v}^{\alpha_0}$ is unique as well, and so it has to be the augmentation map.
	\end{proof}
	
	\begin{cor}\label{cor:E1FactorisationSpalpha}
		If $\Ii$ is dualisable, $\alpha\geqslant \alpha_0+3$, and $\alpha'\geqslant \alpha+\alpha_0$, then
		\begin{equation*}
			\IUnit/v^{\alpha_0}\otimes\IUnit/v^{\alpha'}\longrightarrow \IUnit/v^{\alpha_0}\otimes\IUnit/v^\alpha
		\end{equation*}
		factors through the tensor unit $\IUnit/v^{\alpha_0}$ as a map of $\IE_1$-algebras in $\LMod_{\IUnit/v^{\alpha_0}}(\Cc)$.
	\end{cor}
	\begin{proof}
		By \cref{prop:SquareZeroExtensions}\cref{enum:SquareZeroMap}, it's enough to check that $v^{\alpha'-\alpha}\colon \Ii^{\otimes \alpha'}/v^{\alpha_0}\rightarrow \Ii^{\otimes \alpha}/v^{\alpha_0}$ is zero in $\LMod_{\IUnit/v^{\alpha_0}}(\Cc)$ for $\alpha'\geqslant \alpha+\alpha_0$. This reduces to $v^{\alpha_0}\colon \Ii^{\otimes \alpha_0}/v^{\alpha_0}\rightarrow \IUnit/v^{\alpha_0}$ being zero in $\LMod_{\IUnit/v^{\alpha_0}}(\Cc)$. Since $\IUnit/v^{\alpha_0}\otimes-\colon \Cc\rightarrow \LMod_{\IUnit/v^{\alpha_0}}(\Cc)$ is left adjoint to the forgetful functor, this is equivalent to $v^{\alpha_0}\colon \Ii^{\otimes {\alpha_0}}\rightarrow \IUnit/v^{\alpha_0}$ being zero in $\Cc$, which is true by construction.
	\end{proof}
	
	Thanks to \cref{cor:E1FactorisationSpalpha}, it is now easy to construct examples where Assumption~\cref{par:MotivesOfInterest}\cref{enum:V} is satisfied and thus \cref{thm:RefinedInvariants} is applicable.
	
	\begin{exm}\label{exm:THHrefk1m}
		Let $m$ be a positive integer that is either coprime to $2$ or divisible by~$4$. Then $\IS/m$ admits a right-unital multiplication and so Burklund's construction applied to $m\colon \IS\rightarrow \IS$ provides a tower of $\IE_1$-algebras
		\begin{equation*}
			\IS/m^2\longleftarrow \IS/m^3\longleftarrow \IS/m^4\longleftarrow \dotsb\,.
		\end{equation*}
		Up to passing to an appropriate subtower, this satisfies Assumption~\cref{par:MotivesOfInterest}\cref{enum:V}. Indeed, dualisability and the thick tensor ideal condition are clear and the factorisation condition follows from \cref{cor:E1FactorisationSpalpha} above. 
		
		Thus, for any $\IE_\infty$-ring spectrum~$k$, \cref{thm:RefinedInvariants} shows that $\THHref(k[1/m]/k)$ is obtained from $\THH(k/k)\simeq k$ by killing the idempotent pro-algebra $\prolim_{\alpha\geqslant 1}\THH((k\otimes\IS/m^\alpha)/k)$. In particular, there's a cofibre sequence
		\begin{equation*}
			\indcolim_{\alpha\geqslant 1}\THH\bigl((k\otimes\IS/m^\alpha)/k\bigr)^\vee\longrightarrow k\longrightarrow \THHref\bigl(k\bigl[\localise{m}\bigr]/k\bigr)\,.
		\end{equation*}
		in $\Nuc\Ind(\Mod_k(\Sp)^{\B S^1})$. Since $\THHref(-/k)$ commutes with filtered colimits, this also allows us to compute $\THHref(k\otimes\IQ/k)\simeq \colimit_{m\in\IN}\THHref(k[1/m]/k)$.
	\end{exm}
	\begin{exm}\label{exm:THHrefSx}
		If $k$ is any $\IE_\infty$-ring spectrum, we can compute $\THHref(k[x]/k)$ as follows: Let $\IP_k^1$ denote the flat projective line over~$k$, which is smooth and proper over $k$. We can construct a tower of $\IE_1$-algebras
		\begin{equation*}
			k[x^{-1}]/x^{-1}\longleftarrow k[x^{-1}]/x^{-2}\longleftarrow k[x^{-1}]/x^{-3}\longleftarrow \dotsb
		\end{equation*}
		either by hand (construct $k[x^{-1}]$ as a graded $\IE_\infty$-$k$-algebra with $x^{-1}$ in graded degree~$-1$, then truncate the grading) or by applying Burklund's construction to $\Oo_{\IP_k^1}(-1)\rightarrow \Oo_{\IP_k^1}$ (this will only give the tower from the second step onwards, but this is no problem). In either case, Assumption~\cref{par:MotivesOfInterest}\cref{enum:V} will be satisfied and so \cref{thm:RefinedInvariants} provides a cofibre sequence
		\begin{equation*}
			\indcolim_{\alpha\geqslant1}\THH\bigl((k[x^{-1}]/x^{-\alpha})/k\bigr)^\vee\longrightarrow \THH(\IP_k^1/k)\longrightarrow \THHref\bigl(k[x]/k\bigr)
		\end{equation*}
		in $\Nuc\Ind(\Mod_k(\Sp)^{\B S^1})$.
	\end{exm}
	
	As a final example, let us explain how \cref{thm:RefinedInvariants} applies to $\THHref(\L_n^{f}\IS_{(p)}/\IS_{(p)})$, where $\L_n^f$ denotes telescopic localisation to chromatic height~$\leqslant n$. First we need a technical lemma:
	
	\begin{lem}\label{lem:EmFinityTypeTechnical}
		Let $m\geqslant 2$ and $n\geqslant 0$. Let $V'\rightarrow V$ be a map of $\IE_{m+1}$-algebras whose underlying spectra are of type~$n$. Let $v\colon \Sigma^NV\rightarrow V$ be a $v_n$-self map of $V$ and $v'\colon \Sigma^{N'}V'\rightarrow V'$ a $v_n$-self map of $V'$.
		\begin{alphanumerate}
			\item Up to replacing $v'$ by a suitable power, the induced map $v'\otimes_{V'}V\colon \Sigma^{N'}V\rightarrow V$ can be chosen to be a power of $v$.\label{enum:AsymptoticUniqueness}
			\item Suppose $v$ is the fourth power of another $v_n$-self map of $V$, so that $V/v$ admits a right-unital multiplication in $\LMod_V(\Sp_{(p)})$. Furthermore, assume that $v'$ is as in~\cref{enum:AsymptoticUniqueness} and $V'/v'$ admits a right-unital multiplication in $\LMod_{V'}(\Sp)$. Then the canonical left-$V$-module map\label{enum:EmFinityTypeTechnical}
			\begin{equation*}
				V'/v'^{m+1}\otimes_{V'}V\longrightarrow V/v^{m+1}\,.
			\end{equation*}
			can be upgraded to an $\IE_m$-algebra map in $\LMod_V(\Sp)$, where we equip $V/v^{m+1}$ and $V'/v'^{m+1}$ with Burklund's $\IE_m$-structures in $\LMod_V(\Sp)$ and $\LMod_{V'}(\Sp)$, respectively. 
		\end{alphanumerate}
	\end{lem}
	\begin{proof}[Proof sketch]
		Part~\cref{enum:AsymptoticUniqueness} follows immediately from asymptotic uniqueness of $v_n$-self maps (see \cite[Lemma~\href{https://www.math.ias.edu/~lurie/252xnotes/Lecture27.pdf\#page=3}{27.10}]{LurieChromatic} for example).
		
		To show~\cref{enum:EmFinityTypeTechnical}, let us denote $V/v'^{m+1}\coloneqq V'/v'^{m+1}\otimes_{V'}V$ for short. First note that the claim is not completely automatic, since the $\IE_m$-structures on $V/v^{m+1}$ and $V/v'^{m+1}$ are constructed via different deformation categories. More precisely, let $\Qq$ and $\Qq'$ be the classes of morphisms in $\LMod_V(\Sp)^\omega$ that become split epimorphisms upon $-\otimes_VV/v$ or $-\otimes_VV/v'$, respectively. Then the $\IE_m$-structure on $V/v^{m+1}$ is constructed via $\Def(\LMod_V(\Sp);\Qq)$, whereas for $V/v'^{m+1}$ we use $\Def(\LMod_V(\Sp);\Qq')$.
		
		Our assumptions on $v$ and $v'$ imply that $V/v'\rightarrow V/v$ can be turned into an $\IE_1$-map in $\LMod_V(\Sp)$. This need not be compatible with the $\IE_1$-structures on $V/v^{m+1}$ or $V/v'^{m+1}$, but it is enough to ensure $\Qq'\subseteq \Qq$, because any morphism that becomes split after $-\otimes_VV/v'$ will also become split after $(-\otimes_{V}V/v')\otimes_{V/v'}V/v\simeq -\otimes_VV/v$. Sheafification then induces a strongly continuous $\IE_{m+1}$-monoidal functor $\Def(\LMod_V(\Sp);\Qq')\rightarrow \Def(\LMod_V(\Sp);\Qq)$ which fits into a commutative diagram
		\begin{equation*}
			\begin{tikzcd}
				\Def\bigl(\LMod_V(\Sp);\Qq'\bigr)\rar &\Def\bigl(\LMod_V(\Sp);\Qq\bigr)\\
				\LMod_V(\Sp)\uar["\nu'"]\urar["\nu"']
			\end{tikzcd}
		\end{equation*}
		where $\nu$ and $\nu'$ denote the respective Yoneda embeddings.
		
		Let us now denote deformations in $\Def(\LMod_V(\Sp);\Qq)$ by $(\widetilde{-})$ as in the proof of \cref{prop:SquareZeroExtensions}. Via the functor above and \cite[Proposition~\chref{2.4}]{BurklundMooreSpectra}, we can write $\widetilde{V}/\widetilde{v}'^{m+1}$ as an iterated pushout of $\IE_m$-algebras in $\Def(\LMod_V(\Sp);\Qq)$. This yields a sequence of obstructions to constructing an $\IE_m$-algebra map $\widetilde{V}/\widetilde{v}'^{m+1}\rightarrow \widetilde{V}/\widetilde{v}^{m+1}$. Since the functor $\Def(\LMod_V(\Sp);\Qq')\rightarrow \Def(\LMod_V(\Sp);\Qq)$ intertwines $\nu'$ and $\nu$, the obstructions are still of the form that automatically vanishes.
	\end{proof}
	
	\begin{exm}\label{exm:THHrefChromatic}
		For all $m\geqslant 2$ and $n\geqslant 0$ let us construct a tower of $\IE_m$-algebras
		\begin{equation*}
			V(n)_0\longleftarrow V(n)_1\longleftarrow V(n)_2\longrightarrow \dotsb
		\end{equation*}
		of the form $V(n)_r\simeq \IS/(p^{\alpha_{r,0}},v_1^{\alpha_{r,1}},\dotsc,v_n^{\alpha_{r,n}})$, such that Assumption~\cref{par:MotivesOfInterest}\cref{enum:V} is satisfied. Note that the dualisability condition in \cref{par:MotivesOfInterest}\cref{enum:V} is trivial and the thick tensor ideal condition is automatic by the thick subcategory theorem (see \cite[Theorem~\href{https://www.math.ias.edu/~lurie/252xnotes/Lecture26.pdf\#page=2}{26.8}]{LurieChromatic} for example). So we only have to construct the tower and verify the factorisation condition.
		
		We use induction on~$n$. Suppose we've already constructed a tower of $\IE_{m+1}$-algebras $(V(n-1)_r)_{r\geqslant 0}$ with the desired properties. We'll write $V_r\coloneqq V(n-1)_r$ for brevity. Using \cref{lem:EmFinityTypeTechnical} for $V_{r+1}\rightarrow V_r$, we can inductively construct $v_n$-self maps $v_{n,r}\colon \Sigma^{N_r}V_{r}\rightarrow Vr$ such that each of them is the fourth power of another $v_n$-self map and the quotients 
		\begin{equation*}
			\ov V_r\coloneqq V_r/v_{n,r}^{2^r(m+1)}
		\end{equation*}
		fit into a tower of $\IE_m$-algebras. Note that this would already work with $V_r/v_{n,r}^{m+1}$; the extra factor in the exponent will only be used for the factorisation condition.
		
		As in the proof of \cref{lem:TorsionSpectra}, consider the right-$V_{r+1}\otimes V_r^\op$-module structure on $\ov V_{r+1}\otimes \ov V_r$ given by its \enquote{inner} bimodule structure. Since $V_{r+1}\otimes V_r\rightarrow V_r\otimes V_r$ factors through $V_r$ by the inductive hypothesis, we see that $\ov V_{r+1}\otimes \ov V_r\rightarrow \ov V_r\otimes \ov V_r$ factors through
		\begin{equation*}
			\bigl(\ov V_{r+1}\otimes \ov V_r\bigr)\otimes_{V_{r+1}\otimes V_r^\op}V_r\simeq \bigl(\ov V_{r+1}\otimes_{V_{r+1}}V_r\bigr)\otimes_{V_r}\ov V_r
		\end{equation*}
		as a map of $\ov V_{r+1}$-$\ov V_r$-bimodules. If we now consider the composition $\ov V_{r+2}\otimes \ov V_r\rightarrow \ov V_r\otimes \ov V_r$, we see that it factors through
		\begin{equation*}
			V_r/v_{n,r+1}^{2^{r+2}(m+1)}\otimes_{V_r} \ov V_r\longrightarrow V_r/v_{n,r+1}^{2^{r+1}(m+1)}\otimes_{V_r} \ov V_r\,.
		\end{equation*}
		This, in turn, factors through $\ov V_r$ as a map of $\IE_1$-algebras in $\RMod_{\ov V_r}(\Sp)$. Indeed, this follows from \cref{cor:E1FactorisationSpalpha} via base change along $V_r/v_{n,r+1}^{2^r(m+1)}\rightarrow V_r/v_{n,r}^{2^r(m+1)}\simeq \ov V_r$. So we get the desired factorisation for $\ov V_{r+2}\otimes \ov V_r\rightarrow \ov V_r\otimes \ov V_r$. Thus, if we put $V(n)_r\coloneqq \ov V_{2r}$, we get a tower of the desired form.
		
		With these disgusting technicalities out of the way, we can finally apply \cref{thm:RefinedInvariants}: We deduce that $\THHref(\L_n^{f}\IS_{(p)}/\IS_{(p)})$ is obtained from $\IS_{(p)}$ by killing an idempotent pro-algebra of the form $\prolim_{r\geqslant 0}\THH(\IS/(p^{\alpha_{r,0}},v_1^{\alpha_{r,1}},\dotsc,v_n^{\alpha_{r,n}}))$. In particular, we get a cofibre sequence
		\begin{equation*}
			\indcolim_{r\geqslant 0}\THH\bigl(\IS/(p^{\alpha_{r,0}},v_1^{\alpha_{r,1}},\dotsc,v_n^{\alpha_{r,n}})\bigr)^\vee\longrightarrow \IS_{(p)}\longrightarrow \THHref\bigl(\L_n^f\IS_{(p)}/\IS_{(p)}\bigr)
		\end{equation*}
		in $\Nuc\Ind(\Sp_{(p)}^{\B S^1})$.
	\end{exm}

	\newpage
	
	\section{Refined \texorpdfstring{$\THH$ and $\TC^-$}{THH and TC-} over \texorpdfstring{$\ku$}{ku}}\label{sec:RefinedTC-}
	
	We've seen in \cref{exm:THHrefk1m} that to compute $\THHref(\IQ)$, one essentially has to compute an ind-object of the form $\indcolim_{\alpha\geqslant 2}\THH(\IS/p^\alpha)^\vee$ for all primes~$p$. This seems currently out of reach. However, after base change to $\ku$, we can get some control over $\THH((\ku\otimes\IS/p^\alpha)/\ku)$ thanks to the results from \cite{qdeRhamku}, and so $\THHref(\ku\otimes\IQ/\ku)$ is approachable.
	
	In this section we study $\TCref(\ku\otimes\IQ/\ku)$ and $\TCref(\KU\otimes\IQ/\KU)$, which contain the same information as $\THHref(\ku\otimes\IQ/\ku)$ and $\THHref(\KU\otimes\IQ/\KU)$ by \cref{lem:S1ActiontComplete} below. In \cref{subsec:RefinedTC-}, we compute the homotopy groups
	\begin{equation*}
		\A_{\ku}^*\coloneqq \pi_{2*}\TCref(\ku\otimes\IQ/\ku)\quad\text{and}\quad\A_{\KU}\coloneqq\pi_0\TCref(\KU\otimes\IQ/\KU)
	\end{equation*}
	in terms of certain $q$-Hodge filtrations $\fil_{\qHodge}^\star\qdeRham_{(\IZ/p^\alpha)/\IZ_p}$ and the associated $q$-Hodge complexes $\qHodge_{(\IZ/p^{\alpha})/\IZ_p}$ that we get from the chosen $\IE_1$-structures on $\IS/p^\alpha$. In \cref{subsec:ElementaryProof} we'll explain how to describe these objects explicitly. These explicit descriptions will then be used in \cref{sec:Overconvergent} to finish the proof of \cref{thm:OverconvergentNeighbourhood,thm:OverconvergentNeighbourhoodEquivariant}.
	
	\begin{numpar}[Convention]\label{conv:ImplicitCompletion3}
		Throughout~\crefrange{sec:RefinedTC-}{sec:Overconvergent}, all ($q$\nobreakdash-)de Rham complexes and $q$-Hodge complexes relative to a $p$-complete ring will be implicitly $p$-completed.
	\end{numpar}
	
	\subsection{\texorpdfstring{$q$}{q}-Hodge filtrations and \texorpdfstring{$\TCref(\ku\otimes\IQ/\ku)$}{TC-ref(ku(x)Q/ku)}}\label{subsec:RefinedTC-}
	
	We begin by showing that for complex orientable ring spectra~$k$, $\THHref(k\otimes\IQ/k)$ with its $S^1$-action contains the same information as $\TCref(k\otimes\IQ/k)$.
	\begin{lem}\label{lem:S1ActiontComplete}
		Let $k$ be a complex orientable $\IE_\infty$-ring spectrum, equipped with trivial $S^1$-action, and let $t\in\pi_{-2}(k^{\h S^1})$ be any complex orientation. Then taking $S^1$-fixed points defines a symmetric monoidal equivalence
		\begin{equation*}
			(-)^{\h S^1}\colon \Mod_k(\Sp)^{\B S^1}\overset{\simeq}{\longrightarrow} \Mod_{k^{\h S^1}}(\Sp)_t^\complete\,,
		\end{equation*}
		where $\Mod_{k^{\h S\smash{^1}}}(\Sp)_t^\complete$ denotes $\infty$-category of $t$-complete $k^{\h S^1}$-module spectra, which we equip with the $t$-completed tensor product $-\cotimes_{k^{\h S^1}}-$.
	\end{lem}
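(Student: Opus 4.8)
The plan is to realise $(-)^{\h S^1}$ as a colimit-preserving, lax symmetric monoidal functor landing in $t$-complete $k^{\h S^1}$-modules, and then to prove it is both essentially surjective and fully faithful by testing on a single compact generator. First I would invoke the standard identification $\Mod_k(\Sp)^{\B S^1}=\Fun(\B S^1,\Mod_k(\Sp))\simeq \LMod_{k\otimes\Sigma^\infty_+ S^1}(\Sp)$, i.e.\ modules over the group algebra of $S^1$; in particular the source is compactly generated by the free module $P\coloneqq k\otimes\Sigma^\infty_+ S^1$, and for the forgetful functor $U\colon \Mod_k(\Sp)^{\B S^1}\to \Mod_k(\Sp)$ (evaluation at the basepoint) one has $\Map_{\Mod_k(\Sp)^{\B S^1}}(P,Y)\simeq UY$ naturally in $Y$. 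Since $(-)^{\h S^1}$ is the right adjoint of the symmetric monoidal constant (``trivial action'') functor, it is lax symmetric monoidal and carries the unit $k$ to $k^{\h S^1}$; hence it induces a lax symmetric monoidal functor $F\colon \Mod_k(\Sp)^{\B S^1}\to \Mod_{k^{\h S^1}}(\Sp)$.

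The complex orientation enters at the next step, which I expect to be the main point. Using the skeletal filtration of $\B S^1$, the spectrum $M^{\h S^1}$ carries a complete decreasing filtration whose $n$\textsuperscript{th} graded piece is $\Sigma^{-2n}UM$, and along which multiplication by $t\in\pi_{-2}(k^{\h S^1})$ acts as the shift; consequently $M^{\h S^1}$ is $t$-complete and there is a natural equivalence $M^{\h S^1}/t\simeq UM$. (Equivalently: the $t$-adic and skeletal filtrations on $M^{\h S^1}$ coincide, which is exactly convergence of the homotopy-fixed-point spectral sequence.) Thus $F$ refines to $\bar F\colon \Mod_k(\Sp)^{\B S^1}\to \Mod_{k^{\h S^1}}(\Sp)_t^\complete$. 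Moreover $\bar F$ preserves colimits: colimits in the target are computed by $t$-completing the underlying colimit, both $\bar F(\colimit_\alpha M_\alpha)$ and $(\colimit_\alpha\bar F M_\alpha)_t^\complete$ are $t$-complete, and reducing modulo $t$ both become $\colimit_\alpha UM_\alpha$ (since $U$ and $(-)/t$ preserve colimits); a $t$-complete module vanishing after $\otimes_{k^{\h S^1}}k^{\h S^1}/t$ is zero, so the comparison map is an equivalence.

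Now I would prove $\bar F$ is an equivalence of $\infty$-categories. For essential surjectivity I compute $\bar F(P)$: as $P$ is an induced $S^1$-spectrum we have $P^{\t S^1}\simeq 0$, so the norm cofibre sequence gives $P^{\h S^1}\simeq \Sigma P_{\h S^1}\simeq \Sigma k$, which as a $k^{\h S^1}$-module is $\Sigma(k^{\h S^1}/t)$ --- a compact generator of $\Mod_{k^{\h S^1}}(\Sp)_t^\complete$. Since $\bar F$ is exact, colimit-preserving, and sends a compact generator to a compact generator, it is essentially surjective. For full faithfulness it suffices to check that $\bar F$ induces an equivalence on mapping spectra out of $P$, because $\Map(-,Y)$ and $\Map(\bar F-,\bar FY)$ both send colimits in the source to limits and $P$ generates under colimits. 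Here $\Map_{\Mod_k(\Sp)^{\B S^1}}(P,Y)\simeq UY$, whereas $\Map_{\Mod_{k^{\h S^1}}(\Sp)_t^\complete}(\bar FP,\bar FY)\simeq \Sigma^{-1}\RHom_{k^{\h S^1}}(k^{\h S^1}/t,Y^{\h S^1})\simeq Y^{\h S^1}/t\simeq UY$, and one checks that the comparison map is precisely the natural equivalence $M^{\h S^1}/t\simeq UM$ of the previous paragraph applied to $M=Y$. The only genuinely non-formal verifications are thus this identification $M^{\h S^1}/t\simeq UM$ (and its naturality), which uses the complex orientation essentially, and the bookkeeping that the comparison map of mapping spectra is the expected one.

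Finally, for the symmetric monoidal refinement: a lax symmetric monoidal functor which is an equivalence is automatically strong symmetric monoidal, provided one transports the symmetric monoidal structure correctly; concretely, the lax structure maps $\bar F(M)\otimes_{k^{\h S^1}}\bar F(N)\to\bar F(M\otimes_k N)$ become equivalences after $t$-completion (again checked modulo $t$, where they reduce to the monoidality of $U$), so the target must be equipped with the $t$-completed tensor product $-\cotimes_{k^{\h S^1}}-$. This yields the symmetric monoidal equivalence $(-)^{\h S^1}\colon \Mod_k(\Sp)^{\B S^1}\xrightarrow{\ \simeq\ }\Mod_{k^{\h S^1}}(\Sp)_t^\complete$ as claimed.
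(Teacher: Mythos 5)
Your proof is correct, but it takes a genuinely different route from the paper's. The paper argues purely through the adjunction: it exhibits the left adjoint $L$ of $(-)^{\h S^1}$ explicitly (trivial action followed by base change along $k^{\h S^1}\rightarrow k$, so that $L$ is $(-)/t$ on underlying modules), observes that $L$ is conservative on $t$-complete modules and that the counit is an equivalence, and concludes by the standard criterion "fully faithful right adjoint plus conservative left adjoint". You instead run a Schwede--Shipley-type recognition argument: identify the source with $\LMod_{k\otimes\Sigma^\infty_+S^1}(\Sp)$, compute that the compact generator $P=k\otimes\Sigma^\infty_+S^1$ goes to the compact generator $\Sigma(k^{\h S^1}/t)$ via the norm cofibre sequence, and verify full faithfulness on mapping spectra out of $P$. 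Both proofs hinge on exactly the same non-formal input --- the natural equivalence $M^{\h S^1}/t\simeq UM$ for complex orientable $k$, which is what the paper delegates to \cite[Lemma~\chref{2.2.10}]{EvenFiltration} and what you extract from the skeletal filtration of $\B S^1$; everything else is formal in either approach, including the monoidality check modulo $t$, which is identical in both. Your route is longer but makes the compact generation of both sides and the image of the generator explicit, which is occasionally useful; the paper's is shorter because it never needs to name a generator. One small logical reordering: your essential-surjectivity step ("colimit-preserving and sends a compact generator to a compact generator, hence essentially surjective") is not valid on its own, since the essential image of a colimit-preserving functor is only known to be closed under colimits once full faithfulness is established; but as you prove full faithfulness independently immediately afterwards, this costs nothing.
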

	\begin{proof}
		By construction $(-)^{\h S^1}$ is lax symmetric monoidal. To see that it is strictly symmetric monoidal, we must check whether $M^{\h S^1}\cotimes_{k^{\h S^1}} N^{\h S^1}\rightarrow (M\otimes_k N)^{\h S^1}$ is an equivalence. As both sides are $t$-complete, this can be checked modulo $t$, where it follows from \cite[Lemma~\chref{2.2.10}]{EvenFiltration} for example.
		
		By definition, $(-)^{\h S^1}\colon \Sp^{\B S^1}\rightarrow \Sp$ has a left adjoint, given by the symmetric monoidal functor $\const\colon \Sp\rightarrow \Sp^{\B S^1}$, which sends a spectrum $X$ to itself equipped with the trivial $S^1$-action. By general nonsense about how symmetric monoidal adjunctions pass to module categories, we see that $(-)^{\h S^1}\colon \Mod_{k}(\Sp)^{\B S^1}\simeq \Mod_k(\Sp^{\B S^1})\rightarrow \Mod_{k^{\h S^1}}(\Sp)$ admits a left adjoint $L$, which is given as the composition
		\begin{equation*}
			L\colon \Mod_{k^{\h S^1}}(\Sp)\xrightarrow{\const}\Mod_{k^{\h S^1}}(\Sp^{\B S^1})\xrightarrow{-\otimes_{k^{\h S^1}}k}\Mod_k\bigl(\Sp^{\B S^1}\bigr)\simeq \Mod_k(\Sp)^{\B S^1}\,.
		\end{equation*}
		In particular, on underlying $k$-modules, $L$ is simply given by $(-)/t$. Since $(-)/t$ is conservative on $t$-complete $k^{\h S^1}$-modules, it follows that $L\colon \Mod_{k^{\h S^1}}(\Sp)_t^\complete\rightarrow \Mod_k(\Sp)^{\B S^1}$ must be conservative too. Furthermore, the counit $c\colon L((-)^{\h S\smash{^1}})\Rightarrow \id$ is an equivalence, as follows from \cite[Lemma~\chref{2.2.10}]{EvenFiltration} again. Thus $(-)^{\h S^1}$ must be fully faithful. We conclude using the standard fact that an adjunction in which the right adjoint is fully faithful and the left adjoint is conservative must be a pair of inverse equivalences.
	\end{proof}
	We'll now set out to compute $\pi_*\TCref(\ku\otimes\IQ/\ku)$ and $\pi_*\TCref(\KU\otimes \IQ/\KU)$.
	\begin{numpar}[Outline of the computation.]\label{par:TC-BattlePlan}
		For convenience, let's call a positive integer $m$ \emph{high-powered} if its prime factorisation $m=\prod_pp^{\alpha_p}$ has the following property: For all primes~$p>2$ either $\alpha_p=0$ or $\alpha_p\geqslant 2$ and for $p=2$ either $\alpha_2=0$ or $\alpha_2$ is even and $\geqslant 4$. We let $\IN^\lightning\!$ denote the set of high-powered positive integers, partially ordered by divisibility.
		
		Since $\IS/4$ and $\IS/p$ admit right-unital multiplications, we can use Burklund's general construction \cite[Theorem~\chref{1.5}]{BurklundMooreSpectra}%
		\footnote{We could also use \cite[Theorem~\chref{3.2}]{BurklundMooreSpectra} to get another tower of $\IE_1$-algebras $\IS/8\leftarrow \IS/16\leftarrow\IS/32\leftarrow\dotsb$. This one is potentially different from ours (as different deformation categories are used in the construction). It will become apparent in \cref{par:ReductionToTorsionFree} why we made our choice.}
		to construct $\IE_1$-structures on 
		\begin{equation*}
			\IS/m\simeq \prod_p\IS/p^{\alpha_p}
		\end{equation*}
		for every high-powered $m$. These assemble into a functor $\IS/{-}\colon \IN^{\lightning}\!\rightarrow \Alg_{\IE_1}(\Sp)$. In the following we'll write $\ku/m\coloneqq \ku\otimes\IS/m$ and $\KU/m\coloneqq \KU\otimes\IS/m$, where it is understood that the $\IE_1$-structure is always base changed from the one on $\IS/m$ above. By \cref{exm:THHrefk1m} and \cref{lem:S1ActiontComplete}, we get a cofibre sequence
		\begin{equation*}
			\indcolim_{m\in (\IN^\lightning\!)^\op}\TC^-\bigl((\ku/m)/\ku\bigr)^\vee\longrightarrow \ku^{\h S^1}\longrightarrow \TCref\left(\ku\otimes \IQ/\ku\right)
		\end{equation*}
		(where now $(-)^\vee\coloneqq \Hom_{\ku^{\h S^1}}(-,\ku^{\h S^1})$ denotes the dual in $\ku^{\h S^1}$-modules) and a similar one for $\KU$. To compute the pro-object on the left, we'll proceed in three steps:
		\begin{alphanumerate}
			\item We compute the homotopy groups $\pi_*\TC^-((\ku/m)/\ku)$ and $\pi_*\TC^-((\KU/m)/\KU)$ using \cite[Theorem~\chref{4.27}]{qdeRhamku}. This will be the content of \cref{cor:TC-kupalpha}.\label{enum:TC-BattlePlanStepA}
			\item We compute $\pi_*\TC^-((\ku/m)/\ku)^\vee$ and $\pi_*\TC^-((\KU/m)/\KU)^\vee$, essentially showing that in this case taking duals commutes with $\pi_*$ in a derived way. This will be achieved in \cref{cor:Duals}.\label{enum:TC-BattlePlanStepB}
			\item We show that pro-idempotence and the transition maps being trace-class passes to homotopy groups in this case. This will be the content of \cref{cor:ProIdempotent,cor:ProTraceClass}.\label{enum:TC-BattlePlanStepC}
		\end{alphanumerate}
		This leads to a preliminary description of the homotopy rings $\pi_*\TCref(\ku\otimes\IQ/\ku)$ and $\pi_*\TCref(\KU\otimes \IQ/\KU)$ in \cref{thm:RefinedTC-qHodge}. 
	\end{numpar}
	
	We begin with step~\cref{enum:TC-BattlePlanStepA}.

	\begin{numpar}[Reduction to the $p$-torsion free case.]\label{par:ReductionToTorsionFree}
		Decomposing $m=\prod_pp^{\alpha_p}$ into prime powers, we have
		\begin{equation*}
			\TC^-\bigl((\ku/m)/\ku\bigr)\simeq \prod_p\TC^-\bigl((\ku/p^{\alpha_p})/\ku\bigr)\,,
		\end{equation*}
		so we may reduce to the case where $m=p^\alpha$ is a high-powered prime power. Let us remark that $\TC^-((\ku/p^\alpha)/\ku)$ is automatically $p$-complete. Indeed, it is $(\beta,t)$-complete and $\TC^-((\ku/p^\alpha)/\ku)/(\beta,t)\simeq \HH((\IZ/p^\alpha)/\IZ)$ is $p^\alpha$-torsion, hence $p$-complete.
		%
		
		To compute $\TC^-((\ku/p^\alpha)/\ku)$, we lift to a $p$-torsion free case. Let $\IZ_p\{x\}_\infty$ be the free $p$-complete perfect $\delta$-ring on a generator $x$. Since the $p$-completed cotangent complex of $\IZ_p\{x\}_\infty$ vanishes, it lifts uniquely to a $p$-complete connective $\IE_\infty$-ring spectrum, which we'll denote $\IS_{\IZ_p\{x\}_\infty}$. By \cite[Theorem~\chref{1.5}]{BurklundMooreSpectra}, we get a tower of $\IE_1$-algebras in $\IS_{\IZ_p\{x\}_\infty}$-modules
		\begin{equation*}
			\IS_{\IZ_p\{x\}_\infty}/x^2\longleftarrow\IS_{\IZ_p\{x\}_\infty}/x^3\longleftarrow \IS_{\IZ_p\{x\}_\infty}/x^4\longleftarrow\dotsb
		\end{equation*}
		for $p> 2$; the case $p=2$ needs powers of $x^2$ instead. The map of perfect $\delta$-rings $\IZ_p\{x\}_\infty\rightarrow \IZ_p$ sending $x\mapsto p$ lifts uniquely to an $\IE_\infty$-map $\IS_{\IZ_p\{x\}_\infty}\rightarrow \IS_p$. If we base change the tower above along this map, we get the tower of $\IE_1$-algebras $(\IS/p^\alpha)$ from \cref{par:TC-BattlePlan}. Indeed, this follows from the uniqueness statement in \cite[Theorem~\chref{1.5}]{BurklundMooreSpectra}.%
		\footnote{Burklund only shows that the objects in the tower are unique and therefore satisfy base change. But the same argument shows that the transition maps too are unique, so they satisfy base change as well.}
		%
	
	Now put $\ku_{\IZ_p\{x\}_\infty}\coloneqq (\ku\otimes \IS_{\IZ_p\{x\}_\infty})_p^\complete$. Then $\THH(-/\ku_{\IZ_p\{x\}_\infty})_p^\complete\simeq \THH(-/\ku)_p^\complete$ holds by the same argument as in \cite[Proposition~\chref{11.7}]{BMS2} and so we get a base change equivalence
	\begin{equation*}
		\left(\TC^-\bigl((\ku_{\IZ_p\{x\}_\infty}/x^\alpha)/\ku\bigr)\otimes_{\ku_{\IZ_p\{x\}_\infty}}\ku_p^\complete\right)_{(p,t)}^\complete\overset{\simeq}{\longrightarrow}\TC^-\bigl((\ku/p^\alpha)/\ku\bigr)\,.
	\end{equation*}
	\end{numpar}
	\begin{numpar}[A $q$-Hodge filtration for $\IZ/m$.]\label{par:qHodgeZm}
		We can apply \cite[Theorem~\chref{4.17}]{qdeRhamku} to $\IZ_p\{x\}_\infty/x^\alpha$ with its spherical $\IE_1$-lift $\IS_{\IZ_p\{x\}_\infty}/x^\alpha$ to obtain a filtration $\fil_{\qHodge}^\star\qdeRham_{(\IZ_p\{x\}_\infty/x^\alpha)/\IZ_p}$ on the $p$-completed derived de Rham complex of $\IZ_p\{x\}_\infty/x^\alpha$. This filtration doesn't depend on the choice of spherical lift (only its existence) and $q$-deforms the Hodge filtration on $\deRham_{(\IZ_p\{x\}/x^\alpha)/\IZ_p}$. We then construct a filtration on $\qdeRham_{(\IZ/p^\alpha)/\IZ_p}$ as the base change
		\begin{equation*}
			\fil_{\qHodge}^\star\qdeRham_{(\IZ/p^\alpha)/\IZ_p}\coloneqq 	\left(\fil_{\qHodge}^\star\qdeRham_{(\IZ_p\{x\}_\infty/x^\alpha)/\IZ_p}\lotimes_{\IZ_p\{x\}_\infty}\IZ_p\right)_{(p,q-1)}^\complete\,.
		\end{equation*}
		For a general high-powered positive integer $m\in\IN^{\lightning}\!$ with prime factorisation $m=\prod_pp^{\alpha_p}$, we put
		\begin{equation*}
			\fil_{\qHodge}^\star\qdeRham_{(\IZ/m)/\IZ}\coloneqq 		\prod_p\fil_{\qHodge}^\star\qdeRham_{(\IZ/p^{\alpha_p})/\IZ_p}\,,
		\end{equation*}
		and denote its completion by $\fil_{\qHodge}^\star\qhatdeRham_{(\IZ/m)/\IZ}$. We'll verify in \cref{lem:qHodgeZm} below that the filtered object $\fil_{\qHodge}^\star\qdeRham_{(\IZ/m)/\IZ}$ is indeed a \emph{$q$-Hodge filtration} in the sense of \cite[Definition~\chref{3.2}]{qWittHabiro}, as the notation suggests.
		
		We regard these filtrations as filtered modules over the filtered ring $(q-1)^\star\IZ\qpower$. In the following, this filtered ring will be identified with the graded ring $\pi_{2*}(\ku^{\h S^1})\cong \IZ[\beta]\llbracket t\rrbracket$, where $t$ sits graded degree~$-1$ and plays the role of the filtration parameter, $\beta$ sits in graded degree~$1$, and $\beta t=(q-1)$. We will also consider the \emph{$q$-Hodge complex}
		\begin{equation*}
			\qHodge_{(\IZ/m)/\IZ}\coloneqq 	\colimit\left(\fil_{\qHodge}^0\qdeRham_{(\IZ/m)/\IZ}\xrightarrow{(q-1)}\fil_{\qHodge}^1\qdeRham_{(\IZ/m)/\IZ}\xrightarrow{(q-1)}\dotsb\right)_{(q-1)}^\complete
		\end{equation*}
		as in \cite[\chref{3.5}]{qWittHabiro}.
	\end{numpar}
	\begin{rem}
		Let $m=\prod_pp^{\alpha_p}$ be an integer such that for all primes $p>2$ either $\alpha_p=0$ or $\alpha_p\geqslant 3$ and for $p=2$ either $\alpha_2=0$ or $\alpha_2$ is even and $\geqslant 6$. Then the $\IE_1$-structure on $\IS/m$ can be upgraded to an $\IE_2$-structure. We can thus apply \cite[Theorem~\chref{4.27}]{qdeRhamku} directly to obtain another $q$-Hodge filtration on $\qdeRham_{(\IZ/m)/\IZ}$, without having to go through the base change above. However, this $q$-Hodge filtration agrees with the one from \cref{par:qHodgeZm}.
		
		Indeed, note that the $\IE_1$-structure on $\IS_{\IZ_p\{x\}_\infty}/x^{\alpha_p}$ also admits an $\IE_2$-upgrade, compatible with the one on $\IS/p^{\alpha_p}$. Then the assertion follows by naturality and the observation that the solid even filtration on the already even $\IE_1$-ring spectrum $\TC_\solid^-((\ku_{\IZ_p\{x\}_\infty}/x^{\alpha_p})/\ku)$ necessarily agrees with the double-speed Whitehead filtration $\tau_{\geqslant 2\star}$.
	\end{rem}
	\begin{lem}\label{lem:qHodgeZm}
	The filtration $\fil_{\qHodge}^\star\qdeRham_{(\IZ/m)/\IZ}$ is a $q$-Hodge filtration in the sense of \cite[Definition~\chref{3.2}]{qWittHabiro}. Moreover, $\qdeRham_{(\IZ/m)/\IZ}$ and $\qHodge_{(\IZ/m)/\IZ}$ are static $(q-1)$-torsion free rings and the $q$-Hodge filtration is a descending filtration by ideals.
	\end{lem}
	\begin{proof}
		Let us verify the conditions from \cite[Definition~\chref{3.2}]{qWittHabiro}.
		For any prime~$p$, the non-$p$-completed derived $q$-de Rham complex $\qdeRham_{(\IZ/p^{\alpha_p})/\IZ}$ vanishes after $(-)[1/p]_{(q-1)}^\complete$, as $(\IZ/p^{\alpha_p})[1/p]\cong 0$. Hence it also vanishes after $(-)_p^\complete[1/p]_{(q-1)}^\complete$, as any module over the trivial ring is trivial. It follows that $\qdeRham_{(\IZ/p^{\alpha_p})/\IZ}$ is already $p$-complete and thus agrees with $\qdeRham_{(\IZ/p^{\alpha_p})/\IZ_p}$.
	
		With this observation, condition~(\href{https://guests.mpim-bonn.mpg.de/ferdinand/q-Habiro.pdf#Item.23}{$a$}) of \cite[Definition~\chref{3.2}]{qWittHabiro} is straightforward to verify. Condition~(\href{https://guests.mpim-bonn.mpg.de/ferdinand/q-Habiro.pdf#Item.24}{$b$}) follows via base change from $\IZ_p\{x\}_\infty/x^{\alpha_p}$. The other two conditions are vacuous, since the rationalisations vanish. Therefore, $\fil_{\qHodge}^\star\qdeRham_{(\IZ/m)/\IZ}$ is indeed a $q$-Hodge filtration.
	
		To verify that $\fil_{\qHodge}^\star\qdeRham_{(\IZ/m)/\IZ}$ is degree-wise static and $(q-1)$-torsion free, just observe that its reduction modulo~$(q-1)$ is $\fil_{\Hodge}^\star\deRham_{(\IZ/m)/\IZ}$, which is degree-wise static. Via base change from $\IZ_p\{x\}_\infty/x^{\alpha_p}$ it's then clear that $\fil_{\qHodge}^\star\qdeRham_{(\IZ/m)/\IZ}$ must be a descending filtration by ideals. By construction, this implies that $\qHodge_{(\IZ/m)/\IZ}$ is a static and $(q-1)$-torsion free ring, as claimed.
	\end{proof}
	
	The upshot of \cref{par:ReductionToTorsionFree}--\labelcref{lem:qHodgeZm} is the following.
	
	\begin{cor}\label{cor:TC-kupalpha}
	Let $m\in\IN^{\lightning}\!$ be a high-powered positive integer. Then the spectra $\TC^-((\ku/m)/\ku)$ and $\TC^-((\KU/m)/\KU)$ are concentrated in even degrees and we have
	\begin{align*}
		\pi_{2*}\TC^-\bigl((\ku/m)/\ku\bigr)&\cong \fil_{\qHodge}^\star\qhatdeRham_{(\IZ/m)/\IZ}\,,\\ \pi_{2*}\TC^-\bigl((\KU/m)/\KU\bigr)&\cong \qHodge_{(\IZ/m)/\IZ}[\beta^{\pm 1}]\,.
	\end{align*}
	\end{cor}
	\begin{proof}
	It's enough to check evenness modulo $t$, so we may pass from $\TC^-$ to $\THH$. Since $\THH((\ku/m)/\ku)$ is connective, we may further pass to $\THH((\ku/m)/\ku)/\beta\simeq \HH((\IZ/m)/\IZ)$, which is indeed even. This shows evenness for $\THH((\ku/m)/\ku)$ and then the same follows for $\THH((\ku/m)/\ku)[1/\beta]\simeq \THH((\KU/m)/\KU)$.
	
	By decomposing $m$ into prime factors as in \cref{par:ReductionToTorsionFree} and using the base change equivalence, we get a map
	\begin{equation*}
		\fil_{\qHodge}^\star\qhatdeRham_{(\IZ/m)/\IZ}\rightarrow\pi_{2\star}\TC^-((\ku/m)/\ku)\,.
	\end{equation*}
	Whether this is an equivalence can be checked modulo~$\beta$, where we recover the well-known fact that the even homotopy groups of $\TC^-((\ku/m)/\ku)/\beta\simeq \HC^-((\IZ/m)/\IZ)$ are the completed Hodge filtration $\fil_{\Hodge}^\star\hatdeRham_{(\IZ/m)/\IZ}$. The claim that the even homotopy groups of
	\begin{equation*}
		\TC^-\bigl((\KU/m)/\KU\bigr)\simeq \TC^-\bigl((\ku/m)/\ku\bigr)\bigl[\localise{\beta}\bigr]_t^\complete
	\end{equation*}
	are given by $\qHodge_{(\IZ/m)/\IZ}[\beta^{\pm 1}]$ follows formally.
	\end{proof}
	
	This finishes step~\cref{enum:TC-BattlePlanStepA} of our plan in~\cref{par:TC-BattlePlan}.
	We'll now commence step~\cref{par:TC-BattlePlan}\cref{enum:TC-BattlePlanStepB}. We start with a general fact (which is usually formulated as a spectral sequence).
	\begin{lem}\label{lem:ExtSpectralSequence}
	Let $k$ be an even $\IE_1$-ring spectrum and let $M,N$ be even left-$k$-modules. Then the mapping spectrum $\Hom_k(M,N)$ admits a complete exhaustive descending filtration with graded pieces
	\begin{equation*}
		\gr^*\Hom_k(M,N)\simeq \Sigma^{2*}\RHhom_{\pi_{2*}(k)}\bigl(\pi_{2*}(M),\pi_{2*}(N)\bigr)\,.
	\end{equation*}
	Here $\Sigma^{2*}\colon \Gr(\Sp)\rightarrow \Gr(\Sp)$ is the \enquote{double shearing} functor and $\RHhom_{\pi_{2*}(k)}$ denotes the derived internal Hom in graded $\pi_{2*}(k)$-modules.
	\end{lem}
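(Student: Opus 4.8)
The plan is to exhibit the filtration as the totalisation, along a resolution of $M$ by even free $k$-modules, of the (double-speed) Whitehead filtrations of the resulting mapping spectra. The key structural inputs are that $k$ is even, so $\pi_{2*}(k)$ is a graded ring and $\pi_{2*}(M)$, $\pi_{2*}(N)$ are graded $\pi_{2*}(k)$-modules, and that a coproduct of even shifts of $k$ has even homotopy with the expected $\pi_{2*}$.

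First I would build the resolution. Choose a free resolution $P_\bullet\to\pi_{2*}(M)$ by graded free $\pi_{2*}(k)$-modules, $P_s\cong\bigoplus_\alpha\pi_{2*}(k)[-d_{s\alpha}]$, and realise it by a simplicial even $k$-module $F_\bullet$ with $F_s\simeq\bigoplus_\alpha\Sigma^{2d_{s\alpha}}k$, $\pi_{2*}(F_\bullet)\cong P_\bullet$, and $\left|F_\bullet\right|\simeq M$ in $\Mod_k$. This uses evenness of $k$ in two ways: any surjection onto $\pi_{2*}(M)$ from a graded free module lifts to a $k$-module map out of the corresponding even free module (by freeness of $\Sigma^{2d}k$), and building the bar-type resolution from such lifts, one checks via the (collapsing) even-homotopy spectral sequence that its realisation recovers $M$.

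Next, the base case. For $F\simeq\bigoplus_\alpha\Sigma^{2d_\alpha}k$ one has $\Hom_k(F,N)\simeq\prod_\alpha\Sigma^{-2d_\alpha}N$, which is an even spectrum, and its double-speed Whitehead filtration $\Fil^n\coloneqq\tau_{\geqslant 2n}\Hom_k(F,N)$ is complete, exhaustive, with
\begin{equation*}
\gr^n\Hom_k(F,N)\simeq\Sigma^{2n}\H\pi_{2n}\Hom_k(F,N)\simeq\Sigma^{2n}\bigl(\Hhom_{\pi_{2*}(k)}(\pi_{2*}(F),\pi_{2*}(N))\bigr)_n\,,
\end{equation*}
which is exactly the assertion in this case, since $\pi_{2*}(F)$ is projective and so $\RHhom_{\pi_{2*}(k)}$ agrees with $\Hhom_{\pi_{2*}(k)}$ here. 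This filtration is functorial along $k$-module maps and exact, so applying it levelwise to $F_\bullet$ and totalising produces a filtration on $\Hom_k(M,N)\simeq\Tot\Hom_k(F_\bullet,N)$ (using that $\Hom_k(-,N)$ carries the colimit $\left|F_\bullet\right|$ to a limit). Since $\gr^n$ is, up to a shift, a finite limit (a fibre), it commutes with $\Tot$, so
\begin{equation*}
\gr^n\Hom_k(M,N)\simeq\Sigma^{2n}\Tot\Bigl(\H\bigl(\Hhom_{\pi_{2*}(k)}(P_\bullet,\pi_{2*}(N))_n\bigr)\Bigr)\simeq\Sigma^{2n}\bigl(\RHhom_{\pi_{2*}(k)}(\pi_{2*}(M),\pi_{2*}(N))\bigr)_n\,,
\end{equation*}
the last step because $P_\bullet$ is a resolution, so $\Hhom_{\pi_{2*}(k)}(P_\bullet,-)$ computes $\RHhom_{\pi_{2*}(k)}(\pi_{2*}(M),-)$; assembling over $n$ gives the stated formula for $\gr^*$.

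It remains to check completeness and exhaustiveness of the totalised filtration. Completeness is automatic: each levelwise Whitehead filtration is complete (for any spectrum $X$, $\limit_n\tau_{\geqslant 2n}X\simeq 0$), and a totalisation of complete filtered spectra is complete. Exhaustiveness is the point that needs care, since for unbounded $\pi_{2*}(M)$ the filtration is unbounded below and one must commute $\colimit_{n\to-\infty}$ past $\Tot$; I would handle this by choosing the resolution $P_\bullet$ with controlled internal degrees (degreewise bounded below, or at least with growth linear in the simplicial degree), so that the cosimplicial tower is uniformly bounded below in each internal weight and the colimit does commute with $\Tot$. I expect this bookkeeping, together with verifying the convergence $\left|F_\bullet\right|\simeq M$ of the even free resolution, to be the only genuine obstacle; the rest is a routine transport of the Whitehead filtration through a resolution.
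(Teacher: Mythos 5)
Your approach — resolving $M$ by even free $k$-modules and totalising the levelwise double-speed Postnikov filtrations — is genuinely different from the paper's. The paper instead uses the adjunction $\colimit\colon \Fil(\Sp)\shortdoublelrmorphism\Sp\noloc\const$ and the observation $\colimit\tau_{\geqslant 2*}(k)\simeq k$ to rewrite $\Hom_k(M,N)\simeq\Hom_{\tau_{\geqslant 2*}(k)}(\tau_{\geqslant 2*}(M),\const N)$; the filtration is then $\Fil^n\coloneqq\Hom_{\tau_{\geqslant 2*}(k)}(\tau_{\geqslant 2*}(M),\tau_{\geqslant 2(*+n)}(N))$, a filtration on the \emph{target}, and the graded pieces are computed by base change along $\IS_{\Fil}\to\IS_{\Gr}$ and shearing. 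That route requires no resolution, no choices, and gives a manifestly functorial filtration for free. Your route is the classical one that any Adams-spectral-sequence veteran would reach for, and it can be made to work, but it buys less and costs more.

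There are two genuine gaps in your writeup. First, the existence of a simplicial even free resolution $F_\bullet$ with $\left|F_\bullet\right|\simeq M$ is asserted, not proven; lifting a free resolution of $\pi_{2*}(M)$ to a coherent simplicial diagram in $\LMod_k$ requires a real argument (an Adams/Postnikov tower of cofibre sequences $M_{s+1}\to F_s\to M_s$ with $M_s$, $F_s$ even is easier to set up than a genuine simplicial object, and gives the same filtration on $\Hom_k(M,N)$). Second, your proposed exhaustiveness fix is on the wrong track: if $\pi_{2*}(M)$ is unbounded below, even $P_0$ must have generators in arbitrarily negative internal degree, so ``degreewise bounded below'' resolutions need not exist, and ``linear growth in simplicial degree'' does not obviously help. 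The correct argument is simpler and needs no bookkeeping at all: the cofibre of $\Fil^n=\Tot\tau_{\geqslant 2n}\Hom_k(F_\bullet,N)\to\Hom_k(M,N)$ is $\Tot\tau_{\leqslant 2n-1}\Hom_k(F_\bullet,N)$, a totalisation of $(2n-1)$-truncated spectra and hence itself $(2n-1)$-truncated, so in each fixed degree it vanishes for $n\ll 0$ and the colimit over $n\to-\infty$ is zero. (This is the resolution-language shadow of the paper's $t$-structure argument: the cofibres of $\tau_{\geqslant 2(*+n)}(N)\to\const N$ become arbitrarily coconnective in the double Postnikov $t$-structure.) Finally, for the downstream uses in the paper one actually needs the filtration to be canonical/functorial, which your construction only gives after an independence-of-resolution argument you do not address; the paper's version has this built in.
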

	\begin{proof}
	In the usual adjunction $\colimit\colon \Fil(\Sp)\shortdoublelrmorphism \Sp\noloc \const$, the left adjoint is symmetric monoidal and the right adjoint is lax symmetric monoidal. Furthermore, $\colimit \tau_{\geqslant 2\star}(k)\simeq k$. It follows formally that $\colimit\colon \LMod_{\tau_{\geqslant 2\star}(k)}(\Fil(\Sp))\shortdoublelrmorphism \LMod_k(\Sp)\noloc \const$ is an adjunction as well and so $\Hom_k(M,N)\simeq \Hom_{\tau_{\geqslant 2\star}(k)}(\tau_{\geqslant 2\star}(M),\const N)$. Hence we may define the desired filration via 
	\begin{equation*}
		\fil^n\Hom_k(M,N)\coloneqq \Hom_{\tau_{\geqslant 2\star}(k)}\bigl(\tau_{\geqslant 2\star}(M),\tau_{\geqslant 2(\star+n)}(N)\bigr)\,.
	\end{equation*}
	This filtration is clearly complete since we may pull $0\simeq \limit_{n\rightarrow \infty}\tau_{\geqslant 2(\star+n)}(N)$ out of the $\Hom$. To show that the filtration is exhaustive, we need to check that $\const N\simeq \colimit_{n\rightarrow -\infty}\tau_{\geqslant 2(\star+n)}(N)$ can similarly be pulled out of the $\Hom$. To this end, recall that $\Fil(\Sp)$ can be equipped with the \emph{double Postnikov $t$-structure} in which objects in the image of $\tau_{\geqslant 2\star}(-)$ are connective and connective objects are closed under tensor products (see \cite[Construction~\chref{3.3.6}]{RaksitFilteredCircle} for example and double everything). Then $\Mod_{\tau_{\geqslant 2\star}(k)}(\Fil(\Sp))$ inherits a $t$-structure in which $\tau_{\geqslant 2\star}(M)$ is connective and the cofibres of $\tau_{\geqslant 2(\star+n)}(N)\rightarrow \const N$ get more and more coconnective as $n\to-\infty$. This shows that the colimit can be pulled out.
	
	It remains to determine the associated graded. By construction, the $n$\textsuperscript{th} graded piece is given by $\gr^n\Hom_k(M,N)\simeq \Hom_{\tau_{\geqslant 2\star}(k)}(\tau_{\geqslant 2*}(M),\Sigma^{2(\star+n)}\pi_{2(\star+n)}(N))$. To simplify this further, let $\IS_{\Gr}$ and $\IS_{\Fil}$ denote the tensor units in graded and filtered spectra, respectively. By abuse of notation, we identify $\IS_{\Fil}$ with its underlying graded spectrum. As remarked in \cref{conv:Filtrations}, we have $\Fil(\Sp)\simeq \Mod_{\IS_{\Fil}}(\Gr(\Sp))$; this identifies passing to the associated graded with the base change functor $-\otimes_{\IS_{\Fil}}\IS_{\Gr}$. Since the $\IS_{\Fil}$-module structure on $\Sigma^{2(\star+n)}\pi_{2(\star+n)}(N)$ already factors through $\IS_{\Fil}\rightarrow \IS_{\Gr}$, we obtain
	\begin{align*}
		\Hom_{\tau_{\geqslant 2\star}(k)}\bigl(\tau_{\geqslant 2*}(M),\Sigma^{2(\star+n)}\pi_{2(\star+n)}(N)\bigr)&\simeq \Hom_{\Sigma^{2*}\pi_{2*}(k)}\bigl(\Sigma^{2*}\pi_{2*}(M),\Sigma^{2(*+n)}\pi_{2(*+n)}(N)\bigr)\\
		&\simeq \Sigma^{2n}\Hom_{\pi_{2*}(k)}\bigl(\pi_{2*}(M),\pi_{2*}(N)(-n)\bigr)\,.
	\end{align*}
	The first step is the usual base change equivalence for $\tau_{\geqslant 2\star}(k)\rightarrow \tau_{\geqslant 2\star}(k)\otimes_{\IS_{\Fil}}\IS_{\Gr}\simeq \Sigma^{2*}\pi_{2*}(k)$, the second step uses that the shearing functor $\Sigma^{2*}\colon \Gr(\Sp)\rightarrow \Gr(\Sp)$ is an $\IE_1$-monoidal equivalence (even $\IE_2$-monoidal, see \cite[Proposition~\chref{3.10}]{ExamplesOfDiskAlgebras}, but we don't need that). Now the right-hand side is precisely the $n$\textsuperscript{th} graded piece of $\RHhom_{\pi_{2*}(k)}(\pi_{2*}(M),\pi_{2*}(N))$ and so we're done.
	\end{proof}
	
	We'll apply this now in the case $k\simeq \ku^{\h S^1}$, so that $\pi_{2*}(k)\cong \IZ[\beta]\llbracket t\rrbracket$. We also let $\Eext_{\IZ[\beta]\llbracket t\rrbracket}^i$ denote the graded $\IZ[\beta]\llbracket t\rrbracket$-module $\H_{-i}\RHhom_{\IZ[\beta]\llbracket t\rrbracket}$ for all $i\geqslant 0$.
	
	\begin{cor}\label{cor:Duals}
	Let $m\in\IN^{\lightning}\!$ be a high-powered positive integer. Then the spectra $\TC^-((\ku/m)/\ku)^\vee$ and $\TC^-((\KU/m)/\KU)^\vee$ are concentrated in odd degrees and we have
	\begin{align*}
		\pi_{-(2*+1)}\TC^-\bigl((\ku/m)/\ku\bigr)^\vee&\cong \Eext_{\IZ[\beta]\llbracket t\rrbracket}^1\left(\fil_{\qHodge}^\star\qhatdeRham_{(\IZ/m)/\IZ},\IZ[\beta]\llbracket t\rrbracket\right)\\
		\pi_{-(2*+1)}\TC^-\bigl((\KU/m)/\KU\bigr)^\vee&\cong \Ext_{\IZ\qpower}^1\left(\qHodge_{(\IZ/m)/\IZ},\IZ\qpower\right)[\beta^{\pm 1}]\,.
	\end{align*}
	\end{cor}
	\begin{proof}
	According to \cref{cor:TC-kupalpha} and \cref{lem:ExtSpectralSequence}, the spectrum $\TC^-((\ku/m)/\ku)^\vee$ admits a complete exhaustive filtration with associated graded $\Sigma^{2*}(\fil_{\qHodge}^\star\qhatdeRham_{(\IZ/m)/\IZ})^\vee$, where now the dual is taken in graded $\IZ[\beta]\llbracket t\rrbracket$-modules. It'll be enough to show that this dual is concentrated in homological degree~$-1$ (which precisely accounts for the $\Eext_{\IZ\qpower[\beta^{\pm 1}]}^1$-terms). Since $\IZ[\beta]\llbracket t\rrbracket$ is $(\beta,t)$-complete as a graded object, the same is true for any dual in graded $\IZ[\beta]\llbracket t\rrbracket$-modules, and so it'll be enough that
	\begin{equation*}
		\RHhom_{\IZ[\beta]\llbracket t\rrbracket}\left(\fil_{\qHodge}^\star\qhatdeRham_{(\IZ/m)/\IZ},\IZ[\beta]\llbracket t\rrbracket\right)/(\beta,t)\simeq \RHhom_\IZ\left(\gr_{\Hodge}^*\hatdeRham_{(\IZ/m)/\IZ},\IZ\right)
	\end{equation*}
	is concentraded in homological degree $-1$. Since $\gr_{\Hodge}^n\hatdeRham_{(\IZ/m)/\IZ}\simeq \Sigma^{-n}\bigwedge^n\L_{(\IZ/m)/\IZ}\simeq \IZ/m$, the $n$\textsuperscript{th} graded piece of the right-hand side is precisely $\RHom_\IZ(\IZ/m,\IZ)$, which is indeed concentrated in homological degree $-1$. This finishes the proof for $\TC^-((\ku/m)/\ku)^\vee$.
	
	The proof for $\TC^-((\KU/m)/\KU)^\vee$ is analogous, except that we need a different argument to show that the dual $(\qHodge_{(\IZ/m)/\IZ})^\vee$ in $\IZ\qpower$-modules is concentrated in homological degree~$-1$. By $(q-1)$-completeness, it'll be enough to check the same for $\RHom_\IZ(\qHodge_{(\IZ/m)/\IZ}/(q-1),\IZ)$. By \cite[\chref{3.8}]{qWittHabiro} we see that $\qHodge_{(\IZ/m)/\IZ}/(q-1)$ admits an exhaustive ascending filtration with associated graded given by $\gr_{\Hodge}^*\deRham_{(\IZ/m)/\IZ}$. It follows that $\RHom_\IZ(\qHodge_{(\IZ/m)/\IZ}/(q-1),\IZ)$ admits a descending filtration with associated graded $\RHhom_\IZ(\gr_{\Hodge}^*\deRham_{(\IZ/m)/\IZ},\IZ)$. This is indeed concentrated in homological degree~$-1$ as we've seen above, so we're done.
	\end{proof}
	
	This finishes step~\cref{enum:TC-BattlePlanStepB} in our plan from~\cref{par:TC-BattlePlan}. We continue with step~\cref{enum:TC-BattlePlanStepC}. Note that neither pro-idempotence of $\prolim_{m\in \smash{\IN^\lightning\!}}\TC^-((\ku/m)/\ku)$ nor the fact that its transition maps become eventually trace-class are automatically preserved under passing to homotopy groups. The problem is that $\pi_*(-)$---or really passing to the associated graded of the Whitehead filtration $\tau_{\geqslant \star}$---is not a symmetric monoidal functor. 
	
	As we'll see, in our situation, passing to the associated graded of the \emph{double-speed} Whitehead filtration $\tau_{\geqslant 2\star}$ behaves as if it were symmetric monoidal, which fixes all issues. Our starting point is the following general fact, which is quite similar to \cref{lem:ExtSpectralSequence} (and is also usually formulated as a spectral sequence).
	\begin{lem}\label{lem:TorSpectralSequence}
	Let $k$ be an even $\IE_\infty$-ring spectrum, let $t\in \pi_{2*}(k)$ be a homogeneous element, and let $M$, $N$ be even $k$-modules. Then the $t$-completed tensor product $M\cotimes_kN$ admits a complete exhaustive descending filtration with graded pieces
	\begin{equation*}
		\gr^*(M\cotimes_kN)\simeq \Sigma^{2*}\left(\pi_{2*}(M)\clotimes_{\pi_{2*}(k)}\pi_{2*}(N)\right)\,.
	\end{equation*}
	Here $-\clotimes_{\pi_{2*}(k)}-$ denotes the graded $t$-completed derived tensor product over $\pi_{2*}(k)$.
	\end{lem}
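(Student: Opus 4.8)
The plan is to follow the proof of \cref{lem:ExtSpectralSequence} essentially verbatim, replacing internal mapping spectra by relative tensor products. Write $\tau_{\geqslant 2*}(-)$ for the double-speed Postnikov filtration, so that $\tau_{\geqslant 2*}(k)\in\CAlg(\Fil(\Sp))$ with $\colimit\tau_{\geqslant 2*}(k)\simeq k$ and $\gr^*\tau_{\geqslant 2*}(k)\simeq\Sigma^{2*}\pi_{2*}(k)$, and recall the symmetric monoidal functor $\colimit\colon\Fil(\Sp)\to\Sp$ together with its lax symmetric monoidal right adjoint $\const$. For even $k$-modules $M$ and $N$, the objects $\tau_{\geqslant 2*}(M)$ and $\tau_{\geqslant 2*}(N)$ are $\tau_{\geqslant 2*}(k)$-modules in $\Fil(\Sp)$; since $\colimit$ is symmetric monoidal it commutes with the formation of relative tensor products, so
\begin{equation*}
	\mathcal{F}\coloneqq\tau_{\geqslant 2*}(M)\otimes_{\tau_{\geqslant 2*}(k)}\tau_{\geqslant 2*}(N)
\end{equation*}
satisfies $\colimit\mathcal{F}\simeq M\otimes_kN$. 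Its associated graded is computed exactly as in \cref{lem:ExtSpectralSequence}: using $\Fil(\Sp)\simeq\Mod_{\IS_{\Fil}}(\Gr(\Sp))$, symmetric monoidality of $\gr$, and the $\IE_2$-monoidal shearing equivalence $\Sigma^{2*}\colon\Gr(\Sp)\xrightarrow{\ \sim\ }\Gr(\Sp)$ from \cite[Proposition~\chref{3.10}]{ExamplesOfDiskAlgebras}, one gets $\gr^*\mathcal{F}\simeq\Sigma^{2*}\bigl(\pi_{2*}(M)\lotimes_{\pi_{2*}(k)}\pi_{2*}(N)\bigr)$, the derived tensor product being formed in graded $\pi_{2*}(k)$-modules.

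It remains to fold in the $t$-completion. The homogeneous element $t\in\pi_{2*}(k)$ is detected on a single graded piece of $\tau_{\geqslant 2*}(k)$ and hence lifts to a self-map $\widetilde{t}$ of the filtered ring $\tau_{\geqslant 2*}(k)$ of bounded degree with $\colimit\widetilde{t}=t$; I would define $\Fil^*(M\cotimes_kN)$ to be the $\widetilde{t}$-adic completion $\mathcal{F}^\complete_{\widetilde{t}}$ in $\Mod_{\tau_{\geqslant 2*}(k)}(\Fil(\Sp))$. Granting the compatibilities discussed below, $\gr$ commutes with this completion and $t$-completion is intertwined with the shearing equivalence, so that $\gr^*\mathcal{F}^\complete_{\widetilde{t}}\simeq\Sigma^{2*}\bigl(\pi_{2*}(M)\clotimes_{\pi_{2*}(k)}\pi_{2*}(N)\bigr)$, which is the asserted associated graded; moreover $\colimit\mathcal{F}^\complete_{\widetilde{t}}\simeq\bigl(\colimit\mathcal{F}\bigr)^\complete_t\simeq M\cotimes_kN$, so $\mathcal{F}^\complete_{\widetilde{t}}$ is a filtration on the right object, and it is automatically exhaustive in the sense of \cref{par:Notation}.

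The step that needs genuine care---and is the main obstacle---is the interaction of $\colimit$ and $\gr$ with the limit $\mathcal{F}^\complete_{\widetilde{t}}=\limit_j\mathcal{F}/\widetilde{t}^{\,j}$, together with completeness of the resulting filtration. Here the double-speed Postnikov $t$-structure on $\Fil(\Sp)$ (cf.\ \cite[Construction~\chref{3.3.6}]{RaksitFilteredCircle}) does the work: $\tau_{\geqslant 2*}(M)$, $\tau_{\geqslant 2*}(N)$ and $\tau_{\geqslant 2*}(k)$ are connective for it, connectivity is preserved by the tensor product, and since $\widetilde{t}$ has bounded degree each $\mathcal{F}/\widetilde{t}^{\,j}$ is connective with a connectivity bound uniform in $j$. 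From this one deduces: in each filtration degree the tower $\{(\mathcal{F}/\widetilde{t}^{\,j})^n\}_j$ has uniformly bounded-below homotopy, so that both $\colimit$ and $\gr$ commute with $\limit_j$; $\mathcal{F}^\complete_{\widetilde{t}}$ is connective up to a shift by one, whence $\limit_{n\to\infty}(\mathcal{F}^\complete_{\widetilde{t}})^n\simeq 0$ and the filtration is complete; and reducing modulo $\widetilde{t}^{\,j}$ and taking the colimit over $n\to-\infty$ recovers $(M\otimes_kN)/t^j$, so that $\colimit\mathcal{F}^\complete_{\widetilde{t}}\simeq M\cotimes_kN$. Assembling these points with the associated graded computation finishes the proof. (In the applications in \cref{lem:ProIdempotent} and \cref{lem:ProTraceClass}, $M$ and $N$ are already $t$-complete static graded modules, so the completion subtleties simplify considerably; I would nonetheless prove the general statement via these uniform connectivity estimates.)
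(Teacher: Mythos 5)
Your construction is the same as the paper's: filter $M\cotimes_kN$ by the $\widetilde{t}$-adic completion of $\Ff=\tau_{\geqslant 2*}(M)\otimes_{\tau_{\geqslant 2*}(k)}\tau_{\geqslant 2*}(N)$, use symmetric monoidality of $\colimit$ and of $\gr$ together with the shearing equivalence to identify the colimit and the graded pieces, and use connectivity in the double Postnikov $t$-structure for completeness. All of that matches. The one step that does not hold up as written is your justification that $\colimit_{n\to-\infty}$ commutes with $\limit_j(-/\widetilde{t}^{\,j})$, i.e.\ that the completed filtration still exhausts $M\cotimes_kN$. You deduce this from the towers $\{(\Ff/\widetilde{t}^{\,j})^n\}_j$ having uniformly bounded-below homotopy, but uniform \emph{connectivity} is the wrong kind of bound here: filtered colimits of spectra do not commute with sequential limits of uniformly connective towers. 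Already on $\pi_0$ one would need $\colimit_n\lim_j A_{n,j}\cong\lim_j\colimit_nA_{n,j}$ for the relevant bi-indexed systems of homotopy groups, and this fails for, e.g., $A_{n,j}=p^{\max(j+n,0)}\IZ\subseteq\IZ$ with inclusions as transition maps (the left side is $0$, the right side is $\IZ$), all realised by spectra concentrated in degree~$0$. Uniform connectivity is what gives you completeness of the filtration, not exhaustiveness.

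What is needed instead---and what the paper uses---is a \emph{coconnectivity} bound on the error terms: the cofibre of $\Fil^{-n}\Ff\rightarrow M\otimes_kN$ is $(2n+1)$-coconnective, this bound survives reduction modulo $\widetilde{t}^{\,j}$ uniformly in $j$ and degrades by at most one under $\limit_j$, and since it tends to $-\infty$ as the filtration index goes to $-\infty$, the cofibres of $\Fil^{-n}(M\cotimes_kN)\rightarrow M\cotimes_kN$ vanish in each fixed homotopy degree in the colimit. Replacing your uniform-connectivity claim by this estimate on the cofibres repairs the argument; everything else in your write-up agrees with the paper's proof.
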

	\begin{proof}
	The filtered spectrum $\tau_{\geqslant 2\star}(M)\otimes_{\tau_{\geqslant 2\star}(k)}\tau_{\geqslant 2\star}(N)$ defines a filtration on $M\otimes_kN$. This filtration is exhaustive, since $\colimit\colon \Fil(\Sp)\rightarrow \Sp$ is symmetric monoidal, and complete, since $\tau_{\geqslant 2\star}(M)\otimes_{\tau_{\geqslant 2\star}(k)}\tau_{\geqslant 2\star}(N)$ is a connective object in the double Postnikov $t$-structure (see the proof of \cref{lem:ExtSpectralSequence}).
	
	Now consider the $t$-adically completed tensor product $\tau_{\geqslant 2\star}(M)\cotimes_{\tau_{\geqslant 2\star}(k)}\tau_{\geqslant 2\star}(N)$, where $t$ in the filtration degree corresponding to its homotopical degree. This now defines a filtration on $M\cotimes_kN$, which is clearly still complete. It is also still exhaustive. Indeed, for all $n$, the cofibre of $(\tau_{\geqslant 2\star}(M)\otimes_{\tau_{\geqslant 2\star}(k)}\tau_{\geqslant 2\star}(N))_{-n}\rightarrow M\otimes N$ is $(2n+1)$-coconnective. Upon $t$-adic completion, the coconnectivity can go down by at most~$1$, and so we see that the cofibre of $(\tau_{\geqslant 2\star}(M)\cotimes_{\tau_{\geqslant 2\star}(k)}\tau_{\geqslant 2\star}(N))_{-n}\rightarrow M\cotimes N$ will still be $2n$-coconnective. This ensures exhaustiveness.
	
	Passing to the associated graded is symmetric and commutes with $t$-adic completion (in the filtered and graded setting, respectively). Moreover, the double shearing functor $\Sigma^{2*}$ is $\IE_1$-monoidal (even $\IE_2$, but we won't need that). Hence
	\begin{equation*}
		\gr^*(M\cotimes_kN)\simeq \Sigma^{2*}\pi_{2*}(M)\cotimes_{\Sigma^{2*}\pi_{2*}(k)}\Sigma^{2*}\pi_{2*}(N)\simeq \Sigma^{2*}\left(\pi_{2*}(M)\clotimes_{\pi_{2*}(k)}\pi_{2*}(N)\right)\,.\qedhere
	\end{equation*}
	\end{proof}
	\begin{cor}\label{cor:ProIdempotent}
	$\prolim_{m\in \smash{\IN^\lightning\!}}\fil_{\qHodge}^\star\qhatdeRham_{(\IZ/m)/\IZ}$ and $\prolim_{m\in \smash{\IN^\lightning\!}}\qHodge_{(\IZ/m)/\IZ}$ are idempotent pro-algebras, respectively, in the derived $\infty$-categories of $t$-complete graded $\IZ[\beta]\llbracket t\rrbracket$-modules and of $(q-1)$-complete $\IZ\qpower$-modules. 
	\end{cor}
	\begin{proof}
	Throughout the proof, $\cotimes$ will denote a $t$-completed tensor product. We also put $\fil^\star\qhatdeRham_{m}\coloneqq \fil_{\qHodge}^\star\qhatdeRham_{(\IZ/m)/\IZ}$ and $A\coloneqq \prolim_{m\in\smash{\IN^{\lightning}\!}}\fil^\star\qhatdeRham_{m}$ for short.
	
	Since each $\fil^\star\qhatdeRham_{m}$ is a graded $\IZ[\beta]\llbracket t\rrbracket$-algebra, we get a unit map $\IZ[\beta]\llbracket t\rrbracket\rightarrow A$ and a multiplication $A\clotimes_{\IZ[\beta]\llbracket t\rrbracket} A\rightarrow A$ such that the composition
	\begin{equation*}
		A\simeq \IZ[\beta]\llbracket t\rrbracket\clotimes_{\IZ[\beta]\llbracket t\rrbracket}A \longrightarrow A\clotimes_{\IZ[\beta]\llbracket t\rrbracket} A\longrightarrow A
	\end{equation*}
	is the identity. For the other composition, let $m_1,m_2\in\IN^{\lightning}\!$ and consider the $t$-completed tensor product
	\begin{equation*}
		\TC^-\bigl((\ku/m_1\otimes_{\ku}\ku/m_2)/\ku\bigr)\simeq \TC^-\bigl((\ku/m_1)/\ku\bigr)\cotimes_{\ku^{\h S^1}}\TC^-\bigl((\ku/m_2)/\ku\bigr)\,.
	\end{equation*}
	By \cref{lem:TorSpectralSequence}, this has a complete exhaustive filtration with graded pieces given by $\Sigma^{2*}(\fil^\star\qhatdeRham_{m_1}\clotimes_{\IZ[\beta]\llbracket t\rrbracket}\fil^\star\qhatdeRham_{m_2})$. Observe that this graded completed tensor product is concentrated in homological degrees $[0,1]$. Indeed, this can be checked modulo $(\beta,t)$. Then $\fil^\star\qhatdeRham_{m_i}/(\beta,t)\simeq \gr_{\Hodge}^*\deRham_{(\IZ/m_i)/\IZ_p}$ is given by $\IZ/m_i$ in every graded degree for $i=1,2$, and $\IZ/m_1\lotimes_\IZ\IZ/m_2$ is indeed concentrated in homological degrees $[0,1]$. It follows that the filtration on $	\TC^-((\ku/p^{m_1}\otimes_{\ku}\ku/p^{m_2})/\ku)$ must be the double speed Whitehad filtration $\tau_{\geqslant 2\star}$.
	
	By \cref{cor:E1FactorisationSpalpha}, $\TC^-((\ku/m^3\otimes_{\ku}\ku/m)/\ku)\rightarrow \TC^-((\ku/m^2\otimes_{\ku}\ku/m)/\ku)$ factors through the even spectrum $\TC^-((\ku/m)/\ku)$. By passing to the associated graded of the double speed Whitehead filtration, we see that
	\begin{equation*}
		\fil^\star\qhatdeRham_{m^3}\clotimes_{\IZ[\beta]\llbracket t\rrbracket}\fil^\star\qhatdeRham_{m}\longrightarrow\fil^\star\qhatdeRham_{m^2}\clotimes_{\IZ[\beta]\llbracket t\rrbracket}\fil^\star\qhatdeRham_{m^2}
	\end{equation*}
	factors through $\fil^\star\qhatdeRham_{m}$. This finishes the proof that $A=\prolim_{m\in\smash{\IN^{\lightning}\!}}\fil_{\qHodge}^\star\qhatdeRham_{(\IZ/m)/\IZ}$ is an idempotent pro-algebra.
	
	The argument for $\prolim_{m\in\smash{\IN^{\lightning}\!}}\qHodge_{(\IZ/m)/\IZ}$ is analogous, except that we work with $\KU$ instead of $\ku$, and to show that $\qHodge_{(\IZ/m_1)/\IZ}\clotimes_{\IZ\qpower}\qHodge_{(\IZ/m_2)/\IZ}$ is concentrated in homological degrees $[0,1]$, we need a slightly different argument: First, we can reduce modulo $(q-1)$. The conjugate filtration from \cite[\chref{3.8}]{qWittHabiro} gives an ascending filtration on $\qHodge_{(\IZ/m_i)/\IZ}/(q-1)$ for $i=1,2$, whose graded pieces are copies of $\IZ/m_i$. Moreover, $\qHodge_{(\IZ/m_i)/\IZ}/(q-1)$ is an $\IZ/m_i$-algebra, since $\qHodge_{(\IZ/m_i)/\IZ}$ contains an element of the form $m_i/(q-1)$. Thus, abstractly, $\qHodge_{(\IZ/m_i)/\IZ}/(q-1)\simeq \bigoplus_\IN\IZ/m_i$. So we're done since $\IZ/m_1\lotimes_\IZ\IZ/m_2$ is concentrated in homological degrees $[0,1]$.
	\end{proof}
	\begin{cor}\label{cor:ProTraceClass}
	$\prolim_{m\in \smash{\IN^\lightning\!}}\fil_{\qHodge}^\star\qhatdeRham_{(\IZ/m)/\IZ}$ and $\prolim_{m\in \smash{\IN^\lightning\!}}\qHodge_{(\IZ/m)/\IZ}$ are equivalent to pro-objects with trace-class transition maps.
	\end{cor}
	\begin{proof}
	Throughout the proof, $\cotimes$ will denote a $t$-completed tensor product. Using \cref{cor:E1FactorisationSpalpha} and unravelling the proof of \cref{lem:TraceClassTransitionMaps}, we find that that for every high-powered $m$, $\TC^-((\ku/m^{3})/\ku)\rightarrow \TC^-((\ku/m)/\ku)$ is trace-class in $t$-complete $\ku^{\h S^1}$-modules. Hence it must be induced by a map
	\begin{equation*}
		\eta\colon \ku^{\h S^1}\longrightarrow \TC^-\bigl((\ku/m^3)/\ku\bigr)^\vee\cotimes_{\ku^{\h S^1}}\TC^-\bigl((\ku/m)/\ku\bigr)
	\end{equation*}
	By \cref{lem:TorSpectralSequence} (applied to the shift $\Sigma\TC^-((\ku/m^3)/\ku)^\vee$ to get an even spectrum, then we shift back afterwards), the right-hand side has a complete exhaustive filtration with graded pieces $(\fil_{\qHodge}^\star\qhatdeRham_{(\IZ/m^3)/\IZ})^\vee\clotimes_{\IZ[\beta]\llbracket t\rrbracket}\fil_{\qHodge}^\star\qhatdeRham_{(\IZ/m)/\IZ}$. As in the proof of \cref{cor:ProIdempotent}, one easily checks that this graded completed tensor product is concentrated in homological degrees $[-1,0]$. It follows that the filtration must be given by $\tau_{\geqslant 2\star-1}(-)$. Thus, by considering $\tau_{\geqslant 2\star-1}(\eta)$ and then passing to associated gradeds, we obtain a morphism
	\begin{equation*}
		\IZ[\beta]\llbracket t\rrbracket \longrightarrow \bigl(\fil_{\qHodge}^\star\qhatdeRham_{(\IZ/m^3)/\IZ}\bigr)^\vee\clotimes_{\IZ[\beta]\llbracket t\rrbracket}\fil_{\qHodge}^\star\qhatdeRham_{(\IZ/m)/\IZ}\,.
	\end{equation*}
	which witnesses that the morphism $\fil_{\qHodge}^\star\qhatdeRham_{(\IZ/m^3)/\IZ_p}\rightarrow\fil_{\qHodge}^\star\qhatdeRham_{(\IZ/m)/\IZ}$ is indeed trace-class, as desired.
	
	The argument for $\qHodge_{(\IZ/m^3)/\IZ}\rightarrow\qHodge_{(\IZ/m)/\IZ}$ being trace-class is analogous, except that we use $\KU$ instead of $\ku$. Moreover, we need a different argument to show that $(\qHodge_{(\IZ/m^3)/\IZ})^\vee\clotimes_{\IZ\qpower}\qHodge_{(\IZ/m)/\IZ}$ is concentrated in homological degrees $[-1,0]$: First, we can reduce modulo $(q-1)$. As we've seen in the proof of \cref{cor:ProIdempotent}, on underlying abelian groups we get an equivalence $\qHodge_{(\IZ/m)/\IZ}/(q-1)\simeq \bigoplus_\IN\IZ/m$. An analogous conclusion holds for $\qHodge_{(\IZ/m^3)/\IZ}/(q-1)$. Thus, the tensor product modulo $(q-1)$ becomes $\Sigma^{-1}\prod_\IN\IZ/m^3\lotimes_\IZ\bigoplus_\IN\IZ/m$, which is clearly concentrated in homological degrees $[-1,0]$. 
	\end{proof}
	
	This finishes step~\cref{par:TC-BattlePlan}\cref{enum:TC-BattlePlanStepC} and we arrive at the result of our computation.
	
	\begin{thm}\label{thm:RefinedTC-qHodge}
	$\TCref(\ku\otimes\IQ/\ku)$ and $\TCref(\KU\otimes\IQ/\KU)$ are concentrated in even degrees. Furthermore, their even homotopy groups are given as follows:
	\begin{alphanumerate}
		\item $\pi_{2*}\TCref(\ku\otimes\IQ/\ku)\cong \A_{\ku}^*$, where $\A_{\ku}^*$ is obtained by killing the idempotent pro-graded $\IZ[\beta]\llbracket t\rrbracket$-algebra $\prolim_{m\in \smash{\IN^\lightning\!}}\fil_{\qHodge}^\star\qhatdeRham_{(\IZ/m)/\IZ}$. In particular, there's a short exact sequence\label{enum:RefinedTC-ku}
		\begin{equation*}
			0\longrightarrow \IZ[\beta]\llbracket t\rrbracket\longrightarrow \A_{\ku}^*\longrightarrow\indcolim_{m\in (\IN^\lightning\!)^\op}\Eext_{\IZ[\beta]\llbracket t\rrbracket}^1\left(\fil_{\qHodge}^\star\qhatdeRham_{(\IZ/m)/\IZ},\IZ[\beta]\llbracket t\rrbracket\right)\longrightarrow 0\,,
		\end{equation*}
		and $\A_{\ku}^*$ is an idempotent nuclear graded $\IZ[\beta]\llbracket t\rrbracket$-algebra.
		\item $\pi_{2*}\TCref(\KU\otimes \IQ/\KU)\cong \A_{\KU}[\beta^{\pm 1}]$, where $\A_{\KU}$ is obtained by killing the idempotent pro-$\IZ\qpower$-algebra $\prolim_{m\in\smash{\IN^{\lightning}\!}}\qHodge_{(\IZ/m)/\IZ}$. In particular, there's a short exact sequence\label{enum:RefinedTC-KU}
		\begin{equation*}
			0\longrightarrow \IZ\qpower\longrightarrow \A_{\KU}\longrightarrow\indcolim_{m\in (\IN^\lightning\!)^\op}\Ext_{\IZ\qpower}^1\left(\qHodge_{(\IZ/m)/\IZ},\IZ\qpower\right)\longrightarrow 0\,,
		\end{equation*}
		and $\A_{\KU}$ is an idempotent nuclear $\IZ\qpower$-algebra.
	\end{alphanumerate}
	\end{thm}
	\begin{proof}
	We use the cofibre sequence of \cref{par:TC-BattlePlan}. To compute $\TCref(\ku\otimes\IQ/\ku)$, we must study the cofibres of $\TC^-((\ku/m)/\ku)^\vee\rightarrow \ku^{\h S^1}$ for high-powered integers $m\in\IN^{\lightning}\!$. Put
	\begin{align*}
		\fil^\star\q\ov{\deRham}_{m}&\coloneqq \cofib\left(\IZ[\beta]\llbracket t\rrbracket\rightarrow \fil_{\qHodge}^\star\qhatdeRham_{(\IZ/m)/\IZ}\right)\,,\\
		\ov{\TC}_m^-&\coloneqq \cofib\left(\ku^{\h S^1}\rightarrow \TC^-\bigl((\ku/m)/\ku\bigr)\right)\,.
	\end{align*}
	Since $\ku^{\h S^1}$ and $\TC^-((\ku/m)/\ku)$ are even spectra, the sequence of double speed Whitehead filtrations $\tau_{\geqslant 2\star}(\ku^{\h S^1})\rightarrow \tau_{\geqslant 2\star}\TC^-((\ku/m)/\ku)\rightarrow \tau_{\geqslant 2\star}\ov{\TC}_m^-$ is still a cofibre sequence in filtered spectra. Applying the construction from the proof of \cref{lem:ExtSpectralSequence}, we get complete exhaustive filtrations on the duals of $\ku^{\h S^1}$, $\TC^-((\ku/m)/\ku)$, and $\ov{\TC}_m^-$ in such a way that they fit into a cofibre sequence $\fil^\star(\ov{\TC}_m^-)^\vee\rightarrow \fil^\star\TC^-((\ku/m)/\ku)^\vee\rightarrow \fil^\star(\ku^{\h S^1})^\vee $. After passing to associated gradeds, we get a cofibre sequence of graded $\Sigma^{2*}\IZ[\beta]\llbracket t\rrbracket$-modules
	\begin{equation*}
		\gr^*(\ov{\TC}_m^-)^\vee\longrightarrow \Sigma^{2*}\bigl(\fil_{\qHodge}^\star\qhatdeRham_{(\IZ/m)/\IZ}\bigr)^\vee\longrightarrow \Sigma^{2*}\IZ[\beta]\llbracket t\rrbracket^\vee\,,
	\end{equation*}
	where $\Sigma^{2*}\colon \Gr(\Sp)\rightarrow \Gr(\Sp)$ denotes the \enquote{double shearing} functor. It's clear from the construction that the morphism on the right must really be given by $\Sigma^{2*}(-)^\vee$ applied to the unit map $\IZ[\beta]\llbracket t\rrbracket\rightarrow \fil_{\qHodge}^\star\qhatdeRham_{(\IZ/m)/\IZ}$. It follows that $\gr^*(\ov{\TC}_m^-)^\vee\simeq \Sigma^{2*}(\fil^\star\q\ov{\deRham}_m)^\vee$. Observe that $(\fil^\star\q\ov{\deRham}_m^*)^\vee$ sits in homological degree~$-1$. Indeed, this can be checked modulo $(\beta,t)$. Then $\fil^\star\q\ov{\deRham}_m/(\beta,t)\simeq \cofib(\IZ\rightarrow\gr_{\Hodge}^*\deRham_{(\IZ/m)/\IZ})$ is given by $\Sigma \IZ$ in graded degree~$0$ and $\IZ/m$ in every other graded degree, so it's straightforward to see that its graded dual over $\IZ$ sits indeed in homological degree~$-1$.
	
	Thus, $\fil^\star(\ov{\TC}_m^-)^\vee$ must be the double speed Whitehead filtration, $(\ov{\TC}_m^-)^\vee$ is concentrated in odd degrees, and $\pi_{2*-1}((\ov{\TC}_m^-)^\vee)\cong \H_{-1}(\fil^\star(\q\ov{\deRham}_m)^\vee)$ as a graded $\IZ[\beta]\llbracket t\rrbracket$-modules. Combining this with \cref{cor:Duals}, we see that the long exact homotopy sequence of the rotated cofibre sequence $ (\ku^{\h S^1})^\vee \rightarrow\Sigma (\ov{\TC}_m^-)^\vee\rightarrow\Sigma\TC^-((\ku/m)/\ku)^\vee $ breaks up into a short exact sequence of graded $\IZ[\beta]\llbracket t\rrbracket$-modules of the following form:
	\begin{equation*}
		0\longrightarrow \IZ[\beta]\llbracket t\rrbracket\longrightarrow \H_{-1}\bigl(\fil^\star(\q\ov{\deRham}_m)^\vee\bigr)\longrightarrow \Eext_{\IZ[\beta]\llbracket t\rrbracket}^1\left(\fil_{\qHodge}^\star\qhatdeRham_{(\IZ/m)/\IZ},\IZ[\beta]\llbracket t\rrbracket\right)\longrightarrow 0\,.
	\end{equation*}
	Since $\TCref(\ku\otimes\IQ/\ku)\simeq \indcolim_{m\in \smash{(\IN^\lightning\!)^\op}}\Sigma(\ov\TC_m^-)$ by the cofibre sequence from \cref{par:TC-BattlePlan}, it follows at once that $\TCref(\ku\otimes\IQ/\ku)$ is concentrated in even degrees and that $\A_{\ku}^*$ fits into the desired short exact sequence. Furthermore, it's clear from our considerations above that
	\begin{equation*}
		\bigl(\fil_{\qHodge}^\star\qhatdeRham_{(\IZ/m)/\IZ}\bigr)^\vee\simeq\Sigma^{-1}\Eext_{\IZ[\beta]\llbracket t\rrbracket}\left(\fil_{\qHodge}^\star\qhatdeRham_{(\IZ/m)/\IZ},\IZ[\beta]\llbracket t\rrbracket\right)\longrightarrow \IZ[\beta]\llbracket t\rrbracket\,,
	\end{equation*}
	induced by the short exact sequence, is given by dualising the canonical unit morphism $\IZ[\beta]\llbracket t\rrbracket\rightarrow \fil_{\qHodge}^\star\qhatdeRham_{(\IZ/m)/\IZ}$. Then the underlying graded ind-$\IZ[\beta]\llbracket t\rrbracket$-module of $\A_{\ku}^*$ must really be given by killing the pro-idempotent $\prolim_{m\in \smash{\IN^\lightning\!}}\fil_{\qHodge}^\star\qhatdeRham_{(\IZ/m)/\IZ}$. Idempotence and nuclearity of $\A_{\ku}^*$ follow from \cref{lem:NuclearIdempotentAbstract}\cref{enum:NuclearIdempotentAbstract} and \cref{cor:ProTraceClass}. Since idempotents admit a unique $\IE_\infty$-algebra structure, it follows that the desired description of $\A_{\ku}^*$ also holds as a nuclear ind-$\IZ[\beta]\llbracket t\rrbracket$-algebra. This finishes the proof of \cref{enum:RefinedTC-ku}.
	
	The proof of \cref{enum:RefinedTC-KU} is analogous; the only difference is that we need a different argument why $\cofib(\IZ\qpower\rightarrow \qHodge_{(\IZ/m)/\IZ})^\vee$ is concentrated in homological degree~$-1$. This can be checked modulo $(q-1)$. We've seen in the proof of \cref{cor:ProIdempotent} that $\qHodge_{(\IZ/m)/\IZ}/(q-1)$ is a $\IZ/m$-algebra and, abstractly, $\qHodge_{(\IZ/m)/\IZ}/(q-1)\simeq \bigoplus_\IN\IZ/m$. We can choose this decomposition in such a way that one of those summands corresponds to the unit $\IZ/m\rightarrow \qHodge_{(\IZ/m)/\IZ}/(q-1)$. It follows that 
	\begin{equation*}
		\cofib\bigl(\IZ\rightarrow \qHodge_{(\IZ/m)/\IZ}/(q-1)\bigr)^\vee\simeq \biggl(\Sigma\IZ\oplus\bigoplus_{\IN\smallsetminus\{1\}}\IZ/m\biggr)^\vee\simeq \Sigma^{-1}\IZ\oplus\Sigma^{-1}\prod_{\IN\smallsetminus\{1\}}\IZ/m
	\end{equation*}
	is indeed concentrated in homological degree~$-1$ and we're done.
	\end{proof}
	
	\subsection{Explicit \texorpdfstring{$q$}{q}-Hodge filtrations}\label{subsec:ElementaryProof}
	
	In this subsection, we'll give an explicit description of the $q$-Hodge filtration $\fil_{\qHodge}^\star\qdeRham_{(\IZ/m)/\IZ}$. This will be used in \cref{sec:Overconvergent} to prove \cref{thm:OverconvergentNeighbourhood,thm:OverconvergentNeighbourhoodEquivariant}.
	
	By construction, it will be enough to describe the $q$-Hodge filtration in the case where $m=p^\alpha$ is a prime power. In this case, the filtration is obtained via base change from $\qdeRham_{(\IZ_p\{x\}_\infty/x^\alpha)/\IZ_p}$. Using $\qdeRham_{(\IZ_p\{x\}_\infty/x^\alpha)/\IZ_p}\simeq \qdeRham_{(\IZ_p\{x\}_\infty/x^\alpha)/\IZ_p\{x\}_\infty}$ and base change, we can further reduce the problem to describing the filtration from \cite[Construction~\chref{4.21}]{qWittHabiro} on the derived $q$-de Rham complex 
	\begin{equation*}
	\qdeRham_{(\IZ_p\{x\}/x^\alpha)/\IZ_p\{x\}}\simeq \IZ_p\{x\}\qpower\left\{\frac{\phi(x^\alpha)}{[p]_q}\right\}_{(p,q-1)}^\complete\,.
	\end{equation*}
	Let us denote this ring by $\q D_\alpha$ for short and let $D_\alpha\coloneqq \deRham_{(\IZ_p\{x\}/x^\alpha)/\IZ_p\{x\}}$. Then  $D_\alpha$ is the $p$-completed PD-envelope of $(x^\alpha)\subseteq \IZ_p\{x\}$ and $\q D_\alpha/(q-1)\simeq D_\alpha$. The filtration $\fil_{\qHodge}^\star\q D_\alpha$ from \cite[Construction~\chref{4.21}]{qWittHabiro} is, by definition, given as the ($1$-categorical) preimage of the $(x^\alpha,q-1)$-adic filtration on $D_\alpha[1/p]_{\Hodge}^\complete\qpower$.
	

	\begin{numpar}[Lifts of divided powers.]\label{par:LiftsOfDividedPowers}
	Let $\gamma(-)\coloneqq (-)^p/p$ denote the divided power operation and let $\gamma^{(n)}(-)$ denote its $n$-fold iteration. To get an explicit description of the filtration $\fil_{\qHodge}^\star\q D_\alpha$, our goal is to find elements $\widetilde{\gamma}_q^{(n)}(x^\alpha)\in\fil_{\qHodge}^{p^n}\q D_\alpha$ for all $n\geqslant 0$ such that the following two conditions hold:
	\begin{alphanumerate}\itshape
		\item We have $\widetilde{\gamma}_q^{(n)}(x^\alpha)\equiv \gamma^{(n)}(x^\alpha)\mod (q-1)$.
		\item The image of $\widetilde{\gamma}_q^{(n)}(x^\alpha)$ in $D_\alpha[1/p]_{\Hodge}^\complete\qpower$ is contained in the ideal $(x^\alpha,q-1)^{p^n}$.
	\end{alphanumerate}
	Indeed, if we believe that $\fil_{\qHodge}^\star \q D_\alpha/(q-1)\cong \fil_{\Hodge}^\star D_\alpha$, then such elements must exist. Conversely, if such elements exist, then $\fil_{\qHodge}^\star \q D_\alpha/(q-1)\rightarrow \fil_{\Hodge}^\star D_\alpha$ must be surjective and thus an isomorphism by \cite[Lemma~\chref{4.26}]{qWittHabiro}. So $\fil_{\qHodge}^\star\q D_\alpha$ must be generated as a $(p,q-1)$-complete filtered $\q D_\alpha$-algebra by $(q-1)$ in filtration degree~$1$ and the elements $\widetilde{\gamma}_q^{(n)}(x^\alpha)$ in filtration degree~$p^n$ for all $n\geqslant 0$.
	\end{numpar}
	
	The following technical lemma shows existence of these lifts along with some structural information about them, and we'll even see an explicit recursive construction in the proof. Moreover, all of this works for all $\alpha\geqslant 2$ without any restrictions in the case $p=2$.
	
	\begin{lem}\label{lem:StructuralResultqHodge}
	For all primes $p$, there is a sequence $(\Gamma_n)_{n\geqslant 0}$ of polynomials in \(\IZ_p\{x\}[q]\) with the following properties:
	
	\begin{alphanumerate}
		
		\item 
		\label{item:sam:congr}
		\(\Gamma_n \equiv x^{p^n}\mod (q-1)^{p-1}\) and $\Gamma_n\in (x^p,(q-1)^{p-1})^{p^{n-1}}\).
		
		\item 
		\label{item:sam:ideal}
		\(\Gamma_n \in ((\phi^i(x),\Phi_{p^i}(q))^p, \Phi_{p^i}(q)^{p-1})^{p^{n-1-i}}\) for all \(1\leqslant i\leqslant n-1\).
		
		\item 
		\label{item:sam:bideal}
		\(\Gamma_n \in (\phi^{n}(x), \Phi_{p^n}(q))\).
		
		\item 
		\label{item:sam:prod}
		\((\Gamma_n)^{\alpha} \in \prod_{i=1}^n \Phi_{p^i}(q)^{p^{n-i}}\cdot\q D_\alpha\) for all $\alpha\geqslant 2$.
		
	\end{alphanumerate}
	In particular, for all $\alpha\geqslant 2$, $(\Gamma_n)^\alpha$ is contained in the ideal $(x^\alpha,q-1)^{p^n}$, and
	\begin{equation*}
		\widetilde{\gamma}_q^{(n)}(x^\alpha)\coloneqq\frac{(\Gamma_n)^\alpha}{\prod_{i=1}^n \Phi_{p^i}(q)^{p^{n-i}}}\in \fil_{\qHodge}^{p^n}\q D_\alpha
	\end{equation*}
	is a lift of the $n$-fold iterated divided power $\gamma^{(n)}(x^\alpha)$ and contained in the $(p^n)$\textsuperscript{th} step of the $q$-Hodge filtration on $\q D_\alpha$.
	\end{lem}
	
	\begin{proof}
	We'll do a proof by induction. For the base case of the induction, \(n=0\), let \(\Gamma_0 := x\). All of the statements are trivial in this case.
	
	For the induction step, we first want to construct the element \(\Gamma_{n}\). For this, let \(P_n, Q_n\) be some polynomials in \(\mathbb{Z}[q]\) such that \(p = P_n(q)(q-1)^{(p-1)p^{n-1}}+Q_n(q)\Phi_{p^n}(q)\). Note that such polynomials always exist, since \(\Phi_{p^n}(1) = p\) and $\Phi_{p^n}(q) \equiv (q-1)^{(p-1)p^{n-1}} \mod p$, so 
	\begin{equation*}
		\frac{\Phi_{p^n}(q) - (q-1)^{(p-1)p^{n-1}}}{p}
	\end{equation*}
	is a unit modulo \((q-1)^{(p-1)p^{n-1}}\). Now define 
	\begin{equation*}
		\label{def:sam:1}
		\Gamma_{n}:=(\Gamma_{n-1})^p+P_n(q)(q-1)^{(p-1)p^{n-1}}\delta(\Gamma_{n-1})=\phi(\Gamma_{n-1})-Q_n(q)\Phi_{p^n}(q)\delta(\Gamma_{n-1}).
	\end{equation*}
	Statement \cref{item:sam:congr} follows trivially. For \cref{item:sam:ideal} and~\cref{item:sam:bideal}, by \cref{lem:pTorsionFree} below it's enough to check that $p\cdot \Gamma_n$ is contained in these ideals. We have
	\begin{align*}
		p \cdot \Gamma_n &= p \cdot (\Gamma_{n-1})^p+P_n(q)(q-1)^{(p-1)p^{n-1}}\bigl(\phi(\Gamma_{n-1}) - (\Gamma_{n-1})^p\bigr) \\
		&=p \cdot \phi(\Gamma_{n-1})-Q_n(q)\Phi_{p^n}(q)\bigl(\phi(\Gamma_{n-1}) - (\Gamma_{n-1})^p\bigr).
	\end{align*}
	Now $(\Gamma_{n-1})^p$ and $\phi(\Gamma_{n-1})$ are contained in each one of the ideals from~\cref{item:sam:ideal}. Indeed, for $(\Gamma_{n-1})^p$, this follows from statements~\cref{item:sam:ideal} and~\cref{item:sam:bideal} of the induction hypothesis, and for $\phi(\Gamma_{n-1})$ this follows similarly from~\cref{item:sam:congr} and~\cref{item:sam:ideal}. Therefore, the first of the two equations above shows that $p\cdot \Gamma_n$ is contained in each of the ideals from~\cref{item:sam:ideal}. Similarly, using statement~\cref{item:sam:bideal} of the induction hypothesis, we get $\phi(\Gamma_{n-1})\in (\phi^n(x),\Phi_{p^n}(q))$ and so the second of the equations above shows that $p\cdot \Gamma_n$ is contained in this ideal as well. This finishes the induction step for~\cref{item:sam:ideal} and~\cref{item:sam:bideal}.
	
	It remains to show statement~\cref{item:sam:prod}. By \cite[Lemma~\chref{16.10}]{Prismatic}, $\q D_\alpha$ is $(p,q-1)$-completely flat over $\IZ_p\qpower$ and thus flat on the nose over $\IZ[q]$. Therefore
	\begin{equation*}
		\prod_{i=1}^n \Phi_{p^i}(q)^{p^{n-i}}\cdot\q D_\alpha=\bigcap_{i=1}^n \Phi_{p^i}(q)^{p^{n-i}}\cdot\q D_\alpha\,.
	\end{equation*}
	To show that $(\Gamma_n)^\alpha\in \Phi_{p^i}(q)^{p^{n-i}}\cdot\q D_\alpha$ for \(1 \leqslant i \leqslant n-1\), by the already proven statement \cref{item:sam:ideal}, it's enough to show the same for any element in the ideal $((\phi^i(x),\Phi_{p^i}(q))^p, \Phi_{p^i}(q)^{p-1})^{\alpha p^{n-1-i}}$. So consider a monomial of the form 
	\begin{equation*}
		\bigl(\phi^i(x)^j\Phi_{p^i}(q)^k\bigr)^\ell\Phi_{p^i}(q)^{(p-1)m}\,,
	\end{equation*}
	where $j+k=p$ and $\ell+m=\alpha p^{n-1-i}$. By construction, $\phi(x)^\alpha$ becomes divisible by $\Phi_p(q)$ in $\q D_\alpha$ and so $\phi^i(x)^\alpha\in \Phi_{p^i}(q)\cdot \q D_\alpha$. Hence $\phi^i(x)^{j\ell}$ is divisible by $\Phi_{p^i}(q)^{\lfloor j\ell/\alpha\rfloor}$. It will therefore be enough to show
	\begin{equation*}
		\left\lfloor \frac{j\ell}\alpha\right\rfloor+k\ell+(p-1)m\geqslant p^{n-i}\,.
	\end{equation*}
	This is straightforward: For $\ell=0$, the inequality follows from $\alpha(p-1)\geqslant p$ as $\alpha\geqslant 2$. In general, if we replace $(j,k)$ by $(j-1,k+1)$, the left-hand side changes by at least $\ell-\lfloor \ell/\alpha\rfloor -1$; for $\ell\geqslant 1$ and $\alpha\geqslant 2$ this term is always nonnegative. Therefore we may assume $j=p$, $k=0$, and we must show $\lfloor p\ell/\alpha\rfloor +(p-1)m\geqslant p^{n-i}$. If $p=2$ and $\alpha=2$, this becomes the equality $\ell+m=2^{n-i}$ and so the inequality is sharp in this case. If $p\geqslant 3$ or $\alpha\geqslant 3$, we have $(p-1)-\lfloor p/\alpha\rfloor-1\geqslant 0$ and so by the same argument as before we may assume $\ell=\alpha p^{n-1-i}$, $m=0$. The the desired inequality follows from $\alpha(p-1)\geqslant p$ again.
	
	A similar but easier argument shows that every element in $(\phi^n(x),\Phi_{p^n}(q))^\alpha$ becomes divisible by $\Phi_{p^n}(q)$ in $\q D_\alpha$ and we have an inclusion of ideals $(x^p,(q-1)^{p-1})^{\alpha p^{n-1}}\subseteq (x^\alpha,q-1)^{p^n}$ in $\IZ_p\{x\}[q]$. This finishes the proof of \cref{item:sam:prod} and shows $(\Gamma_n)^\alpha\in (x^\alpha,q-1)^{p^n}$. Hence $\widetilde{\gamma}_q^{(n)}(x^\alpha)$ is really contained in the $(p^n)$\textsuperscript{th} step of the $q$-Hodge filtration and it lifts $\gamma^{(n)}(x^\alpha)$ by \cref{item:sam:congr}.
	\end{proof}
	\begin{lem}\label{lem:pTorsionFree}
	If $J\subseteq \IZ_p\{x\}[q]$ is any of the ideals in \cref{lem:StructuralResultqHodge}\cref{item:sam:ideal} or~\cref{item:sam:bideal}, then $\IZ_p\{x\}[q]/J$ is $p$-torsion free.
	\end{lem}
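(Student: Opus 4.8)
The plan is to present each ideal $J$ in the list as the extension of a \emph{monomial} ideal along a \emph{flat} ring map, and then to observe that a monomial-ideal quotient of a (possibly $p$-completed) polynomial ring over $\IZ_p$ is visibly $p$-torsion free. Fix the relevant index~$i$ — so $1\leqslant i\leqslant n-1$ in case~\cref{item:sam:ideal} and $i=n$ in case~\cref{item:sam:bideal} — write $x_l:=\delta^l(x)$, and let $\sigma\colon\IZ_p[u,v]\rightarrow\IZ_p\{x\}[q]$ be the ring map with $u\mapsto\phi^i(x)$ and $v\mapsto\Phi_{p^i}(q)$. The first point is that $J=\sigma(\hat J)\cdot\IZ_p\{x\}[q]$ for a monomial ideal $\hat J\subseteq\IZ_p[u,v]$ (an ideal generated by monomials $u^av^b$): in case~\cref{item:sam:bideal} take $\hat J=(u,v)$, and in case~\cref{item:sam:ideal} take $\hat J=\bigl((u,v)^p+(v^{p-1})\bigr)^{p^{n-1-i}}$, which is a monomial ideal since sums and powers of monomial ideals are monomial.

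The only substantive step is flatness. Write $\IZ_p\langle u\rangle:=\IZ_p[u]_p^\complete$. I claim the extension $\IZ_p\langle u\rangle[v]\rightarrow\IZ_p\{x\}[q]$ of $\sigma$ is flat. Since $\IZ_p\{x\}[q]=\IZ_p\{x\}\otimes_{\IZ_p}\IZ_p[q]$ and $\IZ_p\langle u\rangle[v]=\IZ_p\langle u\rangle\otimes_{\IZ_p}\IZ_p[v]$, and a tensor product over $\IZ_p$ of flat maps is flat, it suffices to treat the two factors separately. The map $\IZ_p[v]\rightarrow\IZ_p[q]$, $v\mapsto\Phi_{p^i}(q)$, is free with basis $1,q,\dotsc,q^{(p-1)p^{i-1}-1}$ because $\Phi_{p^i}$ is monic of degree $(p-1)p^{i-1}$. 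For $\IZ_p\langle u\rangle\rightarrow\IZ_p\{x\}$, $u\mapsto\phi^i(x)$: both rings are $p$-complete and $p$-torsion free and $\IZ_p\langle u\rangle$ is Noetherian, so by the $p$-adic local criterion for flatness it is enough to check flatness modulo~$p$; and modulo~$p$ this map is $\IF_p[u]\rightarrow\IF_p[x_0,x_1,\dotsc]$, $u\mapsto x_0^{p^i}$ — here one uses that $\phi$ is a lift of Frobenius, so $\phi^i(x)\equiv x^{p^i}\bmod p$ — under which $\IF_p[x_0,x_1,\dotsc]$ is free over $\IF_p[x_0^{p^i}]$ on the monomials $x_0^{j_0}\prod_{l\geqslant1}x_l^{j_l}$ with $0\leqslant j_0<p^i$.

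It then remains to see that $\IZ_p\langle u\rangle[v]/\hat J$ is $p$-torsion free and to conclude by flat base change. Writing $\hat J\cdot\IZ_p\langle u\rangle[v]$ with monomial generators $u^{a_1}v^{b_1},\dotsc,u^{a_N}v^{b_N}$ and comparing coefficients of powers of $v$, one gets $\hat J=\bigoplus_{b\geqslant0}I_b\,v^b$, where $I_b\subseteq\IZ_p\langle u\rangle$ is the ideal generated by those $u^{a_k}$ with $b_k\leqslant b$; as $I_b=(u^{c_b})$ for $c_b=\min\{a_k:b_k\leqslant b\}$ (or $I_b=0$), this gives $\IZ_p\langle u\rangle[v]/\hat J\cong\bigoplus_{b\geqslant0}\bigl(\IZ_p\langle u\rangle/(u^{c_b})\bigr)v^b$, and each summand is $p$-torsion free, being a finite free $\IZ_p$-module or $\IZ_p\langle u\rangle$ itself. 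Therefore $\IZ_p\{x\}[q]/J\cong\IZ_p\{x\}[q]\otimes_{\IZ_p\langle u\rangle[v]}\bigl(\IZ_p\langle u\rangle[v]/\hat J\bigr)$ is flat over the $p$-torsion free ring $\IZ_p\langle u\rangle[v]/\hat J$, so multiplication by $p$ is injective on it, as claimed. The single non-formal ingredient here is the $p$-adic local flatness criterion applied to $\IZ_p\{x\}$ over $\IZ_p\langle u\rangle$; everything else is bookkeeping with monomial ideals.
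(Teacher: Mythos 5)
Your strategy---exhibit $J$ as the extension of a monomial ideal along a flat map and conclude from $p$-torsion-freeness of the small quotient---is essentially the paper's, just factored through $\IZ_p[u,v]$ rather than through the endomorphism $\psi_i$ of $\IZ_p\{x\}[q]$; the monomial bookkeeping for $\IZ_p[u,v]/\hat J$ and the identification $J=\sigma(\hat J)\cdot\IZ_p\{x\}[q]$ are fine. The flaw is the $p$-completion to $\IZ_p\langle u\rangle$: in this lemma $\IZ_p\{x\}$ is the \emph{uncompleted} free $\delta$-ring $\IZ_p[x,\delta(x),\delta^2(x),\dotsc]$, which is precisely what \cite[Lemma~\chref{2.11}]{Prismatic} (flatness of $\phi$) applies to and what makes the terminal observation that $\IZ_p\{x\}[q]$ is free over $\IZ_p$ on a monomial basis literally true. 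Your premise that ``both rings are $p$-complete'' is therefore false, and no ring map $\IZ_p\langle u\rangle\to\IZ_p\{x\}$ sending $u\mapsto\phi^i(x)$ exists---so the flatness argument, and in particular the appeal to the $p$-adic local criterion, is made for a map that isn't there.

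The fix is simple and lands you on the paper's argument: use $\IZ_p[u,v]$ without completing, and for flatness of $\IZ_p[u]\to\IZ_p\{x\}$, $u\mapsto\phi^i(x)$, factor it as the flat polynomial inclusion $u\mapsto x$ followed by the $i$-fold Frobenius $\phi^i$, which is flat by \cite[Lemma~\chref{2.11}]{Prismatic}. Your reduction to the monomial quotient $\IZ_p[u,v]/\hat J$ then goes through and you have reproduced, with a slightly different bookkeeping, the paper's flat-base-change argument.
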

	\begin{proof}
	Consider the map $\psi_i\colon \IZ_p\{x\}[q]\rightarrow \IZ_p\{x\}[q]$ given by the $i$-fold iterated Frobenius $\phi^i\colon \IZ_p\{x\}\rightarrow \IZ_p\{x\}$ and $q\mapsto \Phi_{p^i}(q)$. If we replace $\phi^i(x)$ and $\Phi_{p^i}(q)$ in the definition of $J$ by $x$ and $q$, respectively, we obtain an ideal $J_0\subseteq \IZ_p\{x\}[q]$ such that
	\begin{equation*}
		\IZ_p\{x\}/J\cong \IZ_p\{x\}/J_0\otimes_{\IZ_p\{x\}[q],\psi_i}\IZ_p\{x\}[q]\,.
	\end{equation*}
	Now $\phi^i$ is flat by \cite[Lemma~\chref{2.11}]{Prismatic} and $q\mapsto \Phi_{p^i}(q)$ is finite free, as the polynomial $\Phi_{p^i}(q)$ is monic. So $\psi_i$ is flat and it suffices to show that $\IZ_p\{x\}[q]/J_0$ is $p$-torsion free. But $\IZ_p\{x\}[q]$ is a free module over $\IZ_p$ with basis given by monomials in $x,\delta(x),\delta^2(x),\dotsc$ and $q$. By construction, $J_0$ is a free submodule on a subset of that basis. It follows that $\IZ_p\{x\}[q]/J_0$ is free over $\IZ_p$, hence $p$-torsion free.
	\end{proof}

	\newpage
	
	\section{Algebras of overconvergent functions}\label{sec:Overconvergent}
	In this section we prove \cref{thm:OverconvergentNeighbourhood,thm:OverconvergentNeighbourhoodEquivariant}. In \crefrange{subsec:OverconvergentGeneral}{subsec:GradedAdic} we'll review Clausen's and Scholze's approach to adic spaces via solid analytic rings \cite[Lecture~\href{https://youtu.be/YLQt_tV4tHo?list=PLx5f8IelFRgGmu6gmL-Kf_Rl_6Mm7juZO}{10}]{AnalyticStacks} and study algebras of overconvergent functions as well as gradings in this setup. In \cref{subsec:MainProof}, we'll then combine this with our explicit computation of the $q$-Hodge filtration on $\qdeRham_{(\IZ/p^\alpha)/\IZ_p}$ from \cref{subsec:ElementaryProof} to finish the proof of \cref{thm:OverconvergentNeighbourhood,thm:OverconvergentNeighbourhoodEquivariant}.
	
	\subsection{Adic spaces as analytic stacks}\label{subsec:OverconvergentGeneral}
	In the following, we'll use the formalism of analytic stacks from \cite{AnalyticStacks}. For the convenience of the reader, let us briefly recall the relevant notions.
	
	\begin{numpar}[Solid condensed spectra.]\label{par:CondensedRecollections}
		Let $\Cond(\Sp)$ denote the $\infty$-category of \emph{\embrace{light} condensed spectra}, that is, hypersheaves of spectra on the site of light profinite sets as defined by Clausen and Scholze \cite{AnalyticStacks}. The evaluation at the point $(-)(*)\colon \Cond(\Sp)\rightarrow \Sp$ admits a fully faithful symmetric monoidal left adjoint $(\underline{-})\colon \Sp\rightarrow\Cond(\Sp)$, sending a spectrum $X$ to the \emph{discrete} condensed spectrum $\underline{X}$.
		
		One can develop a theory of \emph{solid condensed spectra} along the lines of \cite[Lectures~\href{https://www.youtube.com/watch?v=bdQ-_CZ5tl8&list=PLx5f8IelFRgGmu6gmL-Kf_Rl_6Mm7juZO}{5}--\href{https://www.youtube.com/watch?v=KKzt6C9ggWA&list=PLx5f8IelFRgGmu6gmL-Kf_Rl_6Mm7juZO}{6}]{AnalyticStacks}. Let $\Null\coloneqq \cofib(\IS[\{\infty\}]\rightarrow\IS[\IN\cup\{\infty\}])$ be the free condensed spectrum on a null sequence. Let $\sigma\colon \Null\rightarrow \Null$ be the endomorphism induced by the shift map $(-)+1\colon \IN\cup\{\infty\}\rightarrow \IN\cup\{\infty\}$. Recall that a condensed spectrum $M$ is called \emph{solid} if
		\begin{equation*}
			1-\sigma^*\colon \Hhom_\IS(\Null,M)\overset{\simeq}{\longrightarrow}\Hhom_\IS(\Null,M)
		\end{equation*}
		is an equivalence, where $\Hhom_\IS$ denotes the internal Hom in $\Cond(\Sp)$. We let $\Sp_\solid\subseteq \Cond(\Sp)$ denote the full sub-$\infty$-category of solid condensed spectra. Then $\Sp_\solid$ is closed under all limits and colimits. This implies that the inclusion $\Sp_\solid\subseteq \Cond(\Sp)$ admits a left adjoint $(-)^\solid\colon \Cond(\Sp)\rightarrow \Sp_\solid$. It satisfies $(M\otimes N)^\solid\simeq (M^\solid\otimes N)^\solid$, which allows us to endow $\Sp_\solid$ with a symmetric monoidal structure, called the \emph{solid tensor product}, via $M\soltimes N\coloneqq (M\otimes N)^\solid$. This allows us to define the derived $\infty$-category of solid abelian groups as $\Dd(\IZ_\solid)\coloneqq\Mod_\IZ(\Sp_\solid)$.
	\end{numpar}
	\begin{numpar}[Huber pairs à la Clausen--Scholze.]\label{par:AdicRecollectionsI}
	Recall that to any Huber pair $(R,R^+)$ one can associate an \emph{analytic ring} $(R,R^+)_\solid$ in the sense of \cite[Lecture~\href{https://youtu.be/YxSZ1mTIpaA?list=PLx5f8IelFRgGmu6gmL-Kf_Rl_6Mm7juZO&t=3964}{1}]{AnalyticStacks} as follows: First consider $R$ as a condensed ring via its given topology. For $f\in R(*)$ and $M\in\Mod_R(\Dd(\IZ_\solid))$ we say that $M$ is \emph{$f$-solid} if 
	\begin{equation*}
		1-f\sigma^*\colon\RHhom_\IZ(\Null_\IZ,M)\overset{\simeq}{\longrightarrow}\RHhom_\IZ(\Null_\IZ,M)
	\end{equation*}
	is an equivalence. Here $\Null_\IZ\coloneqq \Null\otimes \IZ\simeq \cofib(\IZ[\{\infty\}]\rightarrow\IZ[\IN\cup\{\infty\}])$ denotes the free condensed abelian group on a nullsequence. The inclusion of the full sub-$\infty$-category of $f$-solid $R$-modules admits a left adjoint $(-)^{f\solid}$, called \emph{$f$-solidification}. The underlying animated condensed ring of $(R,R^+)_\solid$ is then defined as
	\begin{equation*}
		(R,R^+)_\solid^\triangleright\coloneqq \colimit_{\{f_1,\dotsc,f_r\}\subseteq R^+}R^{f_1\solid,\dotsc,f_r\solid}\,,
	\end{equation*}
	where the colimit is taken over all finite subsets of $R^+$, and $\Dd((R,R^+)_\solid)\subseteq \Mod_R(\Dd(\IZ_\solid))$ is the full sub-$\infty$-category of solid condensed $R$-modules that are $f$-solid for all $f\in R^+\subseteq R(*)$. In the following, we'll always work with Huber pairs for which $(R,R^+)_\solid^\triangleright$ is just $R$ itself.
	
	The classical notion of \emph{affinoid open subsets} fits naturally into this formalism. Suppose we're given $f_1,\dotsc,f_r\in R(*)$ generating an open ideal as well as another element $g\in R(*)$, so that $U\coloneqq \left\{x\in\Spa(R,R^+)\ \middle|\ \abs{f_1}_x,\dotsc,\abs{f_r}_x\leqslant \abs{g}_x\neq 0\right\}$ defines a rational open subset. We can define an analytic ring $\Oo(U_\solid)$ as follows: The underlying animated condensed ring is the solidification
	\begin{equation*}
		\Oo(U)\coloneqq R\bigl[\localise{g}\bigr]^{(f_1/g)\solid,\dotsc,(f_r/g)\solid}
	\end{equation*}
	and we let $\Dd(U_\solid)\coloneqq\Dd(\Oo(U_\solid))\subseteq \Mod_{R[1/g]}(\Dd((R,R^+)_\solid))$ be the full sub-$\infty$-category spanned by those $R[1/g]$-modules in $\Dd((R,R^+)_\solid)$ that are also $(f_i/g)$-solid for $i=1,\dotsc,r$. If $\Oo(U)$ is static and quasi-separated, it agrees with the Huber ring from the classical theory of adic spaces. In practice, this will almost always be the case. 
	\end{numpar}
	\begin{numpar}[Adic spaces à la Clausen--Scholze.]\label{par:AdicRecollectionsII}
	Clausen and Scholze associate to any Tate%
	\footnote{To avoid confusion with analytic stacks, we'll call an adic space \emph{Tate} rather than \emph{analytic} if, locally, there exists a topologically nilpotent unit. The restriction to Tate adic spaces makes sure that open immersions go to open immersions (see \cref{lem:TateAnalyticStacks} below); analytic stacks can be associated to any adic space.}
	adic space $X$ an \emph{analytic stack} $X_\solid\rightarrow \AnSpec \IZ_\solid$. If $X=\Spa(R,R^+)$ is Tate affinoid, we simply put $X_\solid\coloneqq \AnSpec(R,R^+)_\solid$. If $U\subseteq \Spa(R,R^+)$ is an open subset of a Tate affinoid adic space, choose a cover $V\coloneqq\coprod_{i\in I}V_i\rightarrow U$ by rational open subsets and form the \v Cech nerve $V_\bullet\coloneqq \check{\mathrm{C}}_\bullet(V\rightarrow X)$. Every $V_n$ is a disjoint union of affinoid adic spaces, hence $V_{n, \solid}$ is already defined. Then we can put $U_\solid\coloneqq \colimit_{[n]\in\IDelta^\op}V_{n, \solid}$. Finally, if $X$ is an arbitrary Tate adic space, choose a cover $W\coloneqq \coprod_{j\in J}W_j\rightarrow X$ by affinoids and form the \v Cech nerve $W_\bullet\coloneqq \check{\mathrm{C}}_\bullet(W\rightarrow X)$. Each $W_m$ is a disjoint union of open subsets of Tate affinoid adic spaces, so $W_{m, \solid}$ is already defined, and we put $X_\solid\coloneqq \colimit_{[m]\in\IDelta^\op}W_{m, \solid}$.
	
	It can be shown that these constructions are well-defined and independent of the choices involved. We'll omit the verification, but let us at least mention the crucial input.
	\end{numpar}
	\begin{lem}\label{lem:TateAnalyticStacks}
	Let $(R,R^+)$ be a Huber pair and let $X_\solid\coloneqq \AnSpec (R,R^+)_\solid$ be the associated affine analytic stack.
	\begin{alphanumerate}
		\item If $U,U'\subseteq \Spa(R,R^+)$ are rational open subsets, then\label{enum:TateIntersection}
		\begin{equation*}
			\AnSpec\Oo(U_\solid)\times_{\AnSpec(R,R^+)_\solid}\AnSpec\Oo(U'_\solid)\simeq \AnSpec \Oo\bigl((U\cap U')_\solid\bigr)\,.
		\end{equation*}
		\item If $R$ is Tate and $U\subseteq \Spa(R,R^+)$ is a rational open subset, then $j\colon U_\solid\rightarrow X_\solid$ is an open immersion of affine analytic stacks in the sense of \cite[Lecture~\href{https://youtu.be/BV0-dlAuS3U?list=PLx5f8IelFRgGmu6gmL-Kf_Rl_6Mm7juZO&t=3503}{\textup{16}}]{AnalyticStacks}. That is, $j^*$ admits a fully faithful left adjoint $j_!$ satisfying the projection formula.\label{enum:TateOpenImmersion}
		\item If $R$ is Tate and $\coprod_{i=1}^nU_i\rightarrow\Spa(R,R^+)$ is a cover by rational open subsets, then $\coprod_{i=1}^nU_{i, \solid}\rightarrow X_\solid$ is a $!$-cover of affine analytic stacks.\label{enum:TateCover}
	\end{alphanumerate}
	\end{lem}
	\begin{rem}
	The Tate condition in \cref{lem:TateAnalyticStacks}\cref{enum:TateOpenImmersion} and~\cref{enum:TateCover} is crucial and it is the reason why we restrict to the Tate case when we describe adic spaces in terms of analytic stacks. Without this assumption, \cref{enum:TateOpenImmersion} will be wrong. For example, if $R$ is a discrete ring, any Zariski-open also determines a rational open of $\Spa(R,R)$, but in this case $j^*$ almost never preserves limits, so it can't have a left adjoint $j_!$.
	\end{rem}
	\begin{proof}[Proof sketch of \cref{lem:TateAnalyticStacks}]
	Suppose $U$ and $U'$ are given by $\abs{f_1},\dotsc,\abs{f_r}\leqslant\abs{g}\neq 0$ and $\abs{f_1'},\dotsc,\abs{f_s'}\leqslant\abs{g'}\neq 0$, respectively.
	Using the description of pushouts from \cite[Lecture~\href{https://youtu.be/38PzTzCiMow?list=PLx5f8IelFRgGmu6gmL-Kf_Rl_6Mm7juZO&t=2741}{11}]{AnalyticStacks}, it's clear that $\Oo(U_\solid)\lotimes_{(R,R^+)_\solid}\Oo(U'_\solid)$ is the solidification of $R[1/(gg')]$ at the elements $f_i/g$ and $f_j'/g'$ for $i=1,\dotsc,r$, $j=1,\dotsc,s$. But that's precisely $\Oo((U\cap U')_\solid)$, proving \cref{enum:TateIntersection}.

	For \cref{enum:TateOpenImmersion}, assume $U$ is given by $\abs{f_1},\dotsc,\abs{f_r}\leqslant \abs{g}\neq 0$. Since $R$ is assumed to be Tate, the open ideal generated by $f_1,\dotsc,f_r$ must be all of $R$. Hence $g$ will aready be invertible in $R[T_1,\dotsc,T_r]/\left(gT_i-f_i\ \middle|\ i=1,\dotsc,r\right)$ and this quotient is automatically a derived quotient as well. It follows that the functor $j^*\colon \Dd(X_\solid)\rightarrow \Dd(U_\solid)$ can also be written as
	\begin{equation*}
		(-)[T_1,\dotsc,T_r]^{T_1\solid,\dotsc,T_r\solid}/\left(gT_i-f_i\ \middle|\ i=1,\dotsc,r\right)\,.
	\end{equation*}
	By \cite[Lecture~\href{https://youtu.be/fUjn2rGw9SA?list=PLx5f8IelFRgGmu6gmL-Kf_Rl_6Mm7juZO&t=2056}{7}]{AnalyticStacks}, the functor $(-)[T]^{T\solid}$ of adjoining a variable and then solidifiying it can be explicitly described as $\RHhom_{\IZ}(\IZ(\!(T^{-1})\!)/\IZ[T],-)$ and so $j^*(-)\simeq \RHhom_R(Q,-)$, where
	\begin{equation*}
		Q\coloneqq \biggl(\bigotimes_{i=1}^r\Sigma^{-1}\IZ(\!(T_i^{-1})\!)/\IZ[T_i]\lsoltimes_\IZ R\biggr)/\left(gT_i-f_i\ \middle|\ i=1,\dotsc,r\right)\,.
	\end{equation*}
	It follows immediately that $j^*$ admits a left adjoint $j_!(-)\simeq Q\lotimes_{(R,R^+)_\solid}-$. It remains to check the projection formula 
	\begin{equation*}
		j_!(M)\lotimes_{(R,R^+)_\solid}N\simeq j_!\bigl(M\lotimes_{\Oo(U_\solid)}j^*(N)\bigr)\,.
	\end{equation*}
	By the same argument as above, $Q$ is already an $R[1/g]$-module and the functor $j^*$ is insensitive to inverting $g$. Therefore, it's enough to check the projection formula in the case where $N$ is an $R[1/g]$-module. When restricting to $R[1/g]$-modules, $j^*$ is just given by successively killing the idempotent algebras $\IZ(\!(T_i^{-1})\!)\lsoltimes_{\IZ[T_i],  T_i\mapsto f_i/g}R[1/g]$ for $i=1,\dotsc,r$. Now for killing an idempotent it's completely formal to see that the left adjoint indeed satisfies the projection formula. This finishes the proof of \cref{enum:TateOpenImmersion}.
	
	To show \cref{enum:TateCover}, since we already know that each $j_i\colon U_{i, \solid}\rightarrow X_\solid$ is an open immersion, we can use the criterion from \cite[Lecture~\href{https://youtu.be/vRUmXU8ijIk?list=PLx5f8IelFRgGmu6gmL-Kf_Rl_6Mm7juZO&t=4221}{18}]{AnalyticStacks} to verify that $\coprod_{i=1}^nU_{i, \solid}\rightarrow X_\solid$ is indeed a $!$-cover. That is, if $A_i\coloneqq \cofib(j_{i,!}\Oo(U_i)\rightarrow R)$, we need to show $A_1\lotimes_{(R,R^+)_\solid}\dotsb\lotimes_{(R,R^+)_\solid}A_n\simeq 0$. Using \cite[Lemma~{\chpageref[2.6]{19}}]{HuberGeneralizationFormalRigid} and an inductive argument as in 
	\cite[Lemma~\chref{10.3}]{Condensed}, this can be reduced to the special case where $n=2$ and $U_1=\left\{x\in X\ \middle|\ 1\leqslant \abs{f}_x\right\}$, $U_2=\left\{x\in X\ \middle|\ \abs{f}_x\leqslant 1\right\}$ 
	for some $f\in R$. This is now a straightforward calculation.
	\end{proof}
	\begin{rem}
	Let $U\subseteq X$ be an open inclusion of Tate adic spaces and let $j\colon U_\solid\rightarrow X_\solid$ be the corresponding map of analytic stacks. In the following, if its clear that we're working in $\Dd(X_\solid)$, we often abuse notation and write $\Oo_U$ instead of $j_*\Oo_{U_\solid}$ for the pushforward of the structure sheaf of $U_\solid$. We also use $-\lotimes_{\Oo_{X_\solid}}\Oo_{U_\solid}$ to denote the functor $j_*j^*\colon \Dd(X_\solid)\rightarrow \Dd(X_\solid)$.
	
	Let us point out that $-\lotimes_{\Oo_{X_\solid}}\Oo_{U_\solid}$ is \emph{not} just the tensor product with $\Oo_U$ in the symmetric monoidal $\infty$-category $\Dd(X_\solid)$.  We can already see the difference if $X= \Spa(R,R^+)$ and $U\subseteq X$ is a rational open given by $\abs{f_1},\dotsc,\abs{f_r}\leqslant \abs{g}\neq 0$: In this case, 
	\begin{equation*}
		-\lotimes_{\Oo_{X_\solid}}\Oo_{U_\solid}\simeq \bigl(-\lotimes_{\Oo_{X_\solid}}\Oo_{U}\bigr)^{(f_1/g)\solid,\dotsc,(f_r/g)\solid}\,.
	\end{equation*}
	In particular, even though $\Oo_U\lotimes_{\Oo_{X_\solid}}\Oo_{U_\solid}\simeq \Oo_U$ (see \cref{lem:TateAnalyticStacks}\cref{enum:TateIntersection} and \cref{lem:TateOpenImmersions}\cref{enum:TateStacksIntersection} below), it's rarely true that $\Oo_U$ is idempotent in $\Dd(X_\solid)$. 
	%
	%
	%
	\end{rem}
	%
	Thus, there's a priori no reason to expect that sheaves of overconvergent functions $\Oo_{Z^\dagger}$ would be idempotent. In the following, we'll investigate why idempotence is satisfied in the situation of \cref{thm:OverconvergentNeighbourhood,thm:OverconvergentNeighbourhoodEquivariant}. Let's start by introducing a notion of open immersions for analytic stacks that need not be affine.
	
	\begin{numpar}[Open immersions of analytic stacks.]\label{par:NaiveOpenImmersion}
	We call a map of analytic stacks $j\colon U\rightarrow X$ a \emph{naive open immersion} if $j$ is a $!$-able monomorphism and $j^*\simeq j^!$. Since $j$ is a monomorphism, $U\times_XU\simeq U$. Combining this with proper base change, we get $j^*j_!\simeq \id_{\Dd(U)}$ and so $j_!$ is fully faithful. Then the right adjoint $j_*$ of $j^*$ must be fully faithful as well.
	
	Using the projection formula and $j^*j_!\simeq \id_{\Dd(U)}$, we see that $j_!\Oo_U\rightarrow \Oo_X$ exhibits $j_!\Oo_U$ as an idempotent coalgebra in $\Dd(X)$. Then $\cofib(j_!\Oo_U\rightarrow \Oo_X)$ must be an idempotent algebra. In this way, we can associate to any naive open immersion an idempotent algebra in $\Dd(X)$, which we call the \emph{complementary idempotent determined by $U$} and denote $\Oo_{X\smallsetminus U}$. It's straightforward to check that the forgetful functor $i_*\colon \Mod_{\Oo_{X\smallsetminus U}}(\Dd(X))\rightarrow \Dd(X)$, which is fully faithful by idempotence, fits into a recollement
	\begin{equation*}
		\begin{tikzcd}
			\Mod_{\Oo_{X\smallsetminus U}}\bigl(\Dd(X)\bigr)\rar["i_*"] & \Dd(X)\rar["j^*"]\lar[bend right=45,dashed,"i^*"',end anchor=15] \lar[bend left=45,dashed,"i^!",end anchor=-15] & \Dd(U)\lar[bend right=45,dashed,"j_!"'] \lar[bend left=45,dashed,"j_*"]
		\end{tikzcd}
	\end{equation*}
	and so $j_*\Oo_U$ is obtained from $\Oo_X$ by killing the idempotent algebra $\Oo_{X\smallsetminus U}$. As long as it's clear that we're working in $\Dd(X)$, we often abuse notation and just write $\Oo_U$ instead of $j_*\Oo_X$.
	\end{numpar}
	\begin{rem}
	Every open immersion of affine analytic stacks in the sense of \cite[Lecture~\href{https://youtu.be/BV0-dlAuS3U?list=PLx5f8IelFRgGmu6gmL-Kf_Rl_6Mm7juZO&t=3503}{\textup{16}}]{AnalyticStacks} is also a naive open immersion.
	\end{rem}
	\begin{rem}
	If $A\in \Dd(X)$ is an idempotent algebra, we can define an analytic substack $U_A\subseteq X$ by declaring that a map $f\colon Y\rightarrow X$ factors through $U_A$ if and only if $f^*\colon \Dd(X)\rightarrow \Dd(Y)$ factors through the localisation $\Dd(X)/\Mod_A(\Dd(X))$, or equivalently, if and only if $f^*(A)\simeq 0$. However, it's \emph{not} true that the constructions $U\mapsto \Oo_{X\smallsetminus U}$ and $A\mapsto U_A$ are inverses; it's not even clear why $\Dd(U_A)$ would coincide with $\Dd(X)/\Mod_A(\Dd(X))$.
	
	It's not obvious what conditions should be put on $U$ and $A$ to make these constructions mutually inverse (moreover, whatever the condition, it should be satisfied for open immersions of affine analytic stacks). This explains why we call the notion from \cref{par:NaiveOpenImmersion} \emph{naive}: An \emph{honest} open immersion of analytic stacks should be a naive open immersion for which the idempotent algebra $\Oo_{X\smallsetminus U}$ meets the putative condition. In the following, we'll work with the naive notion, since it is enough for our purposes.
	%
	%
	%
	%
	%
	%
	\end{rem}
	
	\begin{lem}\label{lem:TraceClassAnalyticStacks}
	Let $U'\rightarrow U\rightarrow X$ be naive open immersions of analytic stacks. Suppose that $U$ contains the closure of $U'$ in the sense that there exists another naive open immersion $j\colon V\rightarrow X$ such that $U'\times_XV\simeq \emptyset$ and $\Oo_{X\smallsetminus V}\lotimes_{\Oo_X}\Oo_{X\smallsetminus U}\simeq 0$. Then 
	\begin{equation*}
		\Oo_U\lotimes_{\Oo_X}\Oo_{U'}\simeq \Oo_{U'}\,.
	\end{equation*}
	Moreover, the map $\Oo_U\rightarrow \Oo_{U'}$ is trace-class in $\Dd(X)$ and factors through $\Oo_{X\smallsetminus V}$.
	\end{lem}
	\begin{proof}
	The condition $U'\times_XV\simeq \emptyset$ implies that $\Oo_{U'}$ is in the kernel of the pullback functor $j^*\colon\Dd(X)\rightarrow \Dd(V)$ and so $\Oo_{U'}$ is an algebra over the idempotent $A\coloneqq \Oo_{X\smallsetminus V}$ by \cref{par:NaiveOpenImmersion}. We also know that $\Oo_U$ is obtained from $\Oo_X$ by killing the idempotent $B\coloneqq \Oo_{X\smallsetminus U}$. Hence $\Oo_U\simeq \cofib(B^\vee\rightarrow \Oo_X)$. Since $B^\vee$ is a $B$-module, $\Oo_{U'}$ is an $A$-module, and $A\otimes B\simeq 0$, we get $B^\vee\lotimes_{\Oo_X} \Oo_{U'}\simeq 0$, hence indeed $\Oo_U\lotimes_{\Oo_X}\Oo_{U'}\simeq \Oo_{U'}$.
	
	Since the double dual $B^{\vee\vee}$ is still a $B$-module, the same argument shows $\Oo_{U}^\vee\lotimes_{\Oo_X}\Oo_{U'}\simeq \Oo_{U'}$. Hence $\Oo_U\rightarrow \Oo_{U'}$ is trace-class, with classifier given by the unit $\Oo_X\rightarrow \Oo_{U'}$. We've already seen that $\Oo_{U'}$ is an $A$-algebra. The condition $A\otimes B\simeq 0$ also implies $\RHhom_X(B,A)\simeq 0$, since $\RHhom_X(B,A)$ is both an $A$-module and a $B$-module. It follows that $A$ is contained in the image of $j_*\colon \Dd(U)\rightarrow \Dd(X)$ and hence $A$ is an $\Oo_U$-algebra. This shows that $\Oo_U\rightarrow \Oo_{U'}$ factors through $A$.
	\end{proof}
	\begin{lem}\label{lem:TateOpenImmersions}
	Let $X$ be a Tate adic space with associated analytic stack $X_\solid\rightarrow \AnSpec \IZ_\solid$, and let $U,U'\subseteq X$ be open subsets.
	\begin{alphanumerate}
		\item The map $j\colon U_\solid\rightarrow X_\solid$ is a naive open immersion of analytic stacks. Moreover, an arbitary map $f\colon Y\rightarrow X_\solid$ of analytic stacks factors through $U_\solid$ if and only if $f^*(\Oo_{X\smallsetminus U})\simeq 0$.\label{enum:TateStacksOpenImmersion}
		\item We have $U_\solid\times_{X_\solid}U'_\solid\simeq (U\cap U')_\solid$. In particular, $\Oo_U\lotimes_{\Oo_{X_\solid}}\Oo_{U'_\solid}\simeq \Oo_{U\cap U'}$ and vice versa if $U$ and $U'$ are exchanged.\label{enum:TateStacksIntersection}
		\item If $\ov{U}'\subseteq U$, then $U_\solid$ contains the closure of $U'_\solid$ in the sense of \cref{lem:TraceClassAnalyticStacks}.\label{enum:TateStacksTraceClass}
	\end{alphanumerate}
	\end{lem}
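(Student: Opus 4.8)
The plan is to reduce all three assertions to the facts about \emph{rational} opens in a Tate affinoid that are already recorded in \cref{lem:TateAnalyticStacks}, and then to bootstrap to arbitrary open subsets of arbitrary Tate adic spaces by $!$-descent. First I would note that everything is local in the sense that it may be checked after base change along a $!$-cover: being a naive open immersion, the shape of the fibre product in~\cref{enum:TateStacksIntersection}, and the hypotheses of \cref{lem:TraceClassAnalyticStacks} can all be tested along the affinoid $!$-cover $\coprod_jW_j\rightarrow X$ used to build $X_\solid$ in \cref{par:AdicRecollectionsII}, using $U_\solid\times_{X_\solid}W_{j,\commaspace\solid}\simeq(U\cap W_j)_\solid$ and likewise for $U'$. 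So I may assume $X=\Spa(R,R^+)$ is Tate affinoid. I would also recall the classical fact that an intersection of two rational opens is again rational (rewrite with a common denominator and adjoin the products of the two numerator systems to keep the numerator ideal open), so that the \v Cech nerve $V_\bullet=\check{\mathrm{C}}_\bullet(\coprod_iV_i\rightarrow X)$ of any rational cover of an open $U\subseteq X$ has every $V_n$ a disjoint union of rational opens.

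For part~\cref{enum:TateStacksOpenImmersion}, when $U$ is rational I would invoke \cref{lem:TateAnalyticStacks}\cref{enum:TateOpenImmersion} together with the remark that an open immersion of affine analytic stacks is a naive open immersion. For general $U$, by \cref{lem:TateAnalyticStacks}\cref{enum:TateCover} the map $\coprod_iV_{i,\commaspace\solid}\rightarrow X_\solid$ is a $!$-cover, so $U_\solid\simeq\colimit_{[n]\in\IDelta^\op}V_{n,\commaspace\solid}$ is a universal geometric realisation of a diagram of naive open immersions into $X_\solid$; a formal argument using $!$-descent and proper base change ($V_n\times_XU\simeq V_n$) then yields that $j\colon U_\solid\rightarrow X_\solid$ is $!$-able, a monomorphism, and satisfies $j^*\simeq j^!$ (the last being automatic for open immersions). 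This makes the complementary idempotent $\Oo_{X\smallsetminus U}$ of \cref{par:NaiveOpenImmersion} available, and one direction of the factoring criterion is then formal: $j^*\Oo_{X\smallsetminus U}\simeq\cofib(\Oo_U\xrightarrow{\id}\Oo_U)\simeq 0$, so any $f$ factoring through $U_\solid$ kills $\Oo_{X\smallsetminus U}$. For the converse I would use that $U_\solid\rightarrow X_\solid$ is a monomorphism to reduce, along an affinoid cover of the test stack $Y$, to the case $Y=\AnSpec(S,S^+)_\solid$; there $f$ is a map of Huber pairs and the vanishing $f^*\Oo_{X\smallsetminus U}\simeq 0$ unwinds, via the explicit description of $j^*$ from the proof of \cref{lem:TateAnalyticStacks}\cref{enum:TateOpenImmersion}, to the classical condition characterising when $\Spa(S,S^+)\rightarrow\Spa(R,R^+)$ lands in $U$ -- first for rational $U$, then for general $U$ by writing it as a union of rational opens.

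Part~\cref{enum:TateStacksIntersection} is \cref{lem:TateAnalyticStacks}\cref{enum:TateIntersection} when $U$, $U'$ are rational; in general, writing $U=\bigcup_iV_i$ and $U'=\bigcup_kV'_k$ as rational unions and using that the geometric realisations defining $U_\solid$ and $U'_\solid$ are universal, I would obtain $U_\solid\times_{X_\solid}U'_\solid\simeq\colimit_{i,k}(V_i\cap V'_k)_\solid\simeq(U\cap U')_\solid$, the last step because $\{V_i\cap V'_k\}$ is a rational cover of $U\cap U'$ with the expected \v Cech combinatorics. The displayed identity of structure sheaves follows by applying $j'_*(j')^*$ to $j_*\Oo_{U_\solid}$ and invoking proper base change along the square above. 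For part~\cref{enum:TateStacksTraceClass}, assuming $\ov{U}'\subseteq U$ I would take $V\coloneqq X\smallsetminus\ov{U}'$: it is open, so $V_\solid\rightarrow X_\solid$ is a naive open immersion by~\cref{enum:TateStacksOpenImmersion}; $U'\cap V=\emptyset$ gives $U'_\solid\times_{X_\solid}V_\solid\simeq\emptyset$ via~\cref{enum:TateStacksIntersection}; and the condition $\ov{U}'\subseteq U$ is precisely $U\cup V=X$, which makes $\{U_\solid,V_\solid\}\rightarrow X_\solid$ a $!$-cover (reducing to \cref{lem:TateAnalyticStacks}\cref{enum:TateCover} by refining with rational opens), hence $\Oo_{X\smallsetminus V}\lotimes_{\Oo_{X_\solid}}\Oo_{X\smallsetminus U}\simeq 0$ by the $!$-cover criterion. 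So $V$ witnesses that $U_\solid$ contains the closure of $U'_\solid$ in the sense of \cref{lem:TraceClassAnalyticStacks}.

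I expect the main obstacle to be the gluing step in part~\cref{enum:TateStacksOpenImmersion}: verifying that the geometric realisation of the \v Cech nerve of a rational cover is genuinely a naive open immersion -- $!$-able, a monomorphism, with $j^*\simeq j^!$ -- and the closely related statement that arbitrary open covers of Tate adic spaces are $!$-covers. Neither needs new geometric input, but both require carefully threading the $!$-descent and proper-base-change formalism of \cite{AnalyticStacks}; the converse direction of the factoring criterion in~\cref{enum:TateStacksOpenImmersion} is the secondary difficulty, being the point where the classical adic-space dictionary has to be matched against the analytic-stack picture.
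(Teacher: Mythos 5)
Your overall route — reduce to the affinoid rational case via the facts in \cref{lem:TateAnalyticStacks}, then bootstrap by descent — matches the paper, and your treatments of \cref{enum:TateStacksIntersection} and \cref{enum:TateStacksTraceClass} are essentially the same as the paper's (the paper phrases the $\Oo_{X\smallsetminus V}\otimes\Oo_{X\smallsetminus U}\simeq 0$ step via Zariski descent for the two-element cover $\{U,V\}$, while you invoke the $!$-cover criterion directly; these are equivalent). One small ordering point: the paper proves~\cref{enum:TateStacksIntersection} \emph{first} precisely because it is needed to justify the passage to an affinoid cover in~\cref{enum:TateStacksOpenImmersion}; your opening paragraph uses $U_\solid\times_{X_\solid}W_{j,\solid}\simeq(U\cap W_j)_\solid$ before you prove it, which should be fixed by either reordering or proving that special case inline.

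The genuine gap is in the converse direction of the factoring criterion in~\cref{enum:TateStacksOpenImmersion} for a non-rational open $U$. You reduce the test object to $Y=\AnSpec(S,S^+)_\solid$ and appeal to ``the classical condition characterising when $\Spa(S,S^+)\to\Spa(R,R^+)$ lands in $U$.'' This is too restrictive and also elides the hard step. Analytic stacks in the sense of \cite{AnalyticStacks} are built by descent from affines $\AnSpec S$ for \emph{arbitrary} analytic rings $S$, not only from Huber pairs, so there is no classical adic-space dictionary to invoke; the paper correspondingly works with an arbitrary affine $Y\simeq\AnSpec S$. More importantly, even granting such a dictionary, you still need to pass from ``$f^*(\Oo_{X\smallsetminus U})\simeq 0$'' to ``$f$ lands in a finite union of rational opens contained in $U$,'' and this quasi-compactness step is not automatic from unwinding definitions. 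The paper supplies it by writing
\[
\Oo_{X\smallsetminus U}\simeq \colimit_{\{i_1,\dotsc,i_n\}\subseteq I}\left(\Oo_{X\smallsetminus U_{i_1}}\lotimes_{\Oo_{X_\solid}}\dotsb\lotimes_{\Oo_{X_\solid}}\Oo_{X\smallsetminus U_{i_n}}\right)
\]
as a filtered colimit over finite subsets of a rational cover, observing that vanishing of $f^*(\Oo_{X\smallsetminus U})$ is detected by the single relation $1=0$ and hence is witnessed at a finite stage, and then applying the $!$-cover criterion to replace $\AnSpec S$ by the constituents of $\coprod_{j=1}^nU_{i_j,\solid}\times_{X_\solid}\AnSpec S$. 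Without that filtered-colimit argument your ``unwinding'' does not close the proof. (A smaller slip: you assert $\coprod_iV_{i,\solid}\to X_\solid$ is a $!$-cover when the $V_i$ only cover $U$; you mean a $!$-cover of $U_\solid$.)
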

	\begin{proof}[Proof sketch]
	Let's start with \cref{enum:TateStacksIntersection}. In the case where $U$ and $U'$ are affinoid, $U_\solid\times_{X_\solid}U'_\solid\simeq (U\cap U')_\solid$ follows essentially by the construction of $X_\solid$ in \cref{par:AdicRecollectionsII}, because we can choose both $U$ and $U'$ to be part of an affinoid cover of $X$ (and to prove that said construction is independent of the choice of cover, we need \cref{lem:TateAnalyticStacks}\cref{enum:TateIntersection}). To show the general case, just cover $U$ and $U'$ by affinoid open subsets.
	
	Let's show \cref{enum:TateStacksOpenImmersion} next. Let's first consider the case where $X=\Spa(R,R^+)$ is affinoid and $U\subseteq X$ is a rational open. We've already seen in \cref{lem:TateAnalyticStacks}\cref{enum:TateOpenImmersion} that $j\colon U_\solid\rightarrow X_\solid$ is a naive open immersion. Suppose $f\colon Y\rightarrow X_\solid$ is a map of analytic stacks such that $f^*(\Oo_{X\smallsetminus U})\simeq 0$. If $Y\simeq \AnSpec S$ is affine, then the map of analytic rings $(R,R^+)_\solid\rightarrow S$ factors through $\Oo(U_\solid)$ if and only if $f^*\colon \Dd((R,R^+)_\solid)\rightarrow \Dd(S)$ factors through $\Dd(U_\solid)$. Since $f^*(\Oo_{X\smallsetminus U})\simeq 0$, this is satisfied in our case. This proves the claim in the case where $Y\simeq \AnSpec S$ is affine. In particular, $U_\solid\times_{X_\solid}\AnSpec S\simeq \AnSpec S$. For the general case, write $Y$ as a colimit of affines to see $U_\solid\times_{X_\solid}Y\simeq Y$. Then $f\colon Y\rightarrow X_\solid$ clearly factors through $U_\solid$.
	
	Now let $U$ and $X$ be arbitrary. Proving that $j\colon U_\solid\rightarrow X_\solid$ is a naive open immersion formally reduces to the special case considered above; we omit the argument. Now let $f\colon Y\rightarrow X_\solid$ be a map of analytic stacks such that $f^*(\Oo_{X\smallsetminus U})\simeq 0$. Whether $f$ factors through $U_\solid$ can be checked locally on $X_\solid$. By \cref{enum:TateStacksIntersection}, if $\Spa(R,R^+)\rightarrow X$ is an affinoid open supset, then $U_\solid\times_{X_\solid}\AnSpec(R,R^+)_\solid\simeq (U\cap \Spa(R,R^+))_\solid$, so we can reduce to the case where $X$ is affinoid. As above, we may also assume that $Y\simeq \AnSpec S$ is affine. Let $\coprod_{i\in I}U_i\rightarrow U$ be a cover by rational open subsets. Then 
	\begin{equation*}
		\Oo_{X\smallsetminus U}\simeq \colimit_{\{i_1,\dotsc,i_n\}\subseteq I}\left(\Oo_{X\smallsetminus U_{i_1}}\lotimes_{\Oo_{X_\solid}}\dotsb\lotimes_{\Oo_{X_\solid}}\Oo_{X\smallsetminus U_{i_n}}\right)\,,
	\end{equation*}
	where the colimit is taken over all finite subsets of $I$. Since the colimit is filtered and $f^*(\Oo_{X\smallsetminus U})$ is detected by the single condition $1=0$, there exists a finite subset $\{i_1,\dotsc,i_n\}\subseteq I$ such that already $f^*(\Oo_{X\smallsetminus U_{i_1}})\lotimes_{S}\dotsb\lotimes_{S}f^*(\Oo_{X\smallsetminus U_{i_n}})\simeq 0$ in $\Dd(S)$. By the criterion from \cite[Lecture~\href{https://youtu.be/vRUmXU8ijIk?list=PLx5f8IelFRgGmu6gmL-Kf_Rl_6Mm7juZO&t=4221}{18}]{AnalyticStacks}, it follows that $\coprod_{j=1}^nU_{i_j, \solid}\times_{X_\solid}\AnSpec S\rightarrow \AnSpec S$ is a $!$-cover. We may therefore replace $S$ by the constituents of this cover, and for each of them it's clear that they factor through $U_\solid$. This finishes the proof of \cref{enum:TateStacksOpenImmersion}.
	
	Part~\cref{enum:TateStacksTraceClass} is a formal consequence: If $V\coloneqq X\smallsetminus \ov{U}'$, then $V_\solid\rightarrow X_\solid$ is a naive open immersion by \cref{enum:TateStacksOpenImmersion}, $U_\solid\times_{X_\solid} V_\solid\simeq \emptyset$ follows from \cref{enum:TateStacksIntersection}, and if $A\coloneqq \Oo_{X\smallsetminus U}\lotimes_{\Oo_{X_\solid}}\Oo_{X\smallsetminus V}$, then it's formal to see that $\Mod_A(\Dd(X_\solid))$ is the kernel of the pullback functor $\Dd(X_\solid)\rightarrow \Dd(U_\solid)\times_{\Dd((U\cap V)_\solid)}\Dd(V_\solid)$. But this functor is an equivalence as $U\cup V=X$, and so $A\simeq 0$.
	\end{proof}
	We can finally show the desired criterion for idempotence.
	\begin{defi}
	If $X$ is a Tate adic space and $Z\subseteq X$ is a closed subset, the \emph{overconvergent neighbourhood of} $Z$ is the analytic stack
	\begin{equation*}
		Z^\dagger\coloneqq \limit_{U\supseteq Z}U_\solid\,,
	\end{equation*}
	where the limit is taken over all open neighbourhoods of $Z$. If it's clear that we're working in $\Dd(X_\solid)$, we often abuse notation and denote by $\Oo_{Z^\dagger}\coloneqq \colimit_{U\supseteq Z}\Oo_U\in \Dd(X_\solid)$ the \emph{sheaf of overconvergent functions on $Z$}. This is in favorable situations, but not always, the pushforward of the structure sheaf of $Z^\dagger$; see \cref{thm:OverconvergentIdempotentNuclear}\cref{enum:OZdaggerPushforward} below.
	\end{defi}
	
	\begin{thm}\label{thm:OverconvergentIdempotentNuclear}
	Let $X$ be a quasi-compact quasi-separated Tate adic space and let $Z\subseteq X$ be a closed subset such that for all points $z\in Z$ and all generalisations $z'\rightsquigarrow z$ also $z'\in Z$.
	\begin{alphanumerate}
		\item The ind-object
		\begin{equation*}
			\indcolim_{U\supseteq Z} \Oo_U\in \Ind\Dd(X_\solid)
		\end{equation*}
		is idempotent, nuclear, and obtained by killing the pro-idempotent $\prolim_{Z\cap \ov W=\emptyset} \Oo_W$, where the limit is taken over all open subsets $W\subseteq X$ such that $Z\cap \ov W=\emptyset$. In particular, $\Oo_{Z^\dagger}\in \Dd(X_\solid)$ is idempotent and nuclear.\label{enum:OZdaggerIdempotentNuclear}
		\item If for every affinoid open $j\colon\Spa(R,R^+)\rightarrow X$ the pullback $j^*(\Oo_{Z^\dagger})\in \Dd((R,R^+)_\solid)$ is connective%
		\footnote{Following discussions with Ben Antieau and Peter Scholze, we believe that connectivity can be replaced by the much weaker condition that $\Mod_{j^*(\Oo_{Z\smash{^\dagger}})}(\Dd(R))$ is left-complete, using an adaptation of \cite[Proposition~{\chpageref[2.16]{6}}]{MathewMondalDerivedRings}.}%
		, then pushforward along $Z^\dagger\rightarrow X_\solid$ induces a symmetric monoidal equivalence $\Dd(Z^\dagger)\simeq \Mod_{\Oo_{Z^\dagger}}(\Dd(X_\solid))$. In particular, in this case $\Oo_{Z^\dagger}$ is really the pushforward of the structure sheaf of $Z^\dagger$.\label{enum:OZdaggerPushforward}
	\end{alphanumerate}
	\end{thm}
	To prove \cref{thm:OverconvergentIdempotentNuclear}, we send a lemma in advance.
	\begin{lem}\label{lem:SpectralT4}
	Let $X$ be a spectral space and let $Y,Z\subseteq X$ be closed subsets such that for $z\in Z$ and $y\in Y$ there never exists a common generalisation $z\mathrel{\reflectbox{$\leadsto$}} x\leadsto y$ \embrace{in particular $Z\cap Y=\emptyset$}. Then there exist open neighbourhoods $U\supseteq Z$ and $V\supseteq Y$ such that $U\cap V=\emptyset$.
	\end{lem}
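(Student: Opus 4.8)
The plan is to work with the constructible (patch) topology $X_{\mathrm{cons}}$ on the spectral space $X$ and to reduce the statement to a finite-intersection-property argument. Write $x \preceq y$ for the specialisation relation ``$y \in \overline{\{x\}}$'', so that a subset is generalisation-stable iff it is a down-set for $\preceq$; for a subset $S$ put $\downarrow S := \{x \in X : x \preceq s \text{ for some } s \in S\}$. The hypothesis says precisely that $\downarrow Z \cap \downarrow Y = \emptyset$. I will use the following standard facts about spectral spaces: the quasi-compact opens of $X$ form a basis of the topology and are stable under generalisation; a closed subset of $X$ (such as $Z$ or $Y$) is quasi-compact; a finite union of quasi-compact opens is a quasi-compact open; and $X_{\mathrm{cons}}$ is compact Hausdorff, with the quasi-compact opens of $X$ and the closed subsets of $X$ all being closed in $X_{\mathrm{cons}}$.

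The first step is a separation lemma: for $x \in X$ and $y \notin \overline{\{x\}}$ there is a quasi-compact open $W \subseteq X$ with $y \in W$ and $x \notin W$. Indeed $X \setminus \overline{\{x\}}$ is an open neighbourhood of $y$, so by the basis property there is a quasi-compact open $W$ with $y \in W \subseteq X \setminus \overline{\{x\}}$, and then $x \notin W$. The second step upgrades this to the assertion that $\downarrow Z$ is the intersection of all quasi-compact opens of $X$ containing $Z$ (and likewise for $Y$). The inclusion $\subseteq$ is clear, since each such quasi-compact open is a generalisation-stable set containing $Z$. Conversely, if $x \notin \downarrow Z$ then $z \notin \overline{\{x\}}$ for every $z \in Z$, so the first step yields quasi-compact opens $W_z \ni z$ with $x \notin W_z$; since $Z$ is quasi-compact, finitely many of them cover $Z$, and their union is a quasi-compact open of $X$ containing $Z$ but not $x$.

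Finally, let $\mathcal{K}$ and $\mathcal{L}$ be the collections of quasi-compact opens of $X$ containing $Z$ and $Y$ respectively; each contains $X$ and is closed under finite intersections, and by the second step $\bigl(\bigcap_{K \in \mathcal{K}} K\bigr) \cap \bigl(\bigcap_{L \in \mathcal{L}} L\bigr) = \downarrow Z \cap \downarrow Y = \emptyset$. All members of $\mathcal{K}$ and $\mathcal{L}$ are closed in the compact space $X_{\mathrm{cons}}$, so this family of closed sets with empty intersection admits a finite subfamily with empty intersection: there are $K_1, \dots, K_m \in \mathcal{K}$ and $L_1, \dots, L_n \in \mathcal{L}$ with $K_1 \cap \dots \cap K_m \cap L_1 \cap \dots \cap L_n = \emptyset$. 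Then $U := K_1 \cap \dots \cap K_m$ and $V := L_1 \cap \dots \cap L_n$ are open subsets of $X$ with $Z \subseteq U$, $Y \subseteq V$ and $U \cap V = \emptyset$, as desired. I do not expect a genuine obstacle here — everything rests on the classical point-set topology of spectral spaces — but it is worth noting that one cannot in general take $U$ and $V$ to be clopen in $X$ (only quasi-compact open), and that the argument genuinely uses the hypothesis on common generalisations rather than the weaker assumption $Z \cap Y = \emptyset$.
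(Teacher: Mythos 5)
Your proof is correct and takes a genuinely different route from the paper. The paper's argument stays in the given topology: it fixes $z\in Z$, applies the point-separation lemma \cite[\stackstag{0906}]{Stacks} to obtain disjoint opens $U_y\ni z$, $V_y\ni y$ for each $y\in Y$, uses quasi-compactness of $Y$ to pass to a finite subfamily, intersects the $U_{y_i}$, and then repeats the compactness argument over $Z$ --- a classical double-compactness scheme with the separation statement treated as a black box. You instead pass to the constructible topology $X_{\mathrm{cons}}$: your Step~2 identifies $\downarrow Z$ (the set of generalisations of points of $Z$) with the intersection of all quasi-compact opens containing $Z$ --- which, specialised to singletons, essentially reproves the cited separation lemma --- and then Step~3 runs a single finite-intersection-property argument in the compact space $X_{\mathrm{cons}}$. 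Both arguments ultimately invoke quasi-compactness of $Y$, of $Z$, and of the ambient space; you just package the bookkeeping differently. Your version has the small advantage of producing $U$ and $V$ that are \emph{quasi-compact} opens, which the paper's proof does not give a priori, and the identity $\downarrow Z=\bigcap\{K\ \text{qc open}:Z\subseteq K\}$ is a useful reusable fact; what it costs is relying on compactness of $X_{\mathrm{cons}}$ as an additional black box. Your closing remark that the hypothesis on common generalisations is strictly stronger than mere disjointness is correct and worth keeping in mind.
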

	\begin{proof}
	Fix $z\in Z$. By \cite[\stackstag{0906}]{Stacks}, $y\in Y$ there exist open neighbourhoods $U_y\ni z$ and $V_y\ni y$ such that $U_y\cap V_y=\emptyset$. By compactness of $Y$, there exist finitely many $y_1,\dotsc,y_n\in Y$ such that $Y\subseteq V_z\coloneqq V_{y_1}\cup\dotsb\cup V_{y_n}$. Let also $U_z\coloneqq U_{y_1}\cap \dotsb U_{y_n}$, so that $U_z\cap V_z=\emptyset$. By compactness of $Z$, there exist finitely many $z_1,\dotsc,z_m\in Z$ such that $Z\subseteq U\coloneqq U_{z_1}\cup\dotsb\cup U_{z_m}$. Putting $V\coloneqq V_{z_1}\cap\dotsb\cap V_{z_m}$, we have constructed $U$ and $V$ with the required properties.
	\end{proof}
	
	\begin{proof}[Proof of \cref{thm:OverconvergentIdempotentNuclear}]
	First observe that \cref{lem:SpectralT4} can be applied to any closed subset $Y\subseteq X$ such that $Z\cap Y=\emptyset$. Indeed, for any common generalisation $z\mathrel{\reflectbox{$\leadsto$}} x\leadsto y$, we would have $x\in Z$, as $Z$ is closed under generalisations, but then $y\in Z$, as $Z$ is also closed under specialisations.
	
	It follows that in the ind-object $\indcolim_{U\supseteq Z}\Oo_U$ we can restrict to open neighbouhoods of the form $U=X\smallsetminus \ov W$ for some open subset $W$ such that $Z\cap \ov W=\emptyset$. Indeed, for arbitrary $U$, apply \cref{lem:SpectralT4} to $Z$ and $X\smallsetminus U$ to get an open neighbourhood $W\supseteq (X\smallsetminus U)$ such that $Z\cap W=\emptyset$. Then $(X\smallsetminus \ov W)\subseteq U$, as desired.
	
	Let $\Oo_{\ov W}\coloneqq \Oo_{X\smallsetminus(X\smallsetminus \ov W)}\in \Dd(X_\solid)$ be the complementary idempotent determined by the open subset $X\smallsetminus \ov W$. Since each $\Oo_U$ is obtained by killing the idempotent $\Oo_{X\smallsetminus U}$, our observation implies that $\indcolim_{U\supseteq Z} \Oo_U$ is obtained by killing the pro-idempotent $\prolim_{Z\cap \ov W=\emptyset}\Oo_{\ov W}$. For all such $W$, applying \cref{lem:SpectralT4} to $Z$ and $\ov W$ provides another open neighbourhood $W'\supseteq \ov W$ such that still $Z\cap \ov{W'}=\emptyset$. By \cref{lem:TraceClassAnalyticStacks} and \cref{lem:TateOpenImmersions}\cref{enum:TateStacksTraceClass}, $\Oo_{W'}\rightarrow \Oo_W$ is trace-class and factors through $\Oo_{\ov W}$. It follows that $\prolim_{Z\cap \ov W=\emptyset}\Oo_W\simeq \prolim_{Z\cap \ov W=\emptyset}\Oo_{\ov W}$ and that the condition of \cref{lem:NuclearIdempotentAbstract} is satisfied, so that $\indcolim_{U\supseteq Z}\Oo_U$ is indeed idempotent and nuclear in $\Ind\Dd(X_\solid)$. Since $\colimit \colon \Ind\Dd(X_\solid)\rightarrow \Dd(X_\solid)$ preserves idempotents and nuclear objects, it follows that $\Oo_{Z^\dagger}\in \Dd(X_\solid)$ is idempotent and nuclear as well. This finishes the proof of \cref{enum:OZdaggerIdempotentNuclear}.
	
	For~\cref{enum:OZdaggerPushforward}, note that $Z^\dagger$ is clearly compatible with base change and so is $\Oo_{Z^\dagger}$ by \cref{enum:OZdaggerIdempotentNuclear} and \cref{lem:NuclearIdempotentAbstract}\cref{enum:IdempotentBasechange}. We may therefore assume that $X=\Spa(R,R^+)$ is affinoid and $\Oo_{Z^\dagger}$ is connective. Then $\Oo_{Z^\dagger}$ can be turned into an analytic ring using the induced analytic ring structure from $(R,R^+)_\solid$. It follows that a map $f\colon\AnSpec S\rightarrow \AnSpec(R,R^+)_\solid$ factors through $\Oo_{Z^\dagger}$ if and only if $S\simeq f^*(\Oo_{Z^\dagger})$. By \cref{lem:NuclearIdempotentAbstract}\cref{enum:NuclearIdempotentAbstract}, we have $\Oo_{Z^\dagger}\lotimes_{(R,R^+)_\solid}\Oo_{W}\simeq 0$ for all open $W$ such that $Z\cap\ov W=\emptyset$. Thus $S\simeq f^*(\Oo_{Z^\dagger})$ implies $f^*(\Oo_W)\simeq 0$ for all such $W$. By sandwiching open and closed subsets, we get $f^*(\Oo_{X\smallsetminus U})\simeq 0$ for all open neighbourhoods $U\supseteq Z$. By \cref{lem:TateOpenImmersions}\cref{enum:TateStacksOpenImmersion}, this implies that $f$ factors through $Z^\dagger\simeq \limit_{U\supseteq Z}U_\solid$.
	
	Conversely, if $f$ factors through $Z^\dagger$, then $f^*(\Oo_{X\smallsetminus U})\simeq 0$ for all $U$ and thus $f^*(\Oo_W)\simeq 0$ for all $W$ as above, using the same sandwiching argument. It follows that $S$ is a module over the nuclear idempotent ind-algebra obtained by killing $\prolim_{Z\cap \ov W=\emptyset}f^*(\Oo_W)$ in $\Dd(S)$. By \cref{lem:NuclearIdempotentAbstract}\cref{enum:IdempotentBasechange}, this is $\indcolim_{U\supseteq Z}f^*(\Oo_U)$. Then $S$ is also a module over the honest colimit $\colimit_{U\supseteq Z}f^*(\Oo_U)\simeq f^*(\Oo_{Z^\dagger})$, proving $S\simeq f^*(\Oo_{Z^\dagger})$.
	
	In conclusion, this argument shows that $Z^\dagger\simeq \AnSpec \Oo_{Z^\dagger}$ is an affine analytic stack and so $\Dd(Z^\dagger)\simeq \Mod_{\Oo_{Z^\dagger}}(\Dd((R,R^+)_\solid))$ follows by construction, as we've put the induced analytic ring structure on $\Oo_{Z^\dagger}$.
	\end{proof}
	This implies idempotence and nuclearity in the situation of \cref{thm:OverconvergentNeighbourhood}.
	\begin{cor}\label{cor:OZdaggerIdempotentNuclear}
	Let $X\coloneqq \Spa\IZ_p\qpower\smallsetminus\{p=0,q=1\}$ and let $Z\subseteq X$ be the union of the closed subsets $\Spa(\IF_p\qLaurent,\IF_p\qpower)$ and $\Spa(\IQ_p(\zeta_{p^n}),\IZ_p[\zeta_{p^n}])$ for all $n\geqslant 0$.
	\begin{alphanumerate}
		\item $Z$ is closed and closed under generalisations.\label{enum:ZGeneralising}
		\item For $n,r,s\geqslant 1$ such that $(p-1)p^n>s$, let $W_{n,r,s}\subseteq X$ be the rational open subset determined by $\abs{p^r}\leqslant \abs{q^{p^n}-1}\neq 0$, $\abs{(q-1)^s}\leqslant \abs{p}\neq 0$. Then $\Oo_{Z^\dagger}$ is idempotent, nuclear, and the colimit of the idempotent nuclear ind-algebra obtained by killing the idempotent pro-algebra $\prolim_{n,r,s}\Oo_{W_{n,r,s}}$.\label{enum:ZComplement}
	\end{alphanumerate} 
	\end{cor}
	\begin{proof}
	Let $x\in X\smallsetminus Z$. Then $\abs{p}_x\neq 0$, hence $\abs{(q-1)^s}_x\leqslant \abs{p}_x$ for $s\gge 0$. Choose such an $s$. Moreover, $\abs{q^{p^n}-1}_x\neq 0$ holds for all $n\geqslant 0$. Choose $n$ such that $(p-1)p^n>s$ and choose $r\gge 0$ such that $\abs{p^r}_x\leqslant \abs{q^{p^n}-1}_x$. Then $x\in W_{n,r,s}$. If we can show $Z\cap \ov{W}_{n,r,s}=\emptyset$, both~\cref{enum:ZGeneralising} and~\cref{enum:ZComplement} will follow. Indeed, this will imply that $X\smallsetminus Z$ is open and closed under specialisations, proving~\cref{enum:ZGeneralising}. Moreover, $X\smallsetminus Z=\bigcup_{n,r,s} W_{n,r,s}$ and so for any open subset $W$ such that $Z\cap \ov W=\emptyset$ we must have $W_{n,r,s}\supseteq \ov W$ for sufficiently large $n$, $r$, and $s$ by quasi-compactness of $\ov W$. Hence~\cref{enum:ZComplement} follows from \cref{thm:OverconvergentIdempotentNuclear}\cref{enum:OZdaggerIdempotentNuclear}.
	
	To show $Z\cap \ov{W}_{n,r,s}=\emptyset$, let $w\in W_{n,r,s}$. Since $(p-1)p^n>s$, we get $\abs{(q-1)^{(p-1)p^{i-1}}}_w<\abs{p}_x$ for all $i>n$ and so $\abs{\Phi_{p^i}(q)}_w=\abs{p}_w$, where $\Phi_{p^i}(q)$ denotes the $(p^i)$\textsuperscript{th} cyclotomic polynomial. Thus $0<\abs{p^{r+i-n}}_w\leqslant \abs{q^{p^i}-1}_w$ for $i>n$. In particular, $w\notin Z$. Even better: If $U_{i}$ denotes the rational open subset determined by $\abs{q^{p^i}-1}\leqslant \abs{p^{r+i-n+1}}\neq 0$ and $V$ denotes the rational open subset determined by $\abs{p}\leqslant \abs{(q-1)^{s+1}}\neq 0$, then the open set $\bigcup_{i\geqslant n}U_i\cup V$ contains $Z$ and doesn't intersect $W_{n,r,s}$, so indeed $Z\cap \ov W_{n,r,s}=\emptyset$.
	\end{proof}
	
	\subsection{Graded adic spaces}\label{subsec:GradedAdic}
	To deduce idempotence and nuclearity in the situation of \cref{thm:OverconvergentNeighbourhoodEquivariant}, let us describe how to encode gradings in terms of actions of the analytic stack
	\begin{equation*}
	\AdicGm\coloneqq \AnSpec \IZ[u^{\pm 1}]_\solid\,,
	\end{equation*}
	where $\IZ[u^{\pm 1}]_\solid$ is obtained from $\IZ[u^{\pm 1}]$ by solidifying both $u$ and $u^{-1}$. Equivalently, $\IZ[u^{\pm 1}]_\solid$ is the analytic ring associated to the discrete Huber pair $(\IZ[u^{\pm 1}],\IZ[u^{\pm 1}])$.
	
	\begin{numpar}[Graded adic spaces via $\AdicGm$-actions.]\label{par:GradedAdicI}
	Classically, the grading on $\IZ[\beta,t]$ in which $\beta$ and $t$ receive degree $2$ and $-2$, respectively, is encoded by an action of $\IG_m\coloneqq \Spec \IZ[u^{\pm 1}]$ on $\Spec \IZ[\beta,t]$. The action map $\Spec \IZ[\beta,t]\times \IG_m\rightarrow \Spec \IZ[\beta,t]$ corresponds to the ring map $\Delta\colon \IZ[\beta,t]\rightarrow \IZ[\beta,t]\otimes_\IZ\IZ[u^{\pm 1}]$ given by $\Delta(\beta)\coloneqq u^2\beta$, $\Delta(t)\coloneqq u^{-2}t$.
	
	In our situation, we're forced to work with the adic spectrum $\ov X^*\coloneqq \Spa \IZ[\beta,t]_{(p,t)}^\complete$ instead. But in the map $\Delta$ we can't just replace $\IZ[\beta,t]$ by its $(p,t)$-completion, since the tensor product  $\IZ[\beta,t]_{(p,t)}^\complete\otimes_\IZ\IZ[u^{\pm 1}]$ won't be $(p,t)$-complete anymore.
	
	To fix this, consider $\pi\colon \AdicGm\rightarrow \AnSpec \IZ_\solid$ and let $-\lotimes_{\IZ_\solid}\IZ[u^{\pm 1}]_\solid$ denote the pullback $\pi^*\colon \Dd(\IZ_\solid)\rightarrow \Dd(\AdicGm)$. By \cite[Lecture~\href{https://youtu.be/fUjn2rGw9SA?list=PLx5f8IelFRgGmu6gmL-Kf_Rl_6Mm7juZO&t=2056}{7}]{AnalyticStacks}, the process of adjoining a variable and then solidifying it preserves limits, and so
	\begin{equation*}
		\IZ[\beta,t]_{(p,t)}^\complete\lotimes_{\IZ_\solid}\IZ[u^{\pm 1}]_\solid\simeq \IZ[\beta,t,u^{\pm 1}]_{(p,t)}^\complete\,.
	\end{equation*}
	Thus, if we put $\ov X^*_\solid\coloneqq \AnSpec (\IZ[\beta,t]_{(p,t)}^\complete,\IZ[\beta,t]_{(p,t)}^\complete)_\solid$, we do get an action $\ov X^*_\solid\times\AdicGm\rightarrow \ov X^*_\solid$ simply by $(p,t)$-completing the map $\Delta$ above. Here and in the following, all products are taken in the $\infty$-category $\cat{AnStk}_{\IZ_\solid}$ of analytic stacks over $\IZ_\solid$. We let $\AdicGm^\bullet\colon \IDelta^\op\rightarrow \cat{AnStk}_{\IZ_\solid}$ denote the simplicial analytic stack corresponding to the underlying $\IE_1$-structure of the $\IE_\infty$-group object $\AdicGm$, and we let $\ov X^*_\solid\times\AdicGm^\bullet\colon \IDelta^\op\rightarrow \cat{AnStk}_{\IZ_\solid}$ denote the simplicial analytic stack corresponding to the $\AdicGm$-action on $\ov X^*_\solid$. Finally, let
	\begin{equation*}
		\B \AdicGm\coloneqq \colimit_{[n]\in\IDelta^\op}\AdicGm^n\quad\text{and}\quad \ov X^*_\solid/\AdicGm\coloneqq \colimit_{[n]\in\IDelta^\op}\ov X^*_\solid\times\AdicGm^n\,.
	\end{equation*}
	\end{numpar}
	\begin{numpar}[Graded objects as sheaves on $\B\AdicGm$.]\label{par:GradedAdicII}
		Let $\IG_{m, \IZ_\solid}\coloneqq \IG_m\times\AnSpec \IZ_\solid$. By adapting the usual proof, it's straightforward to show that 
		\begin{equation*}
			\Dd(\B\IG_{m, \IZ_\solid})\simeq \Gr\Dd(\IZ_\solid)
		\end{equation*}
		is the $\infty$-category of graded solid condensed abelian groups. Since we have a map of analytic stacks $c\colon \B\AdicGm\rightarrow \B\IG_{m, \IZ_\solid}$, we get a pullback functor $c^*\colon \Gr\Dd(\IZ_\solid)\rightarrow \Dd(\B\AdicGm)$. In this way, we can associate to any graded solid condensed $\IZ$-module a quasi-coherent sheaf on $\B\AdicGm$.
		
		In fact, it can be shown that $c^*$ defines a fully faithful embedding $\Gr\Dd(\IZ_\solid)\rightarrow \Dd(\B\AdicGm)$. We thank Peter Scholze for pointing out the following lemma (any errors are our own):
	\end{numpar}
	\begin{lem}\label{lem:ModulesOverProdZ}
		There exists an equivalence of $\infty$-categories
		\begin{equation*}
			\Mod_{\prod_{n\in\IZ}\IZ}\Dd(\IZ_\solid)\overset{\simeq}{\longrightarrow}\Dd(\B\AdicGm)\,.
		\end{equation*}
		Under this equivalence, the image of a graded object $M^*\in\Gr\Dd(\IZ_\solid)$ is sent to $\bigoplus_{n\in\IZ}M_n$, with component-wise action of the ring $\prod_{n\in\IZ}\IZ$; or in other words, a comodule over $\IZ[u^{\pm 1}]$ is sent to itself, regarded as a module over $\Hhom_\IZ(\IZ[u^{\pm 1}],\IZ)\cong \prod_{n\in\IZ}\IZ$.
	\end{lem}
	\begin{cor}\label{cor:c*FullyFaithful}
		The functor $c^*\colon \Gr\Dd(\IZ_\solid)\rightarrow \Dd(\B\AdicGm)$ is fully faithful.
	\end{cor}
	\begin{proof}
		It's enough to show that the functor $\Gr\Dd(\IZ_\solid)\rightarrow \Mod_{\prod_{n\in\IZ}\IZ}\Dd(\IZ_\solid)$ from \cref{lem:ModulesOverProdZ} is fully faithful. This can be reduced to the case of shifts (both in graded and homotopical direction) of the compact generator $\Null_{\IZ_\solid}\simeq \prod_\IN\IZ$, i.e.\ the case of graded object of the form $\Sigma^i\Null_{\IZ_\solid}(j)$ for some integers $i$ and $j$, which is straightforward to check.
	\end{proof}
	\begin{proof}[Proof sketch of \cref{lem:ModulesOverProdZ}]
		First observe that for any injective map $\alpha\colon [m]\rightarrow [n]$ in the simplex category $\IDelta$, the associated map $\alpha\colon \AdicGm^n\rightarrow \AdicGm^m$ is $!$-able and the pullback functor $\alpha^*\colon \Dd(\AdicGm^m)\rightarrow \Dd(\AdicGm^n)$ agrees up to shift with $\alpha^!$. It follows that $\alpha^*$ admits a left adjoint $\alpha_\natural$, which agrees up to shift with $\alpha_!$. The limit $\Dd(\B\AdicGm)\simeq \limit_{[n]\in\IDelta}\Dd(\AdicGm^n)$ can therefore be rewritten as a simplicial colimit in $\Pr^\L$ along the $\alpha_\natural$ functors. The inclusion of $\Dd(\IZ_\solid)$ into the colimit defines a functor which we'll denote $\eta_\natural \colon \Dd(\IZ_\solid)\rightarrow \Dd(\B\AdicGm)$; its right adjoint is given by pullback along the canonical map $\eta\colon \AnSpec \IZ_\solid\rightarrow \B\AdicGm$.
		
		This colimit diagram lies, in fact, in $\Pr_\omega^\L$. Indeed, each $\Dd(\AdicGm^n)$ is compactly generated and each $\alpha_\natural$ preserves compact objects, since its right adjoint $\alpha^*$ admits a further right adjoint $\alpha_*$. It follows that $\Dd(\B\AdicGm)$ is compactly generated, and the images of $\Null_{\IZ_\solid}\lotimes_{\IZ_\solid}\Oo_{\AdicGm^n_\solid}$ for all $n$ form a set of compact generators. In fact, $\eta_\natural\Null_{\IZ_\solid}$ is already a compact generator, since each $\AdicGm^n\rightarrow \B\AdicGm$ factors through $\eta\colon \AnSpec \IZ_\solid\rightarrow \B\AdicGm$. Next observe that
		\begin{equation*}
			\eta_\natural\IZ\simeq \prod_{n\in\IZ}\Oo(n)\quad\text{and}\quad \eta_\natural\Null_{\IZ_\solid}\simeq \prod_{\IN}\prod_{n\in\IZ}\Oo(n)\,,
		\end{equation*}
		where  $\Oo(n)\in\Dd(\B\AdicGm)$ denotes the image of the graded $\IZ$-module $\IZ(n)$. Indeed, to construct a map, $\eta_\natural\IZ\rightarrow \prod_{n\in\IZ}\Oo(n)$ it's enough to provide a map $\IZ\rightarrow \eta^*\big(\prod_{n\in\IZ}\Oo(n)\big)\simeq \prod_{n\in\IZ}\IZ$; we take the diagonal map. To check that this induces an equivalence, we check that it becomes an equivalence in each $\Dd(\AdicGm^n)$. This is a straightforward calculation, using the fact that the $\alpha_\natural$ functors satisfy base change (which follows from proper base change, as they agree with $\alpha_!$ up to shift). In the same way one shows the formula for $\eta_\natural\Null_{\IZ_\solid}$.
		
		Now $\eta_\natural\IZ\simeq\prod_{n\in\IZ}\Oo(n)$ admits a $\prod_{n\in\IZ}\IZ$-module structure in an apparent way, hence it induces a functor $\Mod_{\prod_{n\in\IZ}}\Dd(\IZ)\rightarrow \Dd(\B\AdicGm)$. Extending $\Dd(\IZ_\solid)$-linearly, we obtain a functor
		\begin{equation*}
			\Mod_{\prod_{n\in\IZ}}\Dd(\IZ_\solid)\simeq\left(\Mod_{\prod_{n\in\IZ}}\Dd(\IZ)\right)\otimes_{\Dd(\IZ)}\Dd(\IZ_\solid)\longrightarrow \Dd(\B\AdicGm)\,,
		\end{equation*}
		as desired. It is essentially surjective, since the compact generator $\prod_{n\in\IZ}\Null_{\IZ_\solid}$ is mapped to the compact generator $\eta_\natural\Null_{\IZ_\solid}$. To check fully faithfulness, we only need to verify that $\Hom_{\prod_{n\in\IZ}\IZ}\bigl(\prod_{n\in\IZ}\Null_{\IZ_\solid},\prod_{n\in\IZ}\Null_{\IZ_\solid}\bigr)\rightarrow\Hom_{\Dd(\B\AdicGm)}(\eta_\natural\Null_{\IZ_\solid},\eta_\natural\Null_{\IZ_\solid})$ is an equivalence. By adjunction, we may rewrite the right-hand side as $\Hom_{\Dd(\IZ)}(\Null_{\IZ_\solid},\eta^*\eta_\natural\Null_{\IZ_\solid})$ and then the claim is clear from $\eta^*\eta_\natural\Null_{\IZ_\solid}\simeq \prod_{n\in\IZ}\Null_{\IZ_\solid}$.
	\end{proof}
	In the next two lemmas, we'll deduce that the graded $\IZ_p[\beta]\llbracket t\rrbracket$-modules $\fil_{\qHodge}^\star\qhatdeRham_{(\IZ/p^{\alpha})/\IZ_p}$ can be regarded as sheaves on $\ov X^*_\solid/\AdicGm$ without loss of information.
	
	\begin{lem}\label{lem:DXTModDBT}
		Let $\Oo_{\ov X^*/\AdicGm}\in\Dd(\B\AdicGm)$ denote the pushforward of the structure sheaf of $\ov X^*_\solid/\AdicGm$. Then pushforward along $\ov X^*_\solid/\AdicGm\rightarrow \B\AdicGm$ induces a symmetric monoidal equivalence of $\infty$-categories
		\begin{equation*}
			\Dd\left(\ov X^*_\solid/\AdicGm\right)\simeq \Mod_{\Oo_{\ov X^*/\AdicGm}}\bigl(\Dd(\B\AdicGm)\bigr)\,.
		\end{equation*}
	\end{lem}
	\begin{proof}
		The same argument as in \cref{par:GradedAdicI} shows $\ov X^*_\solid\times\AdicGm^n\simeq \AnSpec (\IZ[\beta,t,u_1^{\pm 1},\dotsc,u_n^{\pm 1}]_{(p,t)}^\complete)_\solid$.
		By definition, $\Dd(\B\AdicGm)\simeq \limit_{[n]\in\IDelta}\Dd(\AdicGm^n)$ and $\Dd(\ov X^*_\solid/\AdicGm)\simeq \limit_{[n]\in\IDelta}\Dd(\ov X^*_\solid\times\AdicGm^n)$, where the cosimplicial limits are taken along the pullback functors. Observe that the pushforward functors $\pi_*\colon \Dd(\ov X^*_\solid\times\AdicGm^n)\rightarrow \Dd(\AdicGm^n)$ commute with these pullbacks. Indeed, if we would take the limit along the $!$-pullbacks, this would follow from proper base change (by passing to right adjoints). Since $\IZ\rightarrow \IZ[u^{\pm 1}]$ is smooth of relative dimension~$1$ and $\Omega_{\IZ[u^{\pm 1}]/\IZ}^1\cong \IZ[u^{\pm 1}]\d u$ is a free module of rank~$1$, we get $\pi^!\simeq \Sigma^{-1}\pi^*$ by \cite[Theorem~\chref{11.6}]{Condensed}, and so commutativity for the $*$-pullbacks follows.
		
		Therefore $\Oo_{\ov X^*/\AdicGm}\in\Dd(\B\AdicGm)$ is given by the degree-wise pushforwards of the structure sheaves $\Oo_{\ov X^*_\solid\times\AdicGm^n}$, that is, by $\IZ[\beta,t,u_1^{\pm 1},\dotsc,u_n^{\pm 1}]_{(p,t)}^\complete\in \Dd(\AdicGm^n)$ for all $[n]\in\IDelta$. In every degree, the pushforward induces an equivalence
		\begin{equation*}
			\Dd\left(\ov X^*_\solid\times\AdicGm^n\right)\overset{\simeq}{\longrightarrow}\Mod_{\IZ[\beta,t,u_1^{\pm 1},\dotsc,u_n^{\pm 1}]_{(p,t)}^\complete}\bigl(\Dd(\AdicGm^n)\bigr)\,.
		\end{equation*}
		Using this observation, $\Dd\left(\ov X^*_\solid/\AdicGm\right)\simeq \Mod_{\Oo_{\ov X^*/\AdicGm}}(\Dd(\B\AdicGm))$ is completely formal.
	\end{proof}
	
	\begin{lem}\label{lem:GradedObjectsSheavesOnBT}
		Let $\IZ_p[\beta]\llbracket t\rrbracket\in \Gr\Dd(\IZ_\solid)$ denote the graded $(p,t)$-completion of the discrete graded ring $\IZ[\beta,t]$ and equip $\Mod_{\IZ_p[\beta]\llbracket t\rrbracket}(\Gr\Dd(\IZ_\solid))_{(p,t)}^\complete$ with the $(p,t)$-completed graded solid tensor product. Then $c^*$ induces a fully faithful lax symmetric monoidal functor
		\begin{equation*}
			\Mod_{\IZ_p[\beta]\llbracket t\rrbracket}\bigl(\Gr\Dd(\IZ_\solid)\bigr)_{(p,t)}^\complete \longrightarrow \Mod_{\Oo_{\ov X^*/\AdicGm}}\bigl(\Dd(\B\AdicGm)\bigr)\,,
		\end{equation*}
		which is symmetric monoidal when restricted to the full sub-$\infty$-category spanned by those objects in $\Mod_{\IZ_p[\beta]\llbracket t\rrbracket}(\Gr\Dd(\IZ_\solid))_{(p,t)}^\complete$ that are uniformly bounded below in every graded degree.%
		\footnote{By contrast, the graded solid tensor product on $\Gr\Dd(\IZ_\solid)$ does \emph{not} preserve $p$-complete objects, not even if they're uniformly bounded below, because being $p$-complete is not preserved under infinite direct sums.}
	\end{lem}
	\begin{proof}

		To construct the desired functor, we compose $c^*$ with $(p,t)$-completion to obtain
		\begin{equation*}
			\Mod_{\IZ_p[\beta]\llbracket t\rrbracket}\bigl(\Gr\Dd(\IZ_\solid)\bigr)\overset{c^*}{\longrightarrow}\Mod_{c^*(\IZ_p[\beta]\llbracket t\rrbracket)}\bigl(\Dd(\B\AdicGm)\bigr)\xrightarrow{(-)_{(p,t)}^\complete}\Mod_{\Oo_{\ov X^*/\AdicGm}}\bigl(\Dd(\B\AdicGm)\bigr)\,.
		\end{equation*}
		The functor $c^*$ is symmetric monoidal and $(-)_{(p,t)}^\complete$ is lax symmetric monoidal. Hence the composition is lax  symmetric monoidal. Moreover, it is symmetric monoidal when restricted to graded $\IZ_p[\beta]\llbracket t\rrbracket$-modules that are uniformly bounded below in every graded degree. Indeed, the image of such objects in $\Mod_{\Oo_{\ov X^*/\AdicGm}}(\Dd(\B\AdicGm))\simeq\limit_{[n]\in\IDelta}\Dd(\ov X^*_\solid\times\AdicGm^n)$ will be bounded below and $(p,t)$-complete in every cosimplicial degree, because the pullback functors along which the limit is taken preserve bounded below and $(p,t)$-complete objects (the latter because they preserve limits; see the argument in \cref{par:GradedAdicI}). So we can reduce to the fact that the solid tensor product in $\Dd(\ov X^*_\solid\times\AdicGm^n)$ preserves bounded below $(p,t)$-complete objects.
		
		Clearly $(-)_{(p,t)}^\complete\circ c^*$ factors through $\Mod_{\IZ_p[\beta]\llbracket t\rrbracket}(\Gr\Dd(\IZ_\solid))_{(p,t)}^\complete$. The resulting functor
		\begin{equation*}
			\Mod_{\IZ_p[\beta]\llbracket t\rrbracket}(\Gr\Dd(\IZ_\solid))_{(p,t)}^\complete\longrightarrow \Mod_{\Oo_{\ov X^*/\AdicGm}}\bigl(\Dd(\B\AdicGm)\bigr)
		\end{equation*}
		is symmetric monoidal on uniformly bounded below objects. Fully faithfulness can be checked modulo $(p,t)$, so it'll be enough to check that $\Mod_{\IF_p[\beta]}(\Gr\Dd(\IZ_\solid))\rightarrow \Mod_{c^*(\IF_p[\beta])}(\Dd(\B\AdicGm))$ is fully faithful, which follows from \cref{cor:c*FullyFaithful}.
	\end{proof}

	\begin{lem}\label{lem:X*Cover}
	Let $X^*\subseteq \ov X^*$ be the subset $\Spa\IZ[\beta,t]_{(p,t)}^\complete\smallsetminus\{p=0,\beta t=0\}$. Then $X^*$ is a Tate adic space and its associated analytic stack $X^*_\solid$ can be written as the following pushout:
	\begin{equation*}
		\begin{tikzcd}
			\AnSpec\left(\IZ[\beta,t]_{(p,t)}^\complete\bigl[\localise{p\beta t}\bigr],\IZ[\beta,t]_{(p,t)}^\complete\right)_\solid\rar\dar\drar[pushout] & \AnSpec\left(\IZ[\beta,t]_{(p,t)}^\complete\bigl[\localise{\beta t}\bigr],\IZ[\beta,t]_{(p,t)}^\complete\right)_\solid\dar\\
			\AnSpec\left(\IZ[\beta,t]_{(p,t)}^\complete\bigl[\localise{p}\bigr],\IZ[\beta,t]_{(p,t)}^\complete\right)_\solid\rar & X^*_\solid
		\end{tikzcd}
	\end{equation*}
	Moreover, the $\AdicGm$-action on $\ov X^*_\solid$ restricts to an action on $X^*_\solid$, and if $\Oo_{X^*/\AdicGm}\in \Dd(\B\AdicGm)$ denotes the pushforward of the structure sheaf of $X^*_\solid/\AdicGm$, then pushforward along $X^*_\solid/\AdicGm\rightarrow\B\AdicGm$ induces a symmetric monoidal equivalence
	\begin{equation*}
		\Dd(X^*_\solid/\AdicGm)\simeq \Mod_{\Oo_{X^*/\AdicGm}}\bigl(\Dd(\B\AdicGm)\bigr)\,.
	\end{equation*}
	\end{lem}
	\begin{proof}
	By \cref{par:AdicRecollectionsII}, $X^*_\solid$ is glued together from rational open subsets of $\ov X^*$. For example, one can take $U_1=\left\{x\in\ov X^*\ \middle|\ \abs{\beta t}_x\leqslant \abs{p}_x\neq 0\right\}$ and $U_2=\left\{x\in\ov X^*\ \middle|\ \abs{p}_x\leqslant \abs{\beta t}_x\neq 0\right\}$ and then
	\begin{equation*}
		X^*_\solid\simeq U_{1, \solid}\sqcup_{(U_1\cap U_2)_\solid}U_{2, \solid}\,.
	\end{equation*}
	To show the desired pushout, it's enough that $Y_{1, \solid}\coloneqq \AnSpec(\IZ[\beta,t]_{(p,t)}^\complete[1/p],\IZ[\beta,t]_{(p,t)}^\complete)_\solid$ and $Y_{2, \solid}\coloneqq \AnSpec(\IZ[\beta,t]_{(p,t)}^\complete[1/(\beta t)],\IZ[\beta,t]_{(p,t)}^\complete)_\solid$ form a $!$-cover after pullback to $U_{1,  \solid}$ and $U_{2,  \solid}$. This is clear, as $Y_{1, \solid}\times_{\ov X^*_\solid}U_{1, \solid}\simeq U_{1, \solid}$ and similarly $Y_{2, \solid}\times_{\ov X^*_\solid}U_{2, \solid}\simeq U_{2, \solid}$.
	
	To see that the $\AdicGm$-action on $\ov X^*_\solid$ restricts to an action on $X^*_\solid$, just observe that $p$ and $\beta t$ are homogeneous elements. The pushout above implies that the pushforward $\Oo_{X^*}\in \Dd(\IZ_\solid)$ of the structure sheaf of $X^*_\solid$ is given by
	\begin{equation*}
		\Oo_{X^*}\simeq \IZ[\beta,t]_{(p,t)}^\complete\bigl[\localise{p}\bigr]\times_{\IZ[\beta,t]_{(p,t)}^\complete\bigl[\frac1{p\beta t}\bigr]}\IZ[\beta,t]_{(p,t)}^\complete\bigl[\localise{\beta t}\bigr]\,,
	\end{equation*}
	the pullback being taken in the derived sense. Now $\Dd(X^*_\solid\times\AdicGm^n)\simeq \Mod_{\Oo_{X^*\times\AdicGm^n}}(\Dd(\AdicGm^n))$ holds for all $[n]\in\IDelta$, since the same is true for $Y_{1, \solid}$, $Y_{2, \solid}$, and $Y_{1, \solid}\times_{X^*_\solid}Y_{2, \solid}$. This finally implies $\Dd(X^*_\solid/\AdicGm)\simeq \Mod_{\Oo_{X^*/\AdicGm}}(\Dd(\B\AdicGm))$, as desired.
	\end{proof}
	
	We can finally show idempotence and nuclearity in the situation of \cref{thm:OverconvergentNeighbourhoodEquivariant}.
	%
	%
	\begin{cor}\label{cor:OZTdaggerIdempotentNuclear}
	Let $Z^*\subseteq X^*$ be union of the closed subsets $\{p=0\}$ and $\{[p^n]_{\ku}(t)=0\}$ for all $n\geqslant 0$, where $[p^n]_{\ku}(t)\coloneqq ((1+\beta t)^{p^n}-1)/\beta$ denotes the $p^n$-series of the formal group law of $\ku$.
	\begin{alphanumerate}
		\item $Z^*$ is closed and closed under generalisations. Moreover, the $\AdicGm$-action on $X^*_\solid$ restricts to an action on the overconvergent neighbourhood $Z^{*,\dagger}$ of $Z^*$.\label{enum:Z*Generalising}
		\item For $n,r,s\geqslant 1$ such that $(p-1)p^n>s$, let $W_{n,r,s}^*\subseteq X^*$ be the rational open subset determined by $\abs{p^r}\leqslant \abs{[p^n]_{\ku}(t)}\neq 0$, $\abs{(\beta t)^s}\leqslant \abs{p}\neq 0$. Then  $\Oo_{Z^{*,\dagger}/\AdicGm}\in \Dd(X^*_\solid/\AdicGm)$ is idempotent, nuclear, and the colimit of the ind-algebra obtained by killing the idempotent pro-algebra $\prolim_{n,r,s} \Oo_{W_{n,r,s}^*/\AdicGm}$.\label{enum:Z*Complement}
	\end{alphanumerate}
	\end{cor}
	\begin{proof}
	The proof of \cref{cor:OZdaggerIdempotentNuclear} can be carried over to show that $Z^*\cap \ov W_{n,r,s}^*=\emptyset$ and $X^*\smallsetminus Z^*=\bigcup_{n,r,s}W_{n,r,s}^*$. Hence $Z^*$ is closed and closed under generalisations. Moreover, the $\AdicGm$-equivariant open subsets $X^*\smallsetminus \ov W_{n,r,s}^*$ are coinitial among all open neighbourhoods of $Z^*$, because for an arbitrary $U\supseteq Z^*$, the complement $X^*\smallsetminus U$ is quasi-compact and thus contained in some $W_{n,r,s}^*$. Since the $W_{n,r,s}^*$ are $\AdicGm$-equivariant, as they're defined by homogeneous elements, we see that $Z^{*,\dagger}$ acquires a $\AdicGm$-action. This finishes the proof of \cref{enum:Z*Generalising}.
	
	For part~\cref{enum:Z*Complement}, \cref{thm:OverconvergentIdempotentNuclear} shows that $\Oo_{Z^{*,\dagger}}$ is the colimit of the idempotent nuclear ind-algebra obtained by killing $\prolim_{n,r,s}\Oo_{W_{n,r,s}^*}$. Since $Z^{*,\dagger}\times\AdicGm^n\simeq \limit_{U^*\supseteq Z^*}(U_\solid^*\times\AdicGm)$, where the limit is taken over all $\AdicGm$-equivariant open neighbourhoods, and since killing pro-idempotents is compatible with base change in the nuclear case by \cref{lem:NuclearIdempotentAbstract}\cref{enum:IdempotentBasechange}, we get that $\Oo_{Z^{*,\dagger}\times\AdicGm^n}$ is similarly given by killing $\prolim_{n,r,s}\Oo_{W_{n,r,s}^*\times\AdicGm^n}$ in $\Dd(X^*_\solid\times\AdicGm^n)$. Now let $A\in \Dd(X^*_\solid/\AdicGm)$ be the colimit of the ind-algebra given by killing $\prolim_{n,r,s}\Oo_{W_{n,r,s}^*/\AdicGm}$. Then \cref{lem:TraceClassAnalyticStacks} shows that all sufficiently large transition maps in this pro-object are trace-class again. Hence $A$ is idempotent, nuclear, and the base change result from \cref{lem:NuclearIdempotentAbstract}\cref{enum:IdempotentBasechange} shows that the pullbacks of $A$ to $X^*_\solid\times\AdicGm^n$ agree with $\Oo_{Z^{*,\dagger}\times\AdicGm^n}$ for all $[n]\in\IDelta$. This implies $\Oo_{Z^{*,\dagger}/\AdicGm}\simeq A$, as both of the maps
	\begin{equation*}
		\Oo_{Z^{*,\dagger}/\AdicGm}\longrightarrow\Oo_{Z^{*,\dagger}/\AdicGm}\lotimes_{\Oo_{X^*_\solid/\AdicGm}}A\longleftarrow A
	\end{equation*}
	become equivalences after pullback to $X^*_\solid\times\AdicGm^n$ for all $[n]\in\IDelta$.
	\end{proof}

	\subsection{Proof of \texorpdfstring{\cref{thm:OverconvergentNeighbourhood,thm:OverconvergentNeighbourhoodEquivariant}}{Theorems 1.19 and 1.20}}\label{subsec:MainProof}
	
	In this final subsection, we'll give a completely explicit description of the homotopy groups of
	\begin{equation*}
	\TCref\bigl(\ku_p^\complete\otimes\IQ/\ku_p^\complete\bigr)\quad\text{and}\quad\TCref\bigl(\KU_p^\complete\otimes\IQ/\KU_p^\complete\bigr)\,.
	\end{equation*}
	By \cref{exm:THHrefk1m} and \cref{lem:S1ActiontComplete}, these objects are obtained from $(\ku_p^\complete)^{\h S^1}$ and $(\KU_p^\complete)^{\h S^1}$, respectively, by killing the idempotent pro-algebras%
	\footnote{In the case $p=2$, the pro-systems need to be indexed by $\alpha$ even and $\geqslant4$, but we'll ignore this since it makes no difference}
	\begin{equation*}
	\prolim_{\alpha\geqslant 2}\TC^-\bigl((\ku/p^\alpha)/\ku\bigr)\quad\text{and}\quad\prolim_{\alpha\geqslant 2}\TC^-\bigl((\KU/p^\alpha)/\KU\bigr)\,.
	\end{equation*}
	The arguments from \cref{subsec:RefinedTC-}, particularly Corollaries~\labelcref{cor:ProIdempotent}, \labelcref{cor:ProTraceClass}, and the proof of \cref{thm:RefinedTC-qHodge}, show that $\TCref$ is concentrated in even degrees in both cases, and the even homotopy groups are given by
	\begin{equation*}
	\pi_{2*}\TCref\bigl(\ku_p^\complete\otimes\IQ/\ku_p^\complete\bigr)\cong \Akup\,,\quad \pi_{2*}\TCref\bigl(\KU_p^\complete\otimes\IQ/\KU_p^\complete\bigr)\cong \AKUp[\beta^{\pm 1}]\,,
	\end{equation*}
	where $\Akup$ is obtained by killing the idempotent pro-algebra
	$\prolim_{\alpha\geqslant 2}\fil_{\qHodge}^\star\qhatdeRham_{(\IZ/p^{\alpha})/\IZ_p}$ in graded $(p,t)$-complete $\IZ_p[\beta]\llbracket t\rrbracket$-modules and $\AKUp$ is obtained by killing the idempotent pro-algebra $\prolim_{\alpha\geqslant 2}\qHodge_{(\IZ/p^{\alpha})/\IZ_p}$ in $(p,q-1)$-complete $\IZ_p\qpower$-modules. Moreover, we already know that $\Akup$ and $\AKUp$ are idempotent nuclear ind-objects.

	Our goal is to identify $\Akup$ and $\AKUp$ with the structure sheaves of the analytic stacks $Z^{*,\dagger}/\AdicGm$ and $Z^\dagger$, respectively (see \cref{cor:OZdaggerIdempotentNuclear,cor:OZTdaggerIdempotentNuclear}). To this end, let us first discuss how to transport $\Akup$ and $\AKUp$ into the solid condensed world.
	
	\begin{numpar}[Nuclear modules à la Efimov and à la Clausen--Scholze.]\label{par:NuclearEfimovClausenScholze}
	Let $R$ be a ring and $I\subseteq R$ a finitely generated homogeneous ideal. Efimov defines an $\infty$-category of \emph{nuclear $\widehat{R}_{I}$-modules}, which (along many equivalent characterisations) can be described as
	\begin{equation*}
		\Nuc(\widehat{R}_{I})\simeq \Nuc\Ind\bigl(\widehat{\Dd}_I(R)\bigr);
	\end{equation*}
	see \cite[Corollary~\chref{4.4}]{EfimovLimits} (also recall that $\Nuc\Ind(-)$ is set-theoretically ok thanks to \cref{rem:NucInd}). Let $\widehat{R}_{I,\solid}\coloneqq (\widehat{R}_I,\widehat{R}_I)_\solid$ be the analytic ring associated to the Huber pair $(\widehat{R}_I,\widehat{R}_I)$ (see \cref{par:AdicRecollectionsI}). Then we can also consider the $\infty$-category $\Nuc(\Dd(\widehat{R}_{I,\solid}))$ of nuclear $\widehat{R}_{I,\solid}$-modules.%
	\footnote{In fact, for \emph{any} Huber pair $(\widehat{R}_I,R^+)$ the nuclear objects $\Nuc(\Dd((\widehat{R}_I,R^+)_\solid))$ will be independent of the choice of $R^+$. See \cite[Example~\chref{3.34}]{DescentSolidQCoh} for example.}
	Efimov \cite[Corollary~\chref{7.6}]{EfimovLimits} constructs a fully faithful strongly continuous symmetric monoidal functor
	\begin{equation*}
		\Nuc\Dd(\widehat{R}_{I, \solid})\longrightarrow \Nuc(\widehat{R}_{I})\,,
	\end{equation*}
	which is an equivalence on bounded objects.
	\end{numpar}
	\begin{numpar}[$\AKUp$ and $\Akup$ as sheaves on analytic stacks.]
	Applying Efimov's result above for $R=\IZ[q]$ and $I=(p,q-1)$, we see that the bounded object $\AKUp$ is in the essential image of $\Nuc(\Dd(\IZ_p\qpower_\solid))$. Its preimage can be explicitly described: As We can regard each $\qHodge_{(\IZ/p^{\alpha})/\IZ_p}$ as a $(p,q-1)$-complete%
	\footnote{Observe that $\qHodge_{(\IZ/p^{\alpha})/\IZ_p}$ is automatically $p$-complete, since it is $(q-1)$-complete and contains an element of the form $p^\alpha/(q-1)$ by construction.}
	solid condensed $\IZ_p\qpower$-module by $(p,q-1)$-completing the associated discrete condensed abelian group. The pro-algebra $\prolim_{\alpha\geqslant2}\qHodge_{(\IZ/p^{\alpha})/\IZ_p}$ is still idempotent in $\Pro\Dd(\IZ_p\qpower_\solid)$ and has eventually trace-class transition maps. Thus, by killing it, we get an idempotent nuclear algebra in $\Ind\Dd(\IZ_p\qpower_\solid)$. Its colimit is the preimage of $\AKUp$.
	
	In a similar way, via \cref{lem:GradedObjectsSheavesOnBT}, we can regard $\prolim_{\alpha\geqslant2}\fil_{\qHodge}^\star\qhatdeRham_{(\IZ/p^{\alpha})/\IZ_p}$ as an idempotent pro-algebra in $\Mod_{\Oo_{\smash{\ov X^*_\solid}/\AdicGm}}(\Dd(\B\AdicGm))$. By killing it and taking the colimit of the result idempotent nuclear ind-algebra, we can regard $\Akup$ as an object in $\Nuc\Mod_{\Oo_{\smash{\ov X^*_\solid}/\AdicGm}}(\Dd(\B\AdicGm))$
	\end{numpar}
	
	The following lemma shows that $\Akup$ and $\AKUp$ are already sheaves on $X^*_\solid/\AdicGm$ and $X_\solid$, where we put $X^*\coloneqq \ov X^*\smallsetminus\{p=0,\beta t=0\}$ and $X\coloneqq \Spa\IZ_p\qpower\smallsetminus\{p=0,q=1\}$ as before.
	\begin{lem}\label{lem:AkuOverAnalyticLocus}
	$\Akup$ vanishes after $(p,\beta)$-completion and after $(p,t)$-completion. $\AKUp$ vanishes after $(p,q-1)$-completion. In particular, $\Akup$ and $\AKUp$ are already contained in the full sub-$\infty$-categories $\Dd(X^*_\solid/\AdicGm)\simeq \Mod_{\Oo_{X^*/\AdicGm}}(\Dd(\B\AdicGm))$ and $\Dd(X_\solid)\simeq \Mod_{\Oo_X}(\Dd(\IZ_\solid))$.
	\end{lem}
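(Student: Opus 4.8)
The plan is to reduce all three vanishing statements to a single assertion about a pro-idempotent algebra and then to establish that assertion using the conjugate filtration on $\qHodge_{(\IZ/p^\alpha)/\IZ_p}/(q-1)$. First, I would observe that the three statements are linked: in $\IZ_p[\beta]\llbracket t\rrbracket$ one has $q-1=\beta t$ and $(\beta)\cap(t)=(\beta t)$, so $\IZ_p[\beta]\llbracket t\rrbracket/(p,\beta t)$ is the fibre product $\IF_p[\beta]\times_{\IF_p}\IF_p\llbracket t\rrbracket$; hence "$\Akup$ vanishes after $(p,\beta)$-completion and after $(p,t)$-completion" is equivalent to "$\Akup$ vanishes after $(p,q-1)$-completion", and inverting $\beta$ shows that the latter forces $\AKUp/(p,q-1)\simeq 0$ as well. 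Granting the two $\ku$-level vanishings, the "in particular" is then formal: by \cref{lem:X*Cover} and \cref{lem:DXTModDBT} the inclusion $\Dd(X^*_\solid/\AdicGm)\hookrightarrow\Dd(\ov X^*_\solid/\AdicGm)$, resp.\ $\Dd(X_\solid)\hookrightarrow\Dd((\Spa\IZ_p\qpower)_\solid)$, picks out exactly the sheaves supported away from the closed complement $\{p=0,\beta t=0\}=V(p,\beta)\cup V(p,t)$, resp.\ $\{p=0,q=1\}=V(p,q-1)$, and a $(p,t)$- (resp.\ $(p,q-1)$-) complete module is supported there precisely when it vanishes after $(p,\beta)$- and $(p,t)$- (resp.\ $(p,q-1)$-) completion.

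Next, recall from the beginning of \cref{sec:Overconvergent} that $\Akup$ is obtained by killing the pro-idempotent $\prolim_\alpha\Fil_{\qHodge}^*\qhatdeRham_{(\IZ/p^\alpha)/\IZ_p}$ in graded $(p,t)$-complete $\IZ_p[\beta]\llbracket t\rrbracket$-modules, and $\AKUp$ by killing $\prolim_\alpha\qHodge_{(\IZ/p^\alpha)/\IZ_p}$ in $(p,q-1)$-complete $\IZ_p\qpower$-modules. By \cref{lem:ProTraceClass} the transition maps of these pro-systems are eventually trace-class, so \cref{lem:NuclearIdempotentAbstract}\cref{enum:IdempotentBasechange} applies to the symmetric monoidal base-change functors $(-)\lotimes_{\IZ_p[\beta]\llbracket t\rrbracket}\IF_p[\beta]$, $(-)\lotimes_{\IZ_p[\beta]\llbracket t\rrbracket}\IF_p\llbracket t\rrbracket$ and $(-)\lotimes_{\IZ_p\qpower}\IF_p$: it shows, e.g., that $\AKUp/(p,q-1)$ is obtained by killing $\prolim_\alpha(\qHodge_{(\IZ/p^\alpha)/\IZ_p}/(p,q-1))$. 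Since killing the constant pro-idempotent at the tensor unit produces $0$, and since idempotent algebra structures are unique, it suffices to prove that each of these reduced pro-idempotents is pro-equivalent to the constant pro-object at the unit.

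The heart of the matter is the behaviour of the transition maps on conjugate-graded pieces. By \cref{par:ConjugateFiltration}, $\qHodge_{(\IZ/p^\alpha)/\IZ_p}/(q-1)$ carries an exhaustive ascending conjugate filtration with $\gr_n^{\mathrm{conj}}\cong\Sigma^{-n}\bigwedge^n\L_{(\IZ_p\{x\}/x^\alpha)/\IZ_p\{x\}}\lotimes_{\IZ_p\{x\}}\IZ_p\cong\IZ/p^\alpha$ for all $n\geqslant0$ (the splitting already exploited in the proof of \cref{thm:RefinedTC-qHodge}). Since the transition maps are induced, via the construction in \cref{par:ReductionToTorsionFree}, by the reductions $\IZ_p\{x\}/x^{\alpha'}\twoheadrightarrow\IZ_p\{x\}/x^\alpha$ followed by $x\mapsto p$ — and on $(x^{\alpha'})/(x^{\alpha'})^2\to(x^\alpha)/(x^\alpha)^2$ the generator goes to $x^{\alpha'-\alpha}$ times the generator — one computes that for $\alpha'>\alpha$ the transition map acts on $\gr_n^{\mathrm{conj}}\colon\IZ/p^{\alpha'}\to\IZ/p^\alpha$ by multiplication by $p^{n(\alpha'-\alpha)}$. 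Modulo $p$ this is therefore \emph{zero} on $\gr_n^{\mathrm{conj}}$ for every $n\geqslant1$ (and on $\gr_0^{\mathrm{conj}}=\IZ/p^\alpha\lotimes_\IZ\IF_p\simeq\IF_p\oplus\Sigma\IF_p$ it is the split idempotent onto $\IF_p$, a parallel $\mathrm{Tor}$-computation giving multiplication by $p^{(n+1)(\alpha'-\alpha)}\equiv0$ on the suspended summand). Hence the positive conjugate-graded pieces of the pro-object $\prolim_\alpha(\qHodge_{(\IZ/p^\alpha)/\IZ_p}/(p,q-1))$ are pro-zero, the $\gr_0$-piece is pro-equivalent to the constant $\IF_p$, and this matches the unit $\IF_p\to\qHodge_{(\IZ/p^\alpha)/\IZ_p}/(p,q-1)$, which lands precisely in $H_0(\gr_0^{\mathrm{conj}})$. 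The same analysis applies to $\Fil_{\qHodge}^*\qhatdeRham_{(\IZ/p^\alpha)/\IZ_p}$ reduced modulo $(p,\beta)$ and modulo $(p,t)$; here reducing mod $t$ turns $\TC^-((\ku/p^\alpha)/\ku)$ into $\THH((\ku/p^\alpha)/\ku)$ and then mod $\beta$ into $\HH((\IZ/p^\alpha)/\IZ)$, whose homotopy is the divided power algebra on a weight-one class, on which the transition maps again act by $p^{n(\alpha'-\alpha)}$ in weight $n$.

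The point that requires care — and which I expect to be the main obstacle — is upgrading "every conjugate-graded piece of the pro-object is pro-zero (resp.\ pro-constant at the unit)" to "the whole pro-object is pro-equivalent to the constant one": the conjugate filtration is exhaustive but unbounded, so this is not a finite d\'evissage and the interchange of the colimit over the filtration with the pro-limit is not automatic. Passing to the $\IF_p$-duals, one would argue — using that the tails $\prod_{n\geqslant k}(\gr_n^{\mathrm{conj}})^\vee$ are uniformly bounded in homological degree and have vanishing $\lim$ and $\lim^1$ — that the positive-weight contributions collapse in the colimit, leaving exactly $\IF_p$ mapping isomorphically to the unit, so that killing the reduced pro-idempotent yields $0$. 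A secondary technical point is the verification that the transition maps act on the conjugate-graded pieces precisely by $p^{n(\alpha'-\alpha)}$, which requires unwinding the base-change definition of $\qHodge_{(\IZ/p^\alpha)/\IZ_p}$ and of its conjugate filtration from \cref{con:qHodgeZpalpha}.
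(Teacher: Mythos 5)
Your proposal takes a completely different route from the paper, and the route has a genuine gap at exactly the point you flag. The paper's proof is a two-line base change: $\ku/(p,\beta)\simeq \IF_p$ as $\IE_\infty$-ring spectra, so $\TCref((\ku\otimes\IQ)/\ku)/(p,\beta)\simeq \TCref((\IF_p\otimes\IQ)/\IF_p)\simeq 0$ because $\IF_p\otimes\IQ\simeq 0$; the $(p,t)$-vanishing then follows because $\Akup/t$ sits in nonnegative graded degrees and is therefore automatically $\beta$-complete, and the $\AKUp$ statement follows because $\AKUp[\beta^{\pm1}]$ is an $\Akup$-algebra. (Note also that your opening claim of an \emph{equivalence} between the three vanishing statements is only correct in one direction: $M/(p,\beta)\simeq 0$ and $M/(p,t)\simeq 0$ together imply $M/(p,\beta t)\simeq 0$ via the octahedron for $\beta t=\beta\cdot t$, but the converse requires the grading argument the paper uses, not just the fibre-product decomposition of $\IZ_p[\beta]\llbracket t\rrbracket/(p,\beta t)$.) Your handling of the "in particular" via \cref{lem:X*Cover} and the Beauville--Laszlo-type fibre square matches the paper.

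The gap in your main argument is the passage from "the transition maps are zero on the positive conjugate-graded pieces mod $p$" to "the reduced pro-idempotent is pro-equivalent to the constant unit". For an \emph{unbounded} exhaustive ascending filtration, a map that vanishes on all positive graded pieces merely lowers the filtration by one; composing $k$ such maps kills $\Fil_{k}$ but never all of the colimit. Concretely, the shift map on $\bigoplus_{n\in\IN}\IF_p$ is zero on every graded piece of the weight filtration, yet the ind-system of its $\IF_p$-duals $\prod_{\IN}\IF_p\rightarrow\prod_{\IN}\IF_p\rightarrow\dotsb$ is injective in each step and has nonzero colimit. Since $\qHodge_{(\IZ/p^\alpha)/\IZ_p}/(p,q-1)$ is an infinite direct sum of copies of $\IF_p\oplus\Sigma\IF_p$ indexed by conjugate weight, your situation is exactly of this shape, and the graded-piece computation alone cannot rule out this behaviour: you would need genuine pro-nilpotence of the transition maps on the cofibre of the unit (for instance, that each transition map factors through a \emph{finite} stage of the conjugate filtration, which is where the trace-class property of \cref{lem:ProTraceClass} would have to enter, and which you invoke only for the base-change step). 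The $\lim/\lim^1$ remark in your final paragraph does not address this, so the argument as written does not establish the vanishing. Everything else you need (the identification of the graded transition maps as multiplication by $p^{n(\alpha'-\alpha)}$, the base-change of the pro-idempotent via \cref{lem:NuclearIdempotentAbstract}) is plausible, but without closing this gap the proof does not go through; the paper's base-change argument avoids the issue entirely.
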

	\begin{proof}
	By Nakayama's lemma it's enough to show $\Akup/(p,\beta)\simeq 0$ and $\Akup/(p,t)\simeq 0$. Since $\AKUp[\beta^{\pm 1}]$ is a $\Akup$-algebra, this will also show $\AKUp/(p,q-1)\simeq 0$. Since $\Akup/t$ is concentrated in nonnegative graded degrees, it is automatically $\beta$-complete, so it's already enough to show $\Akup/(p,\beta)\simeq 0$. Now $\ku\rightarrow \ku/(p,\beta)\simeq \IF_p$ is a map of $\IE_\infty$-ring spectra, and it's clear from \cref{exm:THHrefk1m} and \cref{lem:S1ActiontComplete} that $\TCref(-\otimes\IQ/-)$ satisfies base change along $\IE_\infty$-maps. So $\TCref(\ku\otimes\IQ/\ku)/(p,\beta)\simeq \TCref(\IF_p\otimes\IQ/\IF_p)\simeq 0$.
	
	It follows that $(\Akup)_{(p,\beta t)}^\complete\simeq 0$. Using the pullback square from \cref{lem:X*Cover}
	, we get
	\begin{equation*}
		\Akup\simeq \Akup\lotimes_{\Oo_{\ov X_\solid^\star/\AdicGm}}\Oo_{X^*/\AdicGm}
	\end{equation*}
	and so $\Akup$ is indeed a $\Oo_{X^*/\AdicGm}$-module. The argument for $\AKUp$ is analogous.
	\end{proof}
	To finish the proof of \cref{thm:OverconvergentNeighbourhood,thm:OverconvergentNeighbourhoodEquivariant}, we analyse the pro-systems $\prolim_{n,r,s}\Oo_{W_{n,r,s}}$ and $\prolim_{n,r,s}\Oo_{W_{n,r,s}^*/\AdicGm}$ from \cref{cor:OZdaggerIdempotentNuclear,cor:OZTdaggerIdempotentNuclear} and show that they are pro-isomorphic to the pro-systems from \cref{thm:RefinedTC-qHodge}\cref{enum:RefinedTC-KU} and~\cref{enum:RefinedTC-ku}, respectively.
	\begin{lem}\label{lem:qHodgeSystemOfOpensI}
	For every fixed $\alpha\geqslant 2$ and all sufficiently large $n$, $r$, $s$, there exist maps
	\begin{align*}
		\Oo_{W_{n,r,s}^*/\AdicGm}&\longrightarrow \fil_{\qHodge}^\star\qhatdeRham_{(\IZ/p^{\alpha})/\IZ_p}\lotimes_{\Oo_{\ov X^*_\solid/\AdicGm}}\Oo_{X^*/\AdicGm}\,,\\
		\Oo_{W_{n,r,s}}&\longrightarrow \qHodge_{(\IZ/p^{\alpha})/\IZ_p}\lotimes_{\IZ_p\qpower_\solid}\Oo_{X}
	\end{align*}
	in $\Dd(X^*_\solid/\AdicGm)$ and $\Dd(X_\solid)$, respectively.
	\end{lem}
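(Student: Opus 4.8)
The plan is to produce the two maps by hand, using the explicit generators of the $q$-Hodge filtration on $\q D_\alpha$ coming from \cref{lem:StructuralResultqHodge}, and then base-changing to $\IZ/p^\alpha$. First I would recall from \cref{par:LiftsOfDividedPowers} that, under the identification $q-1=\beta t$, the filtered ring $\Fil_{\qHodge}^*\qhatdeRham_{(\IZ/p^\alpha)/\IZ_p}$ is generated as a $(p,q-1)$-complete filtered algebra over $\qhatdeRham_{(\IZ/p^\alpha)/\IZ_p}$ by the element $(q-1)$ in filtration degree $1$ together with the lifts $\widetilde\gamma_q^{(n)}(x^\alpha)$ of the iterated divided powers in filtration degree $p^n$; after regrading via the shearing $\Sigma^{2*}$ and identifying graded pieces with powers of $\beta t$, giving a map $\Oo_{W_{n,r,s}^*/\AdicGm}\to \Fil_{\qHodge}^*\qhatdeRham_{(\IZ/p^\alpha)/\IZ_p}\lotimes_{\Oo_{\ov X^*_\solid/\AdicGm}}\Oo_{X^*/\AdicGm}$ amounts to checking, after base change to $X^*$, that the rational functions $p^r/[p^n]_{\ku}(t)$ and $(\beta t)^s/p$ are \emph{bounded by $1$} on the locus cut out by (the graded completion of) the target, i.e.\ that they lie in the relevant solidification. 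The key numerical input is \cref{lem:StructuralResultqHodge}\cref{item:sam:prod}: $(\Gamma_n)^\alpha\in\prod_{i=1}^n\Phi_{p^i}(q)^{p^{n-i}}\cdot\q D_\alpha$, which after sending $x\mapsto p$ (the base change $\IZ_p\{x\}/x^\alpha\to\IZ/p^\alpha$) and after inverting $\beta$ or $p$ turns the $q$-divided powers into elements that become integral multiples of $p^{-r}[p^n]_{\ku}(t)$ for suitable $r$ depending on $n$ and $\alpha$. Concretely, $[p^n]_{\ku}(t)=((1+\beta t)^{p^n}-1)/\beta$ corresponds to $\prod_{i\le n}\Phi_{p^i}(q)/\beta$-type expressions, and one reads off which $W_{n,r,s}^*$ the generators of $\Fil_{\qHodge}^*$ force one into.

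Second I would carry out the same for $\AKUp$: here the target is $\qHodge_{(\IZ/p^\alpha)/\IZ_p}\lotimes_{\IZ_p\qpower_\solid}\Oo_X$, the ungraded version, and the relevant opens $W_{n,r,s}$ in $X=\Spa\IZ_p\qpower\smallsetminus\{p=0,q=1\}$ are cut out by $\abs{p^r}\le\abs{q^{p^n}-1}\ne0$ and $\abs{(q-1)^s}\le\abs p\ne0$. Since $\qHodge_{(\IZ/p^\alpha)/\IZ_p}$ is obtained from $\Fil_{\qHodge}^*$ by inverting $(q-1)$ along the filtration and completing, it contains the elements $\widetilde\gamma_q^{(n)}(x^\alpha)/(q-1)^{p^n}$, and the same estimates as above (specialised at $x\mapsto p$) show that $q^{p^n}-1$ becomes, up to a unit and bounded error, $p\cdot(\text{something})$ plus $(q-1)^{(p-1)p^{n-1}}$-terms, giving that $p^r/(q^{p^n}-1)$ is a bounded function on $\Spa$ of the completed rationalisation of $\qHodge_{(\IZ/p^\alpha)/\IZ_p}$ once $r$ is large enough relative to $n,\alpha$; the bound $\abs{(q-1)^s}\le\abs p$ holds because $\qHodge_{(\IZ/p^\alpha)/\IZ_p}$ contains $p^\alpha/(q-1)$ (hence $(q-1)^s\mid p$ after rationalisation for $s\gg0$, using $p^\alpha$-torsion). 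Packaging these inclusions as maps of analytic rings, using that $\Oo(W_{n,r,s})$ and $\Oo(W_{n,r,s}^*)$ are defined precisely as the solidified localisations where these rational functions are adjoined subject to $\abs{-}\le1$, produces the desired maps; the compatibility with the $\AdicGm$-action in the first case is automatic since everything in sight is built from the homogeneous elements $p$, $\beta t$, $[p^n]_{\ku}(t)$.

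The main obstacle I expect is bookkeeping the exact relationship between the ideal $\prod_{i=1}^n\Phi_{p^i}(q)^{p^{n-i}}\cdot\q D_\alpha$ (and its image in $\q D_\alpha/(x-p)$, i.e.\ in $\qdeRham_{(\IZ/p^\alpha)/\IZ_p}$ up to completion) and the single element $[p^n]_{\ku}(t)=\prod_{i=1}^n\Phi_{p^i}(q)/\beta$ — the point is that $\Phi_{p^i}(q)$ appears with multiplicity $p^{n-i}$ in the former but multiplicity $1$ in the latter, so one must check that the \emph{extra} factors $\Phi_{p^i}(q)^{p^{n-i}-1}$ are already bounded-by-$1$ (equivalently, that no denominators worse than $[p^n]_{\ku}(t)$ and $p$ appear), which is exactly the content of the numerical inequality $\lfloor p\ell/\alpha\rfloor+(p-1)m\ge p^{n-i}$ proved inside \cref{lem:StructuralResultqHodge} and needs to be re-run in the present geometric language. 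A secondary nuisance is that these maps only exist "for sufficiently large $n,r,s$" and one must be careful that the implicit bounds are uniform enough to later assemble the pro-systems in \cref{cor:OZdaggerIdempotentNuclear,cor:OZTdaggerIdempotentNuclear} — but this is precisely why the statement is phrased with the "for all sufficiently large" quantifier, and no single choice needs to be optimised.
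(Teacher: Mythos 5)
Your identification of the goal is correct: one must produce topologically nilpotent elements of the shape $p^r/[p^n]_{\ku}(t)$ and $(\beta t)^s/p$ in the target, and you also correctly note that $\AdicGm$-equivariance is automatic because everything in sight is homogeneous. But the route you take is not the one the paper uses, and I think it is the wrong tool for this direction. You reach for \cref{lem:StructuralResultqHodge}, whose content is the fine structure of the $q$-divided-power lifts $\Gamma_n$ — but the paper reserves that lemma for the \emph{other} map, \cref{lem:qHodgeSystemOfOpensII} (filtered object $\to$ $\Oo_{W_{n,r,s}^*}$), where one genuinely needs to know that the generators of the filtration land in the solidification. The present lemma only asks that the target contain the two topologically nilpotent elements defining $W_{n,r,s}^*$, and for that the paper uses much more elementary inputs: (i) the defining element of the prismatic envelope together with Frobenius twists supplies $p^\alpha/\Phi_{p^{i}}(q)\in\qdeRham_{(\IZ/p^\alpha)/\IZ_p}$ for every $i\geqslant 1$; (ii) $p^\alpha\in\Fil^1_{\qHodge}$, i.e.\ $p^\alpha$ is divisible by $t$ in the graded picture; multiplying these gives $p^{(n+1)\alpha}/[p^n]_{\ku}(t)$ directly. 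Neither (i) nor (ii) requires $\Gamma_n$, and both sidestep the multiplicity mismatch you flag — which is real, and a symptom of using a tool tuned for a stronger statement: $(\Gamma_n)^\alpha$ is divisible by $\prod_{i=1}^n\Phi_{p^i}(q)^{p^{n-i}}$, but you need divisibility of a $p$-power by the \emph{much smaller} $t\prod_{i=1}^n\Phi_{p^i}(q)$, and extracting the latter from the former (while keeping track of the $t$-factor via the filtration degree) is precisely the awkward bookkeeping you acknowledge; the paper avoids it entirely.

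There is also a genuine gap on the second bound. For $(\beta t)^s/p$ in the filtered case, the paper invokes the nilpotence criterion of Bhatt--Clausen--Mathew to show $\Phi_p(q)$ is nilpotent in $\Fil_{\qHodge}^*\qhatdeRham_{(\IZ/p^\alpha)/\IZ_p}/p$, hence $(q-1)^{p-1}$ is too, hence $(q-1)^N/p$ exists for $N\gg 0$. You give an argument of this flavour only in the $\qHodge$-case (via $p^\alpha/(q-1)\in\qHodge_{(\IZ/p^\alpha)/\IZ_p}$), and say nothing about the filtered case, which is the one actually needed for the first of the two displayed maps. So your proposal has a missing step there, and the step you supply for the first bound runs through a more complicated lemma than is necessary. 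If you rebuild the argument on (i), (ii), and the nilpotence criterion, you will land on the paper's proof.
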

	\begin{proof}
	By construction, the $q$-de Rham complex $\qdeRham_{(\IZ/p^\alpha)/\IZ_p}$ contains elements of the form $\phi^i(\phi(p^\alpha)/\Phi_p(q))=p^\alpha/\Phi_{p^{i+1}}(q)$ for all $i\geqslant 0$, and we have $p^\alpha\in \fil_{\qHodge}^1\qhatdeRham_{(\IZ/p^{\alpha})/\IZ_p}$. When we regard $\fil_{\qHodge}^\star\qhatdeRham_{(\IZ/p^\alpha)/\IZ_p}$ as a graded $\IZ_p[\beta]\llbracket t\rrbracket$-module, this precisely means that $p^\alpha$ is divisible by~$t$. Hence we have elements of the form
	\begin{equation*}
		\frac{p^{(n+1)\alpha}}{[p^n]_{\ku}(t)}=\frac{p^\alpha}{t}\cdot \frac{\phi(p^\alpha)}{\Phi_p(q)}\dotsb\frac{\phi^{n}(p^\alpha)}{\Phi_{p^n}(q)}\in \fil_{\qHodge}^\star\qhatdeRham_{(\IZ/p^{\alpha})/\IZ_p}
	\end{equation*}
	for all $n\geqslant 0$. Similarly, there exist elements of the form $(\beta t)^N/p$ in $\fil_{\qHodge}^\star\qhatdeRham_{(\IZ/p^{\alpha})/\IZ_p}$ for sufficiently large~$N$. Indeed, the ring $\qdeRham_{(\IZ/p^\alpha)/\IZ_p}$ is $(p,\Phi_p(q))$-complete and contains an element of the form $p^\alpha/\Phi_p(q)$. Applying the nilpotence criterion from \cite[Proposition~\chref{2.5}]{BhattClausenMathewK1Local}, we see that $\Phi_p(q)$ is nilpotent in $\Fil_{\qHodge}^*\qhatdeRham_{(\IZ/p^{\alpha})/\IZ_p}/p$. Then $(q-1)^{p-1}$ must be nilpotent as well, and so $(q-1)^N$ must be divisible by $p$ in $\fil_{\qHodge}^\star\qhatdeRham_{(\IZ/p^{\alpha})/\IZ_p}$ for $N\gge 0$.
	
	In particular, as soon as we invert $\beta t$ in $\fil_{\qHodge}^\star\qhatdeRham_{(\IZ/p^{\alpha})/\IZ_p}/p$, we see that~$p$ will be invertible as well, and so
	\begin{equation*}
		\fil_{\qHodge}^\star\qhatdeRham_{(\IZ/p^{\alpha})/\IZ_p}\lotimes_{\Oo_{\ov X^*_\solid/\AdicGm}}\Oo_{X^*/\AdicGm}\simeq \fil_{\qHodge}^\star\qhatdeRham_{(\IZ/p^{\alpha})/\IZ_p}\bigl[\localise{p}\bigr]\,.
	\end{equation*} 
	Moreover, as soon as $p$ is invertible, $[p^n]_{\ku}(t)$ will be invertible for all $n\geqslant 0$. Choosing $s>N$, we see that $\fil_{\qHodge}^\star\qhatdeRham_{(\IZ/p^{\alpha})/\IZ_p}$ contains an element of the form $(\beta t)^s/p$ which is topologically nilpotent, hence automatically solid. Moreover, for $(p-1)p^n>s$ and $r>(n+1)\alpha$, we get an element of the form $p^{r}/[p^n]_{\ku}(t)$, which is again topologically nilpotent and thus solid. Thus, for such $n$, $r$, and $s$, a map $\Oo_{W_{n,r,s}^*/\AdicGm}\rightarrow\fil_{\qHodge}^\star\qhatdeRham_{(\IZ/p^{\alpha})/\IZ_p}[1/p]$ exists. The argument in the $q$-Hodge case is analogous.
	\end{proof}
	\begin{rem}
	As a consequence of \cite[Theorem~\chref{3.11}]{qWittHabiro}, $\qHodge_{(\IZ/p^{\alpha})/\IZ_p}/(q^{p^n}-1)$ is an algebra over the $p$-typical Witt vectors $\IW_{p^n}(\IZ/p^\alpha)$. Since this ring is $p^{\alpha+n}$-torsion, we already have elements of the form $p^{\alpha+n}/(q^{p^n}-1)$ in $\qHodge_{(\IZ/p^{\alpha})/\IZ_p}$ for all $n\geqslant 0$.
	\end{rem}
	\begin{lem}\label{lem:qHodgeSystemOfOpensII}
	For all fixed  $n$, $r$, $s$ such that $(p-1)p^n>s$ and all sufficiently large $\alpha\geqslant 2$, there exist canonical maps
	\begin{align*}
		\fil_{\qHodge}^\star\qhatdeRham_{(\IZ/p^{\alpha})/\IZ_p}\lotimes_{\Oo_{\ov X^*_\solid/\AdicGm}}\Oo_{X^*/\AdicGm}&\longrightarrow \Oo_{W_{n,r,s}^*/\AdicGm}\,,\\
		\qHodge_{(\IZ/p^{\alpha})/\IZ_p}\lotimes_{\IZ_p\qpower_\solid}\Oo_{X}&\longrightarrow\Oo_{W_{n,r,s}} 
	\end{align*}
	in $\Dd(X^*_\solid/\AdicGm)$ and $\Dd(X_\solid)$, respectively.
	\end{lem}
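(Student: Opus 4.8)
The plan is to produce, for every sufficiently large $\alpha$, a map in the opposite direction to \cref{lem:qHodgeSystemOfOpensI} by checking that the generators $f_1,\dots,f_r$ of the rational open $W_{n,r,s}^*$ (resp. $W_{n,r,s}$) become divisible by $g$ (resp. by each other in the appropriate pattern) inside $\Fil_{\qHodge}^*\qhatdeRham_{(\IZ/p^\alpha)/\IZ_p}[1/p]$, so that the universal property of the localisation/solidification recalled in \cref{par:AdicRecollectionsI} supplies the morphism. Concretely, $W_{n,r,s}^*$ is cut out by $\abs{p^r}\leqslant\abs{[p^n]_{\ku}(t)}\neq 0$ and $\abs{(\beta t)^s}\leqslant\abs{p}\neq 0$, so I need: first that $[p^n]_{\ku}(t)$ and $p$ are nonzerodivisors (indeed topologically nilpotent units) in the relevant ring, and second that $p^r/[p^n]_{\ku}(t)$ and $(\beta t)^s/p$ are solid elements, i.e. topologically nilpotent. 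The first point is exactly the analysis already carried out in the proof of \cref{lem:qHodgeSystemOfOpensI}: once $\beta t$ is inverted, $p$ becomes invertible (because $(\beta t)^N/p$ lies in the $q$-Hodge filtration for $N\gg0$ by the nilpotence argument via \cite[Proposition~\chref{2.5}]{BhattClausenMathewK1Local}), and then $[p^n]_{\ku}(t)$ is invertible too; so
\begin{equation*}
	\Fil_{\qHodge}^*\qhatdeRham_{(\IZ/p^\alpha)/\IZ_p}\lotimes_{\Oo_{\ov X_\solid^*/\AdicGm}}\Oo_{X^*/\AdicGm}\simeq \Fil_{\qHodge}^*\qhatdeRham_{(\IZ/p^\alpha)/\IZ_p}\left[\localise{p}\right]\,.
\end{equation*}

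First I would record this reduction, so that the target is simply the ring $\Fil_{\qHodge}^*\qhatdeRham_{(\IZ/p^\alpha)/\IZ_p}[1/p]$ with its natural graded topology (topologically nilpotent elements being exactly those in positive $t$-degree up to units, since $q-1=\beta t$ is topologically nilpotent). Then, to factor the localisation map defining $\Oo_{W_{n,r,s}^*/\AdicGm}$ through this ring, I must exhibit elements of the form $p^r/[p^n]_{\ku}(t)$ and $(\beta t)^s/p$ that are topologically nilpotent. The divisibilities themselves hold in $\Fil_{\qHodge}^*\qhatdeRham_{(\IZ/p^\alpha)/\IZ_p}$ — not merely after inverting $p$ — once $\alpha$ is large: the element $p^{\alpha}$ lies in $\Fil_{\qHodge}^1$ and is divisible by $t$ (this is the observation made in \cref{lem:qHodgeSystemOfOpensI}), and $\phi^i(p^\alpha)/\Phi_{p^{i+1}}(q)$ lies in $\qdeRham_{(\IZ/p^\alpha)/\IZ_p}$ by construction of the $q$-PD envelope, so the telescoping identity
\begin{equation*}
	\frac{p^{(n+1)\alpha}}{[p^n]_{\ku}(t)}=\frac{p^\alpha}{t}\cdot\frac{\phi(p^\alpha)}{\Phi_p(q)}\dotsm\frac{\phi^n(p^\alpha)}{\Phi_{p^n}(q)}
\end{equation*}
shows $p^{(n+1)\alpha}/[p^n]_{\ku}(t)\in\Fil_{\qHodge}^*\qhatdeRham_{(\IZ/p^\alpha)/\IZ_p}$; hence for $r\geqslant(n+1)\alpha$ the element $p^r/[p^n]_{\ku}(t)$ exists and, being divisible by a positive power of $t$, is topologically nilpotent. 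Likewise, from the nilpotence of $\Phi_p(q)$ modulo $p$ (again \cite[Proposition~\chref{2.5}]{BhattClausenMathewK1Local}) one gets $(q-1)^N/p\in\Fil_{\qHodge}^*$ for $N\gg0$, so $(\beta t)^s/p$ exists and is topologically nilpotent as soon as $s\geqslant N$; note here we only need $\alpha$ large enough that the stated elements of the $q$-Hodge filtration actually occur, and the constraint $(p-1)p^n>s$ is imposed in the hypothesis to match the definition of $W_{n,r,s}^*$. Once these solidity and divisibility facts are in hand, the universal property of the rational localisation in \cref{par:AdicRecollectionsI} — which characterises $\Oo(W_{n,r,s,\solid}^*)$ as initial among analytic $\IZ[\beta,t]_{(p,t)}^\complete$-algebras in which $p^r/[p^n]_{\ku}(t)$ and $(\beta t)^s/p$ become solid — yields the desired map, and its $\AdicGm$-equivariance is automatic since all the elements involved are homogeneous. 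The $q$-Hodge (ungraded, $\KU$) case is the same argument after inverting $\beta$ and replacing $[p^n]_{\ku}(t)$, $\beta t$ by $q^{p^n}-1=\Phi_{p^1}(q)\dotsm\Phi_{p^n}(q)\cdot\mathrm{unit}$ and $q-1$ respectively.

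I expect the main obstacle to be purely bookkeeping: pinning down \emph{how large} $\alpha$ must be as a function of $(n,r,s)$ so that all the requisite elements ($p^\alpha/t$, the $\phi^i(p^\alpha)/\Phi_{p^{i+1}}(q)$, and $(q-1)^s/p$) genuinely lie in the \emph{completed} $q$-Hodge filtration $\Fil_{\qHodge}^*\qhatdeRham_{(\IZ/p^\alpha)/\IZ_p}$ rather than merely in some localisation, and checking that forming the map is compatible with the three-term pushout description of $\Oo_{X^*/\AdicGm}$ from \cref{lem:X*Cover} (so that the map really lands in $\Dd(X_\solid^*/\AdicGm)$ and not just in $\Dd(\ov X_\solid^*/\AdicGm)$). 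This compatibility follows formally from \cref{lem:AkuOverAnalyticLocus} together with the Beauville--Laszlo-type gluing already used there, so no new idea is needed; the only genuinely delicate input, the nilpotence of $\Phi_p(q)$ modulo $p$ in the $q$-Hodge filtration, has already been established in the proof of \cref{lem:qHodgeSystemOfOpensI} and can simply be cited. Together with \cref{lem:qHodgeSystemOfOpensI}, these maps will exhibit $\prolim_{n,r,s}\Oo_{W_{n,r,s}^*/\AdicGm}$ and $\prolim_{\alpha\geqslant2}\Fil_{\qHodge}^*\qhatdeRham_{(\IZ/p^\alpha)/\IZ_p}$ (pulled back to the Tate locus) as cofinal in one another, which is what is needed to identify $\Akup\cong\Oo_{Z^{*,\dagger}/\AdicGm}$ and $\AKUp\cong\Oo(Z^\dagger)$ in \cref{subsec:MainProof}.
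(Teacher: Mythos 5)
There is a genuine gap: you have the direction of the map backwards, and as a result you have essentially reproved \cref{lem:qHodgeSystemOfOpensI} rather than \cref{lem:qHodgeSystemOfOpensII}. The lemma asks for a map \emph{out of} $\Fil_{\qHodge}^*\qhatdeRham_{(\IZ/p^\alpha)/\IZ_p}\lotimes\Oo_{X^*/\AdicGm}$ and \emph{into} $\Oo_{W_{n,r,s}^*/\AdicGm}$. Your strategy --- show that $p^r/[p^n]_\ku(t)$ and $(\beta t)^s/p$ exist and are topologically nilpotent (hence solid) inside $\Fil_{\qHodge}^*\qhatdeRham_{(\IZ/p^\alpha)/\IZ_p}[1/p]$, then invoke the universal property of the rational localisation --- produces a map \emph{out of} $\Oo(W_{n,r,s}^*)$, since that universal property characterises $\Oo(W_{n,r,s}^*)$ as initial among analytic $\IZ[\beta,t]_{(p,t)}^\complete$-algebras in which those fractions become solid. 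That is exactly \cref{lem:qHodgeSystemOfOpensI}, and indeed the divisibility and nilpotence facts you cite (the elements $p^{(n+1)\alpha}/[p^n]_\ku(t)$ and $(\beta t)^N/p$, the nilpotence of $\Phi_p(q)$ mod $p$ from \cite[Proposition~\chref{2.5}]{BhattClausenMathewK1Local}) are precisely the ones in the paper's proof of that other lemma. Having only that direction twice will not exhibit the two pro-systems as cofinal in one another, as your closing sentence requires.

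To get the map in the asserted direction you need a universal-property-type argument anchored on the \emph{source}: an explicit description of $\Fil_{\qHodge}^*\qhatdeRham_{(\IZ/p^\alpha)/\IZ_p}$ as a $(p,t)$-completed algebra, generated by lifts of iterated divided powers $\widetilde\gamma_q^{(d)}(x^\alpha)$ (via the reduction to $\q D_\alpha = \qdeRham_{(\IZ_p\{x\}/x^\alpha)/\IZ_p\{x\}}$), together with the concrete shape of those generators furnished by \cref{lem:StructuralResultqHodge}: they are of the form $(\Gamma_d)^\alpha/\bigl(t^{p^d}\prod_{i=1}^d\Phi_{p^i}(q)^{p^{d-i}}\bigr)$ with $\Gamma_d\in(x^p,(q-1)^{p-1})^{p^{d-1}}$. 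One then must check --- and this is where the condition ``$\alpha$ sufficiently large depending on $(n,r,s)$'' actually enters --- that after sending $x\mapsto p$, each monomial $x^{pi}(q-1)^{(p-1)j}$ occurring in $(\Gamma_d)^\alpha$ lands on a \emph{solid} element of $\Oo_{W_{n,r,s}^*/\AdicGm}$ once divided by the denominator; this in turn uses that $p^r/(t\Phi_{p^i}(q))$ is solid there for $i\leqslant n$ and that $\Phi_{p^i}(q)=p(1+w)$ with $w$ topologically nilpotent for $i>n$ (here the hypothesis $(p-1)p^n>s$ is used). Inverting the denominators is not enough; solidity of the images of the generators is exactly what guarantees the map extends across the $(p,t)$-completion. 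None of this structural input (in particular \cref{lem:StructuralResultqHodge}) appears in your sketch, and the dependence of $\alpha$ on $(n,r,s)$ is not ``pure bookkeeping'' but the crux of the argument.
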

	\begin{proof}
	Let $\q D_\alpha\coloneqq \qdeRham_{(\IZ_p\{x\}/x^\alpha)/\IZ_p\{x\}}$ as in \cref{subsec:ElementaryProof} and let $\fil_{\qHodge}^\star\q \widehat{D}_\alpha$ denote its completed $q$-Hodge filtration. It follows from \cref{par:LiftsOfDividedPowers} that $\fil_{\qHodge}^\star\q \widehat{D}_\alpha$ is generated as a $(p,t)$-complete graded $\IZ_p[\beta]\llbracket t\rrbracket$-algebra by lifts of the iterated divided powers $\gamma^{(d)}(x^\alpha)$ sitting in filtration degree $2p^d$. Thanks to \cref{lem:StructuralResultqHodge}, we know that these lifts can be chosen to be of the form
	\begin{equation*}
		\frac{(\Gamma_d)^\alpha}{t^{p^d}\prod_{i=1}^d\Phi_{p^i}(q)^{p^{d-i}}}
	\end{equation*}
	for $\Gamma_d\in (x^{p},(q-1)^{p-1})^{p^{d-1}}$. The extra $t^{p^d}$ in the denominator accomodates for the fact that this element must sit in degree $2p^d$. Note that the denominators all become invertible in $\Oo_{W_{n,r,s}^*/\AdicGm}$, but that's not enough to obtain the desired map: We must send the generators to \emph{solid} elements, to ensure that the map extends over the $(p,t)$-completion.
	
	By construction, $(q-1)^s/p$ and $p^r/[p^n]_{\ku}(t)$ are solid. In particular, $p^r/(t\Phi_{p^i}(q))$ is solid for all $i=1,\dotsc,n$. For $i>n$, we have $(p-1)p^{i-1}>s$ by assumption. Hence $(q-1)^{(p-1)p^{i-1}}/p$ is topologically nilpotent in $\Oo_{W_{n,r,s}^*/\AdicGm}$. It follows that $\Phi_{p^i}(q)=p(1+w)$, where $w$ is topologically nilpotent, and so $p^r/\Phi_{p^i}(q)$ is solid in $\Oo_{W_{n,r,s}^*/\AdicGm}$ for $i>n$. Therefore the elements $p^{2r}/(t\Phi_{p^i}(q))$ are solid for all $i\geqslant 1$.
	
	By choosing $\alpha$ large enough, we can ensure that for every monomial $x^{pi}(q-1)^{(p-1)j}$ in the ideal $(x^{p},(q-1)^{p-1})^{\alpha p^{d-1}}$ we have $pi\geqslant 2rp^d$ or $(p-1)j\geqslant sp^d$. Now $(\Gamma_d)^\alpha$ is a $\IZ_p\{x\}[q]$-linear combination of such terms. It follows that the $\delta$-ring map $\IZ_p\{x\}\rightarrow \IZ_p$ sending $x\mapsto p$ can really be extended to a map $\fil_{\qHodge}^\star\q\widehat{D}_\alpha\rightarrow \Oo_{W_{n,r,s}^*/\AdicGm}$ of graded solid condensed $\IZ_p[\beta]\llbracket t\rrbracket$-algebras. Via $(p,t)$-completed base change along $\IZ_p\{x\}\rightarrow \IZ_p$ and extension of scalars to $\Oo_{X^*/\AdicGm}$, this yields the desired map
	\begin{equation*}
		\fil_{\qHodge}^\star\qhatdeRham_{(\IZ/p^{\alpha})/\IZ_p}\lotimes_{\Oo_{\ov X^*_\solid/\AdicGm}}\Oo_{X^*/\AdicGm}\longrightarrow \Oo_{W_{n,r,s}^*/\AdicGm}
	\end{equation*}
	The argument in the $q$-Hodge case is analogous.
	\end{proof}
	\begin{proof}[Proof of \cref{thm:OverconvergentNeighbourhood,thm:OverconvergentNeighbourhoodEquivariant}]
	By \cref{lem:AkuOverAnalyticLocus} and \cref{lem:NuclearIdempotentAbstract}\cref{enum:IdempotentBasechange}, we see that $\Akup$ is the colimit of the idempotent nuclear ind-algebra given by killing the pro-idempotent
	\begin{equation*}
		\prolim_{\alpha\geqslant2}\fil_{\qHodge}^\star\qhatdeRham_{(\IZ/p^{\alpha})/\IZ_p}\lotimes_{\Oo_{\ov X^*_\solid/\AdicGm}}\Oo_{X^*/\AdicGm}
	\end{equation*}
	in $\Dd(X^*_\solid/\AdicGm)$. By \cref{lem:qHodgeSystemOfOpensI,lem:qHodgeSystemOfOpensII}, we see that this pro-system is equivalent to the pro-system $\prolim_{n,r,s}\Oo_{W_{n,r,s}^*/\AdicGm}$, which proves $\Akup\simeq \Oo_{Z^{*,\dagger}/\AdicGm}$. The argument for $\AKUp\simeq \Oo_{Z^\dagger}$ is completely analogous.
	\end{proof}
	\begin{rem}
		An obvious adaptation of \cref{thm:RefinedTC-qHodge} shows that $\AKUp$ and $\Akup$ are connective. Therefore the condition from \cref{thm:OverconvergentIdempotentNuclear}\cref{enum:OZdaggerPushforward} is satisfied and so $\Oo_{Z^\dagger}$ and $\Oo_{Z^{*,\dagger}/\AdicGm}$ are really the pushforwards of the respective structure sheaves.
	\end{rem}

	\newpage 
	\renewcommand{\SectionPrefix}{}
	
	\renewcommand{\bibfont}{\small}
	\printbibliography
\end{document}